 \tikzset{->-/.style={decoration={
  markings,
  mark=at position .5 with {\arrow{>}}},postaction={decorate}}}
\newtheorem{thm}{Theorem}[section]
\newtheorem{prop}[thm]{Proposition}
\newtheorem{lemma}[thm]{Lemma}
\newtheorem{Quest}{Question}
\newtheorem{cor}[thm]{Corollary}
\newcommand{\spin}{\mathfrak{s}}
\theoremstyle{definition}
\newtheorem{defn}{Definition}[section]
\theoremstyle{remark}
\newtheorem{remark}{Remark}[section]
\newtheorem{example}{Example}[section]
    \def\HSt{%
       \setbox0=\hbox{$\widehat{\mathit{HS}}$}
       \setbox1=\hbox{$\mathit{HS}$}
       \dimen0=1.1\ht0
       \advance\dimen0 by 1.17\ht1
       \smash{\mskip2mu\raise\dimen0\rlap{%
          \begin{turn}{180}
              {$\widehat{\phantom{\mathit{HS}}}$}
           \end{turn}} \mskip-2mu    
                \mathit{HS}
    }{\vphantom{\widehat{\mathit{HS}}}}{}}
    \def\HMt{%
       \setbox0=\hbox{$\widehat{\mathit{HM}}$}
       \setbox1=\hbox{$\mathit{HM}$}
       \dimen0=1.1\ht0
       \advance\dimen0 by 1.17\ht1
       \smash{\mskip2mu\raise\dimen0\rlap{%
          \begin{turn}{180}
              {$\widehat{\phantom{\mathit{HM}}}$}
           \end{turn}} \mskip-2mu    
                \mathit{HM}
    }{\vphantom{\widehat{\mathit{HM}}}}{}}
    \newcommand{\HMb}{\overline{\mathit{HM}}}
\newcommand{\HMf}{\widehat{\mathit{HM}}}
\newcommand{\HMred}{{\mathit{HM}}}
\newcommand{\vol}{\mathrm{vol}}
\newcommand{\C}{\mathbb{C}}
\newcommand{\R}{\mathbb{R}}
\newcommand{\SW}{\mathsf{SW}}
\newcommand{\W}{\mathsf{W}}
\newcommand{\SL}{\mathrm{SL}}
\newcommand{\GL}{\mathrm{GL}}
\newcommand{\ZZ}{\mathbb{Z}}
\newcommand{\PSL}{\mathrm{PSL}}
\newcommand{\SO}{\mathrm{SO}}
\newcommand{\sgn}{\mathrm{sgn}}
\begin{document}

\title{Closed geodesics and Fr\o yshov invariants of hyperbolic three-manifolds} 

\author{Francesco Lin}
\address{Department of Mathematics, Columbia University} 
\email{flin@math.columbia.edu}

\author{Michael Lipnowski}
\address{Department of Mathematics and Statistics, McGill University} 
\email{michael.lipnowski@mcgill.ca}
\begin{abstract}
Fr\o yshov invariants are numerical invariants of rational homology three-spheres derived from gradings in monopole Floer homology. In the past few years, they have been employed to solve a wide range of problems in three and four-dimensional topology. In this paper, we look at connections with hyperbolic geometry for the class of minimal $L$-spaces. In particular, we study relations between Fr\o yshov invariants and closed geodesics using ideas from analytic number theory. We discuss two main applications of our approach. First, we derive effective upper bounds for the Fr\o yshov invariants of minimal hyperbolic $L$-spaces purely in terms of volume and injectivity radius. Second, we describe an algorithm to compute Fr\o yshov invariants of minimal $L$-spaces in terms of data arising from hyperbolic geometry. As a concrete example of our method, we compute the Fr\o yshov invariants for all spin$^c$ structures on the Seifert-Weber dodecahedral space. Along the way, we also prove several results about the eta invariants of the odd signature and Dirac operators on hyperbolic three-manifolds which might be of independent interest.
\end{abstract}
\maketitle

\tableofcontents

\section*{Introduction}

Understanding the relationship between hyperbolic geometry and Floer theoretic invariants of three-manifolds is one of the outstanding problems of low-dimensional topology. In our previous work \cite{LL}, as a first step in this direction, we studied sufficient conditions for a hyperbolic rational homology sphere to be an $L$-space (i.e. to have simplest possible Floer homology \cite{KMOS}) in terms of its volume and complex length spectrum. Our approach was based on spectral geometry and its relation to hyperbolic geometry via the Selberg trace formula. It was implemented explicitly (taking as input computations from SnapPy \cite{CDGW}) to show that several manifolds of small volume in the Hodgson-Weeks census \cite{HW} are $L$-spaces. 
\\
\par
In the present paper we focus our attention on the Fr\o yshov invariants of rational homology spheres. These are numerical invariants $h(Y,\spin)\in\mathbb{Q}$ indexed by spin$^c$ structures which are extracted from the gradings in monopole Floer homology (\cite{KM}, Chapter $39$). The corresponding invariants in the context of Heegaard Floer homology are known as correction terms \cite{OSgr}, and the identity $d(Y,\spin)=-2h(Y,\spin)$ holds under the isomorphism between the theories (see \cite{Tauiso}, \cite{CGHiso}, \cite{KLTiso} and subsequent papers). These invariants have been applied in recent years to a wide range of problems in three and four dimensional topology, see among the many \cite{OSun}, \cite{OStr}, \cite{LidLev}. Despite this, their computation in specific examples is still a very challenging problem, even under the assumption that $Y$ is an $L$-space.
\\
\par
The first basic question about them in the spirit of the present paper is the following. Recall that given constants $V,\varepsilon>0$, the set of hyperbolic rational homology spheres $Y$ for which $\mathrm{vol}(Y)<V$ and $\mathrm{inj}(Y)>\varepsilon$ is finite (\cite{BP}, Chapter $5$), and therefore so is the set of possible Fr\o yshov invariants $h(Y,\spin)$.
\begin{Quest}\label{quest1}
Given $V,\varepsilon>0$, can one provide an \textbf{effective} upper bound on $|h(Y,\spin)|$ for all hyperbolic rational homology spheres with $\mathrm{vol}(Y)<V$ and $\mathrm{inj}(Y)>\varepsilon$?
\end{Quest}
Of course, this is a challenging question even when restricting to the smaller class of $L$-spaces with $\mathrm{vol}(Y)<V$ and $\mathrm{inj}(Y)>\varepsilon$. Another natural question in this spirit is the following.
\begin{Quest}\label{quest2}
Can one explicitly determine $h(Y,\spin)$ in terms of data coming from hyperbolic geometry (e.g. volume, injectivity radius, etc.)?
\end{Quest}
While we are not able to address these questions in the stated generality, we will answer them under the additional assumption that $Y$ is a \textit{minimal hyperbolic $L$-space}, i.e. a rational homology sphere $(Y,g_{\mathrm{hyp}})$ equipped with a hyperbolic metric $g_{\mathrm{hyp}}$ for which sufficiently small perturbations of the Seiberg-Witten equations do not admit irreducible solutions. One of the main results of \cite{LL} is that a hyperbolic rational homology sphere $Y$ for which $\lambda_1^*>2$ (where $\lambda_1^*$ is the first eigenvalue of the Hodge Laplacian acting on coexact $1$-forms) is a minimal hyperbolic $L$-space; furthermore, the inequality $\lambda_1^*>2$ can be verified algorithmically in concrete examples, taking as input the length spectrum up to a certain cutoff. The first result we present, which addresses Question \ref{quest1}, is the following.

\begin{thm}\label{thm1}
Suppose $Y$ is a minimal hyperbolic $L$-space with $\mathrm{vol}(Y)<V$ and $\mathrm{inj}(Y)>\varepsilon$, then there exists an \textbf{effectively computable} constant $K_{V,\varepsilon}$ for which $\lvert h(Y,\spin)|\leq  K_{V,\varepsilon}$ for all spin$^c$ structures on $Y$. For example, if $Y$ is a minimal hyperbolic $L$-space with $\mathrm{vol}(Y)<6.5$ and $\mathrm{inj}(Y)>0.15$ (e.g. any rational homology sphere in the Hodgson-Weeks census\footnote{The Hodgson-Weeks census \cite{HW} consists of approximatively $ 11$ thousand closed oriented hyperbolic manifolds with $\mathrm{vol}(Y)<6.5$ and $\mathrm{inj}(Y)>0.15,$ and most of these are rational homology spheres. It is currently not known what percentage of the such manifolds it encompasses.} with $\lambda_1^*> 2$), then for every spin$^c$ structure $\spin$, the inequality $\lvert h(Y,\spin)\lvert\leq 67658$ holds.
\end{thm}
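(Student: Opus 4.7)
The plan is to reduce an upper bound on $|h(Y,\spin)|$ to an effective upper bound on certain eta invariants of natural Dirac-type operators on $Y$, and then to bound those eta invariants by volume, injectivity radius, and the length spectrum via the Selberg trace formula, in the same spirit as the authors' previous work \cite{LL}.

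\textbf{Step 1: Reduction to eta invariants.} For a minimal hyperbolic $L$-space, the Seiberg-Witten moduli space (after small perturbation) consists only of the unique reducible associated to $\spin$. Consequently the Fr\o yshov invariant $h(Y,\spin)$ can be read off from the grading of this reducible critical point, which by Kronheimer-Mrowka's formula (\cite{KM}, Chapter 28) is an explicit rational linear combination of the eta invariant $\eta(D_\spin)$ of the twisted Dirac operator, the eta invariant $\eta_{\mathrm{sign}}(Y)$ of the odd signature operator $*d$ on $1$-forms, and a torsion term coming from $c_1(\spin)$. Since $Y$ is a rational homology sphere the torsion contribution is uniformly controlled by $|H_1(Y;\mathbb Z)|$, which in turn is bounded effectively in terms of $V$ and $\varepsilon$ by standard geometric arguments (counting lattice points in $H^1$ via Gromov-type norm bounds, or bounding the torsion directly from volume). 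Thus the task reduces to proving effective bounds
\[
|\eta(D_\spin)| \leq E_1(V,\varepsilon),\qquad |\eta_{\mathrm{sign}}(Y)| \leq E_2(V,\varepsilon).
\]

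\textbf{Step 2: Spectral expression via heat kernels and the Selberg trace formula.} I would use the standard identity
\[
\eta(D) = \frac{1}{\sqrt{\pi}}\int_0^\infty t^{-1/2}\,\mathrm{Tr}\bigl(D e^{-tD^2}\bigr)\,dt,
\]
applied to $D_\spin$ and to $*d$ on $\ker(d)^\perp$. On a hyperbolic three-manifold both sides can be compared to the corresponding quantities on $\mathbb{H}^3$ via the Selberg trace formula, which expresses the odd-function trace as a volume term (vanishing for odd test functions by inversion symmetry of the Plancherel density on $\mathbb H^3$) plus an explicit sum over closed geodesics weighted by holonomies of $\spin$ (respectively, of the frame bundle) along the geodesics. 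Truncating the $t$-integral at $t=T$, the small-$t$ part is handled by the length-spectrum side of the trace formula, while the large-$t$ tail decays like $e^{-\lambda_{\min} T}$ where $\lambda_{\min}$ is the smallest nonzero eigenvalue of $D_\spin^2$ (respectively, $d^*d$ on coexact $1$-forms).

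\textbf{Step 3: Effective geometric bounds.} Each geometric sum is estimated using (a) a Buser-type upper bound on the number $N(L)$ of primitive closed geodesics of length at most $L$ in terms of $\mathrm{vol}(Y)$, and (b) the lower bound $\ell_{\min}(Y) \geq 2\,\mathrm{inj}(Y) > 2\varepsilon$ on the shortest closed geodesic, which prevents the geometric side from blowing up. The hypothesis $\mathrm{inj}(Y)>\varepsilon$ also yields, via Cheeger-type inequalities and the comparison with $\mathbb H^3$, an explicit lower bound on the smallest nonzero eigenvalues of the operators in question, controlling the large-$t$ tail effectively. Combining these estimates with the explicit volume term from the trace formula yields $E_1$ and $E_2$ as elementary functions of $V$ and $\varepsilon$.

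\textbf{Main obstacle.} The conceptual structure above is clear; the hard part is quantitative. One must choose an admissible test function in the trace formula that (i) approximates the characteristic function of $[0,T]$ well enough to control both the truncation error of the eta integral and the spectral tail, (ii) has a Fourier transform whose decay rate makes the geodesic sum convergent with explicit constants, and (iii) yields constants tight enough that the arithmetic bookkeeping for the Hodgson-Weeks regime $V=6.5$, $\varepsilon=0.15$ actually terminates at the stated value $67658$. Controlling the torsion-term and holonomy contributions explicitly, and carefully tracking the dependence of all constants on $\varepsilon$ through Margulis-tube geometry, is where the bulk of the technical work will lie.
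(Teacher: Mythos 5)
Your overall architecture matches the paper's: reduce $h(Y,\spin)$ to the eta invariants of $\ast d$ and the Dirac operator via the grading of the reducible (the paper's Proposition \ref{froyeta} gives the clean identity $h=-\eta_{\mathrm{sign}}/8-\eta_{\mathrm{Dir}}/2$; note that the $c_1(\spin_X)^2$ contributions from the grading formula and the APS index theorem cancel exactly, so there is no residual torsion term and no need to bound $|H_1(Y;\mathbb{Z})|$), and then bound the eta invariants via the trace formula using $2\,\mathrm{inj}(Y)$ as a lower bound on geodesic lengths.

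However, there is a genuine gap in your mechanism for controlling the spectral side. In Steps 2--3 you propose to bound the large-$t$ tail of $\int_0^\infty t^{-1/2}\mathrm{Tr}(De^{-tD^2})\,dt$ by $e^{-\lambda_{\min}T}$ and to obtain an explicit lower bound on $\lambda_{\min}$ for $D_\spin$ and for $\ast d$ on coexact $1$-forms from ``Cheeger-type inequalities and the comparison with $\mathbb{H}^3$'' under the hypothesis $\mathrm{inj}(Y)>\varepsilon$. No such bound exists: Cheeger's inequality controls only the first nonzero eigenvalue of the Laplacian on \emph{functions}, and there is no effective lower bound, in terms of volume and injectivity radius alone, for the smallest eigenvalue of the Dirac operator or of $\ast d$ on coexact $1$-forms (the minimal $L$-space hypothesis guarantees $\ker D_{B_0}=0$, as in Lemma \ref{nokernel}, but gives no quantitative gap). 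The correct replacement --- and what the paper actually does --- is a \emph{local Weyl law}: applying the even trace formula with a test function supported in $[-2\,\mathrm{inj}(Y),2\,\mathrm{inj}(Y)]$ kills the geodesic sum and yields an effective upper bound $A\nu^2+B$ on the \emph{number} of spectral parameters in any window $[\nu,\nu+1]$, with $A,B$ explicit in $V$ and $\varepsilon$. Since each eigenvalue, no matter how close to zero, contributes at most $O(1)$ to the spectral asymmetry, counting eigenvalues suffices and no spectral gap is needed. Relatedly, the paper bounds the geodesic sum on the geometric side not by a prime-geodesic count but by recognizing it as the geometric side of the trace formula for functions and invoking the local Weyl law for the $0$-form spectrum (including a bound on the number of small eigenvalues), which is where the dependence on $\mathrm{inj}(Y)$ enters sharply. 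Your heat-kernel formulation could in principle be salvaged by the same counting argument, but as written the step relying on a spectral gap would fail.
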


In general, the dependence of $K_{V,\varepsilon}$ on $V$ and $\varepsilon$ is readily computable but does not admit a particularly pleasant closed form. 
In order to streamline our discussion, we take a simple approach to prove Theorem \ref{thm1} leading to bounds which are not asymptotically optimal. More refined arguments might lead to significantly sharper estimates, especially in the case of small injectivity radius (cf. Remark \ref{betterbound}).
\begin{remark}While there are many examples of minimal $L$-spaces with volume $<6.5$ and injectivity radius $>0.15$, it is not known whether there are examples with arbitrarily large volume or arbitrarily small injectivity radius, cf. \cite[Section 5]{LL}.
\end{remark}

The key observation behind the proof of Theorem \ref{thm1} is the following: for minimal hyperbolic $L$-spaces, the Fr\o yshov invariant $h(Y,\spin)$ can be expressed in terms of the \textit{eta invariants} $\eta_{\mathrm{sign}}$ and $\eta_{\mathrm{Dir}}$ of the odd signature operator $\ast d$ acting on coexact $1$-forms and the Dirac operator $D_{B_0}$ corresponding to the flat connection $B^t_0$ on the determinant line bundle. Recall that both operators are first order, elliptic and self-adjoint, and are therefore diagonalizable in $L^2$ with real discrete spectrum unbounded in both directions. The eta invariant is a numerical invariant that intuitively measures the spectral asymmetry of an operator, i.e. the difference between the number of positive and negative eigenvalues \cite{APS1}. Of course, in our cases of interest, both of these quantities are infinite, and the eta invariant is defined via suitable analytic continuation. While the latter was originally obtained using the heat kernel, in our setup it can also be understood in terms of closed geodesics via the Selberg trace formula for \textit{odd test functions}. One should compare this with classical work of Millson \cite{Mil} and Moscovici-Stanton \cite{MosSta} expressing the eta invariants in terms of values of suitable odd Selberg zeta functions. In particular, it is possible to provide explicit expressions for $\eta_{\mathrm{sign}}$ and $\eta_{\mathrm{Dir}}$ in terms of spectral and geometric data:
\begin{itemize}
\item in the case of $\eta_{\mathrm{sign}}$, the geometric input is the complex length spectrum we have already exploited in \cite{LL}. The main difference is that in our previous paper we only needed the trace formula for \text{even} test functions. This is because we were interested in the Hodge Laplacian $\Delta$ acting on coexact $1$-forms, and the trace formula involved spectral parameters $t_j$ with $t_j^2=\lambda_j^*$. When using even test functions, the choice of the sign $t_j$ is irrelevant, but in fact there is a natural choice (for a fixed orientation of $Y$) because $\Delta$ is the square of $\ast d$. 
\item in the case of $\eta_\mathrm{Dir}$, the relevant new geometric data is encoded in the \textit{spin$^c$ length spectrum} of $(Y,\spin)$; this can be used to obtain information about the spectra of the corresponding Dirac operator via a specialization of the Selberg trace formula for the group
\begin{equation*}
G=\left\{g\in\mathrm{GL}_2(\mathbb{C})\text{ for which } |\mathrm{det}(g)|=1\right\}.
\end{equation*}
This should be thought as the spin$^c$ analogue of the group $\mathrm{PGL}_2(\mathbb{C})=\mathrm{Isom}^+(\mathbb{H}^3)$ which we studied in our previous paper.
\end{itemize}
Given this, Theorem \ref{thm1} follows by applying ideas of analytic number theory and choosing suitable compactly supported test functions. The most notable inputs are \textit{local Weyl laws} \cite{Mul}, which allow one to effectively bound from above the number of (coexact) $\ast d$ or the number of Dirac eigenvalues in any given interval. The trace formula relates both the odd signature and Dirac eta invariants quite explicitly to the hyperbolic geometry of the underlying manifold, which allows us to prove effective upper bounds.  For example, we will show that for any hyperbolic rational homology sphere $Y$ with volume $<6.5$ and injectivity radius $>0.15$, the explicit inequalities
\begin{align*}
\lvert\eta_{\mathrm{sign}}\lvert&\leq108267\\
\lvert\eta_{\mathrm{Dir}}\lvert&\leq 108249
\end{align*}
hold, where the second estimate is independent of the choice of spin$^c$ structure. Let us remark again that these estimates are not optimal even within the range of our techniques.
\\
\par
The injectivity radius makes its appearance in the assumptions of these results because it equals half of the length of the shortest closed geodesic. This implies that the relevant trace formula takes a particularly simple form when evaluated using test functions supported in the interval $[-2 \cdot \mathrm{inj}(Y),2 \cdot \mathrm{inj}(Y)]$, as the sum over closed geodesics vanishes.
\par
More generally, explicit knowledge of the length spectrum up to a certain cutoff provides much more detailed information about Fr\o yshov invariants and can in fact be used to provide explicit computations in the spirit of Question \ref{quest2}. In the second part of the paper, after showcasing the main ideas behind this approach in the simple case of the Weeks manifold (whose Fr\o yshov invariants can be computed in a purely topological fashion \cite{MO}), we will focus on making this process explicit in the challenging case of the Seifert-Weber dodecahedral space $\SW$. While this is one of the first examples of hyperbolic manifolds to be discovered \cite{WS}, it is a complicated space to study from the point of view of three-dimensional topology; for example it took $30$ years to verify Thurston's conjecture that $\SW$ is not Haken \cite{BRT}. In \cite{LL2} we used our techniques to show that $\SW$ is a minimal hyperbolic $L$-space by taking into account its large symmetry group. The next result determines its Fr\o yshov invariants for all the spin$^c$ structures on $\SW$ (recall that $H_1(\SW;\mathbb{Z})=(\mathbb{Z}/5\mathbb{Z})^3)$.

\begin{thm}\label{thm2}
Let $\spin_0$ be the unique spin structure on $\SW$, and consider a spin$^c$ structure $\spin=\spin_0+x$ on $\SW$ for $x\in H^2(\SW,\mathbb{Z})$. Then the Fr\o yshov invariant $h(\SW,\spin)$ is computed as in the following table, according to the value of the linking form $\mathsf{lk}(x,x)\in \mathbb{Q}/\mathbb{Z}$.
\end{thm}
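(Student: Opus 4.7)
The plan is to reduce the computation of each $h(\SW, \spin)$ to the evaluation of the two eta invariants $\eta_{\mathrm{sign}}$ and $\eta_{\mathrm{Dir}}(\spin)$, exploiting the formula discussed earlier in the introduction which expresses $h$ as an explicit linear combination of these invariants in the minimal hyperbolic $L$-space case (the hypothesis that $\SW$ is such a space having been established in \cite{LL2}). Since $\eta_{\mathrm{sign}}$ is topological in $\spin$, only $\eta_{\mathrm{Dir}}(\spin)$ varies over the $125$ spin$^c$ structures, and the real work is producing a sufficiently accurate numerical value for each of them to pin down the exact rational number $h(\SW,\spin)$.

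For each invariant, I would evaluate both sides of the Selberg trace formula for odd test functions: on the spectral side, a truncation plus a Weyl law tail bound; on the geometric side, the sum over closed geodesics up to a computable cutoff, using the complex length spectrum for $\eta_{\mathrm{sign}}$ and the spin$^c$ length spectrum (produced by lifting the geodesic data through the determinant-line bundle associated to $\spin$) for $\eta_{\mathrm{Dir}}$. Choosing a compactly supported odd test function adapted to the injectivity radius and the density of geodesics of $\SW$ keeps the geometric side finite and manageable. The spin$^c$ length spectrum data comes from computing, for each closed geodesic $\gamma$, its holonomy in the $U(1)$-reduction of $B_0^t$ twisted by $\spin$; this is the new numerical ingredient beyond what was used in \cite{LL2}.

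To handle all $125$ spin$^c$ structures, I would exploit the very large isometry group of $\SW$. The symmetries act on $H^2(\SW;\ZZ)$ preserving the linking form, and they preserve Fr\o yshov invariants; moreover the linking form on $(\ZZ/5)^3$ is refined enough that, together with the action of the isometry group, the orbits of $\spin^c$ structures are controlled by the value of $\mathsf{lk}(x,x) \in \mathbb{Q}/\mathbb{Z}$. Thus I need to perform the trace-formula computation only for a short list of orbit representatives, one per value of the linking form. The final step is rigorous error control: the local Weyl laws from \cite{Mul} bound the spectral tail, and the prime geodesic counting estimates bound the geometric tail; together these yield an explicit interval around the computed value which, being smaller than the minimal gap between admissible rational values of the Fr\o yshov invariant, uniquely determines $h(\SW,\spin)$.

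The hardest step will be the spin$^c$ side: one needs effective computations of the holonomies of $B_0^t$ around many closed geodesics, organized by spin$^c$ structure, and then sharp enough tail estimates for the twisted trace formula that the resulting numerical window actually isolates the rational value of $h$. Controlling the twisted geometric sum uniformly over the $125$ spin$^c$ structures, while exploiting symmetry to avoid recomputing equivalent data, is the main bookkeeping and analytic challenge; by contrast, once these quantities are in hand the identification of the entries in the table, indexed by $\mathsf{lk}(x,x)$, is immediate.
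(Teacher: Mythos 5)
Your overall architecture matches the paper's: both use $h(\SW,\spin)=-\eta_{\mathrm{sign}}/8-\eta_{\mathrm{Dir}}/2$ (valid since $\SW$ is a minimal hyperbolic $L$-space), evaluate the eta invariants through the odd trace formula with a truncated geometric sum and Weyl-law control of the spectral tail, and use the isometry group (via the Witt extension theorem applied to the linking form) to reduce the $125$ spin$^c$ structures to a handful of classes indexed by $\mathsf{lk}(x,x)$.

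However, there is a genuine gap in your error-control strategy. You propose to pin down $h(\SW,\spin)$ by making the numerical window ``smaller than the minimal gap between admissible rational values.'' A priori $-2h$ lies in $\tfrac{1}{5}\mathbb{Z}$, so this would require total error below $1/10$. With only the (spin$^c$) length spectrum up to cutoff $R\approx 7.5$ available, the achievable error bounds are far larger: the paper's spectral-side errors range from about $0.16$ (spin class) up to about $0.63$ (the class with the smallest spectral gap). The computation only closes because of an additional \emph{topological} input you omit: choosing a simply connected spin $4$-manifold $X$ bounding $\SW$ and reading off the absolute grading formula modulo $\mathbb{Z}$ shows that $-2h(\SW,\spin_0+x)\equiv \mathsf{lk}(x,x)\pmod{\mathbb{Z}}$. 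This reduces the required precision from $1/10$ to $1/2$ (and, combined with an explicit identification of which computational class carries which value of $\mathsf{lk}(x,x)$, resolves the residual $\pm$ ambiguities between classes with fractional parts $1/5$ vs.\ $4/5$ and $2/5$ vs.\ $3/5$). It is also what allows the sign of the single small Dirac eigenvalue in the hardest class to be determined indirectly, since the odd trace formula is insensitive to eigenvalues near zero. Relatedly, your choice of compactly supported test functions is suboptimal here: their Fourier transforms decay only polynomially, which inflates the spectral tail; the paper uses dilated Gaussians (shown to be admissible in its appendix) precisely to keep both tails small. Without the mod-$\mathbb{Z}$ identification of the fractional part via the linking form and a bounding $4$-manifold, the argument as you describe it would not determine the table.
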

\begin{center}
\begin{tabular} { | c | c |}
\hline
$\mathsf{lk}(x,x)$ & $h(\SW,\spin)$  \\
\hline
$0$ &  $-1 / 2$  \\
\hline
$1/5$ & $- 1/10$ \\
\hline
$2/5$ & $- 7/10$\\
\hline
$3/5$ & $- 3/10$\\
\hline
$4/5$ & $ 1/10$\\
\hline
\end{tabular}
\label{SWfro}
\end{center}
\begin{remark}
As we will see in the proof, the case $\mathsf{lk}(x,x)=0$ really encompasses two distinct cases: the unique spin structure and a family of $24$ (non-spin) spin$^c$ structures.
\end{remark}

An important observation here is that the fractional parts of Fr\o yshov invariants of $Y$ admit partial interpretations in terms of the linking form of $Y$ and the topology of manifolds bounding $Y$; given the extra topological input, the problem boils down to computing eta invariants up to a certain small (but reasonable) error. Towards this end, the main limitation is that we can only access a limited amount of the length spectrum of $\SW$. To prove Theorem \ref{thm2}, it is most convenient to use dilated Gaussians as test functions, because both the function and its Fourier transform (which is again a Gaussian) are rapidly decaying; as Gaussians are not compactly supported, we need to truncate certain infinite sums over closed geodesics that arise when estimating the $\eta_{\mathrm{sign}}$ and $\eta_{\mathrm{Dir}}$ via the odd trace formula for coexact $1$-forms and spinors. The main step in the proof is then to estimate the error introduced in this procedure. This can be done by deriving effective bounds on the number of closed geodesics of a given length, in the spirit of the prime geodesic theorem with error terms (\cite{Bus}, Section $9.6$).

\begin{remark}
The approach we implement for $\SW$ can be in principle carried out for any minimal hyperbolic $L$-space for which the linking form is known; the fundamental limitation comes from computing length spectra. In fact, the same ideas can also be exploited to obtain closed formulas for the Fr\o yshov invariants of minimal hyperbolic $L$-spaces with finitely many terms with an effective (but impractical) upper bound on the number of terms. We will not pursue such a closed formula in the present paper.
\end{remark}

Let us remark that explicit computations for the eta invariant of the odd signature operator $\eta_{\mathrm{sign}}$ for hyperbolic three-manifolds have been implemented in Snap \cite{CGHW}, and are based on a Dehn filling approach \cite{Ou}. The key insight is that for a fixed oriented compact $4$-manifold $X$ bounding $Y$, the general Atiyah-Patodi-Singer index theorem \cite{APS1} relates  $\eta_{\mathrm{sign}}$ to the kernel of the odd signature operator on $Y$ and the signature of $X$, both of which are topological invariants.
\par
However, while the APS index theorem also holds for the spin$^c$ Dirac operator, it is well known that the dimension of its kernel on $Y$ is not a topological invariant \cite{Hit}; this fact makes the computation of $\eta_{\mathrm{Dir}}$ much more subtle than its odd signature operator counterpart.
In fact, the computations of Fr\o yshov invariants carried out in this paper for some explicit minimal hyperbolic $L$-spaces provide as a byproduct explicit examples of hyperbolic three-manifolds for which one can compute Dirac eta invariants to high accuracy (which are not zero for obvious geometric reasons, e.g. the existence of an orientation-reversing isometry), and to the best of our knowledge these are the first such examples. For example, our methods will show that for the unique spin structure on the Weeks manifold,
\begin{equation*}
\eta_{\mathrm{Dir}}=0.989992\dots
\end{equation*}
This relies on the fact that the Weeks manifold is a minimal hyperbolic $L$-space, together with the computation of $\eta_{\mathrm{sign}}$ given in \cite{CGHW}; in particular, the value is as precise as the computations provided by Snap.
\par
More generally, the odd trace formula allows one to obtain bounds on $\eta_{\mathrm{Dir}}$ provided one can access the spin$^c$ length spectrum of $(Y,\spin)$; in turn, we will describe an algorithm to compute $\mathrm{spin}^c$ length spectra taking as input information computed using SnapPy. In particular, our method could in principle be applied to compute the invariants $\eta_{\mathrm{Dir}}$ for any hyperbolic three-manifold, even though at a practical level it might be infeasible to obtain a decent approximation in a reasonable time.
\\
\par

\textit{Note for the reader. }The paper is structured so that the various trace formulas (see Section \ref{traceformulas} for the statements) can be treated as black boxes, and all subsequent sections are written in a way that is hopefully self-contained. In particular, in Section \ref{continuation} we only use some complex analysis to provide an explicit formula for the eta invariants in terms of eigenvalues and complex (spin$^c$) lengths. Given this, the remainder of the paper only uses basic facts about Fourier transforms, and we will provide motivation and context for the tools from analytic number theory which we employ. Detailed proofs of the various trace formulas which we use can be found in the appendices; our discussion there assumes the reader to be familiar with the proof of the even trace formula for coexact $1$-forms in our previous work \cite[Appendix B]{LL}.
\\
\par
\textit{Plan of the paper.} Sections \ref{reviewfro}, \ref{spinlength} and \ref{traceformulas} provide background about the main protagonists of the paper: Fr\o yshov invariants, spin$^c$ length spectra, and trace formulas (both even and odd). In Section \ref{continuation} we use the odd trace formulas to provide an explicit expression for the eta invariants of the odd signature and Dirac operators; this will be the main tool for the present paper. In particular we prove Theorem \ref{thm1} in Sections \ref{localweyl} and \ref{proofthm1} by providing explicit bounds on the terms appearing in the sum. In Section \ref{weeks}, we show how our analytical expressions for eta invariants can be used to perform explicit computations on the Weeks manifold, the simplest minimal hyperbolic $L$-space.  This example is propaedeutic for the significantly more challenging case of the Seifert-Weber space, which we discuss in detail in Sections \ref{linkingSW}, \ref{geodesicSW} and \ref{proofthm2}.
\\
\par
\textit{Acknowledgements. }We are greatly indebted to Nathan Dunfield for all the help with SnapPy. The first author was partially supported by NSF grant DMS-1948820 and an Alfred P. Sloan fellowship.

\vspace{0.5cm}
\section{Background on Fr\o yshov and eta invariants}\label{reviewfro}
In this section we review some background topics that will be central for the purposes of the paper.

\subsection{Formal structure of monopole Floer homology and Fr\o yshov invariants} In \cite{KM} the authors associate to each three-manifold $Y$ three $\ZZ[U]$-modules fitting in a long exact sequence
\begin{equation*}
\cdots\stackrel{i_*}{\longrightarrow} \HMt_*(Y)\stackrel{j_*}{\longrightarrow} \HMf_*(Y)\stackrel{p_*}{\longrightarrow}  \HMb_*(Y)\stackrel{i_*}{\longrightarrow} \cdots.
\end{equation*}
These are read respectively \textit{HM-to}, \textit{HM-from} and \textit{HM-bar}, and we collectively refer to them as monopole Floer homology groups. Such invariants decompose along spin$^c$ structures on $Y$; for example we have
\begin{equation*}
\HMt_*(Y)=\bigoplus_{\spin\in \mathrm{Spin}^c(Y)}\HMt_*(Y,\spin).
\end{equation*}
The reduced Floer homology group $\HMred_*(Y,\spin)$ is defined to be the kernel of the map $p_*$. In this paper, we will be only interested in the case of rational homology spheres. In this situation the Floer homology groups for a fixed spin$^c$ struture admit an absolute grading by a $\ZZ$-coset in $\mathbb{Q}$, and the action of $U$ has degree $-2$. Furthermore, we have that
\begin{equation*}
\HMb_*(Y,\spin)\cong \ZZ[U,U^{-1}]
\end{equation*}
as graded modules (up to an overall shift), $\HMf_*(Y,\spin)$ vanishes in degrees high enough, and $p_*$ is an isomorphism in degrees low enough.
\par
A rational homology sphere $Y$ is called an $L$-\textit{space} if $\HMred_*(Y,\spin)=0$ for all spin$^c$ structures.
Given a spin$^c$ rational homology sphere $(Y,\spin)$, denote the minimum degree of a non-zero element in $i_*\left(\HMb_*(Y,\spin)\right)\subset \HMt_*(Y,\spin)$ by $-2h(Y,\spin)$. The quantity $h(Y,\spin)$ is then called the \textit{Fr\o yshov invariant} of $(Y,\spin)$.

\subsection{Fr\o yshov invariants in terms of eta invariants.} Recall \cite[Theorem $4.14$]{APS1} that for an oriented three-manifold the odd signature operator $B$ acts on even forms $\Omega^{\mathrm{even}}$ as
\begin{equation*}(-1)^p(\ast d-d\ast) \text{ on $2p$-forms.}\end{equation*} We identify $2$-forms and $1$-forms using $\ast$, so that the operator is
\begin{equation*}
\left( \begin{array}{cc} \ast d & d\\
d^* & 0
\end{array} \right)
\end{equation*}
acting on $\Omega^1\oplus \Omega^0$. This is a first order elliptic self-adjoint operator, and is diagonalizable in $L^2$ with real discrete spectrum unbounded in both directions. Its eta function is defined to be
\begin{equation*}
\eta_{\mathrm{sign}}(s)=\sum_{\lambda\neq0\text{ eigenvalue }} \frac{\mathrm{sgn}(\lambda)}{|\lambda|^s};
\end{equation*}
this sum defines a holomorphic function for $\mathrm{Re}(s)$ large. One of the key results of \cite{APS1} is that it admits an meromorphic continuation to the entire complex plane, with a regular value at $s=0$. In Section \ref{continuation}, we will quickly review the original proof of this (via the heat kernel) and then provide an alternative interpretation via the trace formula. Intuitively, $\eta_{\mathrm{sign}}=\eta_{\mathrm{sign}}(0)$ measures the spectral asymmetry of the operator. Similarly, the same procedure works for the Dirac operator $D_{B_0}$ (and its perturbations), leading to $\eta_{\mathrm{Dir}}$.
\begin{remark}
For our purposes it is convenient to notice (cf. \cite[Proposition $4.20$]{APS1}) that the odd signature operator (assuming for simplicity $b_1(Y)=0$) under the Hodge decomposition $\Omega^1=d \Omega^0\oplus d^*\Omega^2$ can be written as 
\begin{equation*}
\left( \begin{array}{ccc} \ast d&0& 0\\
0&0 & d\\
0&d^*&0
\end{array} \right)
\end{equation*}
acting on $ d^*\Omega^2\oplus d \Omega^0\oplus \Omega^0$. Now, the block
\begin{equation*}
\left( \begin{array}{cc}
0 & d\\
d^*&0
\end{array} \right)
\end{equation*}
has symmetric spectrum because $d$ and $d^*$ are adjoints; in particular we have for $\mathrm{Re}(s)$ large enough
\begin{equation}\label{eta1form}
\eta_{\mathrm{sign}}(s)=\sum_{t_j} \frac{\mathrm{sgn}(t_j)}{|t_j|^s};
\end{equation}
where the sum runs only on the eigenvalues $\{t_j\}$ of $\ast d$ on \textit{coexact} $1$-forms (notice that all the $t_j$ are non-zero). In particular, $\eta_{\mathrm{sign}}$ coincides with the spectral asymmetry of the action of $\ast d$ on coexact $1$-forms.
\end{remark}
\begin{remark}
We have $(\ast d)^2=\Delta$ when acting on coexact $1$-forms, and therefore the squares of the parameters $t_j$ are exactly the eigenvalues $\lambda_j^*$ of $\Delta$ we studied in \cite{LL}. The crucial extra information for the purposes of the present paper is the sign of these parameters. 
\end{remark}

The relation between eta invariants and the Fr\o yshov invariant is the following. Recall that a minimal $L$-space is a rational homology sphere admitting a metric for which small perturbations of the Seiberg-Witten equations do not have irreducible solutions.
\begin{prop}\label{froyeta}
Suppose $(Y,g)$ is a minimal $L$-space. Then for each spin$^c$ structure $\spin$,
\begin{equation*}
h(Y,\spin)=-\frac{\eta_{\mathrm{sign}}}{8}-\frac{\eta_{\mathrm{Dir}}}{2}
\end{equation*}
where $\eta_\mathrm{Dir}$ is the eta invariant of the Dirac operator $D_{B_0}$ corresponding to the flat connection on the determinant line bundle $B^t_0$.
\end{prop}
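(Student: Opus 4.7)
The strategy is to identify $-2h(Y,\spin)$ with the absolute $\mathbb{Q}$-grading of a distinguished reducible critical point, and then to compute that grading using the Atiyah--Patodi--Singer index theorem applied to a bounding spin$^c$ $4$-manifold.

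First, since $(Y,g)$ is a minimal $L$-space, sufficiently small perturbations of the Seiberg--Witten equations admit no irreducible solutions. In the blown-up Floer setup of \cite{KM} the chain complex is therefore generated entirely by reducibles, which form a $\ZZ$-indexed tower over the unique flat spin$^c$ connection $B_0$, parametrized by the eigenvalues of the perturbed Dirac operator $D_{B_0}$. The long exact sequence then forces $i_\ast \colon \HMb_\ast(Y,\spin) \to \HMt_\ast(Y,\spin)$ to be an isomorphism onto the ``positive half'' of this tower, so that $-2h(Y,\spin)$ equals the grading of the distinguished reducible $[\alpha_0]$ sitting just above the spectral cut. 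At this point one also notes that minimality forces $h_{\mathrm{Dir}} := \dim\ker D_{B_0} = 0$, since a non-trivial zero mode would, after generic small perturbation, produce irreducible solutions and contradict the hypothesis.

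Next, I invoke the absolute grading formula of Kronheimer--Mrowka: for any compact spin$^c$ $4$-manifold $(W,\spin_W)$ with $\partial(W,\spin_W) = (Y,\spin)$, the grading of $[\alpha_0]$ is a specific rational combination of the APS-index of the spin$^c$ Dirac operator on $W$, together with $c_1(\spin_W)^2$ and $\sigma(W)$, with coefficients chosen exactly so the answer is independent of $W$. Substituting the APS index theorems
\begin{align*}
\mathrm{ind}_{\mathrm{APS}}(D^+_W) &= \int_W \hat{A}(W)\,e^{c_1(\spin_W)/2} - \tfrac{1}{2}\bigl(\eta_{\mathrm{Dir}} + h_{\mathrm{Dir}}\bigr), \\
\sigma(W) &= \tfrac{1}{3}\int_W p_1(W) - \eta_{\mathrm{sign}},
\end{align*}
and using the Hirzebruch identity $\int_W \hat{A}(W)\,e^{c_1/2} = \tfrac{1}{8}\bigl(c_1(\spin_W)^2 - \tfrac{1}{3}\int_W p_1(W)\bigr)$ valid in dimension four, the topological contributions involving $c_1^2$ and $\sigma$ cancel exactly between the index theorem and the grading formula. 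The prefactor $\tfrac{1}{8}$ is precisely what produces the coefficient $-\eta_{\mathrm{sign}}/8$ in the final expression, while the factor $-\tfrac{1}{2}$ in front of the Dirac eta invariant produces the coefficient $-\eta_{\mathrm{Dir}}/2$.

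Combining these ingredients, together with $h_{\mathrm{Dir}} = 0$, yields
\[
-2h(Y,\spin) \;=\; \tfrac{\eta_{\mathrm{sign}}}{4} + \eta_{\mathrm{Dir}},
\]
which is equivalent to the claimed identity. The main delicate point is verifying the exact coefficients in the Kronheimer--Mrowka absolute grading formula (signs, factors of $2$, the correct identification of the distinguished reducible in the blown-up picture) and confirming that minimality indeed rules out Dirac kernel contributions; once this is settled the remainder is a straightforward bookkeeping exercise in APS.
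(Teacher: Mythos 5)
Your overall route is the same as the paper's: identify $-2h(Y,\spin)$ with the absolute $\mathbb{Q}$-grading of the first reducible critical point above the spectral cut, express that grading via the Kronheimer--Mrowka formula for a bounding spin$^c$ $4$-manifold $X$, and substitute the APS index theorems for the signature and Dirac operators so that the characteristic-class terms cancel against $c_1(\spin_X)^2$, $\chi(X)$ and $\sigma(X)$; the bookkeeping then gives $-2h=\eta_{\mathrm{sign}}/4+\eta_{\mathrm{Dir}}$ exactly as in the paper (modulo the orientation conventions for the boundary, which you rightly flag as the delicate point).

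There is, however, one genuine gap: your justification that minimality forces $\ker D_{B_0}=0$. You assert that a non-trivial zero mode would, after generic small perturbation, produce irreducible solutions; this is not true (or at least not justified) --- a Dirac zero mode does not by itself create irreducible Seiberg--Witten solutions on $Y$. The paper's Lemma on the absence of kernel argues differently: one perturbs the Chern--Simons--Dirac functional so that the Dirac operator becomes $D_{B_0}\pm\delta$; for small $\delta$ both perturbations still have no irreducibles and no kernel, so both yield admissible chain complexes computing the same Floer homology, and in both the first stable critical point has grading $-2h(Y,\spin)$. Equality of these gradings forces the moduli space on the product cobordism joining them to be zero-dimensional, hence the spectral flow of the family $D_{B_0}+f(t)$ interpolating between $-\delta$ and $+\delta$ vanishes, which is only possible if $D_{B_0}$ has trivial kernel. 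Without some such spectral-flow (or curvature) argument, the step $h_{\mathrm{Dir}}=0$ in your APS substitution is unsupported, and the constant term in the final formula would be off by $\tfrac{1}{2}\dim\ker D_{B_0}$.
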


Classical examples of minimal $L$-spaces are rational homology spheres admitting metrics with positive scalar curvature. In \cite{LL}, we showed that a hyperbolic rational homology sphere for which the first eigenvalue of the Hodge Laplacian coexact $1$-forms $\lambda_1^*$ satisfies $\lambda_1^*>2$ is a minimal $L$-space (using the hyperbolic metric); we furthermore provided several examples of such spaces.
\par
While Proposition \ref{froyeta} is well-known to experts, we will dedicate the rest of the section to its proof; our discussion will assume some familiarity with the content of \cite{KM}, and will not be needed later in the paper except in Section \ref{proofthm2} where we will briefly use the explicit form of the absolute grading (Equation (\ref{formulaabsgrX}) below). The main idea behind the proof is the following. Under the assumption that there are no irreducible solutions, after a small perturbation the Floer chain complex has generator corresponding to the positive eigenspaces of (a small perturbation of) the Dirac operator $D_{B_0}$. From this description it readily follows that they are $L$-spaces \cite[Chapter $22.7$]{KM}, and that $-2h(Y,\spin)$ is the absolute grading of the critical point corresponding to the first positive eigenvalue; the goal is then to express this absolute grading in terms of eta invariants using the APS index theorem.

\vspace{0.3cm}

\subsection{Proof of Proposition \ref{froyeta}}
We begin by recalling from \cite[Chapter $28.3$]{KM} how absolute gradings in monopole Floer homology are defined for torsion spin$^c$ structures. As we only consider rational homology spheres, the $\mathrm{spin}^c$ structures in our context are automatically torsion.  Consider a cobordism $(W,\spin_W)$ from $S^3$ to $(Y,\spin)$; equip $S^3$ with a round metric and a small admissible perturbation, and denote by $[\mathfrak{a}_0]$ the first stable critical point of $S^3$, corresponding to the first positive eigenvalue of the Dirac operator. Consider on $W$ a metric which is a product near the boundary, and denote by $W^*$ the manifold obtained by attaching cylindrical ends. We then define for a critical point $[\mathfrak{a}]$ of $Y$ the rational number
\begin{equation}\label{formulaabsgr}
\mathrm{gr}^{\mathbb{Q}}([\mathfrak{a}])=-\mathrm{gr}([\mathfrak{a}_0],W^*,\spin_W,[\mathfrak{a}])+\frac{c_1(\spin_W)^2-2\chi(W)-3\sigma(W)}{4}
\end{equation}
where:
\begin{itemize}
\item $\mathrm{gr}([\mathfrak{a}_0],W^*,\spin_W,[\mathfrak{a}])$ is the expected dimension of the moduli space of solutions of Seiberg-Witten equations in the spin$^c$ structure $\spin_W$ that converge to $[\mathfrak{a}_0]$ and $[\mathfrak{a}]$. Concretely, this is the index of the linearized equations, after gauge fixing. 
\item $c_1(\spin_W)^2\in\mathbb{Q}$ is the self-intersection of the class $c_1(\spin_W)\in H^2(W;\mathbb{Z})$. Recall that this is defined as
\begin{equation*}
(\tilde{c}\cup\tilde{c})[W,\partial W]
\end{equation*}
where $\tilde{c}\in H^2(W,\partial W,\mathbb{Q})$ is any class whose image in $H^2(W,\mathbb{Q})$ is the same as the image $c_1(\spin_W)$ under the change of coefficient map $H^2(W,\mathbb{Z})\rightarrow H^2(W,\mathbb{Q})$.
\end{itemize}

For our purposes, it will be convenient to work with a closed manifold $X$ with boundary $Y$ over which $\spin$ extends instead; this can be obtained from $W$ by gluing in a ball $D^4$ to fill the $S^3$ boundary component. In this case the formula
\begin{equation}\label{formulaabsgrX}
\mathrm{gr}^{\mathbb{Q}}([\mathfrak{b}])=-\mathrm{gr}(X^*,\spin_X,[\mathfrak{b}])+\frac{c_1(\spin_X)^2-2\chi(X)-3\sigma(X)-2}{4}\in\mathbb{Q}
\end{equation}
holds, where $\mathrm{gr}(X^*,\spin_X,[\mathfrak{b}])$ is again defined as the expected dimension of the relevant moduli space. This readily follows via the excision principle for the index from the definition in formula (\ref{formulaabsgr}) and the computation for $D^4$ in \cite[Chapter $27.4$]{KM}.
\\
\par
Let us point out the following observation.
\begin{lemma}\label{nokernel}
Suppose $(Y,g)$ is a minimal $L$-space. Then for each spin$^c$ structure $\spin$, the Dirac operator $D_{B_0}$ corresponding to the flat connection $B_0^t$ has no kernel.
\end{lemma}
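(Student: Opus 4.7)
The plan is to argue by contradiction: assume $\ker D_{B_0} \neq \{0\}$ and produce irreducible solutions of the perturbed Seiberg--Witten equations for arbitrarily small admissible perturbations, directly contradicting the minimal $L$-space hypothesis. Fix a unit-norm element $\phi_0 \in \ker D_{B_0}$.

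First I would linearize the Seiberg--Witten map at the reducible $(B_0, 0)$. After standard Coulomb gauge fixing (available since $H^1(Y;\R) = 0$), the linearization is an elliptic self-adjoint operator whose kernel is $\{0\} \oplus \ker D_{B_0}$; by self-adjointness, its cokernel is then also identified with $\ker D_{B_0}$, so the obstruction to solving the nonlinear SW equations sits entirely in the spinor direction.

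Next I would perform a Kuranishi / Lyapunov--Schmidt reduction in the spirit of \cite{KM}, searching for solutions of the form
\[
(B,\Phi) = \bigl(B_0 + a,\; s\phi_0 + \psi\bigr), \qquad s > 0 \text{ small},\ \psi \perp \ker D_{B_0}.
\]
Projecting the equations onto the orthogonal complement of the cokernel and applying the implicit function theorem (here $H^2(Y;\R) = 0$ is used to primitivize the quadratic spinor term in the curvature equation) yields $a, \psi = O(s^2)$ and reduces the full problem to a finite-dimensional obstruction equation valued in $\ker D_{B_0}$, with leading $s$-dependence an explicit polynomial in $\phi_0$. For a generic small admissible perturbation $\mathfrak{q}$ in the class of \cite{KM}, one arranges this obstruction to be transverse to zero, producing genuine irreducible solutions of the perturbed SW equations with $\|\Phi\|_{L^2} \to 0$ as $\|\mathfrak{q}\| \to 0$, contradicting the minimal $L$-space hypothesis.

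The main obstacle is rigorously performing this Kuranishi reduction within the class of admissible perturbations of \cite{KM} and verifying the transversality of the finite-dimensional obstruction map; a secondary concern is ensuring the bifurcating solutions are genuine irreducibles rather than reducibles in disguise, which is handled by the ansatz $\Phi = s\phi_0 + O(s^2)$ with $s > 0$ and $\phi_0 \neq 0$. Conceptually, one may work directly in the blown-up configuration space of \cite{KM}, where the zero eigenvalue of $D_{B_0}$ appears as a non-transverse boundary critical point whose deformations under small admissible perturbations naturally produce interior (i.e.\ irreducible) critical points.
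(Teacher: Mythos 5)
Your strategy (a local Kuranishi/bifurcation analysis at the reducible) is genuinely different from the paper's proof, but as written it has a gap at the decisive step. The problem is the sentence ``one arranges this obstruction to be transverse to zero, producing genuine irreducible solutions'': transversality of the finite-dimensional obstruction map to $0$ does \emph{not} produce zeros. The reducible $s=0$ is always a zero, and the question is whether the obstruction map $s\mapsto \delta s\,\phi_0 + c\,s^3\phi_0 + O(s^5)$ (for the perturbation $\tfrac{\delta}{2}\|\Psi\|^2$) has zeros with $s>0$. This is a pitchfork bifurcation: irreducibles appear for small $\delta$ of one sign precisely when the cubic coefficient $c$ is nonzero, and nothing in your argument establishes $c\neq 0$ (nor is it automatic). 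If $c=0$ one needs higher-order information, and a priori the obstruction map could vanish only at $s=0$ for \emph{all} small admissible perturbations, in which case no contradiction with the minimal $L$-space hypothesis arises. The standard way to rescue a bifurcation argument of this type is a degree/wall-crossing statement (the signed count of irreducibles jumps by $\dim_{\mathbb{C}}\ker D_{B_0}$ as $\delta$ crosses $0$, so irreducibles must exist on at least one side), but that input is exactly what is missing from your write-up, and proving it is comparable in difficulty to the lemma itself.

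The paper avoids the local analysis entirely and argues globally through Floer theory: for both perturbations $D_{B_0}\pm\delta$ the (transversely perturbed) chain complexes consist only of reducibles, both compute the same Floer homology, and hence the two first stable critical points carry the same absolute grading $-2h(Y,\spin)$. The relative grading between them along the product cobordism is the spectral flow of $D_{B_0}+f(t)$ with $f$ interpolating monotonically from $-\delta$ to $+\delta$; its vanishing forces $\ker D_{B_0}=0$, since a $k$-dimensional kernel would contribute spectral flow $k$. In effect, the invariance of the absolute grading substitutes for the nondegeneracy of the obstruction map that your local argument would need. If you want to keep your approach, you should either prove the nonvanishing of the leading obstruction coefficient or supply the wall-crossing count; otherwise the grading comparison is the shorter route.
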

\begin{remark}
As a consequence, the minimal hyperbolic $L$-spaces we exhibited in \cite{LL} (e.g. the Weeks manifold) do not admit harmonic spinors. These seem to be the first known examples of hyperbolic three-manifolds having no harmonic spinors; more examples can be found using the trace formula techniques we discuss later in this paper.
\end{remark} 
\begin{proof}
Consider the small perturbation of the Chern-Simons-Dirac functional
\begin{equation*}
\mathcal{L}_{\delta}=\mathcal{L}+\frac{\delta}{2}\|\Psi\|^2_{L^2},
\end{equation*}
for which the corresponding perturbed Dirac operator at $(B_0,0)$ is $D_{B_0}+\delta$. For small values of $\pm\delta$, this operator has no kernel and the Seiberg-Witten equations still have no irreducible solutions (because $Y$ is a minimal $L$-space). By adding additional small pertubations, we can assure that the spectra of these two operators are simple, and they still do not have kernel, so we obtain transversality in the sense of \cite[Chapter $12$]{KM}. In particular, they both determine chain complexes computing the Floer homology, hence both the two first stable critical points have the same absolute grading $-2h(Y,\spin)$. This implies that the moduli space of solutions to the perturbed equations connecting them on a product cobordism $\mathbb{R}\times Y$ (for which the induced map is an isomorphism) is zero dimensional, hence the spectral flow of the corresponding linearized operator at the reducible is also zero. After performing a small homotopy (cf. \cite[Chapter 14]{KM}), this implies that the spectral flow for a family of operators of the form $D_{B_0}+f(t)$, where $f(t)$ is a monotone function with $f(t)=\pm\delta$ for $t>1$ and $t<-1$ respectively, equals zero. In particular, $D_{B_0}$ has no kernel.
\end{proof}

In this argument, one can avoid adding extra small perturbations to make the spectra simple when working with Morse-Bott singularities instead \cite{LL}. To streamline the argument below, we will assume that the first positive eigenspace of $D_{B_0}$ is simple, and the general case can be dealt with by either adding a small perturbation or by working in a Morse-Bott setting. Consider first reducible critical point $[\mathfrak{b}_0]$, which lies over the reducible solution $[B_0,0]$. Under our assumptions, its absolute grading is exactly $-2h(Y,\spin)$. Taking $A$ to be a spin$^c$ connection of $\spin_X$ restricting to $B_0$ in a neighborhood of the boundary, linearizing the equations at $(A,0)$ we have
\begin{equation*}
\mathrm{gr}(X^*,\spin_X,[\mathfrak{b}_0])=\mathrm{ind}_{L^2(X)}(d^*+d^+)+2\mathrm{ind}^{\mathbb{C}}_{L^2(X)}(D^+_A)
\end{equation*}
where
\begin{equation*}
d^*+d^+:i\Omega^1(X)\rightarrow i\Omega^0(X)\oplus i\Omega^+(X)
\end{equation*}
has index
\begin{equation*}
{b_1(X)-b_2^+(X)-1}=-\frac{\chi(X)+\sigma(X)+1}{2}.
\end{equation*}
The Atiyah-Patodi-Singer for the odd signature operator says in our case
\begin{equation*}
\sigma(X)=\eta_{\mathrm{sign}}+\frac{1}{3}\int_X p_1 
\end{equation*}
where $p_1$ is the first Pontryagin form of the metric, and $\eta_{\mathrm{sign}}$ is the eta invariant of the odd signature operator. Let us point out that the different sign comes from our convention for boundary orientations from that of \cite{APS1}: we think of $X$ as a `filling' of $Y$, rather than a `cap', and this in turns implies that our boundary conditions involve the negative spectral projection \cite[Chapter 17]{KM}. For the Dirac operator, we obtain
\begin{equation*}
\mathrm{ind}^{\mathbb{C}}_{L^2(X)}(D^+_A)=\frac{1}{2}\eta_{D_{\mathrm{Dir}}}+\frac{1}{8}\int_X \left(-\frac{1}{3}p_1 +c_1(A)^2\right)
\end{equation*}
where $c_1(A)$ is the Chern form of $A$, and the term involving the dimension of the kernel of $D_{B_0}$ is not present because of Lemma \ref{nokernel}. Putting everything together, we see that
\begin{equation*}
\mathrm{gr}^{\mathbb{Q}}([\mathfrak{b}_0] )= \frac{1}{4}  \eta_{\mathrm{sign}} + \eta_{\mathrm{Dir}},
\end{equation*}
and the result follows.

\vspace{0.5cm}
\section{Torsion spin$^c$ structures and spin$^c$ length spectra}\label{spinlength}
Recall that a closed geodesic $\gamma$ in an oriented hyperbolic three-manifold admits complex length
\begin{equation*}
\mathbb{C}\ell(\gamma) := \ell(\gamma)+i \cdot \mathrm{hol}(\gamma)\in \mathbb{R}^{>0}+i\mathbb{R}/2\pi\mathbb{Z}.
\end{equation*}
In this section we discuss a refinement of this notion when the manifold is equipped with a torsion spin$^c$ structure; this refinement of complex length appears on the geometric side of the trace formula for spinors. We also discuss how the refinement can be computed in explicit examples, taking as input SnapPy's Dirichlet domain and complex length spectrum computations.  

\subsection{Torsion spin$^c$ structures and $G$} Let us begin by discussing with the simpler case of a genuine spin structure $\spin_0$ on $Y$. The choice of a spin structure $\spin_0$ allows us to lift the holonomy of a closed geodesic $\gamma:S^1\rightarrow Y$ from an element in $\mathbb{R}/2\pi\mathbb{Z}$ to an element in $\mathbb{R}/4\pi\mathbb{Z}$. From a topological perspective, this is because, using the trivial spin structure on the tangent bundle of $\gamma$, $\spin_0$ this induces a spin structure on the normal bundle of $\gamma$; and a spin structure on a rank $2$ real vector is simply a trivialization well defined up to adding an even number of twists (see \cite[Chapter IV]{Kir}). In particular, it makes sense to talk about $\mathrm{hol}(\gamma)/2$ as an element in $\mathbb{R}/2\pi\mathbb{Z}$.
\begin{remark}
Recall that (confusingly) the trivial spin structure on $S^1$ corresponds to the \textit{non-trivial} double cover of $S^1$; with this convention, we have $\mathrm{hol}(\gamma^n)/2=n\cdot (\mathrm{hol}(\gamma)/2)$ in $\mathbb{R}/2\pi\mathbb{Z}$. This does not hold if we choose the Lie spin structure.
\end{remark}
From the point of view of hyperbolic geometry, thinking of $\pi_1(Y)\subset\mathrm{Isom}^+(\mathbb{H}^3)\cong \mathrm{PSL}_2(\mathbb{C})$, a spin  structure is a lift
\begin{center}
\begin{tikzcd}
  & \SL_2(\mathbb{C})\arrow[d] \\
\pi_1(Y)\arrow[r,hook]\arrow[ur]& \mathrm{PSL}_2(\mathbb{C})
\end{tikzcd}
\end{center}
where the vertical map is the quotient by the subgroup $\{\pm1\}$. Notice that all orientable $3$-manifolds are spin because their second Stiefel-Whitney class $w_2$ vanishes \cite[Chapter $11$]{MilSta}, so such a lift always exists. Any two such lifts differ by a homomorphism $\pi_1(Y)\rightarrow \mathbb \{\pm1\}$, i.e. an element in $H^1(Y,\mathbb{Z}/2)$. From this viewpoint, recall that complex length of a hyperbolic element $\gamma$ is given by
\begin{equation*}
\C \ell(\gamma)=2\log \lambda_{\gamma}
\end{equation*}
where $\lambda_{\gamma}$ is the root of
\begin{equation*}
x^2-(\mathrm{tr}\gamma)x+1=0
\end{equation*}
with $|\lambda_{\gamma}|>1$. Of course, for an element $\gamma\in \mathrm{PSL}_2(\mathbb{C})$ the trace is well-defined only up to sign, and we see that the two choices of the trace correspond to $\pm\lambda_{\gamma}$; this implies that the argument of $ \lambda_{\gamma}^2$, which is $\mathrm{hol}(\gamma)$, is well defined modulo $2\pi$.
\par
It is then clear that a spin structure, which provides a well-defined trace, also fixes a choice between $\pm\lambda_{\gamma}$; the argument of this choice, which is $\mathrm{hol}(\gamma)/2,$ is well defined modulo $2\pi$. Very concretely, this is implies that the lift $\tilde{\gamma}\in \SL_2(\mathbb{C})$ is conjugate to
$$\left( \begin{array}{cc}
e^{\frac{\ell(\gamma)}{2}+i\frac{\mathrm{hol}(\gamma)}{2}}&0\\
0&e^{-\frac{\ell(\gamma)}{2}-i\frac{\mathrm{hol}(\gamma)}{2}}
\end{array} \right).$$
The case of general torsion spin$^c$ structures is analogous, where we use instead the group
\begin{equation*}
G=\{g \in \mathrm{GL}_2(\C):  |\det(g)|=1\}\subset \mathrm{GL}_2( \mathbb{C}).
\end{equation*}
There is a natural map
\begin{equation*}
\pi:G\rightarrow \mathrm{PSL}_2(\mathbb{C})
\end{equation*} 
obtained by expressing
\begin{equation*}
G=\left(\SL_2(\mathbb{C})\times U_1 \right)/\{\pm1\},
\end{equation*}
where $U_1\subset G$ is the subgroup of multiples of the identity, and projecting onto the first factor. A torsion spin$^c$ structure, together with a flat spin$^c$ connection, is then a homomorphism
\begin{equation*}
\varphi:\pi_1(Y)\rightarrow G
\end{equation*}
for which $\pi\circ\varphi:\pi_1(Y)\rightarrow \mathrm{PSL}_2(\mathbb{C})$ is the inclusion map. We see that two spin$^c$ structures then differ by a homomorphism
\begin{equation*}
\pi_1(Y)\rightarrow U_1,
\end{equation*}
which can be thought of as an element in $H^1(Y,\mathbb{R}/\mathbb{Z})$. The Bockstein long exact sequence for the coefficients
\begin{equation*}
0\rightarrow \mathbb{Z}\rightarrow\mathbb{R}\rightarrow \mathbb{R}/\mathbb{Z}\rightarrow 0
\end{equation*}
reads in this case
\begin{equation*}
H^1(Y;\mathbb{Z})\rightarrow H^1(Y,\mathbb{R})\rightarrow H^1(Y,\mathbb{R}/\mathbb{Z})\rightarrow H^2(Y,\mathbb{Z})\rightarrow H^2(Y,\mathbb{R}).
\end{equation*}
The latter allows us to interpret topological classes of torsion spin$^c$ structure as an affine space over the torsion of $H^2(Y;\mathbb{Z})$.
\par
Throughout the paper, we will study rational homology spheres, for which all spin$^c$ structures are torsion. In this case, we have identifications
\begin{equation*}
H^1(Y,\mathbb{Q}/\mathbb{Z})\cong H^1(Y,\mathbb{R}/\mathbb{Z})\cong H^2(Y;\mathbb{Z})
\end{equation*}
We will further fix a base spin structure $\spin_0$ and accordingly identify spin$^c$ structures with $H^1(Y,\mathbb{Q}/\mathbb{Z})$. For simplicity, $\varphi$ will interchangeably refer to both the lift and the difference homomorphism. In this case, the element $\gamma$ has lift conjugate to
$$\left(  \begin{array}{cc}
e^{i\varphi(\gamma)}e^{\frac{\ell(\gamma)}{2}+i\frac{\mathrm{hol}(\gamma)}{2}}&0\\
0&e^{i\varphi(\gamma)}e^{-\frac{\ell(\gamma)}{2}-i\frac{\mathrm{hol}(\gamma)}{2}}
\end{array} \right)$$
and the spin$^c$ length spectrum keeps track of the value $\varphi(\gamma)$. We will refer to the latter as the \textit{twisting character}.

\vspace{0.3cm}

\subsection{Computations of spin$^c$ lifts} For eventual use of the trace formula for spinors on $G,$ we need to explicitly compute the torsion spin$^c$ data, namely $\frac{\mathrm{hol}(\gamma)}{2}$ and $\phi(\gamma)$ for representatives of all conjugacy classes in $\Gamma$ up to some specified real length.  We refer to this data as the \emph{spin$^c$ length spectrum} of $(Y,\spin)$. We discuss a concrete algorithm to compute such information, taking as input data provided by SnapPy.

\subsubsection{Input for spin$^c$ length spectrum: SnapPy Dirichlet domain and complex length spectrum}
In order to compute the spin$^c$ length spectrum up to some real length threshold $R,$ we take as input the following objects (both pre-computable in SnapPy): 

\begin{itemize}
\item
A Dirichlet domain $D$ for $\Gamma=\pi_1(Y)$ centered at $o = (1,0,0,0)$ in the hyperboloid model of $\mathbb{H}^3,$ i.e. the upper sheet of 
$$\bigg\{ (t,x,y,z)\text{ such that } Q(t,x,y,z)= -1 \bigg\},$$
where $Q(t,x,y,z)= -t^2 + x^2 + y^2 + z^2$. In particular, this includes a list of all elements $\gamma$ of the identity component of the orthogonal group $\SO(Q)^0$ for which some domain within the bisector of $o$ and $\gamma o$ is a face $F \subset D.$  For hyperbolic 3-manifolds $Y,$ such elements $\gamma$ come in inverse pairs: for the above element $\gamma, \gamma^{-1} F$ is the face of $D$ opposite $F.$  The group $\Gamma$ is generated by the face-pairing elements (see \S \ref{lifttoSL2C}).
\item
For each conjugacy class $C$ in $\Gamma$ with $\ell(C)\leq R$, a matrix $g_C$ in $\SO(Q)^0,$ which is the image in $\SO(Q)^0$ of some element of $\Gamma$ representing $C.$ One can extract the complex length of $C$ from $g_C$, but of course it contains more information.
\end{itemize}

\subsubsection{Converting $\Gamma \subset \SO(Q)^0$ to $\Gamma \subset \mathrm{PSL}_2(\C)$}  \label{O31toSL2C}

Minkowski space $\mathbb{R}^{1,3}$ can be identified with the space of $2 \times 2$ hermitian matrices $i\mathfrak{u}_2$ via the map
$$(t,x,y,z) \mapsto \left( \begin{array}{cc} t + z & x - iy \\ x + iy & t - z \end{array} \right).$$
Under this identification, the quadratic form $Q$ equals the determinant of the above matrix.  The group $\PSL_2(\C)$ acts on $i\mathfrak{u}_2$ by 
$$g \cdot X = g X g^\ast,$$
preserving the determinant.  The resulting homomorphism  
$$\PSL_2(\C) \xrightarrow{\sim} \SO(Q)^0$$
is in fact an isomorphism.  To transform $\Gamma$ to a corresponding subgroup of $\PSL_2(\C)$ amounts to inverting the above isomorphism.  In turn, this amounts to solving the following linear algebra problem: 

Given $f \in \mathrm{End}(i\mathfrak{u}_2)$ preserving $Q,$ find $g \in \GL_2(\C)$ satisfying
\begin{equation} \label{invertingSL2toO31}
Xg^\ast = \left( \det(g) \cdot g^{-1} \right) f(X) \text{ for all } X \in i\mathfrak{u}_2.
\end{equation}

The entries of $\det(g) \cdot g^{-1}$ are linear in the entries of $g,$ so \eqref{invertingSL2toO31} defines a linear algebra problem (over $\mathbb{R}$), which is readily solved. Rescaling the solution $g$ to \eqref{invertingSL2toO31} as needed, we solve \eqref{invertingSL2toO31} with some matrix $g \in \PSL_2(\C).$ 

\vspace{0.3cm}
\subsubsection{Reduction theory via Dirichlet domains for conjugacy classes $C \subset \Gamma$} \label{dirichletdomainreduction}
We discuss how to express a given element $\gamma\in\Gamma$ in terms of the face-pairing elements $\gamma_i$ of $D$.
\begin{lemma} \label{distancereduction}
Suppose $x \in \mathbb{H}^3 \setminus D.$  There is some face pairing element $\gamma$ for which $d(\gamma^{-1} \cdot x, o) < d(x,o).$
\end{lemma}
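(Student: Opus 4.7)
The plan is to invoke the defining structural property of a Dirichlet domain: $D$ is cut out by the finitely many half-spaces
\begin{equation*}
H_\gamma := \{y \in \mathbb{H}^3 : d(y,o) \leq d(y, \gamma o)\}
\end{equation*}
as $\gamma$ ranges over the face-pairing elements of $D$. A priori $D = \bigcap_{\gamma \in \Gamma \setminus \{e\}} H_\gamma$, but only the face-pairing half-spaces are actually needed: all other inequalities are redundant consequences of these.

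Granting this, the lemma becomes almost immediate. If $x \in \mathbb{H}^3 \setminus D$, then $x$ must violate at least one of the defining half-space inequalities, so there exists a face-pairing element $\gamma$ with $d(x, o) > d(x, \gamma o)$. Since $\gamma$ acts by isometry, $d(x, \gamma o) = d(\gamma^{-1}x, o)$, and we conclude $d(\gamma^{-1} x, o) < d(x, o)$ as required.

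The only real content is the reduction from $\Gamma$ to face-pairing elements, which is the step I would emphasize (or quote): by local finiteness of the tessellation $\{\gamma D\}_{\gamma \in \Gamma}$, the convex polytope $D$ has only finitely many codimension-one faces, and a half-space $H_\gamma$ whose bounding bisector meets $\partial D$ only in a set of codimension $\geq 2$ is already implied by the remaining finitely many half-space constraints (a general fact about convex polytopes cut out by locally finite families of half-spaces). Since face-pairing elements are by definition those whose bisectors contribute actual codimension-one faces of $D$, these suffice to define $D$; I would simply cite a standard reference such as Ratcliffe's \emph{Foundations of Hyperbolic Manifolds}. The main obstacle to a fully self-contained proof would be this local finiteness plus convex-polytope redundancy argument, but both are completely standard and not specific to the hyperbolic setting, so the lemma itself reduces to a near-tautological restatement of the usual characterization of Dirichlet fundamental domains.
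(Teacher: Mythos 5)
Your proof is correct and follows exactly the paper's argument: $D$ is cut out by the half-spaces associated to the face-pairing elements, so any $x \notin D$ violates one of these inequalities, and the isometry identity $d(x,\gamma o) = d(\gamma^{-1}x, o)$ finishes it. The paper simply asserts that the face-pairing half-spaces suffice to define $D$ (which you justify in slightly more detail), so the two proofs are the same.
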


\begin{proof}
Because $\gamma_1,\ldots,\gamma_m$ support the faces of the Dirichlet domain $D,$  
$$D = \{ x \in \mathbb{H}^3: d(x,o) \leq d(x, \gamma_i \cdot o) \text{ for } i =1,\ldots, m  \}.$$

In particular, if $x \notin D,$ then
$$d(x,o) > d(x, \gamma_i \cdot o) = d(\gamma_i^{-1} \cdot x, o)$$
for some face-pairing element $\gamma_i.$
\end{proof}

This yields the following \textbf{Dirichlet domain reduction algorithm for $\Gamma$}:
\begin{itemize}
\item
Initialize $x = \gamma \cdot o.$  Let $L = []$ be an empty list.  Note: if $\gamma$ is not the identity element, then $x \notin D.$

\item
While $x$ is not in $D$: find a face pairing element $\gamma$ for which $d(\gamma^{-1} x, o) < d(x,o)$; this is always possible by Lemma \ref{distancereduction}.  Then: 
\begin{itemize}
\item
Append $\gamma$ to the right end of $L.$ 
\item
Replace $x$ by $\gamma^{-1} x.$ 
\end{itemize}

\item
Once $x \in D,$ output $\delta = \prod_{g \in L} g,$ product taken in left-to-right order.
\end{itemize}

\begin{lemma}
The Dirichlet domain reduction algorithm for $\Gamma$ terminates.  Its output equals $\gamma.$
\end{lemma}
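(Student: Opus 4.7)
The plan is to dispatch termination and correctness separately, with both arguments ultimately resting on the fact that $\Gamma$ acts freely and properly discontinuously on $\mathbb{H}^3$ (which is immediate since $Y = \mathbb{H}^3/\Gamma$ is a closed hyperbolic manifold).

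For termination, the key observation I would highlight is that the iterate $x$ remains in the orbit $\Gamma \cdot o$ throughout the execution: initially $x = \gamma \cdot o$, and each update replaces $x$ with $\gamma_i^{-1} x$, which is still of the form $\eta \cdot o$ for some $\eta \in \Gamma$. Since $\Gamma$ acts properly discontinuously, the orbit $\Gamma \cdot o$ is discrete in $\mathbb{H}^3$, so the closed ball of radius $d(\gamma \cdot o, o)$ around $o$ contains only finitely many orbit points. Because Lemma \ref{distancereduction} guarantees that $d(x,o)$ strictly decreases at each iteration, $x$ can assume only finitely many values before landing in $D$.

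For correctness, the first step is bookkeeping: if $\gamma_1,\dots,\gamma_n$ are the face-pairing elements appended to $L$ in order, then by induction on the iteration count the terminal value of $x$ is
\begin{equation*}
x = \gamma_n^{-1} \cdots \gamma_1^{-1} \gamma \cdot o = \delta^{-1} \gamma \cdot o,
\end{equation*}
where $\delta = \gamma_1 \cdots \gamma_n$ is the output. I would then invoke the fundamental-domain property of $D$: for any non-identity $\eta \in \Gamma$, freeness gives $d(\eta \cdot o, o) > 0 = d(\eta \cdot o, \eta \cdot o)$, so the defining inequalities of the Dirichlet domain force $\eta \cdot o \notin D$. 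Applying this with $\eta = \delta^{-1} \gamma$, which satisfies $\eta \cdot o \in D$ by the termination condition, yields $\eta = 1$ and hence $\delta = \gamma$.

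Neither part of the argument should pose a genuine obstacle. The only thing that requires care is ensuring that the Dirichlet domain inequalities are used with the correct strictness — orbit points other than $o$ must lie strictly outside $D$, not merely on its boundary — but freeness of the $\Gamma$-action on $\mathbb{H}^3$ delivers exactly this, so no further input from hyperbolic geometry is needed. If one wants a quantitative version of termination, the number of iterations is bounded above by $\#(\Gamma \cdot o \cap \overline{B}(o, d(\gamma \cdot o, o)))$, which is finite by discreteness.
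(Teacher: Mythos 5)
Your proof is correct and follows essentially the same route as the paper's: termination via the strictly decreasing distances $d(x,o)$ combined with proper discontinuity of the $\Gamma$-action, and correctness by observing that the terminal point $\delta^{-1}\gamma\cdot o$ is an orbit point of $o$ lying in $D$, hence equals $o$, hence $\delta = \gamma$ by freeness. The only cosmetic difference is that you rule out $\eta\cdot o \in D$ for $\eta \neq 1$ directly from the Dirichlet inequalities, whereas the paper phrases this as $o$ being the unique point of $D$ in its own orbit because it is interior to $D$; both are standard and equivalent.
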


\begin{proof}
Let $x = \gamma \cdot o.$
If the while loop did not terminate, we would find an infinite sequence of face pairing elements $\gamma_1, \gamma_2, \ldots$ for which $x, \gamma_1^{-1} \cdot x, \gamma_2^{-1} \cdot \gamma_1^{-1} \cdot x, \ldots$ all do not lie in $D$ and for which
$$d(x,o) > d(\gamma_1^{-1} \cdot x, o) > d(\gamma_2^{-1} \cdot x,o) > \cdots$$
But the latter is not possible since the action of $\Gamma$ on $\mathbb{H}^3$ is properly discontinuous. Suppose now $L = [\gamma_1,\cdots,\gamma_n]$ at the point of the algorithm when $x \in D.$  That means that
$$\gamma_n^{-1} \gamma_{n-1}^{-1} \cdots \gamma_1^{-1} \cdot (\gamma \cdot o) \in D.$$
The latter point is evidently $\Gamma$-equivalent to $o$ and lies in $D.$  Since $o$ does not lie on the boundary of $D,$ $o$ is the unique point of $D$ which is $\Gamma$-equivalent to $o.$  So
$$\left( \gamma_n^{-1} \gamma_{n-1}^{-1} \cdots \gamma_1^{-1} \cdot \gamma \right) \cdot o = o.$$
Since the action of $\Gamma$ on $\mathbb{H}^3$ is fixed-point free, it follows that $\gamma = \gamma_1 \cdots \gamma_n.$\end{proof}

\subsubsection{Computing one spin structure} \label{lifttoSL2C}
As mentioned above, it is convenient to fix a base genuine spin structure $\spin_0$ (so that all the others spin$^c$ structures are obtained by twisting it via a character). To compute the corresponding lift $\tilde{\Gamma}\subset\mathrm{SL}_2(\C)$, we use the presentation of $\Gamma \subset \PSL_2(\C)$ afforded to us by the Dirichlet domain $D.$  Let $\Gamma_{\mathrm{abs}}$ denote the (abstract) group with a generator $[\gamma]$ for each face pairing element $\gamma$ and satisfying the following relations: 
\begin{itemize}
\item
(opposite face relations) $[\gamma] \cdot [\gamma^{-1}] = 1$ for all face pairing elements $\gamma$
\item
(edge cycle relations) The edges of $D$ are partitioned into \emph{edge cycles}.  This is an ordered sequence of edges $e_1,e_2, \ldots, e_n$ for which $\gamma_1 \cdot e_1 = e_2, \gamma_2 \cdot  e_2 = e_3, \ldots, \gamma_n \cdot e_n = e_1$ for some $\gamma_1,\gamma_2\dots,\gamma_n$. Every edge cycle yields a corresponding relation: $[\gamma_n] \cdots [\gamma_1] = 1.$
\end{itemize}
The opposite face and edge cycle relations hold for the corresponding elements of $\Gamma.$  The assignment
\begin{align} \label{poincarepolyhedron}
\iota: \mathrm{\Gamma_{\mathrm{abs}}} &\rightarrow \Gamma \subset \PSL_2(\C) \nonumber \\
[\gamma] &\mapsto \gamma
\end{align}
thus extends to a well-defined homomorphism.  According to the Poincar\'{e} polyhedron theorem \cite{Mas}, $\iota$ is an isomorphism.  
\bigskip

For every face-pairing element $\gamma \in \Gamma \subset \PSL_2(\C),$ choose arbitrarily a lift $\widetilde{\gamma} \in \SL_2(\C).$  Because $\iota$ is well-defined, all of the defining relations have an associated sign.  For example, if $[\gamma_n] \cdots [\gamma_1] = 1$ is an edge cycle relation, then
$$\widetilde{\gamma_n} \cdots \widetilde{\gamma_1} = \epsilon_{[\gamma_n] \cdots [\gamma_1]} \left( \begin{array}{cc} 1 & 0 \\ 0 & 1 \end{array} \right),$$
where $\epsilon_{[\gamma_n] \cdots [\gamma_1]} = \pm 1.$  Let $\epsilon_{[\gamma]} = \pm 1$ be unknowns we will solve for.   Then 
$$\gamma \mapsto \epsilon_{[\gamma]} \cdot \widetilde{\gamma}$$
extends to a well-defined lift of $\Gamma$ to $\widetilde{\Gamma} \subset \SL_2(\C)$ iff the opposite face and edge cycle relations are satisfied.  This immediately reduces to the following linear algebra problem (over $\mathbb{Z}/2\mathbb{Z}$):    
$$\begin{cases}
\epsilon_{[\gamma]} \epsilon_{[\gamma^{-1}]} = \epsilon_{[\gamma] \cdot [\gamma^{-1}]} &\text{ opposite face relations} \\
\epsilon_{[\gamma_n]} \cdots \epsilon_{[\gamma_1]} = \epsilon_{[\gamma_n] \cdots [\gamma_1]} &\text{ edge cycle relations}. \\
\end{cases}$$
We know abstractly that some solution must exist (because $Y$ is spin), and we readily solve for one.  

\begin{remark}
The collection of lifts $\widetilde{\Gamma}$ is a torsor for $H^1(\Gamma \backslash \mathbb{H}^3, \mathbb{Z}/2).$  No preferred lift exists in general.  We note, however, that $\SW,$ the protagonist of the second part of our paper, satisfies $H^1(\SW, \mathbb{Z} / 2) = 0,$ so $\pi_1(\SW)$ admits a unique lift to $\widetilde{\pi_1(\SW)} \subset \SL_2(\C).$   
\end{remark}
\vspace{0.3cm}

\subsubsection{Computing all homomorphisms $\Gamma \rightarrow \mathbb{Q} / \mathbb{Z}$} \label{smithnormalform}

From the previous subsection, $\Gamma_{\mathrm{abs}}$ gives a presentation for $\Gamma.$  Let $e_1,\cdots, e_m$ denote the images of the face-pairing elements $\gamma_1, \ldots, \gamma_m$ in the abelianization $\Gamma^{\mathrm{ab}}.$   The abelian group $\Gamma^{\mathrm{ab}}$ can be presented as
\begin{equation*}
\mathbb{Z}\langle e_1, e_2,\dots, e_m\rangle / R,
\end{equation*}
where $R$ are the abelian versions of the opposite face and edge cycle relations from Subsection \ref{lifttoSL2C}. We assume that $b_1(Y)=0$, so that $\Gamma^{\mathrm{ab}}=H_1(Y,\mathbb{Z})$ is a finite group. Using the Smith normal form, we compute a basis $b_1, \dots, b_m$ of $\mathbb{Z}\langle e_1, e_2,\dots, e_m\rangle$ for which a basis for the span of $R$ is given $d_1 b_1, \dots, d_m b_m$ for some non-zero integers $d_1, d_2 , \dots, d_m$   with $d_i\lvert d_{i+1}$. In particular,
\begin{equation*}
\Gamma^{\mathrm{ab}}=H_1(Y,\mathbb{Z})=\bigoplus \mathbb{Z}/d_i\mathbb{Z}.
\end{equation*}
Homomorphisms from $\Gamma^{\mathrm{ab}}$ to $\mathbb{Q} / \mathbb{Z}$ are uniquely determined by the images $\mathbf{y} := (y_1,\ldots,y_m)$ of $b_1,\ldots, b_m$; we define $\phi_{\mathbf{y}}$ as the one sending
\begin{equation*}
 b_i  \mapsto y_i \in \left(\frac{1}{d_i} \mathbb{Z}\right) / \mathbb{Z}.
\end{equation*}
If $A = (b_1 | \cdots | b_m)$ is the matrix with $j$th column $b_j,$ then 
$$\mathbf{y} A^{-1} = (\phi_{\mathbf{y}}(e_1), \ldots, \phi_{\mathbf{y}}(e_m)).$$

\subsubsection{Indexing all lifts of $\Gamma$ to $\widetilde{\Gamma} \subset G$}
We index the lifts of $\Gamma \subset \PSL_2(\C)$ to $\widetilde{\Gamma} \subset G$ as follows:
\begin{itemize}
\item
Compute one spin lift $s_0: \Gamma \rightarrow \Gamma_{\mathrm{spin}} \subset \SL_2(\C),$ by the procedure described in \S \ref{lifttoSL2C}. 

\item
A general lift of $\Gamma$ to $G$ is of the form $\gamma \mapsto \phi(\gamma) \cdot s_0(\gamma)$ for some twisting character $\phi: \Gamma \rightarrow \mathrm{U}_1.$
\end{itemize}

Thus,
$$\left\{ \gamma \mapsto e^{2\pi i \phi_{\mathbf{y}}(\gamma)} \cdot s(\gamma) : y_1 \in \left(\frac{1}{d_1} \mathbb{Z}\right) / \mathbb{Z}, \ldots, y_m \in \left(\frac{1}{d_m} \mathbb{Z}\right) / \mathbb{Z} \right\}$$ 

parametrizes all lifts of $\Gamma$ to $G.$

\vspace{0.3cm}
\subsubsection{Computing the spin$^c$ length spectrum}
Suppose we have computed one lift $s_0: \Gamma \rightarrow \SL_2(\C)$ specified by the images of the face-pairing generators for $D$; SnapPy provides the images in $\SO(Q)^0$ for all of the face-pairing generators, so we can apply the inverse isomorphism from \S \ref{O31toSL2C} to each face-pairing generator followed by the procedure described in \S \ref{lifttoSL2C} to compute a lift $s_0$. Suppose also that we have computed a homomorphism $\phi: \Gamma \rightarrow \mathbb{Q} / \mathbb{Z},$ specified by its values on the face-pairing generators, e.g. the homomorphisms $\phi_{\mathbf{y}}$ from \S \ref{smithnormalform}.
\\
\par
For every $\Gamma$-conjugacy class of translation length at most $R,$ SnapPy specifies the image in $\SO(Q)^0$ of some representative element $\gamma$ of its conjugacy class. Notice that $\gamma$ lies in the group generated by the face-pairing elements of the Dirichlet domain $D$. Applying the Dirichlet domain reduction algorithm from \S \ref{dirichletdomainreduction}, we can express $\gamma = \gamma_1 \cdots \gamma_k$ for some face-pairing elements $\gamma_1,\ldots,\gamma_k$ for $D.$  Having computed $s_0$ on the face-pairing generators, $s_0(\gamma) = s_0(\gamma_1) \cdots s_0(\gamma_k)$ is some explicit element of $\SL_2(\C).$  We readily compute its $\SL_2(\C)$-conjugacy class:
$$s_0(\gamma) \sim_{\SL_2(\C)} \left( \begin{array}{cc} e^{u/2} e^{i\theta} & 0 \\ 0 & e^{-u/2} e^{-i\theta} \end{array} \right),$$
and we have that $u=l(\gamma)$ and $\theta=\mathrm{hol}(\gamma)/2\in\mathbb{R}/(2\pi\mathbb{Z})$.
\\
\par
Finally, since the values of the twisting character $\phi$ are known on the face-pairing generators, we readily compute $\phi(\gamma) = \phi(\gamma_1) + \cdots + \phi(\gamma_k) =: \varphi(\gamma).$ In the lift $\widetilde{\Gamma} \subset G$ of $\Gamma$ corresponding to $s_0$ and $\phi$, we have
$$\widetilde{\gamma} \sim_G e^{2\pi i \varphi} \cdot \left( \begin{array}{cc} e^{u/2} e^{i\theta} & 0 \\ 0 & e^{-u/2} e^{-i\theta} \end{array} \right)  =  \left( \begin{array}{cc} e^{u/2} e^{i\theta} e^{2\pi i \varphi} & 0 \\ 0 & e^{-u/2} e^{-i\theta} e^{2\pi i \varphi} \end{array} \right).$$
Following this procedure for every conjugacy class $C$ of length at most $R$ computes the spin$^c$ length spectrum for the lift $\widetilde{\Gamma} \subset G$ of $\Gamma$ corresponding to the pair $s_0: \Gamma \rightarrow \SL_2(\C)$ and $\phi: \Gamma \rightarrow \mathbb{Q} / \mathbb{Z}.$

\vspace{0.5cm}
\section{Trace formulas for functions, forms and spinors}\label{traceformulas}
While our previous work \cite{LL} only used the trace formula to sample coexact 1-form eigenvalues with even test functions, the present paper requires that we extend our toolkit to sample the eigenvalue spectrum for functions and spinors (the latter using both even and odd test functions).  In this section, we collect statements of the trace formula specialized to the latter three contexts. For the purposes of our work, the various statements can be treated as a black box, and their detailed proofs can be found in the appendices to the paper. For simplicity, in the statements we will restrict to smooth compactly supported functions, but we will ultimately need to apply the trace formula using more general test functions; this is made precise in Subsection \ref{regularity}. 

\subsection{Notation and conventions}
\begin{itemize}
\item 
Throughout the paper, we will use the convention for Fourier tranforms
\begin{equation*}
\widehat{H}(t)=\int_{\mathbb{R}} H(x)e^{-itx}dx.
\end{equation*}

\item
For a closed geodesic $\gamma,$ we denote by $\gamma_0$ a prime geodesic of which $\gamma$ is a multiple of. 

\item
When dealing with the odd trace formula, it will be important to have a clear orientation convention, as for example the spectrum of $\ast d$ on $Y$ and $\bar{Y}$ are opposite to each other. We use the identification $\mathbb{H}^3=\mathrm{PSL_2}/\mathrm{PSU_2}$ (obtained by thinking of the upper half plane model), for which the tangent space at $(0,1)$ is $i\mathfrak{su}(2)$. We declare the (physicists') Pauli matrices
\begin{equation*}
\sigma_1 = \left( \begin{array}{cc} 0 & 1 \\ 1 & 0  \end{array} \right) \quad\sigma_2 = \left( \begin{array}{cc} 0 & -i \\ i & 0  \end{array} \right) \quad
\sigma_3 = \left( \begin{array}{cc} 1 & 0 \\ 0 & -1  \end{array} \right)
\end{equation*}
to be a positively oriented basis.
\end{itemize}

\subsection{Trace formula for functions} 
We begin with the classical trace formula for functions. Here, to each eigenvalue
\begin{equation*}
0=\lambda_0<\lambda_1\leq \lambda_2\leq\dots
\end{equation*}
we associate the parameter $r_n\in\mathbb{R}\cup i[0,1]$ for which $\lambda_n=1+r_n^2$ (in particular, $\lambda_0=i$). Non-zero eigenvalues less than $1$ (i.e. such that $r_n\in i(0,1)$ ) are referred to as \textit{small}; the value $1$ is important because it is the bottom of the $L^2$-spectrum of the Laplacian on $\mathbb{H}^3$ (see \cite{Cha}).
\begin{thm}\label{funformula}For $H\in C^{\infty}_c(\mathbb{R})$ an even test function, the identity
\begin{equation*}
\sum_{n=0}^{\infty} \widehat{H}(r_n)= \frac{\mathrm{vol}(Y)}{2\pi} \cdot \left( - H''(0) \right) +\sum_{[\gamma] \neq 1}\frac{\ell(\gamma_0)}{|1-e^{\mathbb{C}\ell(\gamma)}||1-e^{-\mathbb{C}\ell(\gamma)}|}H(\ell(\gamma))
\end{equation*}
holds.
\end{thm}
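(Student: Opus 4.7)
The plan is to derive the formula through the standard Selberg trace formula machinery, adapted to the rank-one symmetric space $\mathbb{H}^3 = \mathrm{PSL}_2(\C)/\mathrm{PSU}_2$ and specialized to scalar functions. The first step is to construct a point-pair invariant kernel $k(x,y) = k_0(d(x,y))$ on $\mathbb{H}^3$ whose spherical (Harish-Chandra) transform equals $\widehat{H}$. Because the spherical functions on $\mathbb{H}^3$ take the particularly clean form $\phi_r(t) = \sin(rt)/(r \sinh t)$, the spherical transform of a radial kernel reduces to an essentially one-dimensional Euclidean Fourier-type transform of a rescaling of $k_0$, which is precisely why the pair $H \leftrightarrow \widehat{H}$ is the natural one to use; for $H$ smooth and compactly supported, $k_0$ is then smooth and rapidly decaying. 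One then assembles the automorphic kernel $K_\Gamma(x,y) = \sum_{\gamma \in \Gamma} k(x,\gamma y)$ on $Y = \Gamma \backslash \mathbb{H}^3$ and computes the trace of the corresponding convolution operator on $L^2(Y)$ in two ways.

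Spectrally, since $Y$ is closed, the Laplacian has purely discrete spectrum $\{\lambda_n = 1 + r_n^2\}$ with an $L^2$-orthonormal basis of eigenfunctions; by construction the operator acts on the $n$-th eigenspace as multiplication by $\widehat{H}(r_n)$, yielding the left-hand side $\sum_n \widehat{H}(r_n)$. Geometrically, one integrates $K_\Gamma(x,x)$ over a fundamental domain and decomposes the sum over $\Gamma$ by conjugacy classes. The identity class contributes $k_0(0) \cdot \mathrm{vol}(Y)$, and evaluating $k_0(0)$ by Plancherel inversion against the spherical Plancherel measure $r^2\, dr / (2\pi^2)$ on $\mathbb{H}^3$, followed by Fourier inversion using the evenness of $\widehat{H}$, produces exactly the volume term $\tfrac{\mathrm{vol}(Y)}{2\pi}\bigl(-H''(0)\bigr)$.

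For a non-trivial conjugacy class $[\gamma]$, cocompactness and torsion-freeness of $\Gamma$ force $\gamma$ to be loxodromic with cyclic centralizer $\Gamma_\gamma = \langle \gamma_0 \rangle$ generated by the primitive underlying element. The contribution factors as the covolume of $\Gamma_\gamma$ inside the $G$-centralizer of $\gamma$ --- which equals $\ell(\gamma_0)$ --- times the archimedean orbital integral of $k$ at $\gamma$. Parameterizing a transverse slice to the axis of $\gamma$ via standard ``cylindrical'' coordinates, the Jacobian of the conjugation map $g \mapsto g^{-1}\gamma g$ works out to $|1 - e^{\mathbb{C}\ell(\gamma)}|\,|1 - e^{-\mathbb{C}\ell(\gamma)}|$, while the integration along the axis collapses to an evaluation reading out $H(\ell(\gamma))$ via the same spherical inversion relating $k_0$ and $H$.

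The main obstacle is this hyperbolic orbital integral: keeping track of the various compatible measures (on $\Gamma_\gamma \backslash G$, on the $G$-centralizer, and on the transverse slice) and correctly extracting the Jacobian. Fortunately, this is essentially the same computation as the one carried out in Appendix B of the authors' previous work for coexact $1$-forms, with the trivial $K$-type replacing the vector-bundle case; the key simplification is that no residual representation-theoretic coefficient appears in front of $H(\ell(\gamma))$, leaving only the Jacobian factor $|1 - e^{\mathbb{C}\ell(\gamma)}|^{-1}|1 - e^{-\mathbb{C}\ell(\gamma)}|^{-1}$. Summing the identity and hyperbolic contributions and equating with the spectral side gives the stated trace formula.
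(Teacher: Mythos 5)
Your argument is correct, but it takes the classical kernel-theoretic route rather than the one the paper relies on. The paper obtains Theorem \ref{funformula} as a specialization of the representation-theoretic trace formula for the group $G$ (Proposition \ref{preliminaryreptheorytraceformula}, equivalently the $\mathrm{PGL}_2(\C)$ formula of \cite[Appendix B]{LL}): one feeds in a test function $F$ on the torus depending only on the length variable $u$, which isolates exactly the spherical representations $\pi_{s_1,s_2,0,0}$, and these correspond to Laplace eigenfunctions with spectral contribution $\widehat{H}$ evaluated at the eigenvalue parameter. Your route --- build a point-pair invariant $k_0$ with spherical transform $\widehat{H}$, trace the automorphic kernel two ways, and compute the orbital integrals directly on $\mathbb{H}^3$ --- is precisely the alternative the paper flags (``direct integration via the Abel transform and integral kernels, see \cite{Cha}''). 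What your approach buys is self-containedness at the level of classical analysis: no Satake transform or classification of unitary representations is needed, and the identity term drops out of the Plancherel density $r^2\,dr/2\pi^2$, which (as I checked) does reproduce $\tfrac{\mathrm{vol}(Y)}{2\pi}\left(-H''(0)\right)$ with no stray constant. What the paper's route buys is uniformity: the same master formula, evaluated on different $K$-isotypic test functions, simultaneously yields the function, coexact $1$-form and spinor formulas (even and odd), which is what the rest of the paper needs. Two points to nail down in a full write-up: the normalization of Haar measure on the centralizer under which the covolume of $\Gamma_\gamma$ is $\ell(\gamma_0)$ (the paper fixes $du\,\tfrac{d\alpha}{2\pi}\tfrac{d\beta}{2\pi}$), and the fact that $\widehat{H}(r_n)$ for the zero and small eigenvalues ($r_n$ imaginary) requires the holomorphic extension of $\widehat{H}$ to the strip $|\mathrm{Im}\,r|\leq 1$, which Paley--Wiener supplies for compactly supported $H$; also beware that the reference \cite{Cha} you would naturally follow contains an erroneous factor of $2$, so the constants must be verified independently, as you have effectively done.
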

This formula is well-known, and essentially follows from the computation in \cite[Appendix B]{LL}. It can also be proved by direct integration via the Abel transform and integral kernels, see for example \cite{Cha} (beware of an erroneous extra factor of $2$).
\vspace{0.3cm}

\subsection{Trace formula for coexact $1$-forms}
In the case of coexact $1$-forms, each eigenvalue
\begin{equation*}
0<\lambda_1^*\leq \lambda_2^*\leq\dots
\end{equation*}
is of the form $\lambda_n^*=t_j^2$ for some $j$, where 
\begin{equation*}
\dots\leq t_{-1}<0< t_0\leq t_1\leq t_2\leq \dots
\end{equation*}
are the eigenvalues of $\ast d.$ In our previous paper, we were only concerned with the absolute values $|t_j|$ of the parameters and so even test functions sufficed for our purposes.  In the present paper, however, the signs of the parameters $t_j$ are crucial.  Below, we state a variant of the trace formula which samples the coexact 1-form eigenvalue spectrum using odd test functions and is thus sensitive to these signs.    
\begin{thm}\label{coexformula}For $H\in C^{\infty}_c(\mathbb{R})$ an even test function, the identity
\begin{equation*}
\frac{1}{2}(b_1(Y)-1) \cdot \widehat{H}(0)+\frac{1}{2}\sum \widehat{H}(t_j)=\frac{\mathrm{vol(Y)}}{2\pi} \cdot (H(0)-H''(0))+\sum \ell(\gamma_0)\frac{\cos(\mathrm{hol}(\gamma))}{|1-e^{\mathbb{C}\ell(\gamma)}||1-e^{-\mathbb{C}\ell(\gamma)}|}H(\ell(\gamma)).
\end{equation*}
holds. For $K\in C^{\infty}_c(\mathbb{R})$ an odd test function, the identity
\begin{equation*}
\frac{1}{2}\sum \widehat{K}(t_j)=i \sum \ell(\gamma_0)\frac{\sin(\mathrm{hol}(\gamma))}{|1-e^{\mathbb{C}\ell(\gamma)}||1-e^{-\mathbb{C}\ell(\gamma)}|}K(\ell(\gamma)).
\end{equation*}
holds.
\end{thm}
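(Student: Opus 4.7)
The plan is to derive Theorem \ref{coexformula} using the Selberg trace formula for $G = \PSL_2(\C)$ acting on the coexact 1-form bundle, following the same framework as in \cite[Appendix B]{LL}, which we take as the starting point. In that framework, coexact 1-forms on $Y$ are realized as $\Gamma$-equivariant sections of the homogeneous bundle $G \times_K V$, with $K = \mathrm{PSU}_2$ and $V$ the $K$-module coming from cotangent vectors at the basepoint; the operator $\ast d$ becomes a specific $G$-invariant first-order operator on such sections. The even trace formula of \cite{LL} is obtained by integrating the distributional wave trace $\mathrm{tr}(e^{it\,\ast d})$ against an even Schwartz function, so since that case is already known we focus here on the new odd statement, which is obtained by running exactly the same machinery with an odd test function.

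On the spectral side, the eigenfunction expansion gives the tempered distribution $\mathrm{tr}(e^{it\,\ast d}) = \sum_j e^{it\,t_j}$, and pairing against an odd $K$ produces $\tfrac{1}{2}\sum_j \widehat{K}(t_j)$, with the same normalization $\tfrac{1}{2}$ appearing in the even case. Because $K$ is odd, contributions from $t_j$ and $-t_j$ no longer reinforce each other, so the sum detects exactly the spectral asymmetry of $\ast d$. On the geometric side one sums the lifted kernel over $\Gamma$-orbits; the identity-class contribution is a tempered distribution supported at $t=0$ which, in the even case, produces the terms $H(0)$, $H''(0)$ and the topological correction $\tfrac{1}{2}(b_1(Y)-1)\widehat{H}(0)$. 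Since every even-order distributional derivative of $\delta_0$ annihilates odd test functions, this entire identity contribution vanishes identically for $K$, explaining the absence of those terms in the odd formula.

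For each non-trivial conjugacy class $[\gamma]$, representatives conjugate in $G$ to $\mathrm{diag}(e^{\mathbb{C}\ell(\gamma)/2}, e^{-\mathbb{C}\ell(\gamma)/2})$ as recalled in Section \ref{spinlength}, and the orbital integral on the centralizer produces the factor $\ell(\gamma_0)$, the denominator $|1-e^{\mathbb{C}\ell(\gamma)}||1-e^{-\mathbb{C}\ell(\gamma)}|$, and a delta function localizing $t$ at $\ell(\gamma)$, exactly as in the even case. The new computation is the local trace of the bundle transport along $\gamma$: the holonomy acts on the complexified normal bundle with eigenvalues $e^{\pm i\,\mathrm{hol}(\gamma)}$, and these two eigen-directions decompose the $\ast d$-contribution into pieces of opposite sign, dictated by the orientation convention fixed via the Pauli matrices of Section \ref{traceformulas}. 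For even test functions the two pieces combine as $e^{i\,\mathrm{hol}}+e^{-i\,\mathrm{hol}} = 2\cos(\mathrm{hol}(\gamma))$, reproducing the cosine factor of the even formula; for odd test functions they combine instead as $e^{i\,\mathrm{hol}}-e^{-i\,\mathrm{hol}} = 2i\sin(\mathrm{hol}(\gamma))$, producing the desired $i\sin(\mathrm{hol}(\gamma))$ factor. The main obstacle is precisely this sign-bookkeeping: one must verify from the explicit description of $\ast d$ as a $G$-invariant operator, together with the orientation convention, that the two weight lines in the normal bundle contribute to $\ast d$ with opposite signs, so that the odd parity of the test function really converts the $2\cos$ into $2i\sin$. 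Once this check is in place, the rest of the calculation is a verbatim repetition of the even case.
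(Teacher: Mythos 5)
Your strategy is essentially the paper's: specialize a Selberg-type trace formula to the isotypic component carrying coexact $1$-forms, observe that the identity contribution is an even distribution at $t=0$ and hence dies on odd test functions, and note that the two holonomy weight lines in the (complexified) normal direction contribute $e^{\pm i\,\mathrm{hol}(\gamma)}$, so that even test functions produce $2\cos(\mathrm{hol})$ and odd ones $2i\sin(\mathrm{hol})$. (The paper phrases this via test functions $H_{\pm}(u)\bigl(e^{i(\alpha-\beta)}\pm e^{i(\beta-\alpha)}\bigr)$ on the diagonal torus and the representations $\pi_{s_1,s_2,\pm1,\mp1}$; working over $\PSL_2(\C)$ rather than the larger group $G$ is fine here, as the paper itself remarks that $G$ is only needed for spinors.)

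The genuine gap is that you explicitly defer the one step that carries the mathematical content: verifying that the two weight lines contribute to $\ast d$ \emph{with opposite signs, and with the correct absolute sign relative to the orientation convention}. Concretely, one must show that $\ast d$ acts on the $\wedge^1\mathfrak{p}$-isotypic vector of $\pi_{s_1,s_2,1,-1}$ by $+i(s_1-s_2)/2$ and not its negative; in the paper this is Proposition \ref{stardeigenvaluecomputation}, an explicit Lie-algebra computation (decomposing the Pauli matrices as $K_i+B_i$ with $K_i\in\mathfrak{k}$, $B_i\in\mathfrak{b}$ and differentiating the induced-representation vector). Without this check the odd formula is only determined up to an overall sign, which is fatal for every downstream application (the sign of $\eta_{\mathrm{sign}}$, hence of $h(Y,\spin)$). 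The even formula, by contrast, is insensitive to this sign, which is precisely why \cite{LL} could avoid the computation; so "the rest is verbatim repetition of the even case" is not quite right — the sign determination is new and is the proof. A minor secondary point: the term $\tfrac{1}{2}(b_1(Y)-1)\widehat{H}(0)$ does not come from the identity conjugacy class on the geometric side, but from the contribution of the one-dimensional representation $\det^0$ (and, when $b_1>0$, of the representations carrying harmonic forms) on the spectral side of the general formula.
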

For the last identity, recall that if $K$ is odd then its Fourier transform is purely imaginary.

\vspace{0.3cm}
\subsection{Trace formula for spinors} As in Section \ref{spinlength}, we fix a base spin structure $\spin_0$ and consider its twist by a character $\varphi$. The corresponding Dirac operator $D_{B_0}$ has discrete spectrum
\begin{equation*}
\dots\leq s_{-1}<0\leq s_0\leq s_1\leq s_2\leq \dots
\end{equation*}
unbounded in both directions.  Also, recall that we introduced the notation $\tilde{\gamma}$ for the lift of $\gamma$ corresponding to the base spin structure $\spin_0$.
\begin{thm}\label{spinorformula}For $H\in C^{\infty}_c(\mathbb{R})$ an even test function, the identity
\begin{equation*}
\frac{1}{2}\sum \widehat{H}(s_j)=\frac{\mathrm{vol(Y)}}{2\pi} \cdot \left(\frac{1}{4}H(0)-H''(0) \right)+\sum \ell(\gamma_0)\frac{\cos(\mathrm{hol}(\tilde{\gamma}))\cos(\varphi({\gamma}))}{|1-e^{\mathbb{C}\ell(\gamma)}||1-e^{-\mathbb{C}\ell(\gamma)}|}H(\ell(\gamma)).
\end{equation*}
holds. For $K\in C^{\infty}_c(\mathbb{R})$ an odd test function, the identity
\begin{equation*}
\frac{1}{2}\sum \widehat{K}(s_j)=i \sum \ell(\gamma_0)\frac{\sin(\mathrm{hol}(\tilde{\gamma})) \cos(\varphi({\gamma})) }{|1-e^{\mathbb{C}\ell(\gamma)}||1-e^{-\mathbb{C}\ell(\gamma)}|}K(\ell(\gamma)).
\end{equation*}
holds.
\end{thm}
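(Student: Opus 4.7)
The plan is to run the Selberg trace formula for the $G$-invariant Dirac operator on the homogeneous spinor bundle $\mathcal{S} = \widetilde{\Gamma}\backslash(G\times_K V)$, where $K \subset G$ is the maximal compact, $V$ is the $2$-dimensional spinor representation of $K$, and $\widetilde{\Gamma} \subset G$ is the lift encoding the spin$^c$ structure via the base spin lift $s_0$ and the twisting character $\varphi$ from Section \ref{spinlength}. The argument will parallel the coexact $1$-form trace formula in \cite[Appendix B]{LL}, replacing $\mathrm{PSL}_2(\mathbb{C})$ throughout by $G$ and the coexact $1$-form bundle by $\mathcal{S}$. Given a test function, the first step is to produce, via the spinor-valued Harish-Chandra--Plancherel transform on $G$, a $K$-biequivariant kernel $k_0 : G \to \mathrm{End}(V)$ whose convolution action on sections of $\mathcal{S}$ descends to $H(D_{B_0})$ (or $K(D_{B_0})$) on $Y$. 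Equating the spectral trace $\sum_j \widehat{H}(s_j)$ with the geometric pre-trace sum
\begin{equation*}
\int_{\widetilde{\Gamma} \backslash G / K} \sum_{\gamma \in \widetilde{\Gamma}} \mathrm{tr}_V\, k_0(x^{-1} \gamma x)\, dx,
\end{equation*}
and splitting the latter into the identity contribution plus the hyperbolic conjugacy classes, will give the claimed identity.

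The identity term equals $\mathrm{vol}(Y) \cdot \mathrm{tr}\, k_0(e)$. Expanding via the spinor Plancherel measure on $G$ should produce the factor $\tfrac{1}{2\pi}\bigl(\tfrac14 H(0) - H''(0)\bigr)$ for even $H$; the $\tfrac14 H(0)$ piece, absent in the function trace formula, reflects the Lichnerowicz curvature shift $R/4 = -3/2$ relating $D_{B_0}^2$ to the spinor rough Laplacian. For odd $K$ the identity contribution vanishes because the Plancherel integrand becomes antisymmetric in the spectral parameter. For each hyperbolic class $[\gamma]$, Weyl integration on the split Cartan of $G$ containing the diagonal representative of $\tilde{\gamma}$ reduces the orbital integral to evaluating $\mathrm{tr}\, k_0$ along that Cartan; the Jacobian produces the factor $\ell(\gamma_0)/|1 - e^{\mathbb{C}\ell(\gamma)}||1 - e^{-\mathbb{C}\ell(\gamma)}|$, and the $K$-trace extracts the character of $V$ at the compact part of $\tilde{\gamma}$, namely $\chi_V(\tilde{\gamma}_K) = 2 e^{i\varphi(\gamma)} \cos(\mathrm{hol}(\tilde{\gamma}))$. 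Pairing $\gamma$ with $\gamma^{-1}$ (whose twist and spin-holonomy are both negated) symmetrizes the phase $e^{i\varphi(\gamma)}$ into the real factor $\cos(\varphi(\gamma))$, giving the even formula. The same orbital computation goes through for odd test functions, except that the opposite Fourier parity replaces $\cos(\mathrm{hol}(\tilde{\gamma}))$ by $i\sin(\mathrm{hol}(\tilde{\gamma}))$, leaving $\cos(\varphi(\gamma))$ unchanged since $\varphi$ is independent of the spectral parameter.

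The main technical obstacle will be the explicit computation of the spinor Plancherel measure on $G$: in particular, extracting the coefficient of $H(0)$ and tracking how the enlarged center $\mathrm{U}_1 \subset G$ modifies the Plancherel inversion relative to the classical $\mathrm{PSL}_2(\mathbb{C})$ case. Once this is done, the twisting character $\varphi$ commutes with all Plancherel-theoretic manipulations and enters only at the last step when evaluating $\chi_V$ at $\tilde{\gamma}$, so the passage from the spin to the spin$^c$ setting propagates mechanically through the orbital integrals. The remaining steps --- convergence of the geometric sum on $\mathrm{supp}(k_0)$, justification of interchange of summation and integration, and the detailed conjugacy class decomposition --- are direct adaptations of the arguments executed in \cite[Appendix B]{LL}.
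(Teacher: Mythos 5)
Your proposal is correct in outline but organizes the proof differently from the paper. You work directly with the locally homogeneous bundle $G\times_K V$ and an $\mathrm{End}(V)$-valued point-pair invariant kernel realizing $\widehat{H}(D_{B_0})$, then compute its trace geometrically via the pre-trace formula, Weyl integration on the split Cartan, and the character of $V|_M$ at the elliptic part of $\tilde{\gamma}$ (which is indeed $2e^{i\varphi(\gamma)}\cos(\mathrm{hol}(\tilde{\gamma}))$, symmetrized to $2\cos(\varphi(\gamma))\cos(\mathrm{hol}(\tilde{\gamma}))$ by pairing $\gamma$ with $\gamma^{-1}$; the paper gets the same reality by noting the spectral side is real and taking real, resp.\ imaginary, parts). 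The paper instead proves a single general trace formula for $G$ with scalar $W$-invariant test functions on the diagonal torus (Proposition \ref{preliminaryreptheorytraceformula}), identifies via Frobenius reciprocity exactly which representations $\pi_{s_1,s_2,n_1,n_2}$ carry $S^\vee$-isotypic vectors, computes the Dirac eigenvalue $i(s_1-s_2)/2$ on each by an explicit Lie-algebra calculation, and then isolates the spinor spectrum by choosing test functions $H_{\pm}(u)(e^{i\alpha}\pm e^{i\beta})$. The two routes are equivalent: your approach replaces the representation-by-representation eigenvalue computation with the construction of the vector-valued kernel, and replaces the torus Fourier analysis with the spinor Plancherel inversion. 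What the paper's route buys is that the only new input beyond \cite[Appendix B]{LL} is finite-dimensional $K$- and $M$-representation theory plus one Lie-algebra computation; what your route buys is a formulation closer to Moscovici--Stanton that makes the dependence on the twisting character $\varphi$ manifestly confined to the character factor in the orbital integrals.

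Two caveats. First, the computation you defer --- the spinor Plancherel density on $G$ --- is exactly where the nontrivial constants live: the coefficient $\tfrac14$ in the identity term and the overall factor $\tfrac12$ on the spectral side (which in the paper arises from the Weyl-group identification $\pi_{s_1,s_2,-1,0}\cong\pi_{s_2,s_1,0,-1}$ folding the eigenvalues $\pm s$ into one isomorphism class). A plan that leaves this open has not yet established the formula. Second, your explanation of the $\tfrac14 H(0)$ term via the Lichnerowicz shift $R/4=-3/2$ is not the right mechanism: the Plancherel density for the $K$-type with $M$-weights $(n_1,n_2)$ is proportional to $r^2+\bigl(\tfrac{n_1-n_2}{2}\bigr)^2$, giving $0$ for functions, $1$ for coexact $1$-forms, and $\tfrac14$ for spinors (half-integral weight); the curvature term does not enter in the way you describe, and taking it as the guide would not reproduce the correct constant.
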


\subsection{Allowing less regular functions}\label{regularity}
While for simplicity we stated the formulas only for smooth, compactly supported functions, the same conclusions hold with less regularity. For example, in our previous work \cite[Appendix $C$]{LL} we showed that the trace formula for coexact $1$-forms holds for functions of the form $\left(\mathbf{1}_{[-a,a]} \right)^{\ast 4}$, the fourth convolution power of the indicator function of the interval $[-a,a]$. Rather than providing general statements, let us point out two specific instances that will be used in this work:
\begin{enumerate}
\item the even trace formulas hold for $H(x)=\left(\mathbf{1}_{[-a,a]} \right)^{\ast k}$, with $k\geq 6$ and the odd trace formulas hold for $K=H'$. This follows directly from the results in \cite[Appendix $C$]{LL}.
\item the even trace formulas hold for a Gaussian $H(x)=e^{-x^2/2c}$, and the odd trace formulas hold for $K=H'$. This is proved in Appendix \ref{gaussiantest}.
\end{enumerate}
In the proof of Theorem \ref{thm1} we will only use functions of the first type, but our proof of Theorem \ref{thm2} uses Gaussians.

\vspace{0.5cm}
\section{Analytic continuation of the eta function via the odd trace formula}\label{continuation}
The classical approach to the analytic continuation of the eta function (\ref{eta1form}) involves the Mellin transform and the asymptotic expansion of the trace of the heat kernel; we start by quickly reviewing the fundamental aspects of this for the reader's convenience. For our purposes, we will discuss a different interpretation using the odd trace formulas described in the previous section.

\subsection{Analytic continuation via the Mellin transform}\label{mellin}
For the reader's convenience, we recall basic facts about the Mellin transform and its relevance for the definition of the $\eta$ invariant. Our account loosely follows \cite[Section $3.4$]{Cha}, suitably adapted to the simpler situation we are dealing with.  Given a continuous function $f:\mathbb{R}^{>0}\rightarrow \mathbb{R}$, we define its \textit{Mellin transform} to be
\begin{equation*}
M_f(s)=\int_0^{\infty} f(t)t^s \; \frac{dt}{t}.
\end{equation*}
This is essentially the Fourier transform for the multiplicative group $\mathbb{R}^{>0}$ equipped with the Haar measure $dt/t$. Assuming that $f(t)=O(t^{-a})$ for $t\rightarrow 0$, and $f(t)=O(t^{-n})$ for $t\rightarrow \infty$ for all $n$, we have that the integral absolutely converges on the half-plane $\mathrm{Re}(s)>a$, and that $M_f$ is a holomorphic function there.
\par
Under additional assumptions, $M_f$ admits an explicit analytic continuation to the whole plane. For our purposes, suppose for example that $f$ admits an asymptotic expansion
\begin{equation}\label{asymptoticexpansion}
f(t)\sim \sum_{k=0}^{\infty}c_k t^{i_k} \quad\text{ for }t\rightarrow 0
\end{equation}
where $\{i_k\}$ is a strictly increasing sequence of real numbers with $\lim i_k=+\infty$. Then $M_f$ extends to a meromorphic function on $\mathbb{C}$ with a simple pole at $-i_k$ with residue $c_k$ for all $k$, and holomorphic everywhere else. To see this, choose $L>0$ and let us write $M_f(s)=M_-(s)+M_+(s)$ where
\begin{align*}
M_-(s) &=\int_0^{L} f(t)t^s \; \frac{dt}{t}, \\
M_+(s) &=\int_L^{\infty} f(t)t^s \; \frac{dt}{t}.
\end{align*}
Our assumption on the behavior of $f$ at infinity implies that $M_+$ is an entire function. Regarding the first integral, the asymptotic expansion  (\ref{asymptoticexpansion}) implies that for a given $N$ we can write
\begin{equation*}
f(t)=\sum_{k=0}^{N-1} c_k t^{i_k}+r_N(t)
\end{equation*}
with $r_N(t)=O(t^{i_N})$ for $t\rightarrow 0$. Integrating, for $\mathrm{Re}(s) > -i_N$ we can therefore write
\begin{align} \label{mellinanalyticcontinuation}
M_-(s) &=\int_0^{L} f(t)t^s \; \frac{dt}{t} \nonumber \\
&=\sum_{k=0}^{N-1}\frac{c_k \cdot L^{s + i_k} }{s+i_k}+\int_0^L r_N(t)t^s \; \frac{dt}{t}.
\end{align}
This expression provides a meromorphic continuation of $M_{-}$ to the half-plane $\mathrm{Re}(s)>-i_N.$  This continuation has simple poles at $-i_0,\dots -i_{N-1}$ with residues $c_0,\dots c_{N-1}$ and is holomorphic everywhere else in the half-plane. Because our choice of $N$ was arbitrary, this proves the claim.

\begin{remark}
Though $M_{-}$ and $M_{+}$ both depend on $L,$ one readily checks that their sum does not.
\end{remark}

The above discussion readily generalizes to the case in which $f(t)=O(t^{-n})$ for some fixed $n$ when $t\rightarrow \infty$. In this case, the analytic continuation (obtained again by breaking the integral $M_f(s)=M_-(s)+M_+(s)$) defines a meromorphic continuation on the half-plane $\mathrm{Re}(s)<n$.

\vspace{0.3cm}
\subsection{The heat kernel}\label{heatkernel}
We briefly recall the classical analytic continuation of the eta function of the odd signature operator
\begin{equation*}
\eta_{\mathrm{sign}}(s)=\sum_{t_n\neq0} \frac{\sgn(t_n)}{|t_n|^s}.
\end{equation*}
using the asymptotic expansion of the heat kernel, see \cite{APS1} for details. The key observation to relate this to the previous discussion is the identity
\begin{equation}\label{gamma}
|a|^{-s}=\frac{1}{\Gamma(s)}\int_0^{\infty} e^{-|a|t}t^s \; \frac{dt}{t},
\end{equation}
where $\Gamma(s)=\int_0^{\infty} e^{-t}t^s \; \frac{dt}{t}$ is the classical Gamma function.
\\
\par
In the simpler case of the Riemann zeta function, we have for $\mathrm{Re}(s)>1$ (which ensures absolute convergence) the identity
\begin{align*}
\zeta(s) &=\sum_{n=1}^{\infty}\frac{1}{n^s} \\
&=\sum_{n=1}^{\infty} \frac{1}{\Gamma(s)}\int_0^{\infty} e^{-nt}t^s \; \frac{dt}{t} \\
&=\frac{1}{\Gamma(s)}\int_0^{\infty}\frac{1}{e^t-1}t^s \; \frac{dt}{t}.
\end{align*}
Recall that $\Gamma(s)$ has poles at $s=0,-1,-2,\dots$ with residue $(-1)^k/k$ at $s=-k$. Given the asymptotic expansion for $t\rightarrow 0$ in terms of Bernoulli numbers
\begin{equation*}
\frac{1}{e^t-1}\sim \sum_{k=-1}^{\infty}\frac{B_{k+1}}{(k+1)!} t^k,
\end{equation*}
and ${1}/({e^t-1})=O(t^{-n})$ for all $n$ when $t\rightarrow \infty$, we recover the classical fact that $\zeta(s)$ admits a meromorphic extension with only one pole at $s=1$.
\\
\par
Let us now focus on the case of the eta function of the odd signature operator (the case of the Dirac operator is analogous). In \cite{APS1}, the authors study the quantity\footnote{To be precise, they study the odd signature operator on \textit{coclosed} $1$-forms, which leads to an additional term involving $b_1(Y)$ in their discussion.}
\begin{equation*}
K(t)=-\sum \frac{\sgn(t_n)}{2}\mathrm{erfc}(|t_n|\sqrt{t})
\end{equation*}
where
\begin{equation*}
\mathrm{erfc}(t)=\frac{2}{\sqrt{\pi}}\int_t^{\infty}e^{-\xi^2}d\xi
\end{equation*}
is the complementary error function. As $0$ is not an eigenvalue by assumption, Weyl's law (cf. Section \ref{localweyl}) implies that $K(t)\rightarrow 0$ exponentially fast for $t\rightarrow \infty$. The derivative of $K(t)$ is computed to be
\begin{align*}
K'(t)&=\frac{1}{\sqrt{4\pi t}}\sum t_n e^{-t_n^2t}.
\end{align*}
Using $(\ref{gamma})$ and integrating by parts we get the formula
\begin{align}\label{etaheat}
\eta_{\mathrm{sign}}(2s) &=\frac{2\sqrt{\pi}}{\Gamma(s+1/2)}\int_0^{\infty} K'(t)t^{s+1} \; \frac{dt}{t} \nonumber \\
&=-\frac{2s\sqrt{\pi}}{\Gamma(s+1/2)}\int_0^{\infty} K(t)t^s \; \frac{dt}{t}.
\end{align}
The key point is that $K(t)$ admits an asymptotic expansion of the form
\begin{equation}\label{expansionK}
K(t)\sim \sum_{k\geq -n}a_k t^{k/2} \quad \text{ for }t\rightarrow 0,
\end{equation}
and therefore its Mellin transform has at worst a simple pole at $0$. In $(\ref{etaheat})$, this pole is canceled by the term $s$ in the numerator, and we see therefore that $\eta_{\mathrm{sign}}(s)$ is holomorphic near $0$. We conclude by mentioning that the asymptotic expansion $(\ref{expansionK})$ follows by interpreting $K$ as the trace of the heat kernel on $\mathbb{R}^{\geq 0}\times Y$ with the (now-called) APS boundary conditions.

\vspace{0.3cm}
\subsection{Analytic continuation via the trace formula}
While the analytic continuation via the heat kernel we have just discussed holds in general, for the specific case of hyperbolic three-manifolds we can take a different route using the odd trace formula. Let us discuss first in detail the case of the eta invariant for the odd signature operator (the case of the Dirac operator is essentially the same). Recall that in this case we only need to consider the spectral asymmetry of $\ast d$ acting on coexact $1$-forms (\ref{eta1form}). Given an even test function $H$ (in our case $H$ will either be a Gaussian or compactly supported, as in Subsection \ref{regularity}), we consider the trace formula applied to the odd test function $H'$. Recalling that with our convention
\begin{equation*}
\widehat{H'}(t)=it \widehat{H}(t),
\end{equation*}
we obtain the identity
\begin{equation} \label{traceformulaodd} 
\sum_n t_n \widehat{H}(t_n) = 2\sum_{1 \neq [\gamma]} \ell(\gamma_0) \cdot \frac{\sin( \mathrm{hol}(\gamma) )}{|1 - e^{\mathbb{C} \ell(\gamma)} | \cdot |1 - e^{-\mathbb{C} \ell(\gamma)} |} \cdot H'( \ell(\gamma) ).
\end{equation}
For a fixed even test function $G$, and given $T>0$, we apply this to the function $H(x)=G(x/T)$, for which $\widehat{H}(t)=T\widehat{G}(Tt)$, and obtain the family of identities
\begin{equation} \label{traceformulaodd} 
\sum_n Tt_n \widehat{G}(Tt_n) = 2\sum_{1 \neq [\gamma]} \ell(\gamma_0) \cdot \frac{\sin( \mathrm{hol}(\gamma) )}{|1 - e^{\mathbb{C} \ell(\gamma)} | \cdot |1 - e^{-\mathbb{C} \ell(\gamma)} |} \cdot \frac{1}{T}G' \left( \frac{\ell(\gamma)}{T} \right).
\end{equation}
We will denote either side of the identity by $G_T$ (and refer to them as spectral and geometric respectively). Now, for any real number $a\neq0$ we have the identity

\begin{align*}
\int_0^{\infty} aT\widehat{G}(aT)T^{s-1}dT&= a\cdot\int_0^{\infty} \widehat{G}(aT)T^sdT\\
&= a\cdot\int_0^{\infty} \widehat{G}(|a|T)T^sdT\\
&=\frac{\sgn(a)}{|a|^s}\cdot \int_0^{\infty} \widehat{G}(T)T^s dT,
\end{align*}
where we used that $\widehat{G}$ is even in the second equality and performed a change of variables in the last equality above. Therefore, for $\mathrm{Re}(s)$ large enough (so that the sums converge absolutely), we have
\begin{align} \label{etaGT}
\eta_{\mathrm{sign}}(s) &=\sum_{t_n}\frac{\sgn(t_n)}{|t_n|^s} \nonumber \\
&=\frac{\int_0^{\infty} \sum \left(t_n T\widehat{G}(t_nT)\right)T^s \; \frac{dT}{T}}{\int_0^{\infty} \widehat{G}(T)T^s dT} \nonumber \\
&=\frac{\int_0^{\infty} G_T T^s \; \frac{dT}{T}}{\int_0^{\infty} \widehat{G}(T)T^s dT}.
\end{align}
We can recognize at the numerator and denominator the Mellin transforms of $G_T$ and $T\widehat{G}(T)$ respectively. We have the following.

\begin{lemma}\label{functionsok}
Suppose that $G$ is either:
\begin{itemize}
\item a Gaussian function $x \mapsto e^{-x^2/2c}$.
\item $G$ is the $k$-th convolution power of an indicator function $\frac{1}{2a}1_{[-a,a]}$, for some even $n\geq 6$. 
\end{itemize}
Then the identity
\begin{equation*}
\eta_{\mathrm{sign}}(s)=\frac{\int_0^{\infty} G_T T^s \frac{dT}{T}}{\int_0^{\infty} T \widehat{G}(T) T^s \frac{dT}{T}}
\end{equation*}
holds in an open domain in $\mathbb{C}$ containing $s=0$, where we interpret the right hand side using the Mellin transform.
\end{lemma}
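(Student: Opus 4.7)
The plan is to establish the claimed identity on a right half-plane where every sum and integral in sight converges absolutely, and then promote it to a neighborhood of $s = 0$ by analytic continuation. First, I would check that for $\mathrm{Re}(s)$ sufficiently large all of the manipulations leading to \eqref{etaGT} are rigorously justified: applying Fubini to interchange sum and integral in the numerator reduces, via the substitution $u = |t_n| T$ inside each term, to verifying
\begin{equation*}
\left( \sum_n |t_n|^{-\mathrm{Re}(s)} \right) \cdot \int_0^\infty |\widehat{G}(u)|\, u^{\mathrm{Re}(s)}\, du < \infty,
\end{equation*}
which holds for $\mathrm{Re}(s)$ larger than some threshold by Weyl's law for $\ast d$ on coexact $1$-forms together with the decay of $\widehat{G}$.

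Next I would analyse the denominator $D(s) = \int_0^\infty T\widehat{G}(T)\, T^{s-1}\, dT$. In both allowed cases $T\widehat{G}(T)$ is smooth, vanishes linearly at $T = 0$, and decays either super-polynomially (Gaussian case) or like $T^{1-k}$ (the $k$-fold convolution case). The Mellin transform machinery of Subsection \ref{mellin} therefore gives a meromorphic continuation of $D(s)$ to a large domain containing $s = 0$, with at worst simple poles at non-positive integers dictated by the Taylor expansion of $T\widehat{G}(T)$ at the origin. Since that expansion begins with the linear term, $s = 0$ is a regular point; a direct computation gives $D(0) = \pi$ in the Gaussian case, and the convolution case is analogous.

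The heart of the matter is the numerator $N(s) = \int_0^\infty G_T\, T^{s-1}\, dT$, for which we need decay estimates at both ends of the integration range. As $T \to 0$ I would use the geometric expression for $G_T$: in the compactly supported case $G'(\ell(\gamma)/T)$ vanishes identically once $T$ is smaller than the shortest closed geodesic length divided by the diameter of $\mathrm{supp}(G')$, so $G_T \equiv 0$ in a neighborhood of the origin; in the Gaussian case each term carries a factor $e^{-\ell(\gamma)^2/(2cT^2)}$, and combining this with the standard upper bound on the number of closed geodesics of length $\leq L$ (cf.\ Section \ref{geodesicSW}) yields super-polynomial decay of $G_T$ as $T \to 0$. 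As $T \to \infty$ I would use the spectral expression: splitting the sum at $|t_n| T = 1$ and combining the Weyl law $\#\{n : |t_n| \leq R\} = O(R^3)$ from Section \ref{localweyl} with the tail bound $|\widehat{G}(x)| \ll (1 + |x|)^{-k}$, a short calculation yields $|G_T| = O(T^{-(k-4)})$ in the convolution case (and Schwartz decay in the Gaussian case). The Mellin transform theory then extends $N(s)$ to a holomorphic function on an open region containing $s = 0$.

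Finally, the identity \eqref{etaGT} reads $\eta_{\mathrm{sign}}(s) \cdot D(s) = N(s)$ on a right half-plane; since $\eta_{\mathrm{sign}}$ is holomorphic at $s = 0$ by the Atiyah-Patodi-Singer heat kernel analysis recalled in Subsection \ref{heatkernel}, and $N, D$ are meromorphic near $s = 0$ with $D(0) \neq 0$, uniqueness of analytic continuation promotes the identity to a full open neighborhood of the origin. The main technical obstacle is the tail estimate for $G_T$ in the convolution case: the polynomial decay of $\widehat{G}$ requires a careful dyadic splitting of the spectral sum, and the hypothesis $k \geq 6$ is precisely what secures enough room between the region of absolute convergence of $N(s)$ and the point $s = 0$.
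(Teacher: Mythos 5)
Your proposal follows essentially the same route as the paper: justify the identity \eqref{etaGT} by Fubini in a region of absolute convergence, show the numerator $\int_0^\infty G_T T^{s-1}\,dT$ extends holomorphically past $s=0$ using decay of $G_T$ at $T\to 0$ (geometric side) and $T\to\infty$ (spectral side), show the denominator is regular and nonzero at $s=0$ via its odd Taylor expansion at the origin, and conclude by uniqueness of analytic continuation.

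One quantitative point needs fixing. In the convolution case your tail bound $|G_T|=O(T^{-(k-4)})$ is weaker than what the trivial estimate gives, and for $k=6$ it is not enough: it only yields holomorphy of the numerator on $\mathrm{Re}(s)<2$, while the Fubini step (which needs $\sum_n|t_n|^{-\mathrm{Re}(s)}<\infty$, i.e.\ $\mathrm{Re}(s)>3$ by Weyl's law) establishes the identity only on the strip $3<\mathrm{Re}(s)<k-1$; the two regions would then be disjoint and analytic continuation could not be invoked. The correct bound is immediate from $|\mathrm{sinc}(x)|^k\leq |x|^{-k}$: each term satisfies $|Tt_n\widehat{G}(Tt_n)|\leq (T|t_n|)^{-(k-1)}$, so $|G_T|\leq T^{-(k-1)}\sum_n|t_n|^{-(k-1)}=O(T^{1-k})$, the sum converging precisely because $k-1>3$. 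With this the numerator is holomorphic on $\mathrm{Re}(s)<k-1$, which contains both the strip where the identity is first proved and a neighborhood of $s=0$, and the argument closes for all even $k\geq 6$ exactly as in the paper. Also note that in the convolution case the identity is initially established on a strip rather than a right half-plane, since the Mellin transform of $T\widehat{G}(T)$ diverges for $\mathrm{Re}(s)\geq k-1$.
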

Of course, the lemma is valid in much greater generality, but for simplicity we have restricted to the class of functions that will be used in the rest of the paper.
\begin{proof}Let us start with the case of a Gaussian. We claim that the quantity $G_T$ is rapidly decaying for both $T\rightarrow 0$ and $T\rightarrow \infty$. The former follows by looking at the definition of $G_T$ using the geometric side because by the prime geodesic theorem the number of prime closed geodesics of length less than $x$ is $O(e^{2x}/2x)$ (cf. Section \ref{geodesicSW}); for the latter, it follows by looking at its definition using the spectral side, because the Weyl law implies that the number of spectral parameters less than $t$ in absolute value is $O(t^3)$ (cf. Section \ref{localweyl}), and $\widehat{G}$ is still a Gaussian. By the properties of the Mellin transform discussed in subsection \ref{mellin}, this implies that the numerator $\int_0^{\infty} G_T T^s \frac{dT}{T}$ is an entire function of $s$. On the other hand, the integrand in the denominator $T\widehat{G}(T)$ is rapidly decaying for $T\rightarrow \infty$, and by looking at the Taylor series at $t=0$ one obtains an asymptotic expansion with only odd exponents. Hence the denominator admits an analytic continuation to the whole plane which is regular and non-zero at $0$ because $\int_0^{\infty}\widehat{G}(T)dT>0$. The claim then follows by uniqueness of the analytic continuation.
\par
The case of $(\frac{1}{2a}1_{[-a,a]})^{\ast k}$, with $k\geq 6$ even is analogous. Assume for simplicity $a=1$. First of all, the derivative of this function is regular enough for the odd trace formula to hold, see Section \ref{regularity}. Notice that in this case
\begin{equation*}
\widehat{G}(t)=\left(\frac{\sin(t)}{t}\right)^k.
\end{equation*}
Looking at the spectral side, we see then that
\begin{equation*}
\lvert G_T\lvert \leq \frac{1}{T^{k-1}}\cdot\sum_n \frac{1}{|t_{n}|^{k-1}}=O(T^{-k+1})
\end{equation*}
for $T\rightarrow +\infty$. Furthermore $G_T$ is still rapidly decaying for $T\rightarrow 0$, because $G$ is compactly supported on the geometric side. The numerator is therefore a holomorphic function on the half-plane $\mathrm{Re}(s)<k-1$. As in the case of Gaussians, $T\widehat{G}(T)$ has a Taylor expansion at $0$ with only odd exponent terms, and $\int_0^{\infty} T\widehat{G}(T) \frac{dT}{T}>0$; it is also $O(T^{-k+1})$ for $T\rightarrow\infty$. The denominator is therefore a meromorphic function on $\mathrm{Re}(s)<k-1$, regular and non-vanishing at zero, and we conclude as in the case of the Gaussian.
\end{proof}

Let us unravel the statement of the above result in a way that is useful for concrete computations. If we choose a cutoff $L>0$, we have the formula
\begin{equation}\label{etazero}
\eta_{\mathrm{sign}}=\frac{\int_0^{L} G_T \; \frac{dT}{T}+\int_L^{\infty} G_T \; \frac{dT}{T}}{\int_0^{\infty} T \widehat{G}(T) \frac{dT}{T}}
\end{equation} 
where at the numerator we evaluate the integral near zero using the geometric definition, and the integral near infinity using the spectral definition. Explicitly, for the eta invariant of the odd signature operator:  
\begin{align*}
\int_0^{L} G_T \frac{dT}{T} &=\sum \ell(\gamma_0)\cdot\frac{2\sin(\mathrm{hol}(\gamma))}{|1 - e^{\mathbb{C}l(\gamma)}| \cdot |1 - e^{-\mathbb{C}l(\gamma)}|} \int_0^L \frac{1}{T} G' \left(\frac{\ell}{T} \right) \frac{dT}{T} \\
&= \sum \ell(\gamma_0)\cdot\frac{2\sin(\mathrm{hol}(\gamma))}{|1 - e^{\mathbb{C}l(\gamma)}| \cdot |1 - e^{-\mathbb{C}l(\gamma)}|} \cdot \frac{1}{\ell(\gamma)} \cdot \int_{\ell / L}^\infty y G'(y) \frac{dy}{y} \\
&= \sum \ell(\gamma_0)\cdot\frac{2\sin(\mathrm{hol}(\gamma))}{|1 - e^{\mathbb{C}l(\gamma)}| \cdot |1 - e^{-\mathbb{C}l(\gamma)}|} \cdot \frac{1}{\ell(\gamma)} \cdot \int_{\ell / L}^\infty G'(y) dy \\
&=  \sum \ell(\gamma_0)\cdot\frac{2\sin(\mathrm{hol}(\gamma))}{|1 - e^{\mathbb{C}l(\gamma)}| \cdot |1 - e^{-\mathbb{C}l(\gamma)}|} \cdot \frac{1}{\ell(\gamma)} \cdot \left(- G \left( \frac{\ell(\gamma)}{L} \right) \right).
\end{align*}
and
\begin{align*}
\int_L^{\infty} G_T \frac{dT}{T} &=\sum \int_L^{\infty} Tt_n\widehat{G}(Tt_n)\frac{dT}{T}\\
&=\sum \mathrm{sgn}(t_n)\int_{L|t_n|}^{\infty} T \widehat{G}(T) \frac{dT}{T}\\
&=\sum \mathrm{sgn}(t_n)\int_{L|t_n|}^{\infty}  \widehat{G}(T){dT},
\end{align*}
where we made a simple substitution in the integral.
\\
\par
Finally, the case of spinors follows in the same way by using the identity
\begin{align*}
G_T &=\sum_n Ts_n \widehat{G}(Ts_n) \\
&= 2\sum_{1 \neq [\gamma]} \ell(\gamma_0) \cdot \frac{\sin( \mathrm{hol}(\tilde{\gamma}) )\cdot {\cos(\varphi_{\tilde{\gamma}})} }{|1 - e^{\mathbb{C} \ell(\gamma)} | \cdot |1 - e^{-\mathbb{C} \ell(\gamma)} |} \cdot \frac{1}{T}G' \left( \frac{\ell(\gamma)}{T} \right),
\end{align*}
instead. The only additional observation is that the computation in (\ref{etaGT}) still holds even when the Dirac operator has kernel. In fact, we have
\begin{align*}
\eta_{\mathrm{Dir}}(s) &=\sum_{s_n\neq 0}\frac{\sgn(s_n)}{|s_n|^s} \nonumber \\
&=\frac{\int_0^{\infty} \sum \left(s_n T\widehat{G}(s_nT)\right)T^s \; \frac{dT}{T}}{\int_0^{\infty} \widehat{G}(T)T^s dT} \nonumber \\
&=\frac{\int_0^{\infty} G_T T^s \; \frac{dT}{T}}{\int_0^{\infty} \widehat{G}(T)T^s dT}
\end{align*}
as is the second row the parameters $s_n$ equal to zero do not contribute.

\vspace{0.5cm}
\section{Effective local Weyl laws}\label{localweyl}
In this section, we discuss upper bounds for the number of spectral parameters in a given interval for our operators of interest.  These upper bounds are expressed purely in terms of injectivity radius and volume, and we refer to such bounds as \textit{local Weyl laws}, see also \cite{Mul}. 
\par

To place this in context, recall the classical Weyl law: for every dimension $n,$ there exists a constant $C_n$ such that for every Riemannian $n$-manifold $(X,g)$, the asymptotic
\begin{equation}\label{Weyl}
\#\{\text{eigenvalues $\lambda$ of $\Delta_g$ with $\sqrt{\lambda}\leq T$}\}\sim C_n\cdot \mathrm{vol}_g(X)\cdot T^{n}.
\end{equation}
holds, where $\Delta_g$ is the Hodge Laplacian acting on functions. Analogous versions hold more generally for squares of Dirac type operators, such as the Hodge Laplacian $(d+d^*)^2$ acting on $k$-forms or the Dirac Laplacian $D_{B_0}^2$, see \cite{BGV}. From Equation (\ref{Weyl}), one expects the following local version of Weyl law to hold
\begin{equation*}
\#\{\text{eigenvalues $\lambda$ of $\Delta_g$ with $\sqrt{\lambda}\in[T,T+1]$}\}\sim n\cdot C_n\cdot \mathrm{vol}_g(X)\cdot T^{n-1}.
\end{equation*}
However, for our purposes it will be important not to just understand the asymptotic behavior of the number of the eigenvalues in a given interval, but also to provide effective upper bounds on it; in particular, our goal is to prove upper bounds of the form
\begin{equation} \label{localweyllawgeneral}
\#\{\text{eigenvalues $\lambda$ of $\Delta_g$ with $\sqrt{\lambda}\in[T,T+1]$}\}\leq A\cdot T^{n-1}+B
\end{equation}
and similar bounds for the other operators we are interested in, namely the odd signature and Dirac operators. For our application, it is crucial to express the upper bound in \eqref{localweyllawgeneral} (and its analogues for all operators we study) \emph{uniformly in the parameter $T,$ where the constants $A$ and $B$ are expressed explicitly in terms of the geometry of $X$}.  The leading constant $A$ appearing in our upper bounds will generally be larger than the optimal one $C'_n\cdot \mathrm{vol}_g(X).$
\\
\par
To prove these local Weyl laws, we will evaluate the trace formulas using even test functions of support so small that the sum over the closed geodesics vanishes. The estimates we will prove are effective in terms of volume and injectivity radius, but not of a nice form in terms of the input; for this reason, at the end of our computations we will specialize to the concrete case in which $\mathrm{inj}>0.15$ and $\mathrm{vol}<6.5$, e.g. the case of a manifold in the Hodgson-Weeks census. We will also assume throughout that $b_1=0$.

\subsection{Preliminaries on test functions}Fix a value of $R>0$ (which will later be a given lower bound on the injectivity radius). Choose an even function $\varphi$ with support in $[-2,2]$ and non-negative Fourier transform. Again, our convention is
\begin{equation*}
\widehat{\varphi}(t)=\int \varphi(x)e^{-ixt}dt.
\end{equation*}
To simplify our discussion, we will make the following:
\\
\par
\textbf{Assumption.} We will assume for simplicity throughout the section that both $\varphi$ and $\widehat{\varphi}$ achieve their maximum at $0$, and that the minimum $m_R$ of $\widehat{\varphi}$ in $[-R,R]$ is achieved at $\pm R$.
\begin{example}
For the specialization $R=0.15,$ we will choose $\varphi_0=\beta\ast\beta$, where
\begin{equation*}
\beta(x)=
\begin{cases}
e^{-1/(1-x^2)} \text{ if }|x|<1\\
0\text{ otherwise.}
\end{cases}
\end{equation*}
This function satisfies the assumptions, and $m_R\sim 0.19643$.
\end{example}
\begin{remark}
Of course, this implies that $\varphi_0$ satisfies the assumption also for $R<0.15$. To obtain a function that satisfies the assumption for large values of $R$, we can look at functions of the form $\varphi_0(Cx)$.
\end{remark}

Consider $\varphi_R(x)=\varphi \left( \frac{x}{R} \right)$, which is supported in $[-2R,2R]$. We have
\begin{align*}
\varphi_R(0) &=\varphi(0), \\ 
\varphi_R''(0) &=\frac{1}{R^2}\varphi''(0),
\end{align*}
and
\begin{align*}
\widehat{\varphi_R}(t) &=\int \varphi \left( \frac{x}{R} \right)e^{-ixt}dx \\
&=\int \varphi(x')e^{iRx't}Rdx' \\
&=R\widehat{\varphi}(Rt).
\end{align*}
Consider also
\begin{align*}
\varphi_{R,\nu} &= \varphi_R(x)(e^{i\nu x}+e^{-i\nu x}) \\
&=2\varphi_R(x) \cos(\nu x).
\end{align*}
Then
\begin{align*}
\varphi_{R,\nu}(0) &=2\varphi(0), \\  
\varphi_{R,\nu}''(0) &=2 \left(\frac{1}{R^2}\varphi''(0)-\nu^2\varphi(0) \right),
\end{align*}
and
\begin{equation*}
\widehat{\varphi_{R,\nu}}(t)=\widehat{\varphi_R}(t+\nu)+\widehat{\varphi_R}(t-\nu).
\end{equation*}

\subsection{Local Weyl law for coexact $1$-forms}
We begin with the case of coexact $1$-forms, which is the simplest to analyze. For $\nu\geq0$, denote by $\delta^*(\nu)$ the number of spectral parameters $t_j$ with $|t_j|\in[\nu,\nu+1]$. We evaluate the trace formula in Theorem \ref{coexformula} for the test function $\varphi_{R,\nu}(x)$ with $R$ less than the injectivity radius of $Y$. As $\varphi_{R,\nu}$ is supported in $[-2R,2R]$, and the injectivity radius is exactly half the length of the shortest geodesic of $Y$, the sum over the geodesics vanishes, and we have the identity
\begin{equation*}
\frac{\mathrm{vol}(Y)}{2\pi}\left(\varphi_{R,\nu}(0)-\varphi_{R,\nu}^{''}(0)\right)+\frac{1}{2}\widehat{\varphi_{R,
\nu}}(0)=\sum_j\frac{1}{2}\widehat{\varphi_{R,\nu}}(|t_j|).
\end{equation*}
By the identities of the previous subsection we therefore obtain
\begin{align*}
&\frac{\mathrm{vol}(Y)}{\pi}\left(\varphi(0)-\frac{1}{R^2}\varphi''(0)+\nu^2\varphi(0)\right)+{R}\widehat{\varphi}(R\nu) \\
&= \frac{1}{2}\sum_j \widehat{\varphi_R}(|t_j|+\nu)+\widehat{\varphi_R}(|t_j|-\nu)\\
&\geq \sum_j\frac{1}{2}\widehat{\varphi_R}(|t_j|-\nu)\\
&\geq \frac{1}{2}R\cdot m_R \cdot\delta^*(\nu)
\end{align*}
using first that $\widehat{\varphi}_{R}$ is non-negative, and then that
\begin{align*}
\widehat{\varphi_R}(t-\nu) &=R \cdot \widehat{\varphi}(R \cdot (t-\nu)) \\
&\geq R \cdot m_R
\end{align*}
for $t\in[\nu,\nu+1]$. For any choice of suitable test function $\varphi$, this provides the following upper bound on $\delta^*(\nu)$ in terms of $\mathrm{vol}(Y)$ and injectivity radius:
\begin{equation*}
\delta^*(\nu)\leq\left(\frac{2 \cdot \mathrm{vol(Y)}\cdot\varphi(0)}{\pi R \cdot m_R}\right)\nu^2+\frac{2}{R \cdot m_R}\left(R\widehat{\varphi}(0)+\frac{\mathrm{vol}(Y)}{\pi}\left(\varphi(0)-\frac{1}{R^2}\varphi''(0)\right)\right)
\end{equation*}
where we used that $\widehat{\varphi}$ achieve its maximum at $0$. Working out the computations using our test function $\varphi_0$, we obtain the following.
\begin{prop}
Let $Y$ be a hyperbolic rational homology sphere with $\mathrm{vol}(Y)<6.5$ and $\mathrm{inj}(Y)>0.15$. Then the inequality
\begin{equation*}
\delta^*(\nu)\leq 18.7\nu^2+2577.3
\end{equation*}
holds.
\end{prop}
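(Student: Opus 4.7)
The plan is to apply the general upper bound
\begin{equation*}
\delta^*(\nu)\leq\left(\frac{2 \cdot \mathrm{vol}(Y)\cdot\varphi(0)}{\pi R \cdot m_R}\right)\nu^2+\frac{2}{R \cdot m_R}\left(R\widehat{\varphi}(0)+\frac{\mathrm{vol}(Y)}{\pi}\left(\varphi(0)-\frac{1}{R^2}\varphi''(0)\right)\right)
\end{equation*}
already derived in the preceding discussion, specialized to the choice $\varphi=\varphi_0=\beta\ast\beta$, to $R=0.15$ (the given lower bound on $\mathrm{inj}(Y)$), and to the worst-case $\mathrm{vol}(Y)=6.5$. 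Since both the coefficient of $\nu^2$ and the constant term are increasing in $\mathrm{vol}(Y)$ and decreasing in $R$ (for fixed $\varphi_0$), it suffices to carry out the estimate for these extremal values.

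First I would verify that $\varphi_0$ genuinely satisfies the standing assumption on test functions at $R=0.15$: namely, that $\varphi_0$ and $\widehat{\varphi_0}=\widehat{\beta}^2$ are even, non-negative, attain their maxima at the origin (the former since $\beta$ is supported on $[-1,1]$ and convex-like at $0$, the latter since it is a square of a real even function whose Fourier transform is concentrated near $0$), and that $\widehat{\varphi_0}$ is monotonically non-increasing on $[0,0.15]$, so that its minimum on $[-0.15,0.15]$ is achieved at the endpoints with value $m_R\approx 0.19643$ as recorded in the example.

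Next I would numerically compute the three constants
\begin{equation*}
\varphi_0(0)=\int_{-1}^{1}\beta(x)^2\,dx,\qquad \varphi_0''(0)=-2\int_{-1}^{1}\beta'(x)^2\,dx,\qquad \widehat{\varphi_0}(0)=\left(\int_{-1}^{1}\beta(x)\,dx\right)^2,
\end{equation*}
where $\beta(x)=e^{-1/(1-x^2)}$ on $(-1,1)$; these are integrals of explicit smooth compactly supported integrands and can be evaluated to any desired precision by standard quadrature. Rigorous numerical enclosures (e.g.\ interval arithmetic or a trapezoidal bound with controlled error) would justify the final decimal inequalities.

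Finally, substituting the computed numerical values of $\varphi_0(0)$, $\varphi_0''(0)$, $\widehat{\varphi_0}(0)$, $m_R\approx 0.19643$, $R=0.15$, and $\mathrm{vol}(Y)\le 6.5$ into the general bound yields the coefficient $2\cdot 6.5\cdot\varphi_0(0)/(\pi\cdot 0.15\cdot m_R)\le 18.7$ for $\nu^2$ and the stated constant $2577.3$. The only genuine step requiring care is keeping track of rounding in these numerical evaluations, and this is straightforward since all integrands are smooth and compactly supported; there is no analytic obstacle left beyond the derivation of the general bound already established.
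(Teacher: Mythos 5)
Your proposal follows exactly the paper's route: the proposition is obtained by substituting the test function $\varphi_0=\beta\ast\beta$, $R=0.15$, and $\mathrm{vol}(Y)\leq 6.5$ into the general bound derived just before it, and your numerical targets ($\varphi_0(0)\approx 0.1331$, $\widehat{\varphi_0}(0)\approx 0.1971$, $m_R\approx 0.19643$) reproduce the constants $18.7$ and $2577.3$. One small correction: since $\varphi_0''=\beta'\ast\beta'$ and $\beta'$ is odd, $\varphi_0''(0)=-\int_{-1}^{1}\beta'(x)^2\,dx$ (no factor of $2$); with your stated formula $-2\int\beta'^2$ the dominant term $-\mathrm{vol}(Y)\varphi''(0)/(\pi R^2)$ in the constant would roughly double and the bound $2577.3$ would fail, so this factor must be dropped before running the quadrature.
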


\begin{remark}\label{betterbound}
The bound obtained with this approach gets worse as $R$ goes to zero; the same is true for the spectral density on functions and spinors. One way to obtain significantly better estimates in this case is to consider a Margulis number $\mu>0$ for $Y$. Recall that for such a number, the set of points of $Y$ with local injectivity radius $<\mu/2$ is the disjoint union of tubes around the finitely many closed geodesics with length $<\mu$. For example, in \cite{Mey} it is shown $\mu=0.1$ is a Margulis number for all closed oriented hyperbolic three-manifolds. As the number of closed geodesics shorter than $\mu$ can be bounded above in terms of the volume \cite{GMM}, one obtains estimates for the spectral density by considering a test function supported in $[-\mu,\mu]$. We will not pursue the exact output of this approach in the present work.
\end{remark}
\vspace{0.3cm}

\subsection{Local Weyl law for eigenfunctions}\label{localweylfun}
The case of functions is more involved because of the possible appearance of small eigenvalues (i.e. the ones corresponding to imaginary parameters $r_j$). Set $\delta(\nu)$ to be the number of parameters in $[\nu,\nu+1]$, and let $\delta_s$ be the number of small eigenvalues. We apply Theorem \ref{funformula} choosing as before $R$ to be less than the injectivity radius, again the sum over geodesics vanishes, and we get the identity
\begin{equation*}
\sum \widehat{\varphi_{R,\nu}}(r_n)=-\frac{\mathrm{vol}(Y)}{2\pi}\varphi_{R,\nu}''(0)
\end{equation*}
Here we recall that $r_n\in \mathbb{R}^{\geq0}\cup i[0,1]$ corresponds to the eigenvalue $\lambda_n=1+r_n^2$.
\par
We start by bounding the number of small eigenvalues. For this purpose, we set $\nu=0$. For real $t$, we have
\begin{align*}
\widehat{\varphi}(it) &=\int_{\mathbb{R}} \varphi(x)e^{-tx}dx \\
&= \int_{\mathbb{R}} \varphi(x) \cosh(tx) dx
\end{align*}
which as a function of $t \in \mathbb{R}$ is non-negative, even, convex, and has minimum at $0$. Therefore we have
\begin{align*}
-\frac{\mathrm{vol}(Y)\varphi''(0)}{2\pi R^2} &=\sum \widehat{\varphi_R}(r_n) \\
&\geq \sum_{r_n \text{ imaginary}} \widehat{\varphi_R}(r_n) \\
&= R\cdot \sum_{r_n \text{ imaginary}} \widehat{\varphi}(Rr_n) \hspace{0.5cm} \\
&\geq R\widehat{\varphi}(0)\cdot \delta_s,
\end{align*}
so that
\begin{equation*}
\delta_s\leq- \frac{\mathrm{vol}(Y)\varphi''(0)}{2\pi R^3 \widehat{\varphi}(0)}.
\end{equation*}

To bound large eigenvalues, we look at $\widehat{\varphi_{R,\nu}}$ for $\nu\neq0$. Unfortunately, this is not necessarily positive on the imaginary axis. On the other hand, we have for $t\in[0,1]$
\begin{align*}
|\widehat{\varphi_{R,\nu}}(it)|&=\left| 2\int_{-2R}^{2R}\varphi(x/R)\cos(\nu x)e^{-xt}dx\right| \hspace{0.5cm} (\text{because } \varphi \text{ is supported on } [-2,2])\\
&=2R\left| \int_{-2}^{2}\varphi(y)\cos(\nu R y)e^{-Ryt}dy\right|\\
&\leq 2R\varphi(0) \int_{-2}^2 e^{-Rty}dy \\
&=2\varphi(0)\cdot \frac{e^{2Rt}-e^{-2Rt}}{t} \\
&\leq 2\varphi(0)(e^{2R}-e^{-2R})
\end{align*}
where the last inequality holds because $(e^{2Rt}-e^{-2Rt})/t$ is increasing for $t\in[0,1]$. The trace formula then implies
\begin{align*}
& \frac{\mathrm{vol(Y)}}{\pi}\left(\nu^2\varphi(0)-\frac{1}{R^2}\varphi''(0)\right)+2\varphi(0)\cdot(e^{2R}-e^{-2R})\delta_s \\
&\geq \sum_{\text{real}}\widehat{\varphi_{R,\nu}}(r_n)\\
&\geq\sum_{\text{real}} \widehat{\varphi_R}(r_n-\nu)\\
&\geq R \cdot m_R\cdot \delta(\nu)
\end{align*}
so that
\begin{equation*}
\delta(\nu)\leq \left(\frac{\mathrm{vol(Y)}\cdot\varphi(0)}{\pi R \cdot m_R}\right)\nu^2+\frac{1}{R \cdot m_R}\left(-\frac{\mathrm{vol(Y)}\varphi''(0)}{\pi R^2}+   2\varphi(0)\cdot({e^{2R}-e^{-2R}})\delta_s\right).
\end{equation*}

Concretely, using again the test function $\varphi_0$, we obtain.
\begin{prop}
Let $Y$ be a hyperbolic rational homology sphere with $\mathrm{vol}(Y)<6.5$ and $\mathrm{inj}(Y)>0.15$. Then the inequalities
\begin{align*}
\delta_s&\leq 637\\
\delta(\nu)&\leq 9.4\nu^2+4782
\end{align*}
hold.
\end{prop}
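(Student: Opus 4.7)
The proposition is a direct specialization of the two general bounds just derived in the subsection, namely
\begin{equation*}
\delta_s \leq -\frac{\mathrm{vol}(Y)\,\varphi''(0)}{2\pi R^3\,\widehat{\varphi}(0)}
\end{equation*}
and
\begin{equation*}
\delta(\nu) \leq \frac{\mathrm{vol}(Y)\,\varphi(0)}{\pi R\,m_R}\,\nu^2 + \frac{1}{R\,m_R}\left(-\frac{\mathrm{vol}(Y)\,\varphi''(0)}{\pi R^2} + 2\varphi(0)\bigl(e^{2R}-e^{-2R}\bigr)\delta_s\right),
\end{equation*}
applied to the concrete test function $\varphi_0 = \beta\ast\beta$ introduced at the start of the section, with the parameters $R=0.15$ and $\mathrm{vol}(Y)<6.5$. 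So the plan is essentially a numerical plug-in, and the only real tasks are (i) to verify that $\varphi_0$ satisfies the assumptions at $R=0.15$ (even, supported in $[-2,2]$, non-negative Fourier transform, maxima at the origin, minimum of $\widehat{\varphi_0}$ on $[-R,R]$ attained at $\pm R$), which has already been noted in the example, and (ii) to record the numerical values needed.

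The first step is to list the relevant constants for $\varphi_0$: $\varphi_0(0)\approx 0.133086$, $\varphi_0''(0)\approx -0.409586$, $\widehat{\varphi_0}(0)\approx 0.19713$, and $m_R = \widehat{\varphi_0}(0.15)\approx 0.19643$. These are obtained by direct numerical evaluation of the convolution $\beta\ast\beta$ and its Fourier transform; no conceptual difficulty is involved.

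The second step is to substitute these into the bound for $\delta_s$:
\begin{equation*}
\delta_s \leq \frac{6.5 \cdot 0.409586}{2\pi\cdot(0.15)^3\cdot 0.19713} < 637,
\end{equation*}
and then to substitute them into the bound for $\delta(\nu)$. The coefficient of $\nu^2$ becomes
\begin{equation*}
\frac{6.5\cdot 0.133086}{\pi\cdot 0.15\cdot 0.19643} < 9.4,
\end{equation*}
while the constant term, using $e^{0.3}-e^{-0.3}\approx 0.6089$ and the previously obtained bound $\delta_s\leq 637$, evaluates to
\begin{equation*}
\frac{1}{0.15\cdot 0.19643}\left(\frac{6.5\cdot 0.409586}{\pi\cdot (0.15)^2} + 2\cdot 0.133086\cdot 0.6089\cdot 637\right) < 4782.
\end{equation*}

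The only mildly delicate point in the whole argument is the monotonicity lemma $|\widehat{\varphi_{R,\nu}}(it)|\leq 2\varphi(0)(e^{2R}-e^{-2R})$ for $t\in[0,1]$, which has already been established in the preceding paragraphs; the propagation of the bound on $\delta_s$ into the bound on $\delta(\nu)$ is what forces the particular shape of the constant $4782$. In principle one could try to sharpen both estimates by choosing a test function better adapted to $R=0.15$ (see Remark \ref{betterbound}), but for the purposes of this proposition no such optimization is needed, and the stated inequalities follow from the numerical substitution above.
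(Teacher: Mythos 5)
Your proposal is correct and follows exactly the paper's route: the proposition is obtained by plugging the recorded numerical constants of $\varphi_0=\beta\ast\beta$ (namely $\varphi_0(0)\approx 0.133086$, $\varphi_0''(0)\approx -0.409586$, $\widehat{\varphi_0}(0)\approx 0.19713$, $m_R\approx 0.19643$) into the two general bounds for $\delta_s$ and $\delta(\nu)$ derived immediately beforehand, with $R=0.15$ and $\mathrm{vol}(Y)<6.5$. The only caveat is that the constant term in the $\delta(\nu)$ bound evaluates to roughly $4782$--$4783$ with the truncated constants, so the final rounding is tight, but this is a sub-leading numerical point and does not affect the argument.
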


\vspace{0.3cm}

\subsection{Local Weyl law for spinors}
This case is essentially the same as that of coexact $1$-forms. For a given spin$^c$ structure, denote by $\delta^D(\nu)$ the number of Dirac eigenvalues with absolute value in $[\nu,\nu+1]$. Choosing again $R=\mathrm{inj}(Y)$, we have
\begin{equation*}
\frac{\mathrm{vol}(Y)}{2\pi}\left(\frac{1}{4}\varphi_{R,\nu}(0)-\varphi_{R,\nu}^{''}(0)\right)=\sum_j\frac{1}{2}\widehat{\varphi_{R,\nu}}(s_j).
\end{equation*}
so that
\begin{equation*}
\frac{\mathrm{vol}(Y)}{\pi}\left(\frac{1}{4}\varphi(0)-\frac{1}{R^2}\varphi''(0)+\nu^2\varphi(0)\right)\geq \frac{1}{2}R\cdot m_R \cdot\delta^D(\nu)
\end{equation*}
and
\begin{equation*}
\delta^D(\nu)\leq \left(\frac{2\mathrm{vol}(Y)\cdot\varphi(0)}{\pi R\cdot m_R}\right)\nu^2+\frac{2\mathrm{vol(Y)}}{\pi R\cdot m_R}\left(\frac{1}{4}\varphi(0)-\frac{1}{R^2}\varphi''(0)\right).
\end{equation*}

\bigskip

Specializing using the test function $\varphi_0,$ we obtain:
\begin{prop}
Let $Y$ be a hyperbolic rational homology sphere with $\mathrm{vol}(Y)<6.5$ and $\mathrm{inj}(Y)>0.15$. Then for any spin$^c$ structure, the inequality
\begin{equation*}
\delta^D(\nu)\leq 18.7\nu^2+2561.3
\end{equation*}
holds.
\end{prop}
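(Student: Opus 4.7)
The plan is to derive the proposition as an immediate numerical specialization of the general upper bound
$$
\delta^D(\nu)\leq \left(\frac{2\,\mathrm{vol}(Y)\cdot\varphi(0)}{\pi R\cdot m_R}\right)\nu^2+\frac{2\,\mathrm{vol}(Y)}{\pi R\cdot m_R}\left(\frac{1}{4}\varphi(0)-\frac{1}{R^2}\varphi''(0)\right),
$$
which is already established immediately before the statement. Since that bound follows from evaluating the even trace formula for spinors against $\varphi_{R,\nu}(x)=2\varphi(x/R)\cos(\nu x)$ with $R<\mathrm{inj}(Y)$ (so the geometric sum over closed geodesics vanishes), nothing further needs to be said about the structural part of the proof.

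The concrete steps I would carry out are: first, set $R=0.15$ (the lower bound on $\mathrm{inj}(Y)$ in the hypothesis) and use $\mathrm{vol}(Y)<6.5$; second, plug in the explicit numerical data for the test function $\varphi_0=\beta\ast\beta$ recorded earlier in the section, namely $\varphi_0(0)\approx 0.133086$, $\varphi_0''(0)\approx -0.409586$, and $m_{0.15}\approx 0.19643$; third, compute the leading coefficient
$$
\frac{2\cdot 6.5 \cdot 0.133086}{\pi\cdot 0.15\cdot 0.19643}\;\leq\;18.7
$$
and the constant term
$$
\frac{2\cdot 6.5}{\pi\cdot 0.15\cdot 0.19643}\left(\frac{0.133086}{4}+\frac{0.409586}{0.0225}\right)\;\leq\;2561.3.
$$
Combining these two estimates yields the claimed inequality $\delta^D(\nu)\leq 18.7\,\nu^2+2561.3$.

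The only thing to check carefully is that $\varphi_0$ genuinely satisfies the assumption from the start of the section (even, supported in $[-2,2]$, non-negative Fourier transform, maxima of $\varphi_0$ and $\widehat{\varphi_0}$ at $0$, and minimum of $\widehat{\varphi_0}$ on $[-R,R]$ attained at $\pm R$). This has already been asserted for the example $\varphi_0=\beta\ast\beta$ at $R=0.15$, and is inherited for smaller $R$. Beyond that there is no real obstacle: the argument is purely arithmetic substitution into a bound already proved in full generality, and is of exactly the same shape as the coexact $1$-form and eigenfunction specializations carried out just above in the same section.
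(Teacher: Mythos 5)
Your proposal is correct and coincides with the paper's own proof: the general bound $\delta^D(\nu)\leq \left(\tfrac{2\,\mathrm{vol}(Y)\varphi(0)}{\pi R m_R}\right)\nu^2+\tfrac{2\,\mathrm{vol}(Y)}{\pi R m_R}\left(\tfrac{1}{4}\varphi(0)-\tfrac{1}{R^2}\varphi''(0)\right)$ is derived from the even spinor trace formula with $\varphi_{R,\nu}$ supported inside $[-2\,\mathrm{inj}(Y),2\,\mathrm{inj}(Y)]$, and the stated numbers follow by substituting $R=0.15$, $\mathrm{vol}(Y)<6.5$ and the recorded values of $\varphi_0(0)$, $\varphi_0''(0)$, $m_{0.15}$ (the arithmetic checks out: the leading coefficient is $\approx 18.69$ and the constant $\approx 2561.2$).
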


\vspace{0.5cm}
\section{Geometric bounds for the Fr\o yshov invariant}\label{proofthm1}

In this section we will use the local Weyl laws from the previous section to prove bounds on the eta invariants in terms of volume and injectivity radius. Combining this with Proposition \ref{froyeta}, we will be able to prove Theorem \ref{thm1}. For simplicity of notation, assume that we have the inequalities
\begin{align*}
\delta^*(\nu)&\leq A\nu^2+B \hspace{0.6cm} (\text{coexact 1-forms})\\
\delta_s&\leq C \hspace{1.75cm} (\text{0-form eigenvalues} < 1)\\
\delta(\nu)&\leq \frac{A}{2}\nu^2+D \hspace{0.5cm} (\text{0-form eigenvalues} \geq 1)\\
\delta^D(\nu)&\leq A\nu^2+E. \hspace{0.5cm} (\text{spinor eigenvalues})
\end{align*}
where the specific values of the constants for a manifold with $\mathrm{vol}<6.5$ and $\mathrm{inj}>0.15$ were determined in the previous section.

\subsection{Bounds on $\eta_{\mathrm{sign}}$.} \label{etasign} Recall from Subsection \ref{continuation} (after setting $L=1$): for an arbitrary admissible test function $G$,

\begin{equation*}
\eta_{\mathrm{sign}}=\frac{\int_0^{1} G_T \frac{dT}{T}+\int_1^{\infty} G_T \frac{dT}{T}}{\int_0^{\infty} \widehat{G}(T) dT}
\end{equation*} 
where
\begin{align*}  
G_T &=\sum_n Tt_n \widehat{G}(Tt_n) \\
&= 2\sum_{1 \neq [\gamma]} \ell(\gamma_0) \cdot \frac{\sin( \mathrm{hol}(\gamma) )}{|1 - e^{\mathbb{C} \ell(\gamma)} | \cdot |1 - e^{-\mathbb{C} \ell(\gamma)} |} \cdot \frac{1}{T}G' \left( \frac{\ell(\gamma)}{T} \right).
\end{align*}
For our purposes, it is convenient to restrict our attention to
$G(x)=(\frac{1}{2}\mathbf{1}_{[-1,1]})^{\ast k}$ for even $k\geq6$. This function has support in $[-k,k]$, and its Fourier transform is $\widehat{G}(t)=\mathrm{sinc}^k(t)$ where $\mathrm{sinc}(t) := \sin(t)/t$. We will denote
\begin{align*}
c_k :=\int_{0}^{\infty}\mathrm{sinc}^k(t) \; dt \\ 
d_k :=\int_{1}^{\infty}\mathrm{sinc}^k(t) \; dt.
\end{align*}
Let us consider the numerator of the expression for $\eta_{\mathrm{sign}}$; we will evaluate the first term using the geometric side of the trace formula and the second term using the spectral side. Starting with the second term, we have
\begin{align*}
\int_1^{\infty}G_T\frac{dT}{T} &=\int_{1}^{\infty}\sum_{t_n} Tt_n \cdot \mathrm{sinc}^k(Tt_n) \frac{dT}{T} \\
&=\int_{1}^{\infty}\sum_{t_n} \mathrm{sgn}(t_n) \cdot T |t_n| \cdot \mathrm{sinc}^k(T |t_n|) \frac{dT}{T} \\
&=\sum_{t_n} \mathrm{sgn}(t_n) \cdot \int_{|t_n|}^{\infty}\mathrm{sinc}^k(T)dT.
\end{align*}
We split the last sum in two parts $\sum_{|t_n|\leq 2}+\sum_{|t_n|>2}$. In absolute value, the first part can be bounded above as
\begin{align*}
\sum_{|t_n|\leq 2} \int_{|t_n|}^{\infty}\mathrm{sinc}^k(T)dT &\leq \delta^*(0)\cdot\int_0^{\infty}\mathrm{sinc}^k(T)dT+\delta^*(1)\cdot\int_1^{\infty}\mathrm{sinc}^k(T)dT \\
&=c_k\cdot B+d_k\cdot(A+B).
\end{align*}
For the second part, using $\mathrm{sinc}^k(T)\leq T^{-k}$, we have
\begin{align*}
\sum_{|t_n|\geq 2} \int_{|t_n|}^{\infty}\mathrm{sinc}^k(T)dT&\leq \sum_{|t_n|\geq 2} \int_{|t_n|}^{\infty}\frac{1}{T^k}dT \\
&=\frac{1}{k-1}\sum_{|t_n|\geq 2}\frac{1}{|t_n|^{k-1}} \\
&=\frac{1}{k-1}\sum_{m=2}^{\infty}\sum_{|t_n|\in[m,m+1]}\frac{1}{|t_n|^{k-1}} \\
&\leq\frac{1}{k-1} \sum_{m=2}^{\infty}\delta^*(m)\frac{1}{m^{k-1}}\\
&\leq \frac{1}{k-1}\sum_{m=2}^{\infty}(Am^2+B)\frac{1}{m^{k-1}}\\
&=\frac{1}{k-1}\left(A(\zeta(k-3)-1)+B(\zeta(k-1)-1)\right)
\end{align*}
and therefore
\begin{equation*}
\left|\int_1^{\infty}G_T\frac{dT}{T}\right|\leq  B\cdot c_k+(A+B)\cdot d_k+\frac{1}{k-1}\left(A(\zeta(k-3)-1)+B(\zeta(k-1)-1)\right).
\end{equation*}
For the geometric side, using a simple substitution, we have
\begin{align*}
\int_0^{1}G_T\frac{dT}{T} &= 2\int_0^1\sum \ell(\gamma_0)\frac{\sin(\mathrm{hol}(\gamma))}{|1-e^{\mathbb{C} \ell(\gamma)}||1-e^{-\mathbb{C} \ell(\gamma)}|}\frac{G' \left( \frac{\ell(\gamma)}{T} \right)}{T}\frac{dT}{T} \\
&=-2\sum \ell(\gamma_0)\frac{\sin(\mathrm{hol}(\gamma))}{|1-e^{\mathbb{C}\ell(\gamma)}||1-e^{-\mathbb{C}\ell(\gamma)}|}\frac{G(\ell(\gamma))}{\ell(\gamma)}
\end{align*}
As $\ell(\gamma)\geq 2\mathrm{inj}(Y)$, we have by taking absolute values
\begin{equation*}
\left|\int_0^{1}G_T\frac{dT}{T}\right|\leq \frac{1}{\mathrm{inj}(Y)}\sum \ell(\gamma_0)\frac{1}{|1-e^{\mathbb{C}\ell(\gamma)}||1-e^{-\mathbb{C}\ell(\gamma)}|}{G(\ell(\gamma))}.
\end{equation*}
To proceed, we notice that the right hand side is (up to a constant) the geometric side of the trace formula for functions, and we can provide bounds in terms of the spectral density of eigenfunctions. In particular, using Theorem \ref{funformula}, we have
\begin{align*}
\sum \ell(\gamma_0)\frac{1}{|1-e^{\mathbb{C}\ell(\gamma)}||1-e^{-\mathbb{C}\ell(\gamma)}|}{G(\ell(\gamma))} &=\frac{\mathrm{vol}(Y)}{2\pi}G''(0)+\sum_{r_n} \widehat{G}(r_n)\\
&\leq \sum_{r_n} \widehat{G}(r_n)
\end{align*}
where we used that $G''(0)$ is negative. We deal with imaginary and real parameters separately. For imaginary parameters, we have
\begin{equation*}
\widehat{G}(it)=\mathrm{sinc}^k(it)=\frac{\sinh^k(t)}{t^k}
\end{equation*}
which is increasing in $t$ and therefore we obtain the inequality
\begin{align*}
\sum_{\text{imaginary }r_n}\widehat{G}(r_n) &\leq \sinh(1)^k\cdot\delta_s \\
&\leq C\cdot\sinh(1)^k.
\end{align*}
For the real parameters, using $\widehat{G}\leq 1$ and $\widehat{G}\leq 1/x^k$ respectively we have
\begin{align*}
\sum_{\text{real } r_n}\widehat{G}(r_n)&=\sum_{r_n\in[0,1]}\widehat{G}(r_n)+\sum_{m=1}^{\infty}\sum_{r_n\in[m,m+1]}\widehat{G}(r_n) \\
&\leq \delta(0)\cdot 1+\sum_{m=1}^{\infty} \delta(m)\cdot\frac{1}{m^k}\\
&\leq D+\sum_{m=1}^{\infty}\frac{(A/2)m^2+D}{m^k}\\
&=\frac{A}{2}\zeta(k-2)+D(\zeta(k)+1).
\end{align*}
Hence, putting everything in \S \ref{etasign} together, we have
\begin{equation*}
\left|\int_0^{1} G_T\frac{dT}{T}\right|\leq \frac{1}{\mathrm{inj}(Y)}\left(C\cdot\sinh(1)^k+\frac{A}{2}\zeta(k-2)+D(\zeta(k)+1)\right).
\end{equation*}
Putting everything together, we obtain the following.
\begin{prop}
For every even $k\geq 6$ the inequality
\begin{align*}
\lvert\eta_{\mathrm{sign}}(Y)\lvert\leq \frac{1}{c_k}\bigg(B\cdot c_k+(A+B)\cdot d_k+\frac{1}{k-1}(A(\zeta(k-3)-1)+B(\zeta(k-1)-1))\\
+\frac{1}{\mathrm{inj}(Y)}(C\cdot\sinh(1)^k+\frac{A}{2}\zeta(k-2)+D(\zeta(k)+1))\bigg)
\end{align*}
holds, where we used the notation introduced above.
\end{prop}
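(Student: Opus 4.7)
The plan is to assemble everything that has been set up in the preceding subsections: the integral representation of $\eta_{\mathrm{sign}}$ from Section \ref{continuation}, the local Weyl laws of Section \ref{localweyl}, and the function trace formula (Theorem \ref{funformula}). Concretely, I fix $G(x) = (\frac{1}{2}\mathbf{1}_{[-1,1]})^{\ast k}$ for an even $k \geq 6$, so that $\widehat{G}(T) = \mathrm{sinc}^k(T)$ and the denominator $\int_0^\infty \widehat{G}(T)\,dT = c_k$. By formula (\ref{etazero}) with $L=1$, the proof reduces to bounding $|\int_0^1 G_T\,\frac{dT}{T}|$ and $|\int_1^\infty G_T\,\frac{dT}{T}|$ separately, then dividing by $c_k$.

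For the tail integral $\int_1^\infty G_T\,\frac{dT}{T}$, I use the spectral side of the odd trace formula: after the substitution $T|t_n| \mapsto T$ it becomes $\sum_n \mathrm{sgn}(t_n)\int_{|t_n|}^\infty \mathrm{sinc}^k(T)\,dT$. I split this sum at $|t_n|=2$: the small-$|t_n|$ part is controlled by $\delta^\ast(0) \leq B$, $\delta^\ast(1) \leq A+B$ and the integrals $c_k, d_k$, while the large-$|t_n|$ part is controlled using the elementary bound $\mathrm{sinc}^k(T) \leq T^{-k}$, the local Weyl law $\delta^\ast(\nu) \leq A\nu^2 + B$, and the resulting convergent $\zeta$-series $\zeta(k-3)$ and $\zeta(k-1)$.

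For the short integral $\int_0^1 G_T\,\frac{dT}{T}$, I use the geometric side of the odd trace formula and the substitution $y = \ell(\gamma)/T$, which produces a sum over closed geodesics with weights $G(\ell(\gamma))/\ell(\gamma)$. Since $\ell(\gamma) \geq 2\mathrm{inj}(Y)$, I pull out a factor $1/\mathrm{inj}(Y)$ and pass to absolute values. The key maneuver is then to recognize the resulting sum as (up to sign) the geometric side of the \emph{even} trace formula for functions applied to $G$: Theorem \ref{funformula} identifies it with $\frac{\mathrm{vol}(Y)}{2\pi}G''(0) + \sum_{r_n}\widehat{G}(r_n)$, and since $G''(0) < 0$ the whole quantity is bounded above by $\sum_{r_n}\widehat{G}(r_n)$. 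Splitting this spectral sum into small eigenvalues (imaginary $r_n$), where $\widehat{G}(it) = \sinh^k(t)/t^k$ is increasing on $[0,1]$ and hence bounded by $\sinh(1)^k \cdot \delta_s \leq C\sinh(1)^k$, and real $r_n$, where $\widehat{G}(r_n) \leq \min(1, r_n^{-k})$ together with $\delta(\nu) \leq \tfrac{A}{2}\nu^2 + D$ yields convergent $\zeta(k)$ and $\zeta(k-2)$ terms, gives the desired bound on the short integral.

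Collecting the two bounds and dividing by $c_k$ produces exactly the stated inequality. The only genuinely nontrivial step is the indirect estimation of the geometric integral: rather than attacking the sum over geodesics directly, one exploits the positivity of $\sin(\mathrm{hol}(\gamma))/(|1-e^{\mathbb{C}\ell}||1-e^{-\mathbb{C}\ell}|) \cdot G(\ell(\gamma))$ in absolute value together with the trivial lower bound $\ell(\gamma)\geq 2\mathrm{inj}(Y)$, and then uses the function trace formula \emph{backward} to replace this intractable geometric sum by a spectral sum that the local Weyl laws can handle. Everything else is bookkeeping involving the constants $A, B, C, D$, the $\mathrm{sinc}^k$ integrals $c_k, d_k$, and standard tail bounds.
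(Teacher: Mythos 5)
Your proposal is correct and follows essentially the same route as the paper's proof: the same choice of $G=(\tfrac12\mathbf{1}_{[-1,1]})^{\ast k}$ with $L=1$, the same split of the tail spectral sum at $|t_n|=2$ using $\delta^*$, and the same key step of bounding the geometric integral by feeding $G$ into the even trace formula for functions and then invoking $\delta_s$ and $\delta(\nu)$. No substantive differences.
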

Choosing for example $k=8$, and plugging in the constants we found in the previous section, we obtain the following effective estimate.
\begin{cor}\label{etasignbound}
If $Y$ is a hyperbolic rational homology sphere with $\mathrm{vol}(Y)<6.5$ and $\mathrm{inj}(Y)>0.15$, then $\lvert\eta_{\mathrm{sign}}(Y)\lvert\leq 108267$.
\end{cor}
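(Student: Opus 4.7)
The plan is to specialize the preceding Proposition to $k=8$ and substitute the explicit constants from the local Weyl laws of Section \ref{localweyl}: $A = 18.7$, $B = 2577.3$, $C = 637$, $D = 4782$, together with $1/\mathrm{inj}(Y) < 20/3$.

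First I would compute $c_8 = \int_0^{\infty} \mathrm{sinc}^8(t)\,dt$. A standard closed-form evaluation (expand $\sin^8$ as a real linear combination of $\cos(2jt)$ and integrate against $t^{-8}$ by parts, or use a residue calculation) gives $c_8 = 151\pi/630 \approx 0.7527$. The tail integral $d_8 = \int_1^{\infty} \mathrm{sinc}^8(t)\,dt$ is much smaller and can be obtained numerically, or bounded by exploiting the oscillation of $\sin^8$. Next I would substitute $\sinh(1)^8 \approx 3.64$ together with the standard values $\zeta(5),\zeta(6),\zeta(7),\zeta(8)$, each lying within $0.04$ of $1$. The dominant contribution to the bound comes from the $1/\mathrm{inj}(Y)$ bracket, within which the two summands $C \sinh(1)^8 \approx 2.3 \times 10^3$ and $D(\zeta(8)+1) \approx 9.6 \times 10^3$ overwhelm the rest; multiplication by $1/\mathrm{inj}(Y) \approx 6.67$ and division by $c_8 \approx 0.7527$ produces a contribution of order $10^5$, which is essentially the final answer. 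The remaining three terms in the numerator ($B c_8$, $(A+B) d_8$, and the $\zeta$-tail term) contribute only a couple of thousand in total.

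The only obstacle is arithmetic bookkeeping; no additional mathematical input beyond the Proposition and standard numerical constants is required. The one mildly delicate point is that the crude tail bound $d_8 \leq \int_1^{\infty} t^{-8}\,dt = 1/7$ is probably too loose to certify landing below $108267$, so one should use a moderate-precision numerical value for $d_8$ instead. The choice $k = 8$ is presumably optimal among even $k \geq 6$ in the present framework: increasing $k$ sharpens the tail sums $\zeta(k-3)$, $\zeta(k-2)$, etc., but enlarges $\sinh(1)^k$, which is weighted by the large factor $C/\mathrm{inj}(Y)$ and is the costliest summand; a brief check of $k = 6, 8, 10$ would confirm the selection.
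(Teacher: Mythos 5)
Your proposal is correct and is exactly the paper's argument: the corollary is obtained by setting $k=8$ in the preceding Proposition and substituting the constants $A=18.7$, $B=2577.3$, $C=637$, $D=4782$ from Section \ref{localweyl} together with $\mathrm{inj}(Y)>0.15$; carrying out the arithmetic (with $c_8=151\pi/630\approx 0.75298$, $d_8\approx 0.068$, $\sinh(1)^8\approx 3.638$) indeed yields approximately $108267$, and your observation that one needs a numerical value of $d_8$ rather than the crude bound $1/7$ is apt given how tight the final figure is.
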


\vspace{0.3cm}

\subsection{Bounds on $\eta_{\mathrm{Dir}}$} The discussion for the case of spinors is identical, with the final result obtained by substituting the constant $D$ with $E$. This is because the quantity to take it is based on the identity
\begin{align*}  
G_T &=\sum_n Ts_n \widehat{G}(Ts_n) \\
&= 2\sum_{1 \neq [\gamma]} \ell(\gamma_0) \cdot \frac{\sin( \mathrm{hol}(\tilde{\gamma}) )\cdot {\cos(\varphi_{\tilde{\gamma}})} }{|1 - e^{\mathbb{C} \ell(\gamma)} | \cdot |1 - e^{-\mathbb{C} \ell(\gamma)} |} \cdot \frac{1}{T}G' \left( \frac{\ell(\gamma)}{T} \right).
\end{align*}
Here we will again bound the integral $\int_1^{\infty}G_T\frac{dT}{T}$ using the bound for the spectral density $\delta^D(\nu)$, and the integral $\int_0^{1}G_T\frac{dT}{T}$ using the trace formula for functions. The final result is the following:
\begin{prop}
For every even $k\geq 6$ the inequality
\begin{align*}
\lvert\eta_{\mathrm{Dir}}(Y)\lvert\leq \frac{1}{c_k}\bigg(E\cdot c_k+(A+E)\cdot d_k+\frac{1}{k-1}(A(\zeta(k-3)-1)+E(\zeta(k-1)-1))\\
+\frac{1}{\mathrm{inj}(Y)}(C\cdot\sinh(1)^k+\frac{A}{2}\zeta(k-2)+D(\zeta(k)+1))\bigg)
\end{align*}
holds for all spin$^c$ structures.
\end{prop}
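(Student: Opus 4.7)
The plan is to mirror the proof of the bound on $|\eta_{\mathrm{sign}}|$ from Section \ref{etasign}, exploiting the fact that the Dirac and odd-signature eta invariants admit essentially parallel geometric expressions via the odd trace formula. We again set $G(x) = (\tfrac{1}{2}\mathbf{1}_{[-1,1]})^{*k}$ for even $k \geq 6$, use the cutoff $L = 1$, and split
\begin{equation*}
\eta_{\mathrm{Dir}} = \frac{\int_0^1 G_T \, \frac{dT}{T} + \int_1^\infty G_T \, \frac{dT}{T}}{\int_0^\infty \widehat{G}(T)\, dT},
\end{equation*}
evaluating the first integral using the geometric side and the second using the spectral side.

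For the spectral tail $\int_1^\infty G_T \, \frac{dT}{T}$, substituting the spectral expression $G_T = \sum_n T s_n \widehat{G}(T s_n)$ and the same substitution as in Section \ref{etasign} gives
\begin{equation*}
\int_1^\infty G_T \, \frac{dT}{T} = \sum_n \mathrm{sgn}(s_n) \int_{|s_n|}^\infty \mathrm{sinc}^k(T) \, dT.
\end{equation*}
Here I would repeat verbatim the argument from the odd-signature case, splitting the sum into $|s_n| \leq 2$ and $|s_n| > 2$, using $\mathrm{sinc}^k(T) \leq T^{-k}$ on the tail, and bounding the Dirac spectral counts with the local Weyl law $\delta^D(\nu) \leq A\nu^2 + E$ in place of $\delta^*(\nu) \leq A\nu^2 + B$. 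This yields the same expression as before with $B$ replaced by $E$.

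For the geometric integral $\int_0^1 G_T \, \frac{dT}{T}$, using the geometric form of $G_T$ for spinors, taking absolute values, and using $|\sin(\mathrm{hol}(\tilde{\gamma}))\cos(\varphi_{\tilde{\gamma}})| \leq 1$ gives
\begin{equation*}
\left| \int_0^1 G_T \, \frac{dT}{T}\right| \leq \frac{1}{\mathrm{inj}(Y)} \sum_{[\gamma] \neq 1} \ell(\gamma_0) \frac{1}{|1 - e^{\mathbb{C}\ell(\gamma)}||1 - e^{-\mathbb{C}\ell(\gamma)}|} G(\ell(\gamma)),
\end{equation*}
which is exactly the same geometric sum that appeared in the $\eta_{\mathrm{sign}}$ argument. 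Consequently the bound coming from the function trace formula (Theorem \ref{funformula}), involving the constants $C$, $A/2$ and $D$, transfers unchanged.

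The only genuine ``new'' input is therefore the swap of $\delta^*$ for $\delta^D$ in the spectral tail estimate; everything else is inherited. Combining the two estimates and dividing by $\int_0^\infty \widehat{G}(T)\,dT = c_k$ gives the claimed bound. The main (mild) point to verify is that the presence of the $\cos(\varphi_{\tilde{\gamma}})$ factor on the geometric side is harmless: since $|\cos(\varphi_{\tilde{\gamma}})| \leq 1$, the bound is uniform in the choice of spin$^c$ structure, which is exactly the uniformity asserted in the statement. I would expect no serious obstacle beyond carefully tracking the substitution $B \leftrightarrow E$ in the tail estimate.
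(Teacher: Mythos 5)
Your proposal is correct and follows exactly the route the paper takes: the paper's own proof consists of repeating the $\eta_{\mathrm{sign}}$ argument verbatim, replacing the coexact local Weyl constant $B$ by the spinor constant $E$ in the spectral tail while the geometric estimate (which only uses $|\sin(\mathrm{hol}(\tilde{\gamma}))\cos(\varphi_{\tilde{\gamma}})|\leq 1$ and the function trace formula with constants $C$, $A/2$, $D$) is unchanged. Your observation that this makes the bound uniform over spin$^c$ structures is also the intended point.
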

Setting again $k=8$, we have the following effective estimate.
\begin{cor}\label{etadiracbound}
If $Y$ is a hyperbolic rational homology sphere with $\mathrm{vol}(Y)<6.5$ and $\mathrm{inj}(Y)>0.15$, then for every spin$^c$ structure $\lvert\eta_{\mathrm{Dir}}(Y)\lvert\leq 108249$.
\end{cor}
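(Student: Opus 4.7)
The plan is to reduce Corollary \ref{etadiracbound} to a direct numerical specialization of the preceding proposition. Concretely, I would apply the inequality
\[
\lvert\eta_{\mathrm{Dir}}(Y)\rvert\leq \frac{1}{c_k}\bigg(E\cdot c_k+(A+E)\cdot d_k+\frac{1}{k-1}\bigl(A(\zeta(k-3)-1)+E(\zeta(k-1)-1)\bigr)+\frac{1}{\mathrm{inj}(Y)}\bigl(C\sinh(1)^k+\tfrac{A}{2}\zeta(k-2)+D(\zeta(k)+1)\bigr)\bigg)
\]
with the choice $k=8$, using the values $A=18.7$, $B=2577.3$, $C=637$, $D=4782$, $E=2561.3$ that were established in Section \ref{localweyl} under the hypotheses $\mathrm{vol}(Y)<6.5$ and $\mathrm{inj}(Y)>0.15$, together with the lower bound $\mathrm{inj}(Y)>0.15$ in the denominator.

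The first substantive step is to compute the constants $c_8=\int_0^\infty\mathrm{sinc}^8(t)\,dt$ and $d_8=\int_1^\infty\mathrm{sinc}^8(t)\,dt$ to sufficient precision; both are standard definite integrals of a rapidly decaying even function and can be evaluated either in closed form (the integrals $\int_0^\infty \mathrm{sinc}^n(t)\,dt$ admit a well-known expression as a finite linear combination of polynomials in $n$) or by straightforward numerical quadrature, since the integrand is smooth and decays like $1/t^8$. The zeta values $\zeta(5),\zeta(6),\zeta(7),\zeta(8)$ are tabulated to arbitrary precision, and $\sinh(1)^8$ is just $\bigl(\tfrac{e-e^{-1}}{2}\bigr)^8\approx 3.6$.

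Next I would assemble the two main groups of terms separately, namely the \emph{spectral contribution}
\[
E\cdot c_8+(A+E)\cdot d_8+\tfrac{1}{7}\bigl(A(\zeta(5)-1)+E(\zeta(7)-1)\bigr),
\]
and the \emph{geometric contribution}
\[
\tfrac{1}{0.15}\bigl(C\sinh(1)^8+\tfrac{A}{2}\zeta(6)+D(\zeta(8)+1)\bigr),
\]
and add them. Dividing the sum by $c_8$ and rounding up yields the claimed bound $108249$. The only real obstacle is arithmetic bookkeeping: one must verify that the individual truncations and rounding in each term accumulate to at most a unit in the final bound, since the constants $A,\dots,E$ are themselves rounded. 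The choice $k=8$ balances the growth in $\sinh(1)^k$ (coming from the small eigenvalues of the Laplacian on functions) against the decay in $d_k$ and in $1/(k-1)\cdot\zeta(k-3)$; larger $k$ makes the spectral tail cheaper but blows up the geometric contribution through $C\sinh(1)^k$. One could in principle optimize over $k$ to slightly sharpen the bound, but this is not needed for the stated inequality.

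Finally, the bound is manifestly independent of the spin$^c$ structure: the constants $A$ and $E$ in the local Weyl law for $\delta^D(\nu)$ proved in Section \ref{localweyl} depend only on $\mathrm{vol}(Y)$ and $\mathrm{inj}(Y)$ via the test-function estimate based on $\varphi_0$, and the geometric contribution only involves the length spectrum of $Y$ and the bound on small eigenvalues of the scalar Laplacian, none of which see the spin$^c$ structure. Hence the estimate holds uniformly for all spin$^c$ structures on $Y$, as claimed.
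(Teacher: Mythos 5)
Your proposal is correct and is exactly the paper's argument: the corollary is obtained by setting $k=8$ in the immediately preceding proposition and substituting the constants $A=18.7$, $C=637$, $D=4782$, $E=2561.3$ from the local Weyl laws, together with $\mathrm{inj}(Y)>0.15$, followed by numerical evaluation of $c_8$, $d_8$, the zeta values and $\sinh(1)^8$. The arithmetic indeed lands at approximately $108247$--$108249$, and your observation that the bound is uniform in the spin$^c$ structure matches the paper's reasoning.
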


\vspace{0.3cm}

\subsection{Proof of Theorem \ref{thm1}} Given our discussion, Theorem \ref{thm1} follows directly from the fact that for a minimal hyperbolic $L$-space
\begin{equation*}
h(Y,\spin)=-\frac{\eta_{\mathrm{sign}}}{8}-\frac{\eta_{\mathrm{Dir}}}{2},
\end{equation*}
see Proposition \ref{froyeta}.

\vspace{0.5cm}
\section{An explicit example: the Weeks manifold}\label{weeks}

In order to prove Theorem \ref{thm1}, we used test functions supported in $[-2\cdot\mathrm{inj},2\cdot\mathrm{inj}]$, as the only geometric input we had about the length spectrum was the injectivity radius. On the other hand, in specific examples one can access very concrete information about the length spectrum, and therefore one can apply the trace formula to a much larger class of test functions. In turn, one can use this to provide explicit computation of Fr\o yshov invariants. In this section, we show how this approach can be implemented in a simple example of minimal hyperbolic $L$-space, the Weeks manifold $\W$; a similar approach works also for other small volume hyperbolic minimal $L$-spaces in the Hodgson-Weeks census we discussed in \cite{LL}. 
\bigskip

Recall that $H_1(\W,\mathbb{Z})=(\mathbb{Z}/5\mathbb{Z})^2$. In our discussion, we will not discuss explicitly error bounds, to keep the section streamlined; we will deal with rigorous estimates of errors in our approximations when dealing with the Seifert-Weber dodecahedral space in the proof of Theorem \ref{thm2}. To this end, one can think of this section as both a warm-up exercise and a sanity check - the latter because the Fr\o yshov invariants of $W$ can be computed directly with other purely topological methods. We have indeed the following.
\begin{prop}\label{froyweeks}
The unique spin structure on $\W$ has Fr\o yshov invariant $-1/2$. For the remaining $24$ spin$^c$ structures the Fr\o yshov invariant either $\pm1/10$, or $\pm3/10$, and each value is the invariant of exactly six spin$^c$ structures.
\end{prop}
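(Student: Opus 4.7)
The plan is to compute $h(\W,\spin)$ via Proposition \ref{froyeta}, exploiting the fact (established in \cite{LL}) that $\W$ is a minimal hyperbolic $L$-space. Since $h(\W,\spin) = -\eta_{\mathrm{sign}}/8 - \eta_{\mathrm{Dir}}/2$, the task splits into computing $\eta_{\mathrm{sign}}$ once (it is spin$^c$-independent) and computing $\eta_{\mathrm{Dir}}(\W,\spin)$ for each of the $25$ spin$^c$ structures on $\W$.

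First, I would approximate $\eta_{\mathrm{sign}}$ by evaluating the analytic expression (\ref{etazero}) from Section \ref{continuation} with a Gaussian test function $G(x) = e^{-x^2/(2c)}$, for which both $G$ and $\widehat{G}$ are rapidly decaying. With a cutoff $L$ separating the two halves of the numerator, one evaluates the near-zero contribution using the geometric sum over closed geodesics (weighted by $\sin(\mathrm{hol}(\gamma))$), taken from SnapPy's complex length spectrum of $\W$, and evaluates the near-infinity contribution using the spectral sum, whose entries are the eigenvalues $t_j$ of $\ast d$ on coexact $1$-forms: their absolute values $|t_j| = \sqrt{\lambda_j^*}$ are extracted from the even trace formula as in \cite{LL}, and their signs are pinned down by consistency with the odd trace formula of Theorem \ref{coexformula}.

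Next, for each of the $25$ spin$^c$ structures I would compute $\eta_{\mathrm{Dir}}(\W,\spin)$ by the analogous procedure applied to the spinor trace formula in Theorem \ref{spinorformula}. The essential new ingredient is the spin$^c$ length spectrum $(\mathrm{hol}(\tilde\gamma)/2,\, \varphi(\gamma))$, produced by the algorithm of Section \ref{spinlength}: fix a base spin structure $\spin_0$, lift the face-pairing elements of SnapPy's Dirichlet domain to $\SL_2(\C)$ as in \S \ref{lifttoSL2C}, enumerate the $25$ twisting characters $\varphi: \pi_1(\W) \to \mathbb{Q}/\mathbb{Z}$ by Smith normal form as in \S \ref{smithnormalform}, and for every conjugacy-class representative apply the Dirichlet reduction of \S \ref{dirichletdomainreduction} to express $\gamma$ as a word in face pairings and thereby evaluate $\mathrm{hol}(\tilde\gamma)/2$ and $\varphi(\gamma)$. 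Combining with $\eta_{\mathrm{sign}}$ yields a numerical approximation of each $h(\W,\spin)$.

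The main obstacle is identifying the exact rational values from the numerical output. As announced at the start of the section, we postpone rigorous error control to the Seifert-Weber analysis, and here treat the computation as a demonstration of the method plus a sanity check. The approximations snap unambiguously to the stated values because the fractional part of $h(\W,\spin) \pmod{\mathbb{Z}}$ is determined by the quadratic refinement of the linking form on $H_1(\W;\mathbb{Z}) = (\mathbb{Z}/5\mathbb{Z})^2$, restricting $h(\W,\spin)$ to a discrete set whose non-spin values are separated by $1/5$, so modest accuracy suffices to select the correct representative from $\{\pm 1/10, \pm 3/10\}$ for non-spin structures and $-1/2$ for the spin structure. Finally, the uniform multiplicity of $6$ per non-spin value follows from equivariance of $h$ under the action of $\mathrm{Isom}(\W)$ on $\mathrm{Spin}^c(\W)$, whose orbits partition the $24$ non-spin structures into four sets of six; the outcome agrees with the purely topological calculation of \cite{MO}, providing independent confirmation.
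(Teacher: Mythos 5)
Your proposal inverts the logical structure of the paper's argument, and as written it contains a genuine gap. The paper's actual proof of Proposition \ref{froyweeks} is purely topological: it cites the fact that $\W$ is the double branched cover of the knot $9_{49}$ and invokes the Goeritz-matrix computation of correction terms from \cite{MO} (extending Ozsv\'ath--Szab\'o's method for alternating knots). The spectral computation you describe is carried out in the paper only as a consistency check, and the paper is explicit that it does \emph{not} constitute a proof without the error estimates: ``In order to make this approximate computation into an actual proof of Proposition \ref{froyweeks}, we need to provide estimates on the terms we did not take into account in the sum,'' and those estimates are deliberately deferred to the Seifert--Weber case. Your proposal makes the numerical computation the proof while simultaneously announcing that you ``postpone rigorous error control.'' That is the gap: the assertion that the approximations ``snap unambiguously'' to values separated by $1/5$ is meaningless until you exhibit a rigorous bound, smaller than $1/10$, on the total error coming from (i) truncating the geodesic sum at the available cutoff (which requires prime-geodesic-type counting bounds as in Section \ref{geodesicSW}), (ii) the spectral tail beyond the eigenvalues you can locate (which requires refined local Weyl laws as in Section \ref{localweyl}), and (iii) the uncertainty in the location and multiplicity of the small Dirac eigenvalues. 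None of these is executed in your proposal, so no rational value is actually pinned down.

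Two smaller points. First, the fractional-part constraint you invoke requires computing the linking form of $\W$ and running the four-manifold argument of Section \ref{proofthm2}; this is plausible but is additional unexecuted work (the paper only does it for $\SW$). Second, the equidistribution ``six structures per value'' does not follow from $\mathrm{Isom}(\W)$-equivariance alone: you would still need to show that the four orbits receive the four \emph{distinct} values $\pm 1/10,\pm 3/10$, which again requires either the completed spectral computation or the topological input from \cite{MO}. The clean fix is to do what the paper does: prove the proposition by citing \cite{MO} and \cite{MedVes}, and present the trace-formula computation as corroboration rather than as the proof.
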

This is essentially showed in \cite{MO}, using the fact that $\W$ is the branched double cover of the knot $9_{49}$ \cite{MedVes}. The key point is that the $9_{49}$ differs from an alternating knot only for an extra twist, and the authors showed that under favorable circumstances this allows to compute the Fr\o yshov invariants in terms of a Goeritz matrix for the knot (generalizing the method of Ozsv\'ath and Szab\'o \cite{OSalt}, which applies to alternating knots).
\\
\par
Going back to our spectral approach, the signature eta invariant was computed in \cite{CGHW} to be
\begin{equation*}
\eta_{\mathrm{sign}}=0.040028711\dots\footnote{A good approximation of this value can also be computed directly using our approach; we will see this in detail in the case of the Seifert-Weber dodecahedral space in the next section.}
\end{equation*}
We compute the eta invariant for the Dirac operators using our explicit expression from Section \ref{continuation}. In order to do so, we first compute the spin$^c$ length spectra up to cutoff $R=6.5$ using the algorithm described in Section \ref{spinlength} (applied to data obtained from SnapPy). We then take the approach from \cite{LL} and use the even trace formula to obtain information about the spectrum. More specifically, using ideas of Booker and Strombergsson, for each spin$^c$ structure $\spin$ we determine an explicit function
\begin{equation*}
J_{\spin}:\mathbb{R}^{\geq0}\rightarrow \mathbb{R}^{\geq0}
\end{equation*}
with the property that if $\pm s$ are eigenvalues of the Dirac operator whose multiplicities add to $m$, then $J_{\spin}(|s|)\geq m$. The pictures in the range $[0,6]$ can be found in Figure \ref{WeeksDirac}; Class $1$ is the class of the spin structure, and the remaining $24$ spin$^c$ structures can be grouped in four groups (each consisting of six elements) with identical picture.
\begin{remark}
In fact, this is a consequence of the action of the isometry group $D_{12}$ on the set of spin$^c$ structures \cite{MedVes}. We will exploit symmetry under the isometry group in the more complicated case of $\SW$ in Section \ref{proofthm2}.
\end{remark}

\begin{figure}
\centering
\begin{subfigure}{.6\linewidth}
\includegraphics[width=\linewidth]{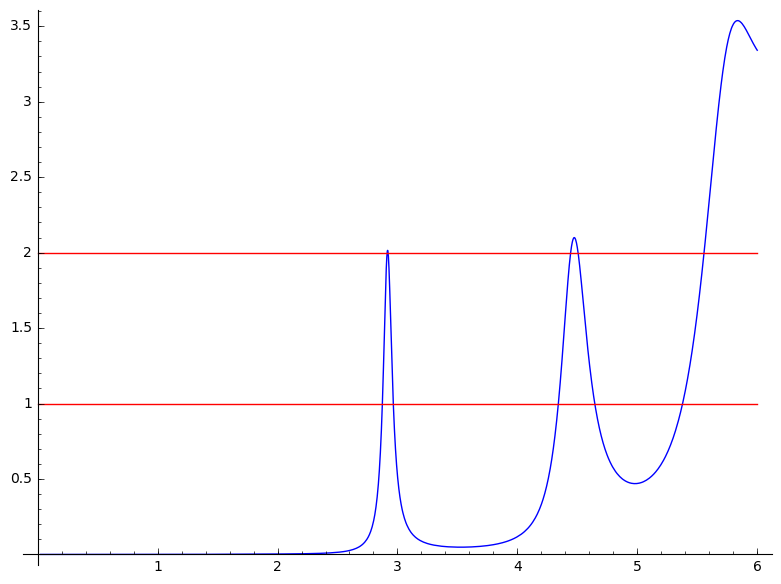}
\caption{Class $1$ (the spin structure)}
\end{subfigure}

\begin{subfigure}{.45\linewidth}
\includegraphics[width=\linewidth]{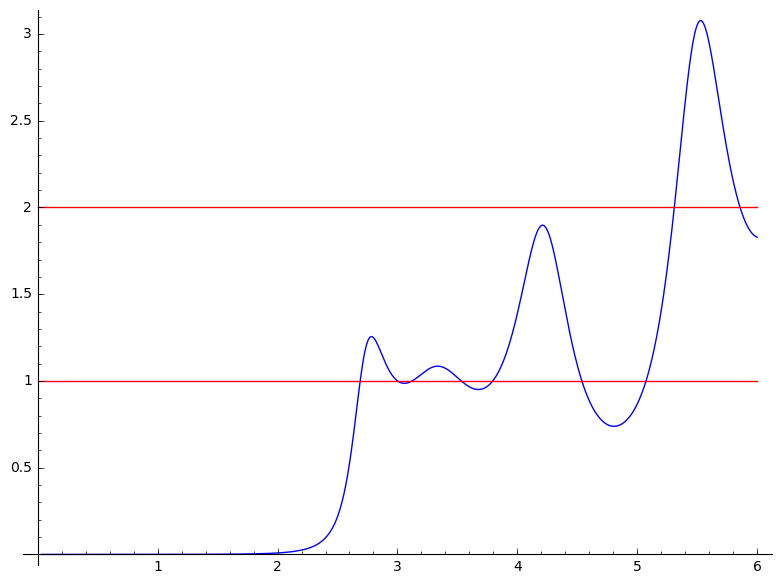}
\caption{Class $2$}
\end{subfigure}
\begin{subfigure}{.45\linewidth}
\includegraphics[width=\linewidth]{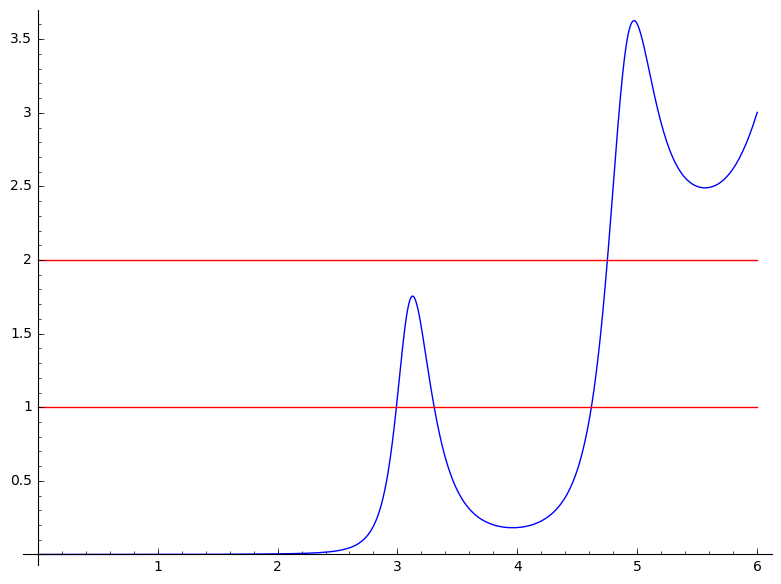}
\caption{Class $3$}
\end{subfigure}
\begin{subfigure}[b]{.45\linewidth}
\includegraphics[width=\linewidth]{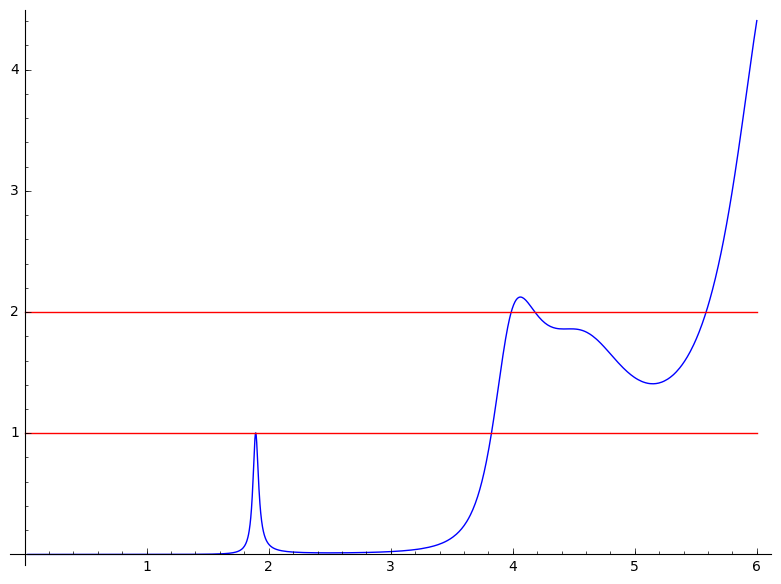}
\caption{Class $4$}
\end{subfigure}
\begin{subfigure}[b]{.45\linewidth}
\includegraphics[width=\linewidth]{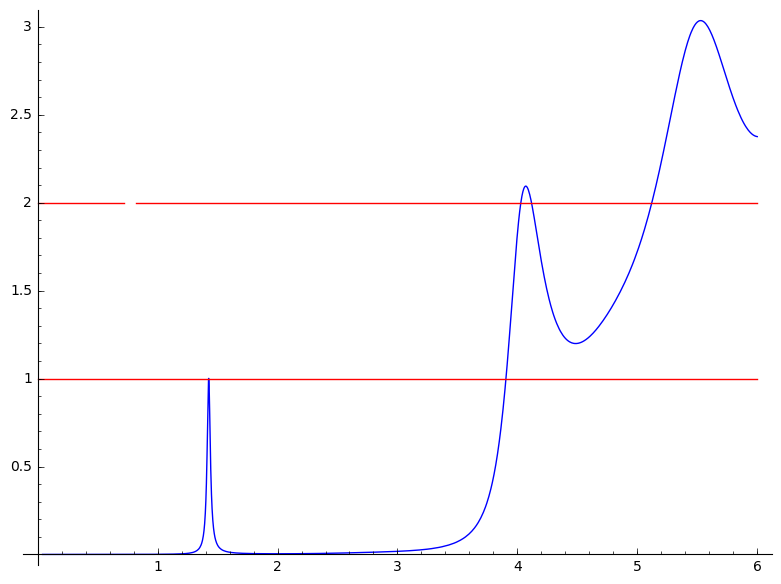}
\caption{Class $5$}
\end{subfigure}
\caption{Pictures for the Weeks manifold}
\label{WeeksDirac}
\end{figure}

We compute an approximation to $\eta_{\mathrm{Dir}}$ via the formula (\ref{etazero}) taking $G$ to be a Gaussian, see Lemma \ref{functionsok}. In this process, we only consider the sum over geodesics with length $\leq 6.5$, and only sum over eigenvalues where a precise guess can be made. More specifically:
\begin{itemize}
\item In the Classes $1$, $4$, $5$, we consider the contribution of the smallest eigenvalue as suggested by the picture (and consider the rest of the spectrum as an error term). Let us point out that can in principle use the method of \cite{LL2} to actually prove that an eigenvalue with the right multiplicity in the tiny window suggested by the picture; here we also use the fact that in the spin case eigenvalues always have even multiplicity, because the corresponding Dirac operator in quaternionic linear. The sign of the eigenvalue can also be determined by using the odd trace formula in Theorem \ref{spinorformula}.
\item In Classes $2$ and $3$, where a precise value for the first eigenvalue is not available, we simply consider the whole spectrum to be an error term, and take a more dilated Gaussian (which has a narrower Fourier transform) to make such error smaller. This works well because there is a good spectral gap (but increases of course the error coming from truncating the sum over the geodesics).
\end{itemize} 
Using the formula (\ref{etazero}) with $L=1$, we obtained the following approximate values for $h=-\eta_{\mathrm{sign}}/8-\eta_{\mathrm{Dir}}/2$:
\begin{center}
\begin{tabular} { | c | c |}
\hline
Spin$^c$ class & Approximate value of $h$  \\
\hline
1 &  $-0.49997\dots$  \\
\hline
2 & $0.09971\dots$ \\
\hline
3 & $-0.09994\dots$\\
\hline
4 & $0.29994\dots$\\
\hline
5 & $-0.29994\dots$\\
\hline
\end{tabular}
\end{center}
which are very close to the exact values in Proposition \ref{froyweeks}. 
\\
\par
In order to make this approximate computation into an actual proof of Proposition \ref{froyweeks}, we need to provide estimates on the terms we did not take into account in the sum. Let us discuss the various steps:
\begin{enumerate}
\item We know that the Fr\o yshov invariants are rational numbers, and in fact information about their fractional part can be obtained by purely topological means. For example, in the case of Weeks, one can show a priori that all Fr\o yshov invariants have the form $(2n+1)/10$. Given this, one only needs to prove that the errors are less than $\frac{1}{20}$ in order to conclude. In fact, by taking into account additional topological information coming from the linking form, one only needs to prove much weaker estimates for the error; this will be crucial in our proof of Theorem \ref{thm2}.
\item To bound the error resulting from truncating the sum over geodesics, we need to have a good understanding of the geodesics with length in a certain interval; in a specific example, this can be done effectively using the trace formula for functions (in the same spirit as the Prime geodesic theorem with errors \cite{Bus}).
\item To bound the error on the spectral side, the key observation is that large eigenvalues contribute very little to the error. For example (thinking about Classes $1$, $4$ and $5$), picking $G(x)=e^{-x^2/2}$ (for which $\widehat{G}(t)=\sqrt{2\pi}e^{-x^2/2}$ and $\int_0^{\infty} \widehat{G}(t)dt=\pi$) and $L=1$, an eigenvalue with $|t|>3.5$ contributes to the eta invariant at most
\begin{equation*}
\frac{1}{\pi}\int_{3.5}^{\infty}\sqrt{2\pi} e^{-x^2/2}dx=0.00046\dots
\end{equation*}
The key point is then to get reasonable bounds on the spectral density (sharper than those in Section \ref{localweyl}).
\end{enumerate}
While we will not pursue the details of this approach here for the Weeks manifold, we will do so in subsequent sections for the more challenging case of the Seifert-Weber manifold in order to prove Theorem \ref{thm2}. In particular, each of the next three sections will address one of the three aspects discussed above. Notice that in the case of $(2)$ and $(3)$, the error estimate can be improved by computing a larger portion of the (spin$^c$) length spectrum.

\begin{remark}
Let us point out that combining Propositions \ref{froyweeks} and \ref{froyeta} we obtain a formula for the Dirac eta invariants $\eta_{\mathrm{Dir}}$ of the Weeks manifold in terms of the signature eta invariant $\eta_{\mathrm{sign}}$. In particular, this allows us to provide a computation of $\eta_{\mathrm{Dir}}$ which is as precise as the one provided by Snap; for example, as mentioned in the introduction we have
\begin{equation*}
\eta_{\mathrm{Dir}}=0.989992\dots
\end{equation*}
for the unique spin structure on the Weeks manifold.
\end{remark}

\vspace{0.5cm}
\section{The linking form of the Seifert-Weber dodecahedral space}\label{linkingSW}

In this section we compute the linking form of the Seifert-Weber dodecahedral space $\SW$. This will be the key (and only) topological input for our computation of its Fr\o yshov invariants, and will give us concrete information on their fractional parts. Recall that for oriented rational homology three spheres, the linking form is the bilinear form
\begin{equation*}
Q: H_1(Y,\mathbb{Z})\times H_1(Y,\mathbb{Z})\rightarrow \mathbb{Q}/\mathbb{Z}
\end{equation*}
defined geometrically as follows: given elements $x,y$, choose $n$ for which $ny=0$ and pick a chain $T$ with $\partial T=ny$. Then $Q(x,y)$ is the intersection number of $x$ and $T$, divided by $n$. More abstractly, the corresponding map
\begin{equation*}
Q^{\hash}:H_1(Y,\mathbb{Z})\rightarrow \mathrm{Hom}(H_1(Y,\mathbb{Z}), \mathbb{Q}/\mathbb{Z})\cong H^1(Y,\mathbb{Q}/\mathbb{Z})
\end{equation*}
is the composition
\begin{equation*}
H_1(Y,\mathbb{Z})\cong H^2(Y,\mathbb{Z}) \cong H^1(Y,\mathbb{Q}/\mathbb{Z})
\end{equation*}
where the first isomorphism is Poincar\'e duality and the second is the Bockstein homomorphism for the short exact sequence of coefficients
\begin{equation*}
0\rightarrow \mathbb{Z}\rightarrow\mathbb{Q}\rightarrow\mathbb{Q}/\mathbb{Z} \rightarrow 0.
\end{equation*}
In particular, $Q^{\hash}$ is an isomorphism and $Q$ is non-degenerate.

\begin{prop}\label{spincSW}
For the explicit basis $a,b,c$ of $H_1(\SW)\cong (\mathbb{Z}/5\mathbb{Z})^3$ identified in the proof below, the linking form $Q_{\SW}$ is represented by the matrix
\begin{equation*}
\left( \begin{array}{ccc}
0&2/5&3/5\\
2/5&0&2/5\\
3/5& 2/5&0
\end{array} \right).
\end{equation*}

\end{prop}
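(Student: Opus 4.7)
The plan is to compute the linking form directly from the natural CW decomposition of $\SW$ arising from its construction as the quotient of a regular hyperbolic dodecahedron $P \subset \mathbb{H}^3$ by the $3/10$-twist identifications of opposite faces. The $72^\circ$ dihedral angle of $P$ forces exactly five edges to meet at each 1-cell of the quotient, so the thirty edges of $P$ fall into six orbits of five; combined with $\chi(\SW) = 0$, this yields a CW structure on $\SW$ with one $0$-cell, six $1$-cells $e_1,\dots,e_6$, six $2$-cells $f_1,\dots,f_6$, and one $3$-cell. First I would fix explicit labellings of the edge-orbits and face-pairs and read off the cellular boundary $\partial_2 : \mathbb{Z}^6 \to \mathbb{Z}^6$ by expressing each length-five attaching word $\partial f_i$ in the $e_j^{\pm 1}$; call the resulting matrix $M$. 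Since $\partial_1 = 0$ (single vertex) and $\partial_3 = 0$ (each face of $P$ appears twice in the top cell with opposite orientations), we have $H_1(\SW,\mathbb{Z}) = \mathbb{Z}^6/M\mathbb{Z}^6$, and Smith normal form both confirms the isomorphism with $(\mathbb{Z}/5\mathbb{Z})^3$ and extracts the basis $a,b,c$.

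Next I would compute the linking form via the standard chain-level recipe. Composing Poincar\'e duality $H_1(\SW) \xrightarrow{\sim} H^2(\SW)$ with the Bockstein isomorphism $H^2(\SW,\mathbb{Z}) \xleftarrow{\sim} H^1(\SW,\mathbb{Q}/\mathbb{Z})$ and evaluating, one finds: for $x,y \in \mathbb{Z}^6$ representing $[x],[y]\in H_1(\SW)$ and any rational chain $v \in \mathbb{Q}^6$ with $Mv = y$,
\[ Q([x],[y]) \;\equiv\; x^T v \;=\; x^T M^{-1} y \pmod{\mathbb{Z}}, \]
provided the dual cells pair with the original cells by the identity matrix — which holds in the dodecahedral CW structure once one orients the faces and edges compatibly and matches $\{e_i\}$ to $\{f_i\}$ in the natural way. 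A single matrix inversion of $M$ followed by the Smith change of basis to $a,b,c$ then produces the displayed $3\times 3$ matrix.

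The principal obstacle will be the combinatorial bookkeeping: tracking signs through the $3/10$ twist in the attaching words $\partial f_i$, and matching orientations in the dual-cell identification. A sign mistake in $\partial_2$ still recovers the correct group $(\mathbb{Z}/5)^3$ but can alter off-diagonal entries of the linking form in a nontrivial way. Built-in sanity checks include $|\det M| = 125 = |H_1(\SW)|$, the required symmetry of $Q$ (automatic from the derivation), and compatibility with the order-$60$ isometry group of $\SW$ acting on $H_1(\SW)$ by isometries of $Q$; in particular the visible reflection symmetry of the displayed matrix under swap of $a$ and $c$ reflects one such constraint that the final answer must satisfy.
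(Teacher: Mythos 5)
Your overall framework -- the dodecahedral CW structure with one $0$-cell, six $1$-cells, six $2$-cells and one $3$-cell, the presentation matrix $M=\partial_2$, Smith normal form to get $(\mathbb{Z}/5)^3$, and then ``invert $M$ mod $\mathbb{Z}$'' -- is essentially the skeleton of the paper's argument. But there is a genuine gap in the step where you claim $Q([x],[y])\equiv x^TM^{-1}y$ with \emph{both} $x$ and $y$ written in the edge basis $e_1,\dots,e_6$, on the grounds that ``the dual cells pair with the original cells by the identity matrix'' once one matches $\{e_i\}$ to $\{f_i\}$ in the natural way. No such matching exists. The chain-level recipe $Q([x],[y])=\frac{1}{n}\langle x,T\rangle$ with $\partial T=ny$, $T=\sum v_if_i$, requires the intersection numbers $\langle x,f_i\rangle$, and a cellular $1$-cycle built from the edges $e_j$ is not transverse to the $2$-cells $f_i$ (the edges lie in their boundaries). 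The cycles that do pair with the $f_i$ by the identity matrix are the $1$-cells of the \emph{dual} decomposition: the arcs $A_i$ joining the center of a face to the center of its identified partner. Expressing these arcs in terms of $a,b,c$ is a separate, nontrivial combinatorial computation -- in the paper one finds $A=4a+2b+4c$, $B=3a+b+4c$, $C=4a+b+3c$, nowhere near the identity -- and this change of basis materially changes the off-diagonal entries of the final $3\times3$ matrix. Your sanity checks ($|\det M|=125$, symmetry of $Q$, equivariance under the isometry group) would not detect this error, since the wrong form obtained from the edge basis can still be symmetric, nondegenerate of the right order, and admit the same symmetry group up to change of basis.

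The fix is exactly the step you omitted: after finding the relations $5a=\partial(F_1-F_2+F_3+F_5-F_6)$ and its analogues, introduce the dual arcs $A,B,C$ through the face centers, observe that $A$ meets $F_1$ once and the other face-pairs not at all (and similarly for $B,F_2$ and $C,F_3$), read off $\mathrm{lk}(A,a)$ etc.\ from the explicit bounding chains, and only then convert both sides to the basis $a,b,c$ using the computed homology classes of $A,B,C$. With that step restored your argument coincides with the paper's proof.
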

\begin{remark}In fact, one can show that (up to scalar) every non-degenerate form has that shape with respect to some basis. But our point is that we also can identify the basis $a,b,c$ geometrically in this specific case. Our computation also suggests that the linking form of a hyperbolic three-manifold is computable by taking a Dirichlet domain with face pairing maps as input, so that the approach to the computation of the Fr\o yshov invariants of $\SW$ we will discuss in the upcoming sections works more generally.
\end{remark}
\begin{proof}
Recall that $\SW$ is obtained from a dodecahedron by identifying opposite faces by $3/5$ of a full counterclockwise rotation. The identification is described explicitly in \cite{Eve}, see Figure \ref{dodecahedron}. We have that:
\begin{itemize}
\item the $20$ vertices are identified to a single point;
\item the $30$ edges are identified in $6$ groups of $5$ elements, and we denote the corresponding generators of homology $a,b,c,d,e,f$.
\item the $12$ faces are identified in pairs as follows: $1 \leftrightarrow 12$, $2 \leftrightarrow 9$, $3 \leftrightarrow 10$, $4 \leftrightarrow 11$, $5 \leftrightarrow 7$, $6 \leftrightarrow 8$.
\end{itemize}
The six pairs of faces give us relations between the generators of the first homology. We will denote the corresponding chain by $F_i$, where $i$ is the smallest of the two indices, and orient it with the opposite orientation as the one inherited as a subset of the plane. We get:
\begin{align*}
(\partial F_1)& \hspace{0.9cm}  a+b+c+d+e&=0 \\
(\partial F_2) & \hspace{0.45cm} -a-b+c \hspace{0.7cm}+e+f&=0\\
(\partial F_3)& \hspace{0.9cm} a-b-c+d \hspace{0.7cm}+f&=0\\
(\partial F_4)& \hspace{1.55cm} b-c-d+e+f&=0\\
(\partial F_5)& \hspace{0.9cm} a \hspace{0.6cm}+c-d-e+f&=0\\
(\partial F_6)& \hspace{0.45cm} -a+b \hspace{0.6cm}+d-e+f&=0
\end{align*}
Simple elementary row operations reduce this system to
\begin{align*}
d&=a+2b+3c\\
e&=3a+2b+c\\
f&=3a+4b+3c
\end{align*}
and 
\begin{equation}\label{5is0}
5a=5b=5c=0
\end{equation}
We therefore see that $a,b,c$ generate the first homology $H_1(\SW)\cong (\mathbb{Z}/5)^3$. Furthermore, we can interpret the equations (\ref{5is0}) as the geometric identities
\begin{align*}
5a&=\partial (F_1-F_2+F_3+F_5-F_6)\\
5b&=\partial(F_1-F_2-F_3+F_4+F_6)\\
5c&=\partial(F_1+F_2-F_3-F_4+F_5).
\end{align*}
To compute the linking form it is convenient to introduce a second basis of the homology: let $A$, $B$, $C$ be the generators corresponding to the oriented segments connecting the centers of the faces $12$ to $1$, $9$ to $2$ and $10$ to $3$ respectively. Using Figure \ref{dodecahedron}, it is easy to compute that at the level of homology
\begin{align*}
A&=4a+2b+4c\\
B&=3a+b+4c\\
C&=4a+b+3c
\end{align*}
For our choice of orientation, we have that $A$ has intersection $+1$ with $F_1$, and $0$ with the other pairs; the analogous statement holds for $B$ and $F_2$ and $C$ and $F_3$ respectively. Using the geometric descriptions of chains bounding for $5a,5b$ and $5c$ above, we therefore get the following values for the linking numbers between elements in $A,B,C$ and $a,b,c$.
\begin{center}
\begin{tabular} { | c | c |c|c|}
\hline
lk & a&b&c  \\
\hline
A & $1/5$ & $1/5$ & $1/5$   \\
\hline
B & $-1/5$ & $-1/5$ & $1/5$ \\
\hline
C & $1/5$ & $-1/5$ & $-1/5$\\

\hline
\end{tabular}
\end{center}
Finally, a simple change of basis to express everything in terms of  the basis $a,b,c$ concludes the proof.
\end{proof}

A useful observation for what follows is that $Q_{\SW}(x,x)$ for $x\neq0$ attains possible values as in the following table.
\begin{center}
\begin{tabular} { | c | c |c|c|}
\hline
$Q_{\SW}(x,x)$ & number of $x\neq 0$ that attain the value  \\
\hline
0 &  24  \\
\hline
1/5 & 30 \\
\hline
2/5 & 20\\
\hline
3/5 & 20\\
\hline
4/5 & 30\\
\hline
\end{tabular}
\end{center}

\bigskip

Another useful observation can be made by looking at the isometry group of $\SW$. Recall that the latter is isomorphic to the symmetric group $S_5$, and acts faithfully on the first homology group \cite{Med}. In fact, from the description in \cite{Med} we see that the natural map
\begin{equation*}
\mathrm{Isom}(\SW)\hookrightarrow \mathrm{O}(Q_{\SW})
\end{equation*}
has image contained in $\mathrm{SO}(Q_{\SW})$. The latter has $120$ elements, and we get therefore an isomorphism $\mathrm{Isom}(\SW)\cong \mathrm{SO}(Q_{\SW})$. We have therefore the following.
\begin{cor}\label{spinisometry}
Consider two spin$^c$ structures $\spin$, $\spin'$ on $\SW$ which are not spin. Then there is an isometry $\varphi$ for which $\varphi^*\spin=\spin'$ if and only if $Q_{\SW}(c_1(\spin),c_1(\spin))=Q_{\SW}(c_1(\spin'),c_1(\spin'))$
\end{cor}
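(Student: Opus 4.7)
The plan is to reduce the statement to a transitivity question about $\mathrm{SO}(Q_{\SW})$ and then apply Witt's extension theorem for quadratic forms over $\mathbb{F}_5$. Fixing the unique spin structure $\spin_0$, I would identify spin$^c$ structures with $H^2(\SW;\mathbb{Z})$ via $\spin = \spin_0 + x$, so that $c_1(\spin) = 2x$. Since $4 \in \mathbb{F}_5^\times$, bilinearity gives that the equality of linking values for $c_1(\spin)$ and $c_1(\spin')$ is equivalent to $Q_{\SW}(x,x) = Q_{\SW}(x',x')$. Moreover every isometry must fix the unique $\spin_0$, so $\varphi^*\spin = \spin'$ translates to $\varphi^* x = x'$.

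The forward direction is then immediate: since $\varphi$ preserves $Q_{\SW}$, one has $Q_{\SW}(x,x) = Q_{\SW}(\varphi^* x, \varphi^* x) = Q_{\SW}(x', x')$.

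For the converse, using the isomorphism $\mathrm{Isom}(\SW) \cong \mathrm{SO}(Q_{\SW})$ established just before the corollary, it suffices to show that $\mathrm{SO}(Q_{\SW})$ acts transitively on the set of non-zero vectors of each fixed norm. By Witt's extension theorem over $\mathbb{F}_5$, the full group $\mathrm{O}(Q_{\SW})$ already acts transitively on each such level set; so the only real content is to exhibit, for every non-zero $x$, an element of determinant $-1$ in $\mathrm{Stab}_{\mathrm{O}(Q_{\SW})}(x)$, which forces the $\mathrm{SO}$-orbit to coincide with the $\mathrm{O}$-orbit. I would do this by producing a non-degenerate hyperplane containing $x$ and reflecting across it: when $Q_{\SW}(x,x) \neq 0$, the complement $x^\perp$ is a non-degenerate $2$-plane which cannot be alternating in odd characteristic, hence contains an anisotropic vector, which together with $x$ spans a suitable hyperplane; when $Q_{\SW}(x,x)=0$, one extends $x$ to a hyperbolic plane $H$ whose $1$-dimensional orthogonal complement is non-degenerate, and reflection in $H$ is a determinant $-1$ stabilizer of $x$.

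The main obstacle is only bookkeeping: one must verify that the action of an isometry on $H^2(\SW;\mathbb{Z})$ used to compare spin$^c$ structures coincides, under Poincar\'e duality, with the action on $H_1(\SW;\mathbb{Z})$ inside $\mathrm{SO}(Q_{\SW})$. This is standard for the oriented $3$-manifold $\SW$ and ensures that the orbit analysis transfers cleanly between the Chern class picture and the linking form picture.
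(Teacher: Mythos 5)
Your proof is correct and follows essentially the same route as the paper: after reducing to the action on $x$ (using that every isometry fixes the unique spin structure and that $4$ is invertible mod $5$), one identifies $\mathrm{Isom}(\SW)$ with $\mathrm{SO}(Q_{\SW})$ and applies Witt's extension theorem to the non-degenerate form $Q_{\SW}$. The one place you go beyond the paper is a genuine (and welcome) refinement: Witt extension directly gives transitivity of the full group $\mathrm{O}(Q_{\SW})$ on non-zero vectors of fixed norm, and the paper silently asserts the $\mathrm{SO}$ version, whereas you supply the missing step by exhibiting a determinant $-1$ element in the stabilizer of each non-zero $x$ (a reflection in an anisotropic vector orthogonal to $x$, which exists in dimension $3$ over $\mathbb{F}_5$), which is exactly what is needed to identify the $\mathrm{SO}$-orbits with the $\mathrm{O}$-orbits.
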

\begin{proof}
This follows by the Witt extension theorem \cite[Theorem $1.5.3$]{Sch}: because $Q_{\SW}$ is non degenerate, given two elements $x,y\neq 0$ satisfying $Q(x) = Q(y)$, there is $A\in \mathrm{SO}(Q_{\SW})$ for which $Ax=y$.
\end{proof}

\begin{figure}
\centering
\begin{tikzpicture}[thick,scale=1]

\foreach \s in {1,...,5}
{
  \node[circle, draw, fill=black!50,
                        inner sep=0pt, minimum width=4pt] (\s) at ({360/5 * (\s - 1)+90}:2) {};
}
\foreach \s in {6,...,10}
{
  \node[circle, draw, fill=black!50,
                        inner sep=0pt, minimum width=4pt]  (\s) at ({360/5 * (\s - 1)+90}:4) {};
}
\foreach \s in {11,...,15}
{
  \node[circle, draw, fill=black!50,
                        inner sep=0pt, minimum width=4pt]  (\s) at ({360/5 * (\s - 1)-90}:6){} ;
}
\foreach \s in {16,...,20}
{
  \node[circle, draw, fill=black!50,
                        inner sep=0pt, minimum width=4pt]  (\s) at ({360/5 * (\s - 1)-90}:8) {};
}

\node at (0,0) {$1$};
\foreach \s in {2,...,6}
{
  \node   at ({360/5 * (1-\s )+126}:3.3) {$\s$};
}
\foreach \s in {7,...,11}
{
  \node   at ({360/5 * (1-\s )+90}:5.5) {$\s$};
}

\draw[->-] (2) -- node[right] {$e$} (1) ;
\draw[->-] (1) -- node[left] {$a$} (5) ;
\draw[->-] (5) -- node[right] {$b$} (4) ;
\draw[->-] (4) -- node[above] {$c$} (3) ;
\draw[->-] (3) -- node[right] {$d$} (2) ;

\draw[->-] (6) -- node[right] {$b$} (1) ;
\draw[->-] (10) -- node[above] {$c$} (5) ;
\draw[->-] (9) -- node[left] {$d$} (4) ;
\draw[->-] (8) -- node[right] {$e$} (3) ;
\draw[->-] (7) -- node[above] {$a$} (2) ;

\draw[->-] (6) -- node[above] {$e$} (13) ;
\draw[->-] (13) -- node[right] {$f$} (10) ;
\draw[->-] (10) -- node[right] {$a$} (12) ;
\draw[->-] (12) -- node[above] {$f$} (9) ;
\draw[->-] (9) -- node[right] {$b$} (11) ;
\draw[->-] (11) -- node[right, above] {$f$} (8) ;
\draw[->-] (8) -- node[above] {$c$} (15) ;
\draw[->-] (15) -- node[right] {$f$} (7) ;
\draw[->-] (7) -- node[right] {$d$} (14) ;
\draw[->-] (14) -- node[above] {$f$} (6) ;

\draw[->-] (18) -- node[right] {$c$} (13) ;
\draw[->-] (17) -- node[above] {$d$} (12) ;
\draw[->-] (16) -- node[right] {$e$} (11) ;
\draw[->-] (20) -- node[above] {$a$} (15) ;
\draw[->-] (19) -- node[right] {$b$} (14) ;

\draw[->-] (17) -- node[above] {$a$} (16) ;
\draw[->-] (16) -- node[above] {$b$} (20) ;
\draw[->-] (20) -- node[right] {$c$} (19) ;
\draw[->-] (19) -- node[above] {$d$} (18) ;
\draw[->-] (18) -- node[right] {$e$} (17) ;
\end{tikzpicture}
\caption{A graphic depiction of the Seifert-Weber dodecahedral space. The twelfth face is the one formed of the five external edges, and we consider it to be on top of the others.} \label{dodecahedron}
\end{figure}
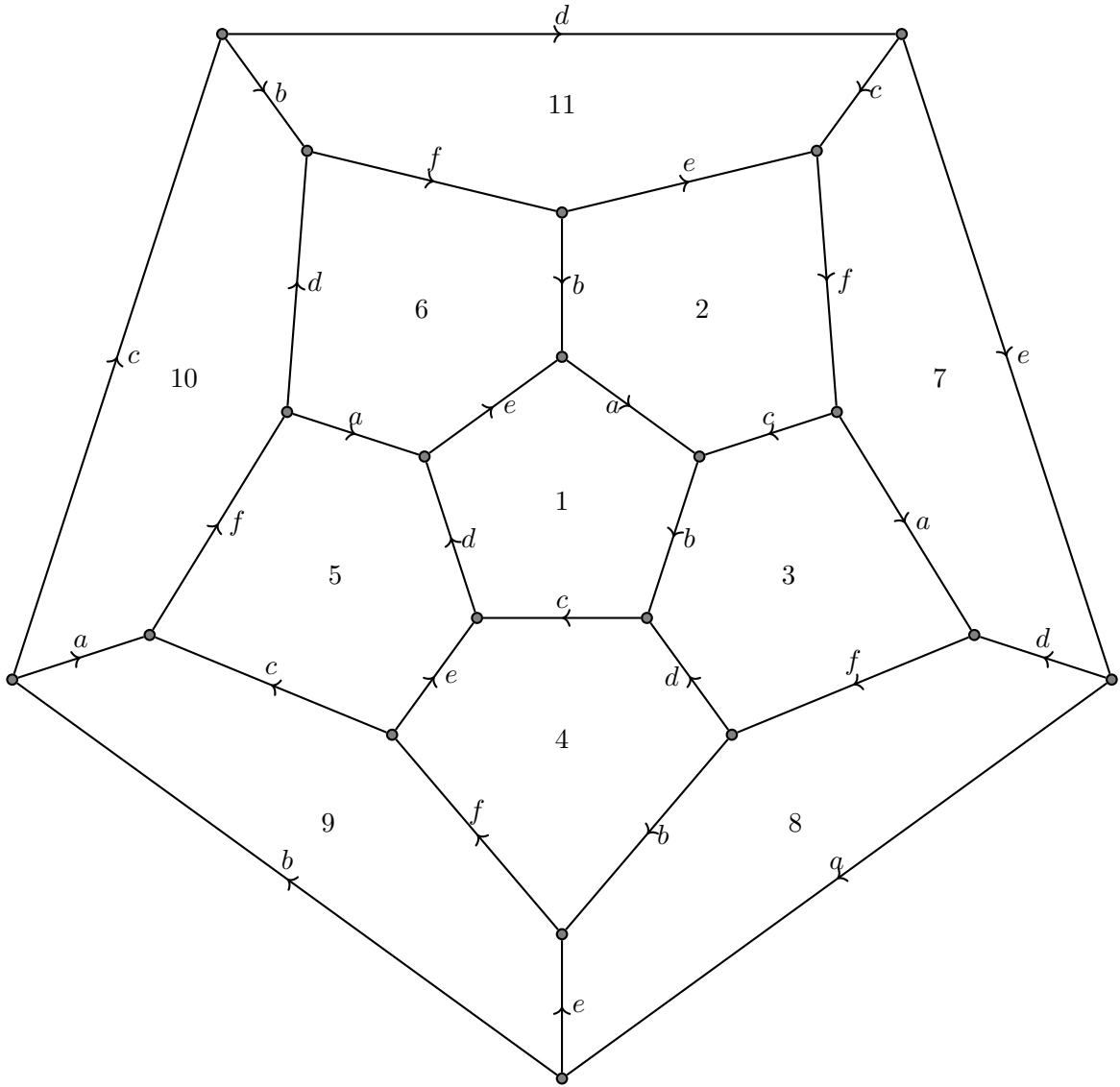

\vspace{0.5cm}
\section{Effective estimates for closed geodesics on the Seifert-Weber dodecahedral space}\label{geodesicSW}
For practical purposes, the most effective way to evaluate the eta invariants using an expression such as (\ref{etazero}) is to use a suitably dilated Gaussian (as demonstrated by the precise computations for the Weeks manifold in Section \ref{weeks}). The main complication is that the function is not compactly supported, and therefore we need to provide effective bounds on the tail sums
\begin{equation*}
\sum_{\ell(\gamma)\geq R} \ell(\gamma_0)\cdot\frac{2\sin(\mathrm{hol}(\gamma))}{|1 - e^{\mathbb{C}l(\gamma)}| \cdot |1 - e^{-\mathbb{C}l(\gamma)}|} \cdot \frac{1}{\ell} \cdot \left(- G (\ell / L) \right).
\end{equation*}
for the signature case and
\begin{equation*}  
\sum_{\ell(\gamma)\geq R} \ell(\gamma_0) \cdot \frac{2\sin( \mathrm{hol}(\tilde{\gamma}) )\cdot {\cos(\varphi_{\tilde{\gamma}})} }{|1 - e^{\mathbb{C} \ell(\gamma)} | \cdot |1 - e^{-\mathbb{C} \ell(\gamma)} |} \cdot \left(- G (\ell / L) \right),
\end{equation*}
in the Dirac case respectively. Here $L$ is the parameter at which we split the integral defining the eta function, and $R$ is the cutoff for geodesics (later on we will take them to be $1.7$ and $7.5$ respectively). For these purposes, we need a good understanding of the behavior of the lengths of geodesics, and more specifically of the quantity
\begin{equation*}
\sum_{\ell(\gamma)\in[T-1/2,T+1/2]} \ell(\gamma_0).
\end{equation*}
To put this in context, the quantity
\begin{equation}\label{geocheby}
\sum_{\ell(\gamma)\leq T} \ell(\gamma_0).
\end{equation}
naturally appears when studying the asymptotic number of prime geodesics, and in particular it plays a key role in the proof of the prime geodesic theorem
\begin{equation*}
\#\{\text{prime geodesics $\gamma$ with $\ell(\gamma)\leq T$}\}\sim e^{2T}/2T,
\end{equation*}
see for example \cite{Cha}. In fact, under the heuristic correspondence between closed geodesics on a hyperbolic manifold and prime numbers (under which $e^{\ell(\gamma)}$ is the analogue of $p$), the quantity (\ref{geocheby}) corresponds to the Chebyshev function
\begin{equation*}
\psi(x)=\sum_{p^k\leq x}\log(p).
\end{equation*}
It is well known \cite[Chapter $4$]{Apo} that the prime number theorem
\begin{equation*}
\#\{\text{prime numbers $p\leq x$}\}\sim x/\log(x)
\end{equation*}
is equivalent to the asymptotic
\begin{equation*}
\psi(x)\sim x.
\end{equation*}
Precise asymptotics for (\ref{geocheby}) can be obtained by evaluating the trace formula for functions in Theorem \ref{funformula} for a smoothed version of the function $\cosh(t)\mathbf{1}_{[-T,T]}$ (see also \cite{Bus} for the case of hyperbolic surfaces). In our case, in order to obtain reasonable effective constants, we will use instead combinations of Gaussian functions. Furthermore, while the general results of Subsection \ref{localweylfun} apply to our case, we can obtain considerably sharper bounds for the spectrum using as input our knowledge of the length spectrum of $\SW$ up to cutoff $R=8$ (rather than just the injectivity radius), as in \cite{LL2}. Furthermore, we have computed the spin$^c$ length spectrum of $\SW$ for all spin$^c$ structures up to cutoff $R=7.5$ using the method described in Section \ref{spinlength}.

\vspace{0.3cm}

\subsection{The spectral gap for the Laplacian on functions}
The first step is to understand the spectral gap for the Laplacian on functions - as we saw in Subsection \ref{localweylfun}, small eigenvalues are the main cause of large error estimates. In our case, we have.
\begin{prop}\label{SWgap}
The Laplacian on functions for $\SW$ has no small eigenvalues, and the smallest parameter satisfies $r_1>2.8$, corresponding to $\lambda_1> 8.84$.
\end{prop}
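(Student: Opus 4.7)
The plan is to apply the trace formula for functions (Theorem \ref{funformula}) with carefully chosen test functions, following the spectral sampling strategy of \cite{LL2}. The geometric input is the complete length spectrum of $\SW$ up to cutoff $R=8$ (computed via SnapPy), which lets us evaluate the geometric side of the trace formula \emph{exactly} for any test function supported in $[-R,R]$.

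Fix an even non-negative function $\psi\in C^\infty_c((-R/2,R/2))$ and set $\varphi=\psi\ast\psi$. Then $\varphi$ is even, non-negative, supported in $(-R,R)$, and $\widehat{\varphi}=|\widehat{\psi}|^2\geq 0$ on $\mathbb{R}$. Moreover, for $\rho\in[0,1]$ one has $\widehat{\varphi}(i\rho)=\int\varphi(x)\cosh(\rho x)\,dx\geq \widehat{\varphi}(0)>0$, since $\varphi\geq 0$ and $\cosh\geq 1$. Thus $\widehat{\varphi}\geq 0$ on the entire locus $\mathbb{R}\cup i[0,1]$ in which the parameters $r_n$ lie.

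\emph{Step 1 (no small eigenvalues).} Applying Theorem \ref{funformula} to $\varphi$ and isolating the contributions of $r_0=i$ and of any putative small eigenvalues gives
\[
\widehat{\varphi}(i)+\widehat{\varphi}(0)\cdot\#\{\text{small }r_n\}+\sum_{r_n\in\mathbb{R}}\widehat{\varphi}(r_n)=\mathrm{RHS}(\varphi),
\]
where the geometric side $\mathrm{RHS}(\varphi)$ is a \emph{finite} sum of volume and geodesic terms that we evaluate numerically. Since the last sum on the left is non-negative, we obtain the effective bound
\[
\#\{\text{small }r_n\}\;\leq\;\frac{\mathrm{RHS}(\varphi)-\widehat{\varphi}(i)}{\widehat{\varphi}(0)}.
\]
By choosing $\psi$ so that this upper bound is strictly less than $1$, we conclude that there are no small eigenvalues.

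\emph{Step 2 (lower bound $r_1>2.8$).} Knowing $r_n\in\mathbb{R}$ for $n\geq 1$, we test against $H_a(x)=\varphi(x)\cos(ax)$ for $a\in[0,2.8]$. Its Fourier transform is
\[
\widehat{H_a}(t)=\tfrac{1}{2}\bigl(\widehat{\varphi}(t-a)+\widehat{\varphi}(t+a)\bigr)\geq 0\quad\text{for }t\in\mathbb{R},
\]
with maximum value $M(a)=\tfrac{1}{2}(\widehat{\varphi}(0)+\widehat{\varphi}(2a))$ attained at $t=\pm a$. The trace formula combined with non-negativity of every $\widehat{H_a}(r_n)$ for $n\geq 1$ yields
\[
\widehat{H_a}(r_1)\;\leq\;\mathrm{RHS}(H_a)-\widehat{H_a}(i).
\]
Consequently, if $r_1=a$, then $M(a)\leq\mathrm{RHS}(H_a)-\widehat{H_a}(i)$. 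It therefore suffices to verify the strict reverse inequality $M(a)>\mathrm{RHS}(H_a)-\widehat{H_a}(i)$ for every $a\in[0,2.8]$, whence $r_1\notin[0,2.8]$. We carry this out on a fine discretization of $[0,2.8]$; uniform Lipschitz continuity of both sides in $a$ (controlled by $\|x\varphi\|_{L^1}$ and the finitely many geodesic terms of length at most $R$) upgrades the pointwise check to an interval check by standard interval arithmetic.

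\emph{Main obstacle.} The real work is the numerical optimization: the profile $\psi$ must be engineered so that Step 1 actually beats the constant-mode contribution $\widehat{\varphi}(i)$, and so that Step 2's margin $M(a)-(\mathrm{RHS}(H_a)-\widehat{H_a}(i))$ stays strictly positive across the full interval $[0,2.8]$. Extending the exclusion window from the trivial gap up to $a=2.8$ is tight and relies crucially on having the length spectrum known out to $R=8$ (not merely $R=2\cdot\mathrm{inj}(\SW)$), so that the geometric side is sharp rather than merely bounded. Once a suitable $\psi$ is found, the remaining verification is a deterministic and rigorous computation.
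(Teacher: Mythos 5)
Your strategy---evaluating the trace formula for functions against test functions with non-negative Fourier transform on the spectral locus, with the geometric side computed exactly from the length spectrum of $\SW$ up to the cutoff $R=8$---is exactly the method the paper uses, and the logic of both of your steps is sound. (One small inaccuracy: your claim that $M(a)$ is the \emph{maximum} of $\widehat{H_a}$ on $\mathbb{R}$ is false in general, e.g.\ for sharply peaked $\widehat{\varphi}$ and small $a$ the value at $t=0$ exceeds $\widehat{\varphi}(0)+\widehat{\varphi}(2a)$; but this is also unnecessary, since your argument only uses the identity $\widehat{H_a}(a)=M(a)$.) The substantive difference is that the paper does not work with a single hand-designed profile: following Booker--Strombergsson, for each candidate parameter $t$ (imaginary or real) it \emph{minimizes} the geometric side over a large linear family of admissible test functions subject to $\widehat{H}(t)=1$ and $\widehat{H}\geq 0$ on the spectral locus, and reads off from the resulting bound $J(t)<1$ that $t$ is not a spectral parameter. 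That optimization is what pushes the exclusion window all the way to $2.8$; your family $H_a=\varphi\cos(ax)$ is the restriction of that linear program to a one-parameter slice, and the margin $M(a)-(\mathrm{RHS}(H_a)-\widehat{H_a}(i))$ may well fail to stay positive near $a=2.8$ for any fixed $\psi$. So you have correctly located the main obstacle, but the remedy is not merely engineering a good $\psi$---it is to enlarge the test-function family to the full Booker--Strombergsson linear program, as the paper does.
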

\begin{remark}
The absence of small eigenvalues in the statement of Proposition \ref{SWgap} was to be expected, as it is a consequence of the generalized Ramanujan conjecture for $\mathrm{GL}_2.$  See for example \cite{BloBru}.
\end{remark}
The proof of this proposition is based on the Booker-Strombergsson method applied to the trace formula for functions (which is indeed, in the case of surfaces, the original setup of their approach \cite{BS}). The main difference is that we now get two pictures, one for the imaginary parameters (Figure \ref{SWgraphfunctionsimaginary}) and one for the real ones (Figure \ref{SWgraphfunctionsreal})\footnote{Note the very different scales of Figures \ref{SWgraphfunctionsimaginary} and \ref{SWgraphfunctionsreal}.}. In both cases, to exclude the parameter $t,$ we minimized the quantity
\begin{equation}\label{functionsum}
\sum_{n=1}^{\infty} \widehat{H}(r_n)
\end{equation}
over the same space of functions used in \cite{LL}, subject to the constraint $\widehat{H}(t) = 1,$ using the trace formula in Theorem \ref{funformula}. Here the cutoff is $R=8$, as in \cite{LL2}. Notice that $r_0=i$ is always a spectral parameter (corresponding to $\lambda_0=0$), and is not included in the sum (\ref{functionsum}).

\begin{figure}
  \includegraphics[width=0.8\linewidth]{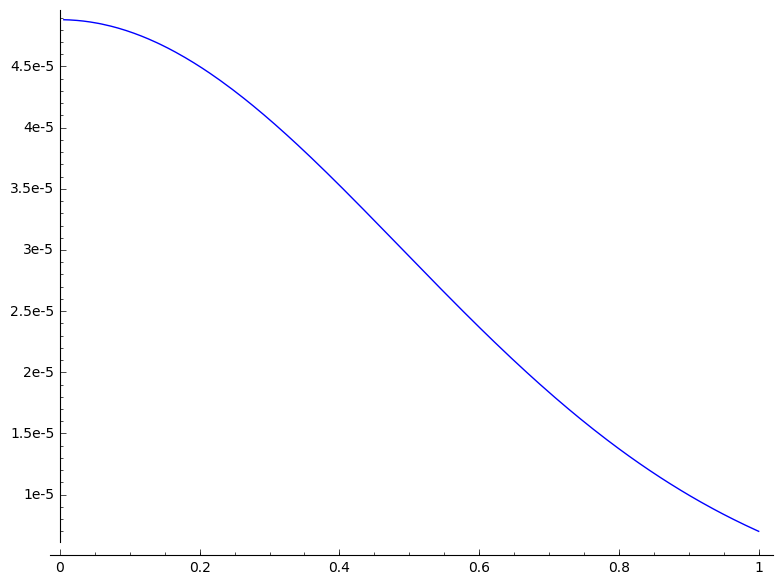}
  \caption{The picture for the imaginary parameters for the Laplacian on functions on $\SW$.}
  \label{SWgraphfunctionsimaginary}
\end{figure}

\begin{figure}
  \includegraphics[width=0.8\linewidth]{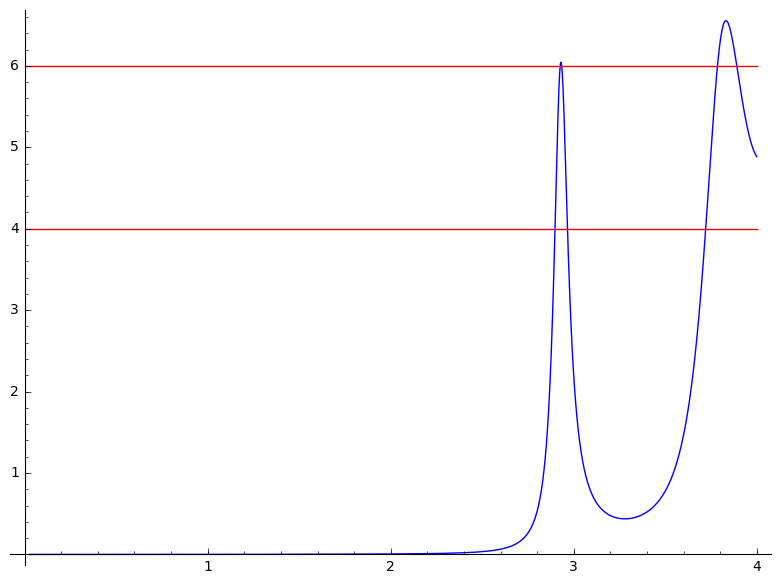}
  \caption{The picture for the real parameters for the Laplacian on functions on $\SW$.}
  \label{SWgraphfunctionsreal}
\end{figure}

\vspace{0.3cm}

\subsection{Spectral density on functions.} We now use the spectral gap and our explicit knowledge of the length spectrum to provide refined bounds on the spectral density. These two extra ingredients will allow us to greatly improve the estimates from Section \ref{localweyl}. Let us apply the trace formula to
\begin{align*}
H_{\nu} &=\left( \frac{1}{2}\mathbf{1}_{[-1,1]} \right)^{\ast 6}\cdot(e^{i\nu x}+e^{-i\nu x}) \\
&=2\cdot \left(\frac{1}{2}\mathbf{1}_{[-1,1]} \right)^{\ast 6}\cdot \cos(\nu x)
\end{align*}
which is the same kind of function we used in Section \ref{localweyl}. In this case, we have
\begin{align*}
-H_{\nu}^{''}(0) &=2*\left( \left( \left(\frac{1}{2}\mathbf{1}_{[-1,1]} \right)^{\ast 6} \right)^{''}(0)- \nu^2 \cdot \left( \frac{1}{2}\mathbf{1}_{[-1,1]} \right)^{\ast 6}(0)\right) \\
&=\frac{11}{20}\nu^2+\frac{1}{4},
\end{align*}
and
\begin{equation*}
\widehat{H_{\nu}}(t)=\left(\frac{\sin(t+\nu)}{t+\nu}\right)^6+\left(\frac{\sin(t-\nu)}{t-\nu}\right)^6.
\end{equation*}
We will use the fact that that
\begin{equation*}
\left(\frac{\sin(x)}{x}\right)^6\geq 0.777\text{ for }x\in[-0.5,0.5].
\end{equation*}
To prove upper bounds, we will look again at the trace formula. In our setup, we can compute explicitly an upper bound for the sum over geodesics
\begin{align*}
\left |\sum_{\gamma}\frac{\ell(\gamma_0)}{|1-e^{\mathbb{C}\ell(\gamma)}||1-e^{-\mathbb{C}\ell(\gamma)}|}H_{\nu}(\ell(\gamma))\right| &\leq \sum_{\gamma}\frac{\ell(\gamma_0)}{|1-e^{\mathbb{C}\ell(\gamma)}||1-e^{-\mathbb{C}\ell(\gamma)}|}H_0(\ell(\gamma)) \\
&\leq 4.827
\end{align*}
because $H_0$ is supported in $[-6,6]$ and we know the length spectrum up to cutoff $R=8$. Furthermore, recalling that
\begin{align*}
\lvert \sin(i+\nu)\lvert^2 &=\cos^2(\nu)\sinh^2(1)+\sin^2(\nu)\cosh^2(1) \\
&=\sinh^2(1)+\sin^2(\nu),
\end{align*}
we have that the contribution of the zero eigenvalue is bounded above by
\begin{equation}\label{justhere}
2 \cdot \left( \frac{\sinh(1)^2+\sin(\nu)^2}{1+\nu^2}\right)^3
\end{equation}
We are interested in upper bounds for the number of spectral parameters in $[\nu-1/2,\nu+1/2]$ for $T\geq 3$. The quantity (\ref{justhere}) is bounded above by $0.0055$ for $\nu\geq 3$.
Putting everything together, and recalling that $\SW$ has volume about $11.19$, we get the following refined local Weyl law.
\begin{prop}
For the eigenvalues of the Laplacian of functions on $\SW$, the upper bound
\begin{equation*}
\#\{\text{parameters in }[\nu-1/2,\nu+1/2]\}\leq 1.262 \nu^2+6.793.
\end{equation*}
holds for $\nu\geq 3$.
\end{prop}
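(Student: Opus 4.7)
The plan is to apply the trace formula for functions (Theorem \ref{funformula}) to the test function $H_\nu$ just constructed, and deduce the claimed inequality by comparing a lower estimate on the spectral side with an upper estimate on the geometric side. Since
$\widehat{H_\nu}(t) = \left(\frac{\sin(t-\nu)}{t-\nu}\right)^6 + \left(\frac{\sin(t+\nu)}{t+\nu}\right)^6 \geq 0$
for all real $t$, and since the stated pointwise bound $(\sin x/x)^6 \geq 0.777$ on $[-1/2,1/2]$ yields $\widehat{H_\nu}(t) \geq 0.777$ whenever $t \in [\nu-1/2,\nu+1/2]$, one can drop the non-negative contributions of all other real parameters to obtain
\[
\sum_n \widehat{H_\nu}(r_n) \;\geq\; 0.777\,N(\nu) + \widehat{H_\nu}(i),
\]
where $N(\nu)$ denotes the quantity to be bounded, and where by Proposition \ref{SWgap} the only imaginary parameter is $r_0 = i$.

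For the geometric side, the volume contribution equals exactly $\tfrac{\mathrm{vol}(\SW)}{2\pi}\bigl(\tfrac{11}{20}\nu^2+\tfrac{1}{4}\bigr)$, via the computation of $-H_\nu''(0)$ already carried out. The sum over closed geodesics is controlled in absolute value by the pointwise estimate $|H_\nu(x)| \leq H_0(x)$ (coming from $|\cos(\nu x)| \leq 1$), which reduces the problem to the single $\nu = 0$ evaluation. Because $H_0$ is supported in $[-6,6]$ and the length spectrum of $\SW$ is known explicitly up to cutoff $R = 8$, this becomes a finite sum that can be tabulated directly, producing the quoted bound of $4.827$.

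Combining both estimates through the trace formula yields
\[
0.777\,N(\nu) + \widehat{H_\nu}(i) \;\leq\; \frac{\mathrm{vol}(\SW)}{2\pi}\left(\frac{11}{20}\nu^2 + \frac{1}{4}\right) + 4.827,
\]
and the small-eigenvalue correction $\widehat{H_\nu}(i)$ is absorbed into the right-hand side using the explicit bound $|\widehat{H_\nu}(i)| \leq 0.0055$ from \eqref{justhere}, valid for $\nu \geq 3$. Substituting $\mathrm{vol}(\SW) \approx 11.19$ and dividing through by $0.777$ then produces the claimed inequality $N(\nu) \leq 1.262\,\nu^2 + 6.793$. No serious obstacle arises: every ingredient (the pointwise lower bound on $\widehat{H_\nu}$ inside a unit window, the dominance $|H_\nu| \leq H_0$ that collapses the $\nu$-dependence on the geometric side, the explicit finite evaluation of the geodesic sum, and the spectral gap of Proposition \ref{SWgap}) has been deliberately set up so that the argument goes through with only elementary bookkeeping.
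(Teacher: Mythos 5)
Your proposal is correct and follows essentially the same route as the paper: the same test function $H_\nu$, the same lower bound $0.777$ on $\widehat{H_\nu}$ in the unit window, the same domination $|H_\nu|\leq H_0$ to reduce the geodesic sum to the explicitly tabulated value $4.827$, and the same $0.0055$ bound on the $r_0=i$ contribution for $\nu\geq 3$, yielding the constants $1.262$ and $6.793$ after dividing by $0.777$. No discrepancies worth noting.
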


\vspace{0.3cm}

\subsection{Bounds for closed geodesics}
We will apply the trace formula of Theorem \ref{funformula} to the function
\begin{equation*}
F_{c,T}=e^{-\frac{c(x-T)^2}{2}}+e^{-\frac{c(x+T)^2}{2}},
\end{equation*}
which has Fourier transform 
\begin{equation*}
\widehat{F}_{c,T}=\sqrt{\frac{2 \pi}{c}}e^{-\frac{x^2}{2c}}\cdot(e^{iTx}+e^{-iTx})=2\sqrt{\frac{{2 \pi}}{c}}e^{-\frac{x^2}{2c}}\cos(Tx).
\end{equation*}
Here $c>0$ is a parameter to be determined later. Let us discuss its value at the various terms in the trace formula, starting from the spectral side.
The contribution of the zero eigenvalue (corresponding to $r_0=i$) is
\begin{equation*}
2\sqrt{\frac{2 \pi}{c}}e^{1/{2c}}\cosh(T).
\end{equation*}
Using Proposition \ref{SWgap}, the contribution of the real parameters is bounded above (independenty of $T$) by
\begin{align} \label{regulartermsestimate}
\sum_{r_n\text{ real}}\widehat{F}_{c,T}(r_n)&=\sum_{n=3}^{\infty}\sum_{r_n\in[n-1/2,n+1/2]}\widehat{F}_{c,T}(r_n) \nonumber \\
&\leq\sum_{n=3}^{\infty}(1.262 n^2+6.793)\cdot2\sqrt{\frac{2 \pi}{c}}e^{-\frac{(n-1/2)^2}{2c}}.
\end{align}
The identity contribution is
\begin{equation} \label{identityestimate}
-\frac{\mathrm{vol}(Y)}{2\pi} F_{c,T}''(0)=\frac{\mathrm{vol}(Y)}{\pi}\cdot e^{-\frac{cT^2}{2}}\cdot(c-c^2T^2).
\end{equation}
(this is negative provided $T^2>1/c$).
\par
All these terms are explicitly computable, and we will use this information to provide upper bounds on
\begin{equation*}
\sum_{l(\gamma)\in[T-1/2,T+1/2]} \ell(\gamma_0).
\end{equation*}
as follows. Notice that
\begin{align*}
|1-e^{\mathbb{C}\ell(\gamma)}||1-e^{-\mathbb{C}\ell(\gamma)}| &\leq (e^\ell+1)(1+e^{-\ell}) \\
& =2\cosh(\ell)+2
\end{align*}
and
\begin{equation*}
F_{c,T}(\ell)\geq e^{-c/8}\text{ for } \ell \in[T-1/2,T+1/2]
\end{equation*}
We therefore have
\begin{align*}
&\sum_{\gamma}\frac{\ell(\gamma_0)}{|1-e^{\mathbb{C}\ell(\gamma)}||1-e^{-\mathbb{C}\ell(\gamma)}|}F_{c,T}(\ell(\gamma)) \\
\geq&\sum_{\ell(\gamma)\in[T-1/2,T+1/2]}\frac{\ell(\gamma_0)}{2\cosh(T+1/2)+2}e^{-c/8} \\
=& \frac{e^{-c/8}}{2\cosh(T+1/2)+2}\cdot\sum_{\ell(\gamma)\in[T-1/2,T+1/2]}{\ell(\gamma_0)}.
\end{align*}
and we can use the explicit upper bound for the first expression, obtained by combining the trace formula with the estimates \eqref{regulartermsestimate} and \eqref{identityestimate} , to get an upper bound for $\sum_{\ell(\gamma)\in[T-1/2,T+1/2]}{\ell(\gamma_0)}$, depending on a parameter $c$. We see empirically that we obtain the best estimate for $c=5$, and we have the following:
\begin{prop}\label{errorSWcutoff0}For $T\geq 7.5$, we have
\begin{equation*}
\sum_{\ell(\gamma)\in[T-1/2,T+1/2]}{\ell(\gamma_0)}\leq A(T),
\end{equation*}
where $A(T)$ is an explicit expression in $T$ readily obtained from the discussion above.
\end{prop}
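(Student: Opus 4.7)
The plan is to assemble the ingredients developed in the preceding discussion into one explicit chain of inequalities, using the trace formula of Theorem \ref{funformula} applied to the even Gaussian test function $F_{c,T}(x) = e^{-c(x-T)^2/2}+e^{-c(x+T)^2/2}$ (admissible by Section \ref{regularity}). Its Fourier transform is $\widehat{F}_{c,T}(\xi)=2\sqrt{2\pi/c}\,e^{-\xi^2/(2c)}\cos(T\xi)$, which on the imaginary axis becomes $2\sqrt{2\pi/c}\,e^{t^2/(2c)}\cosh(Tt)$. Writing the trace formula as
\[
\mathcal{G} \; := \; \sum_\gamma \frac{\ell(\gamma_0)}{|1-e^{\mathbb{C}\ell(\gamma)}||1-e^{-\mathbb{C}\ell(\gamma)}|}F_{c,T}(\ell(\gamma)) \; = \; \sum_n \widehat{F}_{c,T}(r_n)\;+\;\frac{\mathrm{vol}(\SW)}{2\pi} F''_{c,T}(0),
\]
the strategy is to upper bound the right-hand side, then use a pointwise lower bound on the integrand in $\mathcal{G}$ to isolate the desired partial sum $\Sigma(T):=\sum_{\ell(\gamma)\in[T-1/2,T+1/2]}\ell(\gamma_0)$.

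First I would treat the spectral side. By Proposition \ref{SWgap}, the only imaginary parameter is $r_0=i$, contributing the single explicit term $2\sqrt{2\pi/c}\,e^{1/(2c)}\cosh(T)$. For the real parameters (all with $r_n>2.8$), I would partition into unit windows $[n-1/2,n+1/2]$, apply the refined local Weyl bound $\#\{r_n\in[n-1/2,n+1/2]\}\le 1.262\,n^2+6.793$ (valid for $n\geq 3$, the only relevant range), and use $|\widehat{F}_{c,T}(r)|\leq 2\sqrt{2\pi/c}\,e^{-(n-1/2)^2/(2c)}$ on each window to obtain the convergent tail estimate \eqref{regulartermsestimate}. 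The identity contribution is evaluated directly from \eqref{identityestimate}.

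Next, to extract $\Sigma(T)$, I would use that every term in $\mathcal{G}$ is nonnegative (since $F_{c,T}\geq 0$ and the geodesic weights are positive), and that on the window $\ell(\gamma)\in[T-1/2,T+1/2]$ one has the elementary pointwise bounds
\[
F_{c,T}(\ell(\gamma))\;\geq\;e^{-c/8}, \qquad |1-e^{\mathbb{C}\ell(\gamma)}||1-e^{-\mathbb{C}\ell(\gamma)}|\;\leq\;2\cosh(T+1/2)+2.
\]
Dropping the positive contribution from $\ell(\gamma)\notin[T-1/2,T+1/2]$ and combining yields
\[
\mathcal{G}\;\geq\;\frac{e^{-c/8}}{2\cosh(T+1/2)+2}\,\Sigma(T),
\]
so rearranging gives $\Sigma(T)\leq A(T)$ with $A(T)=(2\cosh(T+1/2)+2)\,e^{c/8}\cdot U(c,T)$, where $U(c,T)$ is the explicit upper bound on $\mathcal{G}$ from the previous paragraph. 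The value of $c$ is then optimized numerically (empirically $c=5$) in the regime $T\geq 7.5$.

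The main obstacle is simply tuning the Gaussian width. Small $c$ makes the factor $e^{c/8}$ harmless but inflates the prefactor $\sqrt{2\pi/c}$ and fattens the Gaussian so that the real-parameter tail becomes the dominant loss; large $c$ localizes $F_{c,T}$ sharply on the window but the $e^{c/8}$ blow-up and the term $e^{-cT^2/2}(c-c^2T^2)$ from the identity must be controlled. The balance is possible precisely because the zero-eigenvalue contribution grows like $\cosh(T)$, matching the expected order of magnitude $\sim e^T$ of $\Sigma(T)/(2\cosh(T+1/2)+2)$ heuristically predicted by the prime geodesic theorem; verifying that the other contributions remain subdominant for $T\geq 7.5$ is a finite explicit check.
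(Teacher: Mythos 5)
Your proposal is correct and follows essentially the same route as the paper: the same Gaussian pair $F_{c,T}$, the same treatment of the spectral side (the single imaginary parameter $r_0=i$ via Proposition \ref{SWgap}, the refined local Weyl bound $1.262\,n^2+6.793$ for the real parameters, and the explicit identity term), and the same pointwise lower bounds $F_{c,T}(\ell)\geq e^{-c/8}$ and $|1-e^{\mathbb{C}\ell(\gamma)}||1-e^{-\mathbb{C}\ell(\gamma)}|\leq 2\cosh(T+1/2)+2$ on the window, with $c=5$ chosen empirically. No gaps.
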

We do not write explicitly the explicit expression of $A(T)$ as it is quite long and not particularly illuminating, but we remark that the leading term is of the form $C\cdot e^{2T}$.
\vspace{0.3cm}

\subsection{The geometric error for $\eta$}
Finally, we will conclude by bounding the error in the evaluation of the eta invariant using the standard Gaussian $G(x)=e^{-x^2/2}$ in the formula (\ref{etazero}), and evaluating the geometric side up to length $R=7.5$. We have $\widehat{G}(t)=\sqrt{2\pi}e^{-t^2/2}$, and
\begin{equation*}
\int_0^{\infty}\widehat{G}(t)dt=\pi,
\end{equation*}
these errors correspond to
\begin{equation*}
\frac{2}{\pi}\sum_{\ell(\gamma)\geq 7.5} \ell(\gamma_0)\cdot\frac{\sin(\mathrm{hol}(\gamma))}{|1 - e^{\mathbb{C}l(\gamma)}| \cdot |1 - e^{\mathbb{C}l(\gamma)}|} \cdot \frac{1}{\ell(\gamma)} \cdot \left(- G (\ell / L) \right)
\end{equation*}
for the signature case and
\begin{equation*}  
\frac{2}{\pi}\sum_{\ell(\gamma)\geq 7.5} \ell(\gamma_0) \cdot \frac{\sin( \mathrm{hol}(\tilde{\gamma}) )\cdot {\cos}(\varphi_{\tilde{\gamma}})}{|1 - e^{\mathbb{C} \ell(\gamma)} | \cdot |1 - e^{-\mathbb{C} \ell(\gamma)} |}\cdot \frac{1}{\ell(\gamma)} \cdot \left(- G (\ell / L) \right)
\end{equation*}
for the Dirac operator; again, $L$ is the point at which we split the integral. We can therefore bound the error in the geometric side in all cases by
\begin{equation*}
\frac{2}{\pi}\sum_{\ell(\gamma)\geq7.5}\frac{\ell(\gamma_0)}{|1-e^{\mathbb{C}\ell(\gamma)}||1-e^{-\mathbb{C}\ell(\gamma)}|}\frac{G(\ell(\gamma)/L)}{\ell(\gamma)}.
\end{equation*}
We can break the sum in two parts, one taking into account $\ell(\gamma)\in[7.5,8]$ and one for $\ell(\gamma)\geq 8$. The first part can be computed explicitly because we know the (standard) length spectrum of $\SW$ up to cutoff $R=8$. For the second part, noticing that
\begin{equation*}
|1-e^{\mathbb{C}\ell(\gamma)}||1-e^{-\mathbb{C}\ell(\gamma)}|\geq (e^\ell-1)(1-e^{-\ell})=2\mathrm{sinh}(\ell)-2,
\end{equation*}
we have
\begin{align*}
&\frac{2}{\pi}\sum_{\ell(\gamma)\geq8}\frac{\ell(\gamma_0)}{|1-e^{\mathbb{C}\ell(\gamma)}||1-e^{-\mathbb{C}\ell(\gamma)}|}\frac{G(\ell(\gamma)/L)}{\ell(\gamma)} \\
&\leq \frac{1}{\pi}\sum_{n=8}^{\infty}\frac{e^{-n^2/2L^2}}{(\sinh(n)-1)\cdot n}\cdot \sum_{\ell(\gamma)\in[n,n+1]} \ell(\gamma_0) \\
&= \frac{1}{\pi}\sum_{n=8}^{\infty}\frac{e^{-n^2/2L^2}}{(\sinh(n)-1)\cdot n}\cdot A(n+1/2)
\end{align*}
where $A$ is the quantity from Proposition \ref{errorSWcutoff0}. Putting the pieces together, we obtain an estimate on the total error for the truncated sum depending on the choice of the parameter $L$. It is clear that the larger the value of $L$ is, the better our estimate for the error in the truncated sum will be; on the other hand, we will see in the next section that larger values of $L$ lead to significantly worse errors coming from the spectral side of the formula for the eta invariant in (\ref{etazero}). We empirically found that the value $L=1.7$ provides a very good bound for the sum of these two errors in our situation. In this case, for the geometric side we have the following. 
\begin{prop}\label{errorSWcutoff}
When evaluating the $\eta$ invariant for either the signature or the Dirac operator using a standard Gaussian $G(x)=e^{-x^2/2}$, splitting point $L=1.7$ and length cutoff $R=7.5$, the error coming from truncating the sum on the geometric side is bounded above by $0.0376$.\footnote{For the evaluation of this sum, we used the fact that the general term decays extremely rapidly, as $A(n)=O( e^{2n})$; for example, the term $n=31$ is of the order of $10^{-57}$.}
\end{prop}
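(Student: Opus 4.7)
The proof will essentially be a careful numerical verification organized around the decomposition of the error sum already sketched in the surrounding discussion. Here is the plan.

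The starting point is the pointwise bound
\begin{equation*}
\left| \ell(\gamma_0) \cdot \frac{\sin(\mathrm{hol}(\gamma))\cos(\varphi(\gamma))}{|1-e^{\mathbb{C}\ell(\gamma)}||1-e^{-\mathbb{C}\ell(\gamma)}|} \cdot \frac{G(\ell(\gamma)/L)}{\ell(\gamma)} \right| \leq \frac{\ell(\gamma_0)}{|1-e^{\mathbb{C}\ell(\gamma)}||1-e^{-\mathbb{C}\ell(\gamma)}|} \cdot \frac{G(\ell(\gamma)/L)}{\ell(\gamma)},
\end{equation*}
valid in both the signature and Dirac cases since the cosines and sines are dominated by $1$. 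Hence it suffices to bound the right-hand side summed over $\ell(\gamma)\geq 7.5$, which is a purely geometric quantity and is independent of the twisting character.

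The plan is to split this tail into the ``near'' range $\ell(\gamma)\in[7.5,8]$ and the ``far'' range $\ell(\gamma)\geq 8$. For the near range I would appeal directly to the explicit length spectrum of $\SW$ up to cutoff $R=8$ computed earlier via SnapPy: one enumerates the finitely many conjugacy classes with $\ell(\gamma)\in[7.5,8]$, substitutes their complex lengths into the summand with $L=1.7$, and records the resulting numerical value. For the far range, I would apply the inequality $|1-e^{\mathbb{C}\ell}||1-e^{-\mathbb{C}\ell}|\geq 2\sinh(\ell)-2$ together with $G(\ell/L)\leq G(n/L)$ for $\ell\in[n,n+1]$, yielding
\begin{equation*}
\sum_{\ell(\gamma)\geq 8}\frac{\ell(\gamma_0)}{|1-e^{\mathbb{C}\ell(\gamma)}||1-e^{-\mathbb{C}\ell(\gamma)}|}\frac{G(\ell(\gamma)/L)}{\ell(\gamma)} \leq \sum_{n=8}^{\infty}\frac{e^{-n^2/(2L^2)}}{(2\sinh(n)-2)\cdot n}\cdot A(n+\tfrac{1}{2}),
\end{equation*}
where $A$ is the explicit upper bound from Proposition \ref{errorSWcutoff0}. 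Since $A(T)$ grows like $e^{2T}$ while the prefactor carries $e^{-n^2/(2L^2)}/\sinh(n)\sim e^{-n^2/(2L^2)-n}$, the summand decays super-exponentially and the series converges very rapidly; truncating at a modest $n$ (say $n=40$) already drives the residual below machine precision.

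The main ``obstacle'' is purely computational: one must (i) tabulate the short geodesic contributions in $[7.5,8]$ accurately enough, (ii) evaluate the explicit expression for $A(T)$ from the previous subsection with $c=5$, and (iii) sum both pieces with $L=1.7$ and multiply by the prefactor $2/\pi$ arising from $\int_0^\infty \widehat{G}(t)\,dt=\pi$, checking that the total does not exceed $0.0376$. The choice $L=1.7$ is essentially optimized by balancing this geometric error against the spectral truncation error that will appear in the next section; once $L$ is fixed, the bound $0.0376$ is simply the numerical outcome of the sum. No new analytic idea is required beyond the tools (Proposition \ref{errorSWcutoff0} and the known length spectrum up to $R=8$) already assembled.
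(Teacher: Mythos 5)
Your proposal matches the paper's own argument essentially line for line: bound the sines and cosines by $1$, split the tail at $\ell(\gamma)=8$, compute the $[7.5,8]$ piece explicitly from the known length spectrum, and control the $\ell(\gamma)\geq 8$ piece via $|1-e^{\mathbb{C}\ell}||1-e^{-\mathbb{C}\ell}|\geq 2\sinh(\ell)-2$ together with the bound $A(n+\tfrac{1}{2})$ from Proposition \ref{errorSWcutoff0}. The remaining work is exactly the numerical evaluation you describe, so this is the same proof.
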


\vspace{0.5cm}

\section{The Fr\o yshov invariants of the Seifert-Weber dodecahedral space}\label{proofthm2}
In this section, we finally prove Theorem \ref{thm2}. Our proof is based on the fact that the Seifert-Weber is a minimal hyperbolic $L$-space \cite{LL2}. More generally, the approach of this final section can in principle be adapted to any minimal hyperbolic $L$-space, provided the linking form and a good portion of the length spectrum have been computed.
\subsection{The signature eta invariant.} We have the following.
\begin{prop}\label{etaSW}
The signature eta invariant of $\SW$ is $\eta_{\mathrm{sign}}=1.31111111\dots$
\end{prop}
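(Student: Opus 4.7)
The plan is to apply the analytic continuation formula from Section \ref{continuation} directly to $\SW$, taking the Gaussian test function $G(x)=e^{-x^2/2}$, so that $\widehat{G}(t)=\sqrt{2\pi}\,e^{-t^2/2}$ and $\int_0^\infty \widehat{G}(T)\,dT=\pi$. With splitting point $L=1.7$, the formula (\ref{etazero}) reads
\[
\eta_{\mathrm{sign}} \;=\; \frac{1}{\pi}\left(\int_0^{1.7} G_T\,\frac{dT}{T}\;+\;\int_{1.7}^{\infty} G_T\,\frac{dT}{T}\right),
\]
where the first integral is to be evaluated via the geometric (closed geodesic) side of the odd trace formula for coexact $1$-forms, and the second via its spectral side.

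For the geometric integral, I would use the list of closed geodesics of $\SW$ of length at most $R=7.5$, together with their complex lengths (real length and holonomy), to compute
\[
\int_0^{1.7} G_T\,\frac{dT}{T}
\;=\;-\sum_{\ell(\gamma)\leq 7.5}\frac{2\sin(\mathrm{hol}(\gamma))\,\ell(\gamma_0)}{|1-e^{\mathbb{C}\ell(\gamma)}|\,|1-e^{-\mathbb{C}\ell(\gamma)}|}\cdot\frac{G(\ell(\gamma)/L)}{\ell(\gamma)}
\;+\;\mathrm{error},
\]
and the truncation error is bounded by $0.0376$ by Proposition \ref{errorSWcutoff}.

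For the spectral integral, the evaluation from Section \ref{continuation} gives
\[
\int_{1.7}^{\infty} G_T\,\frac{dT}{T}\;=\;\sum_{t_n}\mathrm{sgn}(t_n)\int_{1.7\,|t_n|}^{\infty}\sqrt{2\pi}\,e^{-T^2/2}\,dT,
\]
which converges super-exponentially fast in $|t_n|$. The ingredients I need are: (i) a concrete list of the small eigenvalues $|t_n|$ of $\ast d$ on coexact $1$-forms of $\SW$, which can be extracted by the Booker--Str\"ombergsson method applied to the even trace formula of Theorem \ref{coexformula} (exactly in the spirit of \cite{LL2}); (ii) the signs of those eigenvalues, which I would pin down by evaluating the \emph{odd} part of Theorem \ref{coexformula} on suitable compactly supported odd test functions with support within the length spectrum cutoff; and (iii) a refined local Weyl law for coexact $1$-forms on $\SW$, derived as in Section \ref{geodesicSW} by evaluating the even trace formula on dilated Gaussians using length spectrum data out to $R=8$, in order to bound the tail $\sum_{|t_n|>T_0}$ by a small quantity.

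The main obstacle is the combined precision: the geometric truncation error ($\leq 0.0376$), the spectral tail truncation error, and any uncertainty in identifying individual small eigenvalues (and their signs) must together be small enough to isolate the stated value $1.31111\dots$ unambiguously; in particular, the spectral tail must be pushed below roughly $10^{-5}$, which dictates how many coexact $1$-form eigenvalues must be computed. Once the tails are controlled, the computation reduces to a finite, explicit sum on each side of the trace formula, and the two sides are combined via the boxed formula above to yield the stated numerical value.
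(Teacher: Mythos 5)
Your plan correctly reproduces the analytic machinery of the paper (Gaussian test function, splitting point $L=1.7$, geometric truncation at $R=7.5$ with error $\leq 0.0376$ from Proposition \ref{errorSWcutoff}, spectral side handled via eigenvalue localization), but it has a gap that makes it unable to reach the stated conclusion: you cannot attain the precision $1.31111111\dots$ by pure trace-formula approximation. You yourself note that the spectral tail would need to be pushed below roughly $10^{-5}$, yet the geometric truncation error alone is already $0.0376$, three orders of magnitude too large, and no amount of additional eigenvalue computation on the spectral side can compensate for it. Shrinking the geometric error to $10^{-5}$ would require length spectrum data far beyond the cutoff $R=8$ that is actually available, so the proposal as written can only establish $\eta_{\mathrm{sign}} = 1.31 \pm 0.05$ or so, not the eight-digit value claimed.

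The missing ingredient is topological rigidity: the paper first invokes the Atiyah--Patodi--Singer relation $3\eta_{\mathrm{sign}} \equiv 2\,\mathrm{cs} + \tau \pmod{2\mathbb{Z}}$, where SnapPy computes $\mathrm{cs} = -0.033333\dots$ and $\tau = 0$ since $H_1(\SW,\mathbb{Z})$ has no $2$-torsion. This forces $\eta_{\mathrm{sign}} = -0.022222\dots + \tfrac{2}{3}n$ for some $n \in \mathbb{Z}$, so it suffices to approximate $\eta_{\mathrm{sign}}$ to within $\tfrac{1}{3}$ to pin down $n$ (and hence the value to the accuracy of the Chern--Simons computation). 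The trace-formula computation you describe --- using the first coexact eigenvalue window from \cite{LL2} with multiplicity $6$ and positive sign, truncating the geodesic sum at $R=7.5$, and bounding the remaining spectral contributions --- then yields a total error of about $0.054$, which is comfortably below $\tfrac{1}{3}$ and identifies $n=2$. Without this step your argument stalls at a coarse numerical estimate.
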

Our result will be obtained by combining the Chern-Simons computations obtained via SnapPy, together with computations involving the trace formula. In principle, this result can be obtained directly via Snap \cite{CGHW}; unfortunately, we were not able to run the software on our laptops. Also, the proof provides a good example of our technique, in a simpler setup than the Dirac case (where our understanding of the spectrum is less precise).
\begin{proof}
Snappy computes the Chern-Simons invariant of $\SW$ to be
\begin{equation*}
\mathrm{cs}=-0.033333\dots
\end{equation*}
The relation
\begin{equation*}
3\eta_{\mathrm{sign}}=2\mathrm{cs}+\tau \mod 2\mathbb{Z}
\end{equation*}
holds \cite{APS2}, where $\tau$, which is the number of $2$-primary summands in $H_1(\SW,\mathbb{Z})$, is zero in our case. Thus
\begin{equation}\label{etavalue}
\eta_{\mathrm{sign}}=-0.022222\dots+\frac{2}{3}n
\end{equation}
for some $n\in\mathbb{Z}$. Therefore, approximating the eta invariant to within error less than $\frac{1}{3}$ pins down its value to high accuracy. Approximating $\eta_{\mathrm{sign}}$ well is straightforward because we have a good understanding of the small coexact 1-form eigenvalues on $\mathrm{SW}$ (see Figure \ref{SWstardBooker}, which we used in \cite{LL2} to show $\lambda_1^*>2$):
\begin{itemize}
\item The smallest eigenspace corresponds to
\begin{equation}\label{eigeninterval}
\sqrt{\lambda_1^*}\in [1.42787720680237, 1.43033743858337]
\end{equation}
and has multiplicity exactly $6$ (see \cite{LL2}). Furthermore, using the odd trace formula one readily shows that this eigenvalue is positive.
\item The next spectral parameter is larger than $2$.
\end{itemize}
We compute an approximation of the eta invariant using $G(x)=e^{-x^2/2}$ and $L=1.7$ in (\ref{etazero}), where
\begin{itemize}
\item we truncate the geometric sum at $R=7.5$;
\item in the spectral sum we approximate the first eigenvalue (with multiplicity $6$) with the midpoint $\widetilde{\sqrt{\lambda_1^*}}$ of the interval in (\ref{eigeninterval}), and consider the remaining part of the spectral sum as an error term.
\end{itemize}
As $\int_0^{\infty} \widehat{G}(t)dt=\pi$,
we have the approximation
\begin{align*}
\eta_{\mathrm{sign}}&\approx \frac{1}{\pi}\left(6\cdot \int_{1.7 \cdot \widetilde{\sqrt{\lambda_1^*}}}^{\infty}\sqrt{2\pi} e^{-t^2/2}dt-\sum_{\ell(\gamma)\leq 7.5} \ell(\gamma_0)\cdot\frac{2\sin(\mathrm{hol}(\gamma))}{|1 - e^{\mathbb{C}l(\gamma)}| \cdot |1 - e^{-\mathbb{C}l(\gamma)}} \cdot \frac{1}{\ell(\gamma)} \cdot e^{-\frac{\ell^2}{2} \cdot (1.7)^2 } \right)\\
&=1.31102382358\dots
\end{align*}
which is very close to the expression (\ref{etavalue}) for $n=2$.  All we need to do is provide an error bound on our computation.
\par
The error arising from truncating the geometric sum at $R=7.5$ is bounded above by $0.0376$ by Proposition \ref{errorSWcutoff}, while the error arising from approximating the value of the first eigenvalue is bounded above by
\begin{equation*}
\frac{6}{\pi}\cdot \int_{1.7\times 1.427877}^{1.7\times 1.4303375}\sqrt{2\pi}e^{-t^2/2}dt\leq0.00105. 
\end{equation*}
To estimate the error arising from truncating the spectral sum at the first eigenvalue, we can refine the estimates from Section \ref{localweyl} (which only involved volume and injectivity radius) because we have a direct knowledge of the length spectrum. In particular, we can use the test function
\begin{equation*}
\cos(Tx)\cdot \left(\frac{1}{2\delta}\mathbf{1}_{[-\delta,\delta]} \right)^{\ast 6}\text{ where }\delta=8/6
\end{equation*}
and compute explicitly the value of the sum over geodesics (as we know it up to cutoff $R=8$), and use it to provide upper bounds on the number of eigenvalues in specific intervals. In particular we get:
\begin{itemize}
\item there are at most $22$ spectral parameters in $[2,3]$, and each contributes at most $\frac{1}{\pi}\int_{1.7 \times 2}^{\infty}\sqrt{2\pi}e^{-t^2/2}dt<0.00068.$
\item there are at most $21$ spectral parameters in $[3,4]$, and each contributes at most $\frac{1}{\pi}\int_{1.7 \times 3}^{\infty}\sqrt{2\pi}e^{-t^2/2}dt<3.4\times 10^{-7}$
\item it is clear that the contribution of larger eigenvalues is negligible, as their number grows quadratically but the contribution decays superexponentially.
\end{itemize}
Taking all this into account, we obtain an error of at most $0.054$, and the result follows.
\end{proof}

\begin{figure}
  \includegraphics[width=0.8\linewidth]{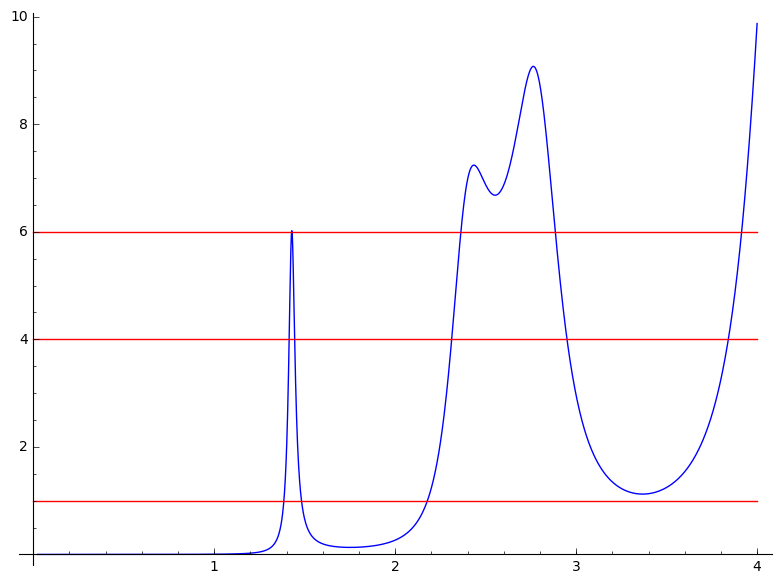}
  \caption{The function $J_{\mathrm{SW}}(t)$ for coexact $1$-forms, computed with cutoff $R=8$, cf. \cite{LL2}.}
  \label{SWstardBooker}
\end{figure}

\begin{remark}As the numerical result suggests, $\eta_{\mathrm{sign}}$ is a rational number. This follows because the invariant trace field $\mathbb{Q}(\sqrt{-1-2\sqrt{5}})$ of $\SW$ is $\mathrm{CM}$-embedded, see \cite{NY}. In fact, one can show that
\begin{equation*}
\eta_{\mathrm{sign}}=59/45
\end{equation*}
using the multiplicativity of the Chern-Simons invariants under covers, and the fact that $\SW$ admits a $60$-fold cover with an orientation reversing isometry (for which $\eta_{\mathrm{sign}}$ is therefore $0$).
\end{remark}
\vspace{0.3cm}

\subsection{The spin structure}
We now focus on the computation of $h(\SW,\spin_0)$ where $\spin_0$ is the unique spin structure on $\mathrm{SW}.$ This is the simplest spin$^c$ structure to handle because
\begin{equation*}
-2h(\SW,\spin_0)=\eta_{\mathrm{sign}}/4+\eta_{\mathrm{Dir}}=n/4
\end{equation*}
for some $n\in\mathbb{Z}$. To see this, recall the classical theorem that every spin three-manifold bounds a compact spin four-manifold $(X,\spin_X)$; in fact, we can choose $X$ to be simply connected \cite[Chapter VII]{Kir} (this will be useful later). Then, looking at the expression of the absolute grading (\ref{formulaabsgrX}) modulo integers, we have that
\begin{align*}
\eta_{\mathrm{sign}}/4+\eta_{\mathrm{Dir}} &= \frac{1}{4} \left(c_1(\spin_X)^2-2\chi(X)-3\sigma(X)-2 \right) \mod \mathbb{Z}\\
&=\frac{1}{4} \left(-2\chi(X)-3\sigma(X)-2 \right)\mod \mathbb{Z}.
\end{align*}
as $\mathrm{gr}(X^*,\spin_X,[\mathfrak{b}])$ is an integer (it is the expected dimension of a moduli space) and $c_1(\spin_X)$ is a torsion class, and the claim follows.

Therefore, we only need to approximate $\eta_{\mathrm{Dir}}$ with a good error. The spectral picture can be found in Figure \ref{SWspinc1}. Here, we computed the spin length spectrum up to cutoff $R=7.5$ using the algorithm described in Section \ref{spinlength}.

\begin{figure}
  \includegraphics[width=0.8\linewidth]{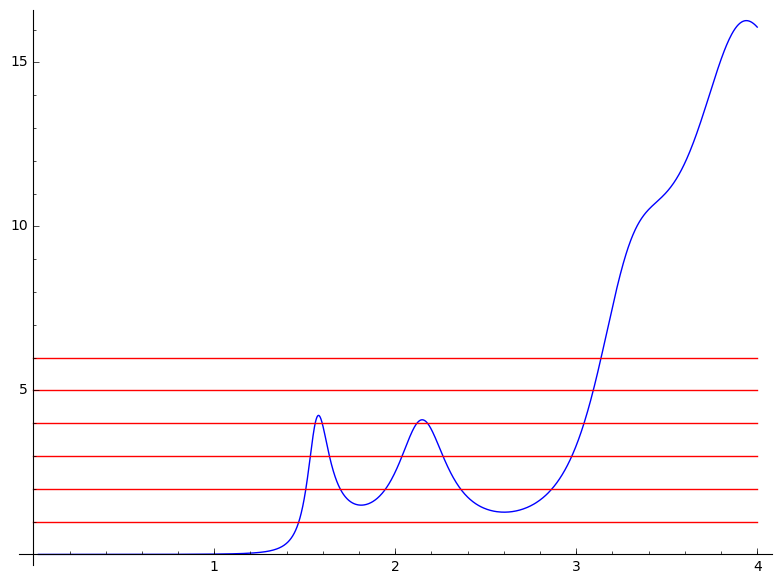}
  \caption{The function $J_{\mathrm{SW},\spin_0}(t)$, computed with cutoff $R=7.5$.}
  \label{SWspinc1}
\end{figure}

For a spin structure, the Dirac operator is quaternionic and therefore all eigenspaces have even multiplicity. We can therefore conclude that $|s_1|\geq1.45$. We compute an approximation to the eta invariant using $G(x)=e^{-x^2/2}$ and $L=1.7$ by truncating the geometric sum at $R=7.5$ and considering the whole spectral side as an error term. We obtain
\begin{align*}
\eta_{\mathrm{Dir}}&\approx -\frac{1}{\pi}\sum_{\ell(\gamma)\leq 7.5} \ell(\gamma_0)\cdot\frac{2\sin(\mathrm{hol}(\tilde{\gamma}))\cdot {\cos(\varphi_{\tilde{\gamma}})} }{|1 - e^{\ell + i \theta}| \cdot |1 - e^{-(\ell + i \theta)}|} \cdot \frac{1}{\ell(\gamma)} \cdot e^{-\frac{\ell^2}{2} \cdot (1.7)^2 }\\
&=0.641083369621\dots
\end{align*}
and therefore the approximate value
\begin{equation*}
-2h(\SW,\spin_0)\approx0.968861147398778
\end{equation*}
This shows $h(\SW,\spin_0)=-1/2$ provided we can bound the error term for $\eta_{\mathrm{Dir}}$ effectively. Again, the error from truncating the sum on the geometric side is bounded above by $0.0376$ by Proposition \ref{errorSWcutoff}. For the spectral side, we bound the number of spectral parameters as in the proof of Proposition \ref{etaSW} using the even spinor trace formula for the function 
\begin{equation}\label{boundfunction}
\cos(Tx)\cdot \left( \frac{1}{2\delta}\mathbf{1}_{[-\delta,\delta]} \right)^{\ast 6}\text{ where }\delta=7.5/6
\end{equation}
(whose geometric side we can compute as it is supported in $[-7.5,7.5]$). We get that there are at most $8$ spectral parameters in $[1.45,2]$, and each contributes at most
\begin{equation*}
\frac{1}{\pi}\int_{1.7 \cdot 1.45}^{\infty}\sqrt{2\pi}e^{-t^2/2}dt<0.0108.
\end{equation*}
Furthermore, there are at most $12$ parameters in $[2,3]$ and $31$ parameters in $[3,4]$; larger parameters are again negligible.
Putting everything together, we see that the error is bounded above by $0.1564$. Therefore, we have
\begin{equation*}
-2h(\SW,\spin_0)\in[0.812,1.126]
\end{equation*}
and the result in Theorem \ref{thm2} follows because $1$ is the only number of the form $n/4$ in that interval.

\vspace{0.3cm}
\subsection{Conclusion of the proof.}

We now complete the proof of Theorem \ref{thm2}. The arguments in this section will be quite ad hoc because the spin$^c$ structures have a smaller spectral gap than the spin case, and in particular the error bounds we will be able to provide will not be as good as in the spin case. Of course, one could in principle obtain better bounds by computing larger portions of the length spectrum, but this is a quite challenging task from the computational viewpoint (compare with the discussion in \cite{LL}).
\\
\par
A corollary of the computation of the previous section is the simply connected spin manifold $(X,\spin_X)$ bounding $\SW$ we fixed satisfies
\begin{equation*}
\frac{1}{4} \left(-2\chi(X)-3\sigma(X)-2 \right) \in \mathbb{Z}
\end{equation*}
As $H^3(X, \partial X;\mathbb{Z})=H_1(X,\mathbb{Z})=0$, the restriction map
\begin{equation*}
H^2(X,\mathbb{Z})\rightarrow H^2(\partial X,\mathbb{Z})
\end{equation*}
is surjective; therefore every spin$^c$ structure $\spin=\spin_0+x$ on $\SW$ extends to a spin$^c$ structure $\tilde{\spin}=\spin_X+\tilde{x}$ on $X$. As $c_1(\tilde{\spin})$ equals $2\tilde{x}$ up to torsion elements, we have
\begin{align*}
\eta_{\mathrm{sign}}/4+\eta_{\mathrm{Dir}} &=\frac{1}{4}(c_1^2(\tilde{\spin})-2\chi(X)-3\sigma(X)-2) \mod \mathbb{Z} \\
&=\tilde{x}^2 \mod \mathbb{Z}.
\end{align*}
As $H_1(X,\mathbb{Z})=0$, we have $\tilde{x}^2=\mathrm{lk}(x,x) \mod\mathbb{Z}$ hence we conclude,
\begin{equation*}
-2h(\SW,\spin) =\mathrm{lk}(x,x)\mod\mathbb{Z}.
\end{equation*}
By the computations of Proposition \ref{linkingSW} we know exactly the number of spin$^c$ which are not spin and for which the self-linking number equals a given value in $\left(\frac{1}{5}\mathbb{Z}\right)\slash \mathbb{Z}$. Also, we know from Corollary \ref{spinisometry} that the isometry group acts transitively on the set of non-spin spin$^c$ structures with fixed self-linking number, so we expect a priori to only have $5$ possible spectral pictures for the remaining spin$^c$ structures. This is indeed the case, see Figure \ref{SWnonspin}.
\begin{figure}
\centering
\begin{subfigure}{.45\linewidth}
\includegraphics[width=\linewidth]{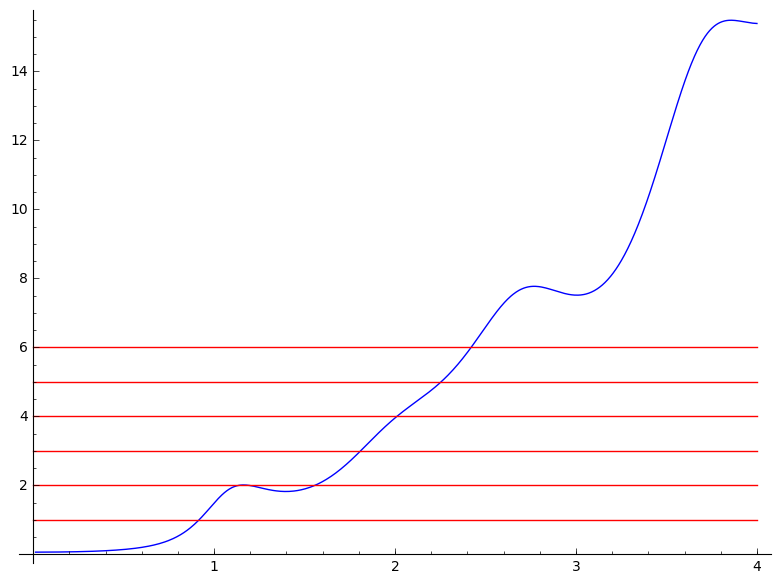}
\caption{Class $2$: $30$ elements.}
\end{subfigure}
\begin{subfigure}{.45\linewidth}
\includegraphics[width=\linewidth]{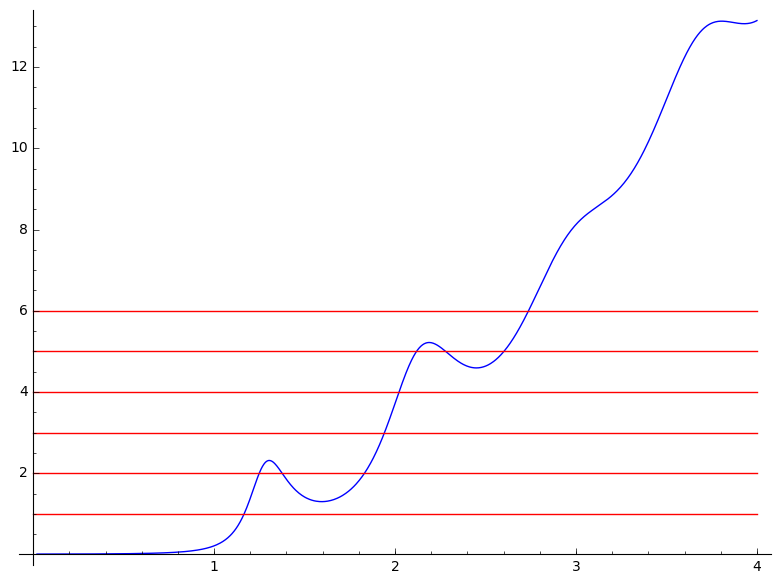}
\caption{Class $3$: $30$ elements}
\end{subfigure}

\begin{subfigure}{.45\linewidth}
\includegraphics[width=\linewidth]{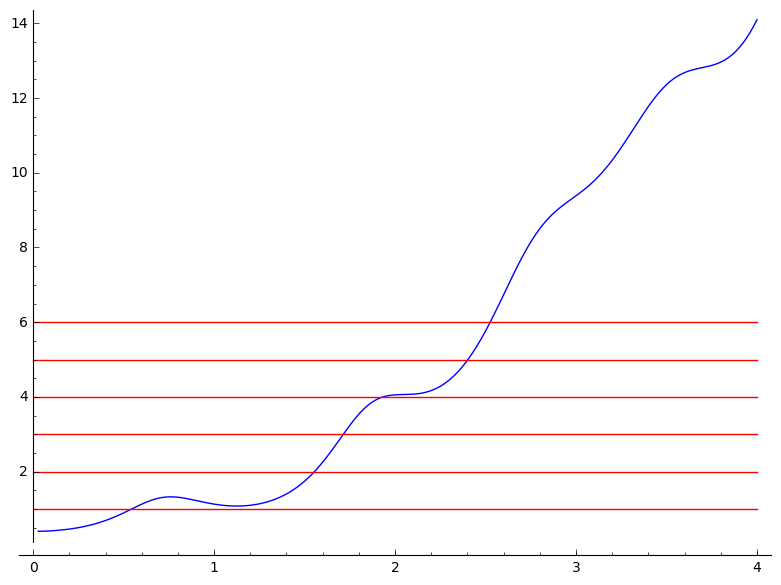}
\caption{Class $4$: $24$ elements.}
\end{subfigure}

\begin{subfigure}[b]{.45\linewidth}
\includegraphics[width=\linewidth]{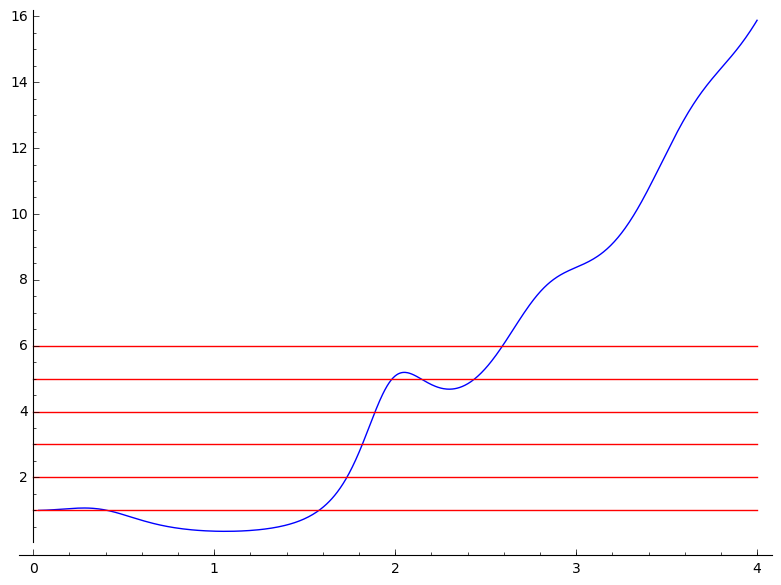}
\caption{Class $5$: $20$ elements}
\end{subfigure}
\begin{subfigure}[b]{.45\linewidth}
\includegraphics[width=\linewidth]{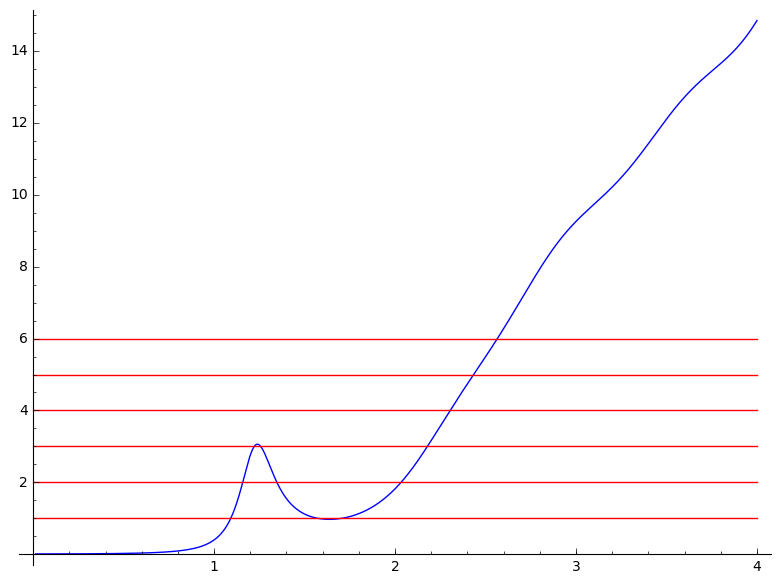}
\caption{Class $6$: $20$ elements}
\end{subfigure}
\caption{Pictures for $\SW$}
\label{SWnonspin}
\end{figure}

Comparing the number of spin$^c$ structures with a given linking form, we conclude the following.
\begin{lemma}\label{SWfrac}The following statements hold:
\begin{enumerate}
\item for the spin$^c$ structures in Class $2$ and $3$, $-2h$ has fractional part $1/5$ and $-1/5$, or vice versa;
\item for the spin$^c$ structures in Class $4$, $-2h$ has fractional part $0$;
\item for the spin$^c$ structures in Class $5$ and $6$, $-2h$ has fractional part $2/5$ and $-2/5$, or vice versa.
\end{enumerate}
As in the case of the spin structure, we compute an approximation to the Dirac eta invariant using $G(x)=e^{-x^2/2}$ and $L=1.7$ by truncating the geometric sum at $R=7.5$ and considering the whole spectral side as an error term. The results for the approximated value of $-2h(Y,\spin)=\eta_{\mathrm{sign}}/4+\eta_{\mathrm{Dir}}$ are as in the following table.
\end{lemma}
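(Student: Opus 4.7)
The plan is to deduce the three fractional-part assertions from the congruence
\begin{equation*}
-2h(\SW,\spin)\equiv \mathrm{lk}(x,x)\pmod{\mathbb{Z}}
\end{equation*}
proved in the preceding subsection, by matching the isometry orbits of non-spin $\spin^c$ structures on $\SW$ to the strata cut out by the self-linking form. No spectral estimate is needed at this stage: the argument is essentially a matching-by-cardinality between two partitions of the same $124$-element set.

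First I would recall the two partitions. On one side, the multiplicity table from Proposition \ref{spincSW}: the nonzero classes $x\in H^2(\SW;\mathbb{Z})\cong (\mathbb{Z}/5\mathbb{Z})^3$ split by the value of $Q_{\SW}(x,x)\in \tfrac{1}{5}\mathbb{Z}/\mathbb{Z}$ into five strata of sizes
\begin{equation*}
24,\ 30,\ 20,\ 20,\ 30 \quad\text{at}\quad 0,\ 1/5,\ 2/5,\ 3/5,\ 4/5.
\end{equation*}
On the other side, by Corollary \ref{spinisometry}, the orbits of $\mathrm{Isom}(\SW)$ acting on non-spin $\spin^c$ structures coincide with the self-linking strata (an isometry preserves the linking form, and transitivity on each stratum comes from Witt extension). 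Since $h(Y,\spin)$ is an isometry invariant, so is its fractional part, and hence the Fr\o yshov fractional parts are constant on each orbit.

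Next I would read off from Figure \ref{SWnonspin} that the non-spin $\spin^c$ structures partition into five isometry classes -- Classes $2$ through $6$ -- of respective sizes $30, 30, 24, 20, 20$. Comparing multisets of cardinalities forces a unique matching: Class $4$ (size $24$) corresponds to $\mathrm{lk}(x,x)=0$; Classes $2$ and $3$ (both of size $30$) together exhaust the values $1/5$ and $4/5\equiv -1/5$; and Classes $5$ and $6$ (both of size $20$) together exhaust $2/5$ and $3/5\equiv -2/5$. Combined with $-2h\equiv\mathrm{lk}(x,x)\pmod{\mathbb{Z}}$, this yields the three bulleted claims.

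There is essentially no mathematical obstacle: the counting argument is forced once one knows that isometries act as the full $\mathrm{SO}(Q_{\SW})$ on $H_1$. The only residual ambiguity -- which of Classes $2$ and $3$ carries $+1/5$ versus $-1/5$, and likewise for Classes $5$ and $6$ -- is genuine at this stage and must be resolved by numerically approximating $\eta_{\mathrm{Dir}}$ to error below $1/10$, which is exactly what the truncated Gaussian computation described at the end of the lemma statement (and carried out as in the spin case, using Proposition \ref{errorSWcutoff} to control the geometric tail and the local Weyl estimates via the test function \eqref{boundfunction} to control the spectral tail) is designed to do.
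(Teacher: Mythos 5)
Your proof is correct and is essentially the paper's own argument: the congruence $-2h\equiv\mathrm{lk}(x,x)\pmod{\mathbb{Z}}$ from the preceding subsection, Corollary \ref{spinisometry} identifying the isometry orbits with the self-linking strata, and the forced matching of the cardinality multisets $\{24,30,30,20,20\}$ from Proposition \ref{spincSW} and Figure \ref{SWnonspin}. You also correctly locate the residual sign ambiguities and note that they are resolved only by the subsequent numerical estimates, exactly as in the paper.
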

\begin{center}
\begin{tabular} { | c | c |c|c|}
\hline
Spin$^c$ class & approximate value of $\eta_{\mathrm{sign}}/4+\eta_{\mathrm{Dir}}$  \\
\hline
2 &   0.292743626654778\dots \\
\hline
3 &  $-0.133807427751222\dots$\\
\hline
4 & 0.645708045190778\dots\\
\hline
5 & 0.456382720492778\dots\\
\hline
6 & 0.542040336591778\dots\\
\hline
\end{tabular}
\end{center}

\vspace{0.3cm}

We then again need to bound errors, as in the case of the spin structure. The error from truncating the sum on the geometric side is bounded above by $0.0376$, see Proposition \ref{errorSWcutoff}. For the spectral side, using the even spinor trace formula for the function (\ref{boundfunction}) we can provide decent bounds on spectral parameters on any given interval. Using this, we can conclude; each of cases requires a slightly different argument, and we treat them separately.
\begin{proof}[Proof of Theorem \ref{thm2} for Classes $2$, $3$ and $4$.]
We begin with Class $4$, which is the most involved because it has the smallest spectral gap. Using our refined estimates on the number of spectral parameters as in the previous sections using the test functions (\ref{boundfunction}), we see that there are at most $1$ eigenvalue in $[0.545,1.05]$, $2$ in $[1.05,1.39]$, $4$ in $[1.39,1.85]$, $7$ in $[1.85,2.2]$, $15$ in $[2.2,3]$, $30$ in $[3,4]$, for a total error of $0.6279$. We therefore conclude
\begin{equation*}
-2h\in[0.017,1.274]
\end{equation*}
so that is $-2h=1$ in this case, as it is an integer by Lemma \ref{SWfrac}.
\par
For Classes $2$ and $3$ the gap is wider, so it is easier to get good bounds. For Class $2$ we can obtain for example
\begin{equation*}
-2h\in[-0.197,0.784].
\end{equation*}
Because the fractional part is either $-1/5$ or $1/5$, we obtain that $-2h=1/5$. This in turn implies that for Class $3$, $-2h$ has fractional part $-1/5$; as we obtain with the same approach
\begin{equation*}
-2h\in[-0.614,0.347],
\end{equation*}
we conclude that $-2h=-1/5$ in this case.
\end{proof}

The case of classes $5$ and $6$, which have fractional part $2/5$ and $-2/5$ or vice versa, is more delicate because for Class $5$ we can only prove a very small spectral gap. We first obtain more information on the fractional parts as follows.

\begin{lemma}
For class $5$, $-2h$ has fractional part $2/5$, while for Class $6$ it has fractional part $3/5$.
\end{lemma}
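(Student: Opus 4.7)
The plan is to mirror the approach used for Classes $2$, $3$, $4$: by Lemma \ref{SWfrac}, the fractional parts of $-2h$ for Classes $5$ and $6$ form the set $\{2/5, 3/5\}$ in some order, and these two candidates are separated by $1/5$ modulo $\mathbb{Z}$. It therefore suffices to prove that the approximate values $0.456382\dots$ (Class $5$) and $0.542040\dots$ (Class $6$) computed in the table of Lemma \ref{SWfrac} are accurate to within total error strictly less than $1/10$. Once this is established, the interval around the Class $5$ approximation will contain $2/5 = 0.4$ but not $3/5 = 0.6$ (and symmetrically for Class $6$), pinning down the fractional part of each.

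First, I would invoke Proposition \ref{errorSWcutoff}, which already bounds the geometric truncation error by $0.0376$ uniformly across all spin$^c$ structures when one uses $G(x) = e^{-x^2/2}$, split parameter $L = 1.7$, and geodesic cutoff $R = 7.5$. This leaves only the spectral truncation error to be controlled. For this, I would follow the same recipe as in the Class $2$, $3$, $4$ arguments: apply the even spinor trace formula to the compactly supported test function $\cos(Tx)\cdot\bigl(\tfrac{1}{2\delta}\mathbf{1}_{[-\delta,\delta]}\bigr)^{\ast 6}$ with $\delta = 7.5/6$ from (\ref{boundfunction}), whose geometric side is explicitly computable because the complete length spectrum of $\SW$ is known up to $R = 8$. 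By varying $T$, this provides explicit upper bounds on the number of Dirac eigenvalues in each interval $[a_i, a_{i+1}]$ above the visible spectral gap in Figure \ref{SWnonspin}(d) and Figure \ref{SWnonspin}(e). The contribution to $\eta_{\mathrm{Dir}}$ of any neglected eigenvalue $|s_n| \geq a$ is bounded by $\tfrac{1}{\pi}\int_{1.7a}^{\infty}\sqrt{2\pi}\,e^{-t^2/2}\,dt$, which decays super-exponentially in $a$, so only a finite band of low-lying intervals contributes non-negligibly.

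The main obstacle will be Class $5$: among all five non-spin classes, Figure \ref{SWnonspin}(d) exhibits the narrowest spectral gap, so even a single eigenvalue sitting close to the lower edge of the admissible region can contribute up to a few hundredths to the error budget. Unlike Class $4$ (where the answer is an integer so a full error of $\sim 0.6$ was acceptable), here the target separation between candidate fractional parts is only $1/5$, and total error $<0.1$ is essential. I would therefore prioritize refining the Class $5$ estimates, either by using the Booker--Strombergsson-style picture to push the first-eigenvalue location closer to the true value (thereby allowing some of the lowest parameters to be handled explicitly rather than absorbed into error), or if necessary by sharpening the spectral density counts via larger $k$ in the convolution power of the indicator function.

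Once the total errors $\varepsilon_5, \varepsilon_6 < 1/10$ are established for both classes, one concludes
\begin{equation*}
-2h(\SW,\spin) \in \bigl[0.456 - \varepsilon_5,\; 0.456 + \varepsilon_5\bigr]\ \text{for Class $5$},
\end{equation*}
\begin{equation*}
-2h(\SW,\spin) \in \bigl[0.542 - \varepsilon_6,\; 0.542 + \varepsilon_6\bigr]\ \text{for Class $6$},
\end{equation*}
and combined with Lemma \ref{SWfrac}, the only consistent assignment of fractional parts is $2/5$ for Class $5$ and $3/5$ for Class $6$, completing the proof.
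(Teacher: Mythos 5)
Your proposal takes a fundamentally different route from the paper, and it has a genuine gap: the error bounds you need are not achievable with the available data. The paper's proof of this lemma is purely topological and uses no new analytic input at all: it writes the linking form as a matrix in the basis in which the algorithm of Section \ref{spinlength} outputs the twisting characters, observes that this matrix is uniquely determined by the requirement that it reproduce the already-established self-linking values for Classes $2$, $3$ and $4$, and then simply evaluates the resulting quadratic form on the characters representing Classes $5$ and $6$. The lemma exists precisely because the analytic estimate cannot separate $2/5$ from $3/5$ for these classes.

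Concretely, your plan requires total error $\varepsilon_5,\varepsilon_6<1/10$. For Class $6$ the paper's best bound (using the full length spectrum to cutoff $7.5$ and the test functions (\ref{boundfunction})) is $0.459$, dominated by up to four spectral parameters in $[1,1.5]$, each of which can contribute about $0.089$; already these alone exceed $0.1$. For Class $5$ the situation is worse: there is a Dirac eigenvalue whose absolute value is only localized to $[0.0408,0.4078]$, and the odd trace formula cannot determine its sign because odd test functions are small near zero. Even after assuming the sign and approximating its contribution by the midpoint, the paper's error is $0.3289$; with the sign unknown the uncertainty from this single eigenvalue is close to $0.95$. Worse, the paper resolves the sign ambiguity for Class $5$ by \emph{using} the fractional part asserted in this lemma (the negative-sign case gives an interval containing no number with fractional part $2/5$), so an analytic proof of the lemma along your lines would be circular as well as numerically out of reach. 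Sharpening the spectral density counts or redrawing the Booker--Str\"ombergsson pictures does not help here, since the obstruction is the location and sign of a single near-zero eigenvalue, not the density of large ones; the only analytic fix would be computing substantially more of the spin$^c$ length spectrum, which the paper explicitly flags as impractical.
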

\begin{proof}
The output of our computations described in Section \ref{spinlength} provides the description of a spin$^c$ structure as a homomorphism
\begin{equation*}
\varphi: {\mathbb{Z}/5\mathbb{Z}}\oplus{\mathbb{Z}/5\mathbb{Z}}\oplus{\mathbb{Z}/5\mathbb{Z}}\rightarrow \mathbb{Q}/ \mathbb{Z},
\end{equation*}
where the natural basis is different from the one used in Section \ref{linkingSW}. It is easy to check that the linking form in this new basis is given by
\begin{equation*}\frac{1}{5}
\begin{bmatrix}
0&2&2\\
2&0&0\\
2&0&1\\
\end{bmatrix}.
\end{equation*}
Indeed, this is the unique symmetric matrix that takes the correct values for the spin$^c$ structures of class $2,3$ and $4$. The lemma is then proved by plugging in the explicit values for classes $5$ and $6$.
\end{proof}
\vspace{0.3cm}
\begin{proof}[Proof of Theorem \ref{thm2} for Class $6$.]
For Class $6$, we compute using again the test functions (\ref{boundfunction}) that there are at most $4$ parameters in $[1,1.5]$, $5$ in $[1.5,2]$, $16$ in $[2,3]$ and $30$ in $[3,4]$. We therefore have a total error of at most $0.459$,
and therefore
\begin{equation*}
-2h\in [0.083,1.002]
\end{equation*}
hence by the previous Lemma we have $-2h=3/5$.
\end{proof}

Finally, to deal with the case of Class $5$, we need some additional information about the small eigenvalues.

\begin{lemma}
For spin$^c$ class $5$, the smallest eigenvalue $s_1$ satisfies 
\begin{equation}\label{interval}
|s_1|\in[0.0408361, 0.4077692]
\end{equation}
and has multiplicity $1$. Furthermore, $|s_2|>1.55$.
\end{lemma}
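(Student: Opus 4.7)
The plan is to combine the spectral picture $J_{\SW,5}$ for Class $5$ shown in Figure \ref{SWnonspin}(d), produced via the Booker-Strombergsson method applied to the even spinor trace formula with length cutoff $R=7.5$, with an explicit evaluation of the trace formula for a suitable test function. Recall the general principle: if $H$ is an admissible even test function with $\widehat{H}\geq 0$ on $\mathbb{R}$ and $\widehat{H}(t_0)=1$, then applying Theorem \ref{spinorformula} gives $\sum_n \widehat{H}(s_n)\geq m(t_0)$, where $m(t_0)$ is the Dirac multiplicity at $\pm t_0$. Optimising over such $H$ produces the picture, so that $J_{\SW,5}(t_0)\geq m(t_0)$ pointwise.

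First I would read off the confinement data from the graph: $J_{\SW,5}(t)<1$ throughout $[0,0.0408361)\cup(0.4077692,1.55]$ and $J_{\SW,5}(t)<2$ throughout $[0.0408361,0.4077692]$. The first inequality forces every Dirac eigenvalue $s$ with $|s|\leq 1.55$ to lie in the window $[0.0408361,0.4077692]$; in particular it rules out a zero eigenvalue. The second inequality bounds the multiplicity of any such eigenvalue by $1$. Together these observations show that on $|s|\leq 1.55$ there is, up to sign, at most one eigenvalue and that eigenvalue must sit inside $[0.0408361,0.4077692]$.

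Next I would show that such an eigenvalue actually exists. For this I would apply the even spinor trace formula to an even test function $H$ whose Fourier transform $\widehat{H}$ satisfies $\widehat{H}\geq c>0$ on $[-0.4077692,0.4077692]$ and decays rapidly outside $[-1.55,1.55]$. The geometric side of the trace formula can then be evaluated explicitly from the Class $5$ spin$^c$ length spectrum computed up to cutoff $R=7.5$ via the algorithm of Section \ref{spinlength}. The tail contribution $\sum_{|s_n|\geq 1.55}\widehat{H}(s_n)$ is controlled using the refined local Weyl bounds of Section \ref{geodesicSW} (obtained from the test function \eqref{boundfunction} and the knowledge of the ordinary length spectrum up to $R=8$), while the truncation error in the geometric sum is bounded as in Proposition \ref{errorSWcutoff}. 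If the resulting lower bound on $\sum_{|s_n|<1.55}\widehat{H}(s_n)$ is strictly positive, then by the previous paragraph it must come from a single eigenvalue inside $[0.0408361,0.4077692]$, which is therefore $s_1$ with multiplicity one; the same argument simultaneously yields $|s_2|>1.55$.

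The main obstacle is purely numerical: Class $5$ has a considerably smaller spectral gap than the spin structure, so the test function $H$ must be tuned quite carefully. One needs $\widehat{H}$ concentrated enough that the geometric and identity contributions, evaluated from the truncated length spectrum, cleanly dominate both the tail spectral bound and the geometric truncation error, while simultaneously retaining a uniform positive lower bound on $\widehat{H}$ throughout $[-0.4077692,0.4077692]$. Given the effective estimates already established in Sections \ref{localweyl} and \ref{geodesicSW}, this reduces to a finite explicit verification that can be implemented in a computer algebra system, analogous to the computations performed for the spin structure above.
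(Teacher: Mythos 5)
There is a genuine gap in the uniqueness part of your argument. The pointwise bound $J_{\SW,5}(t_0)<2$ produced by the Booker--Strombergsson optimization controls only the multiplicity of the eigenvalue at the single value $\pm t_0$: it tells you that any eigenvalue lying in $[0.0408361,0.4077692]$ is simple, but it does not bound the \emph{number} of distinct eigenvalues in that interval. Your deduction that ``on $|s|\leq 1.55$ there is, up to sign, at most one eigenvalue'' therefore does not follow; two (or more) distinct simple eigenvalues in the window are not excluded by anything you have written, so neither the multiplicity-one claim (in the sense the lemma needs, namely a single eigenvalue in the window counted with multiplicity) nor the claim $|s_2|>1.55$ is established. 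Your subsequent trace-formula step only produces a \emph{lower} bound on the in-window spectral mass, which gives existence of at least one eigenvalue there but cannot separate one eigenvalue from two.

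The paper closes exactly this gap with a two-sided evaluation of the even spinor trace formula for the Gaussian $e^{-(x/1.7)^2/2}$ truncated at $R=7.5$ (errors controlled by the same quantities as in Proposition \ref{errorSWcutoff}), yielding
\begin{equation*}
\sum_{s_n} e^{-(1.7\cdot s_n)^2/2}\in[0.9678,\,1.0789].
\end{equation*}
Each eigenvalue in $[0.0408361,0.4077692]$ contributes at least $0.786417$ to this sum, and the eigenvalues with $|s|\geq 1.55$ contribute at most $0.2432$ in total (via the counting bounds obtained from the test functions (\ref{boundfunction}), which also give that there are at most two eigenvalues in the window). Hence zero in-window eigenvalues would force the sum below the lower bound $0.9678$, while two would force it above the upper bound $1.0789$; exactly one remains. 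The essential ingredient your proposal is missing is this \emph{upper} bound on the spectral sum; the rest of your outline (confinement to the window from the picture, existence from a positive lower bound after subtracting tail and truncation errors) is in the same spirit as the paper.
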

\begin{proof}
Using the same method as above involving the test functions (\ref{boundfunction}) we conclude that there are at most two eigenvalues in this interval. 

To prove that there is exactly one, we apply the trace formula to the Gaussian $e^{-(x/1.7)^2/2}$ by truncating the sum over geodesics at $R=7.5$. The computations of the error bound involves the exact same quantities as in the proof of Proposition \ref{errorSWcutoff}, and we obtain the following estimate for the sum over the spectrum:
\begin{equation*}
\sum_{s_n} e^{-(1.7\cdot s_n)^2/2}\in[0.9678,1.0789].
\end{equation*}
Now, for $t\in[0.0408361, 0.4077692]$ we have $e^{-(1.7\cdot t)^2/2}\in[0.786417,0.9976]$ and the spectral picture shows that eigenvalues which are not in this interval are $\geq1.55$. The same trace formula argument used seversal times above to prove upper bounds on spectral density shows that there are at most $6$ parameters in $[1.55,2]$, $16$ in $[2,3]$, $30$ in $[3,4]$, from which we see that the contribution of the eigenvalues $\geq1.55$ has to be at most $0.2432$. Using the fact that there are at most two eigenvalues in the interval, we conclude by direct inspection that there has to be exactly one.
\end{proof}
\vspace{0.3cm}
\begin{proof}[Proof of Theorem \ref{thm2} for Class $5$.]
For our purposes the main complication arises from the fact that it is hard to determine the sign of the small eigenvalue using the trace formula. This is because in order to study the sign we need to use odd test functions, and these will be small near zero. We will therefore study the two possibilities separately.  We approximate the contribution of the small eigenvalue with the average $0.7157945$ of
\begin{align*}
\frac{1}{\pi}\int_{1.7 \cdot 0.04}^{\infty}\sqrt{2\pi}e^{-t^2/2}dt&\approx 0.945786\\
\frac{1}{\pi}\int_{1.7 \cdot 0.41}^{\infty}\sqrt{2\pi}e^{-t^2/2}dt&\approx 0.485803
\end{align*}
which introduces an error of at most $0.229992$. The total error is computed to be then at most $0.3289$ (luckily, $|s_2|$ is large). If the small eigenvalue $s_1$ were negative we would have
\begin{equation*}
-2h\in [-0.588,0.069]
\end{equation*}
which does not contain any number with fractional part $2/5$. Therefore $s_1>0$, and we have
\begin{equation*}
-2h\in[0.8432,1.5011]
\end{equation*}
from which we conclude $-2h=7/5$.
\end{proof}

\vspace{0.5cm}
\appendix

\section{Generalities on the group $G$ and the trace formula}
All the trace formulas we will need in the present paper are obtained by specializing the general trace formula for a cocompact torsion-free lattice $\widetilde{\Gamma}$ in the Lie group
$$G = \{ g \in \mathrm{GL}_2(\mathbb{C}): | \det(g) | = 1 \}.$$
The strategy of the proof follows very closely that of \cite[Appendix $B$]{LL}, where we studied the case of $\mathrm{PGL}_2(\mathbb{C})$:
\begin{enumerate}
\item express the very general trace formula in terms of irreducible representations and geometric data;
\item understand the representation theoretic incarnation of the differential operators under consideration;
\item choose suitable test functions to isolate the relevant representations.
\end{enumerate}
In \cite[Appendix $B$]{LL}, we tackled the first step for $\mathrm{PGL}_2(\mathbb{C})$: we derived a general trace formula for cocompact lattices in the group $\mathrm{PGL}_2(\mathbb{C})$ (which was called $G$ in the reference). So as not to repeat ourselves, we simply state the trace formula for $G$ and highlight the differences between it and the very similar trace formula from \cite{LL}.
\begin{remark}
As it will be clear from the discussion, this approach involving $G$ is only strictly necessary when dealing with spinors; the trace formulas for functions and forms, both even and odd, can be derived from the general trace formula for $\mathrm{PGL}_2(\mathbb{C})$.
\end{remark}

\subsection{Notation for subgroups of $G$.}\label{subgroups}
We will work with the maximal torus $T\subset G$ of diagonal matrices; we will use the parametrizations
\begin{equation}\label{parametrization1}
T = \left\{ \left( \begin{array}{cc} e^{u/2} e^{i \alpha} & 0 \\ 0 & e^{-u/2} e^{i\beta} \end{array} \right): u \in \mathbb{R}, \alpha, \beta \in \mathbb{R} / 2\pi \mathbb{Z} \right\}
\end{equation}
and 
\begin{equation}\label{parametrization2}
T =\left\{\left( \begin{array}{cc} e^{i\phi} e^{v + i\theta} & 0 \\ 0 & e^{i\phi} e^{-v-i\theta} \end{array} \right)v \in \mathbb{R}, \phi, \theta \in \mathbb{R} / 2\pi \mathbb{Z}\right\}.
\end{equation}
The center $Z$ of $G$ is the diagonal copy of $U_1$. We will equip $T$ with the Haar measure
\begin{equation}\label{Haar}
dt=du \frac{d\alpha}{2\pi}\frac{d\beta}{2\pi}.
\end{equation}
The Weyl group $W$ has two elements, and is generated by $(u,\alpha,\beta)\mapsto(-u,\beta,\alpha)$.
\begin{remark}
Using this notation, the complex length of the image in $\mathrm{PGL}_2(\mathbb{C})$ of an element of $T$ is $u+i2\theta$.
\end{remark}
We will denote by $B$ the subgroup of upper triangular matrices; the modular function is then
\begin{align*}\delta:B&\rightarrow \mathbb{R}^{>0}\\
\left( \begin{array}{cc} t_1 & \ast \\ 0 & t_2  \end{array}  \right) &\mapsto |t_1/t_2|^2
\end{align*}
Finally, $K=U_2$ is the maximal compact subgroup; we can identify
\begin{equation*}
G/K=\mathrm{PGL}_2(\mathbb{C})/PU_2=\mathbb{H}^3
\end{equation*}
using the upper half-space model $\mathbb{H}^3\equiv\mathbb{C} \times \mathbb{R}_{> 0}$. The tangent space at $(0,1)$ is then identified with $\mathfrak{p}_0=i\mathfrak{su}(2)$. We will also denote $\mathfrak{p}=\mathfrak{p}_0\otimes \mathbb{C}$ for its complexification. 
\vspace{0.3cm}

\subsection{Unitary representations of $G$} \label{unitaryrepsG}
Every irreducible unitary representation of $G$ is isomorphic to one of the following:  
\begin{itemize}
\item[(a)]
the one dimensional representation with character $\mathrm{det}^k$ for some $k\in \mathbb{Z}$.  

\item[(b)]
the representation $\pi_{s_1,s_2,n_1,n_2}$ with $s_1 = ir_1, s_2 = ir_2$ purely imaginary and $n_1, n_2 \in \mathbb{Z},$ defined as follows. Consider the character
\begin{align*}
\chi_{s_1,s_2, n_1, n_2}: B&\rightarrow \mathbb{C}^*\\
\left( \begin{array}{cc} t_1 & \ast \\ 0 & t_2  \end{array}  \right)& \mapsto |t_1|^{s_1} \cdot |t_2|^{s_2} \cdot \left(\frac{t_1}{|t_1|} \right)^{n_1} \cdot \left( \frac{t_2}{|t_2|} \right)^{n_2}.
\end{align*}
We then define $\pi_{s_1,s_2,n_1,n_2}$ to be the representation induced on $G$ by $\delta^{1/2}\chi_{s_1,s_2, n_1, n_2}$. Because $|t_1| \cdot |t_2| = 1,$ the representation $\pi_{s_1,s_2,n_1,n_2}$ only depends on $s_1 - s_2, n_1, n_2.$
\item[(c)]
complementary series representations, which are abstractly isomorphic to $\pi_{s_1,s_2,0,0}$ where the difference $s_1 - s_2$ is real and lies in the interval $[-1,1].$
\end{itemize}
Again, there are some coincidences among these representations.
\begin{lemma}
We have $\pi_{s_1,s_2,n_1,n_2} \cong \pi_{t_1,t_2,m_1,m_2}$ iff 
$$(t_1,t_2,m_1,m_2) = (s_1,s_2,n_1,n_2) \text{ or } (s_2,s_1,n_2,n_1) \in \frac{i \mathbb{R} \times i \mathbb{R}}{\langle i(1,1) \rangle} \times \mathbb{Z} \times \mathbb{Z}.$$
\end{lemma}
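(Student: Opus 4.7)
The plan is to reduce to the classical classification of unitary principal series of $\GL_2(\C)$. Note the direct product decomposition $\GL_2(\C) \cong G \times \R^{>0}$, where $\R^{>0}$ is embedded via $r \mapsto rI$ and every $g$ factors uniquely as $g_0 \cdot |\det g|^{1/2} I$ with $g_0 \in G$. Under this decomposition our $\pi_{s_1,s_2,n_1,n_2}$ is identified with the restriction to $G$ of the classical unitary principal series $\widetilde{\pi}_{s_1,s_2,n_1,n_2}$ of $\GL_2(\C)$ induced from the same character of the full upper triangular Borel $\widetilde{B}$: indeed $G/B = \GL_2(\C)/\widetilde{B}$ as flag varieties, so the parabolic inductions agree.

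For the forward direction I would apply the theory of Jacquet modules (Casselman's theorem). For an irreducible principal series $\pi(\chi)$ the Jacquet module $\pi_N = \pi / \mathrm{span}\{\pi(n)v - v : n \in N\}$ is a finite length $T$-module whose composition factors, regarded as $T$-characters, are precisely $\delta^{1/2}\chi^w$ as $w$ ranges over the Weyl group $W = \{1, w_0\}$. Since the Jacquet module is a functorial invariant, any isomorphism $\pi_{s_1,s_2,n_1,n_2} \cong \pi_{t_1,t_2,m_1,m_2}$ produces an equality of multisets of $T$-characters
$$\{\chi_{s_1,s_2,n_1,n_2},\ \chi_{s_2,s_1,n_2,n_1}\} = \{\chi_{t_1,t_2,m_1,m_2},\ \chi_{t_2,t_1,m_2,m_1}\}.$$
Evaluating $\chi_{s,s',n,n'}$ at the diagonal element $\mathrm{diag}(e^{u/2}e^{i\alpha}, e^{-u/2}e^{i\beta})$ produces $e^{(s-s')u/2}e^{in\alpha}e^{in'\beta}$, from which the triple $(s-s', n, n')$ is uniquely recovered as $(u,\alpha,\beta)$ vary. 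This yields the claimed equivalence, with the quotient by $\langle i(1,1)\rangle$ reflecting that only $s_1 - s_2$ enters (because $|t_1||t_2|=1$ on $T$).

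The converse is handled by the Knapp-Stein intertwining operator
$$(T_{w_0}f)(g) = \int_{\overline{N}} f(w_0 n g)\,dn,$$
which, after normalization and (where needed) analytic continuation, gives a $G$-equivariant isomorphism $\pi_{s_1,s_2,n_1,n_2} \to \pi_{s_2,s_1,n_2,n_1}$ throughout the unitary range (see e.g.\ Knapp, \emph{Representation Theory of Semisimple Groups}, Ch.\ VII). The main conceptual obstacle to a fully self-contained argument is irreducibility of all unitary principal series of $G$: without irreducibility the Jacquet module step only determines composition factors rather than isomorphism classes, and $T_{w_0}$ could in principle have nontrivial kernel. For $\GL_2(\C)$ all unitary principal series are irreducible (Gelfand-Naimark), and this transfers to $G$ via the direct product decomposition; in the paper I would simply invoke this classical fact, while a self-contained proof would analyze the $K$-type decomposition (with $K = U_2$) via Frobenius reciprocity and verify that $T_{w_0}$ is nonzero on each $K$-isotypic component.
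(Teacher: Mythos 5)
Your argument is correct in outline, but it takes a genuinely different route from the paper. The paper works entirely with distribution characters: $\mathrm{trace}\,\pi_{s_1,s_2,n_1,n_2}(f)=\widehat{Sf}(\chi_{s_1,s_2,n_1,n_2}^{-1})$, and Bouaziz's theorem that the Satake (orbital integral) transform surjects onto $W$-invariant test functions on $T$ converts "equal characters" into "equal Fourier transforms on $W$-invariant functions," i.e.\ $W$-conjugate parameters; this handles both directions at once, with the hidden input being that an irreducible unitary representation is determined by its character. You instead split the two directions: Jacquet modules for "isomorphic $\Rightarrow$ $W$-conjugate parameters" and Knapp--Stein intertwining operators for the converse, and you correctly isolate irreducibility of the unitary principal series (inherited from $\GL_2(\C)\cong G\times\R^{>0}$) as the essential nontrivial input --- a point the paper leaves implicit. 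Two caveats. First, the Casselman Jacquet-module statement you quote (composition factors of $\pi_N$ are the $\delta^{1/2}\chi^{w}$) is the $p$-adic formulation; for the archimedean group $G$ the corresponding statement about the Casselman--Jacquet functor (or, equivalently, about leading exponents of matrix coefficients) is true but technically heavier, and for this direction it would be both simpler and closer to self-contained to use either the character computation (as the paper does) or the pair (infinitesimal character, restriction to $K=U_2$), the latter recovering $\{(n_1,n_2),(n_2,n_1)\}$ by Frobenius reciprocity and $(s_1-s_2)^2$ from the Casimir. Second, for the converse note that in the tempered range ($s_1,s_2$ imaginary) the normalized Knapp--Stein operator is actually unitary, so the converse does not need irreducibility at all; irreducibility is only needed if one wants to run the argument through "same composition factors $\Rightarrow$ isomorphic." With these adjustments your proof is complete, and arguably more self-explanatory than the paper's appeal to Bouaziz, at the cost of invoking more representation-theoretic machinery.
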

\begin{proof} The proof is very similar to the case of $\mathrm{PGL}_2(\mathbb{C})$ covered in \cite[Appendix $B$]{LL}. For every smooth compactly supported test function $f$ on $G,$ 
$$\mathrm{trace}( \pi_{s_1,s_2,n_1,n_2} ) = \widehat{Sf}( \chi_{s_1,s_2,n_1,n_2}^{-1} ),$$
where we view $Sf$ as a $W$-invariant function on the diagonal torus $T.$  By the main theorem of Bouaziz \cite{Bouaziz}, the Satake transform $f \mapsto Sf$ maps onto the $W$-invariant compactly supported smooth function on $T.$  Therefore, two representations $\pi_{s_1,s_2,n_1,n_2}$ and $\pi_{t_1,t_2,m_1,m_2}$ are isomorphic iff  
$$\widehat{F}(\chi_{s_1,s_2,n_1,n_2}) = \widehat{F}(\chi_{t_1,t_2,m_1,m_2})$$
for all smooth, compactly supported functions $F$ on $T$ which is $W$-invariant, i.e. 
\begin{equation*}
F(u,\alpha,\beta) = F(-u,\beta,\alpha).
\end{equation*}
The function $F$ is $W$-invariant iff $\widehat{F}(s,n,m) = \widehat{F}(-s,m,n)$ (the forward implication is immediate from the definition, and the reverse implication follows from Fourier inversion).  Since
\begin{align*}
\widehat{F}(\chi_{s_1,s_2,n_1,n_2}^{-1}) &=\int F(t)\chi_{s_1,s_2,n_1,n_2}(t)dt\\
&= \int F(u,\alpha,\beta)  \cdot e^{u/2 \cdot (s_1 - s_2) } \cdot e^{i n_1 \alpha} \cdot e^{i n_2 \beta} du \frac{d\alpha}{2\pi} \frac{d\beta}{2\pi} \\
&= \widehat{F}(-(s_1-s_2), -n_1,-n_2 ),
\end{align*}
the result follows.
\end{proof}

\vspace{0.3cm}

\subsection{The trace formula for $G$} We have the following general result.
\begin{prop} \label{preliminaryreptheorytraceformula}
Let $F$ be any smooth, compactly supported, $W$-invariant function on the diagonal torus $T$. Let $\widetilde{\Gamma} \subset G$ be a cocompact torsion-free lattice. The identity
\begin{align*}
&{} \sum_{s_1 - s_2,n, m} m_{\widetilde{\Gamma}}(\pi_{s_1,s_2,n,m}) \cdot \widehat{F}(\chi_{s_1,s_2,n,m}^{-1}) + \sum_k \frac{1}{|W|} \int_T |D(t^{-1})|^{1/2} \cdot \det(t)^k \cdot F(t)  dt \\
&=  -{\frac{1}{8\pi}} \cdot \vol(Y) \cdot \sum_{{z \in \widetilde{\Gamma} \cap Z}} \left(  \frac{d^2}{dv^2} + \frac{d^2}{d\theta^2} \right) F |_{t = z} + \sum_{[\gamma] \neq 1} \ell(\gamma_0) \cdot |D(t_\gamma^{-1})|^{-1/2} \cdot F(t_\gamma).
\end{align*}
holds. Here the second derivatives in $v$ and $\theta$ are taken with respect to the parametrization (\ref{parametrization2}) of $T$, and $|W|=2$.
\end{prop}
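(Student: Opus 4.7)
The plan is to adapt the derivation of the analogous trace formula for $\mathrm{PGL}_2(\mathbb{C})$ from \cite[Appendix B]{LL}. I start from the Selberg trace formula for the compact quotient $\widetilde{\Gamma} \backslash G$ applied to a smooth compactly supported test function $f$ on $G$, which expresses
\begin{equation*}
\sum_\pi m_{\widetilde{\Gamma}}(\pi) \cdot \mathrm{tr}(\pi(f)) = \sum_{[\gamma] \subset \widetilde{\Gamma}} \mathrm{vol}(\widetilde{\Gamma}_\gamma \backslash G_\gamma) \cdot \mathcal{O}_\gamma(f),
\end{equation*}
where $\mathcal{O}_\gamma$ is the invariant orbital integral and $\pi$ runs over irreducible unitary representations of $G$. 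By Bouaziz's theorem \cite{Bouaziz}, the Satake transform $f \mapsto Sf$ surjects from $C_c^\infty(G)$ onto $W$-invariant smooth compactly supported functions on $T$; given $F$ as in the statement, I choose $f$ so that $F = |D|^{1/2} \cdot Sf$ (this pairing incorporates the Weyl denominator so the final trace formula has clean form).

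For the spectral side, I treat the two families of unitary representations of $G$ (classified in Subsection \ref{unitaryrepsG}) separately. The principal series contribute $\mathrm{tr}(\pi_{s_1,s_2,n,m}(f)) = \widehat{Sf}(\chi_{s_1,s_2,n,m}^{-1})$ by the same Fourier-theoretic argument already used in the lemma preceding the proposition; pairing with $m_{\widetilde{\Gamma}}$ gives the first term on the LHS. The one-dimensional representations $\det^k$ contribute $\mathrm{tr}(\det^k(f)) = \int_G \det(g)^k f(g) \, dg$, which by the Weyl integration formula equals $\frac{1}{|W|} \int_T |D(t)| \det(t)^k \cdot Sf(t) \, dt = \frac{1}{|W|} \int_T |D(t^{-1})|^{1/2} \det(t)^k F(t) \, dt$, producing the second term on the LHS. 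There are no discrete series for $G$ since $\mathrm{GL}_2(\mathbb{C})$ has none, so the spectral side is complete. The complementary series do not appear (they do not embed in $L^2(\widetilde{\Gamma}\backslash G)$ for our purposes).

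For the geometric side, I stratify conjugacy classes in $\widetilde{\Gamma}$ by centralizer type. Since $\widetilde{\Gamma}$ is torsion-free cocompact, every $\gamma \neq 1$ is either central (lying in $\widetilde{\Gamma} \cap Z \cong \mathbb{Z}$, where $Z = U_1 \cdot I$) or hyperbolic (with $G_\gamma = T$). The hyperbolic contributions are handled essentially as in \cite[Appendix B]{LL}: the orbital integral reduces to $Sf(t_\gamma)$ weighted by $|D(t_\gamma^{-1})|^{-1}$, the covolume of $\widetilde{\Gamma}_\gamma \backslash T$ equals $\ell(\gamma_0)$, and combining with the normalization $F = |D|^{1/2} Sf$ yields exactly $\ell(\gamma_0) \cdot |D(t_\gamma^{-1})|^{-1/2} \cdot F(t_\gamma)$. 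The central terms $z \in \widetilde{\Gamma} \cap Z$ have $G_z = G$, so each contributes a multiple of $f(z)$ times $\mathrm{vol}(\widetilde{\Gamma} \backslash G)$; expressing $f(z)$ in terms of $F$ using Fourier inversion on $T$ and the Plancherel density $-\frac{1}{8\pi}(\frac{d^2}{dv^2}+\frac{d^2}{d\theta^2})$ in coordinates (\ref{parametrization2}) around central elements produces the stated identity/central term.

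The main obstacle, and the principal novelty over the $\mathrm{PGL}_2(\mathbb{C})$ computation of \cite{LL}, lies in carefully tracking the contributions of the nontrivial central elements $z \in \widetilde{\Gamma} \cap Z$, which were absent in the centerless case. This requires computing the Plancherel measure for $G$ near $Z$ and identifying the correct differential operator: one inverts the formula $\widehat{F}(\chi_{s_1,s_2,n,m}^{-1}) = \mathrm{tr}(\pi(f))$ locally near $z$ by expanding the character in the torus coordinates $(v,\theta)$ from (\ref{parametrization2}), and the second derivatives $\frac{d^2}{dv^2} + \frac{d^2}{d\theta^2}$ emerge from the Plancherel density $|s_1-s_2|^2 + n^2$ for $G$ (as opposed to the single $\frac{d^2}{du^2}$ contribution for $\mathrm{PGL}_2$). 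Once this local computation is carried out, each central term assembles as claimed, completing the proof.
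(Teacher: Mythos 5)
Your overall strategy --- the abstract Selberg trace formula for the compact quotient, Bouaziz's surjectivity theorem for the orbital-integral transform, the classification of the unitary dual of $G$, Weyl integration for the $\det^k$ terms, and a Harish-Chandra limit formula for the central contribution --- is exactly the route the paper takes (the paper itself only sketches this, deferring to the $\mathrm{PGL}_2(\mathbb{C})$ computation of \cite[Appendix B]{LL} and listing the differences, including fixing the constant $-\tfrac{1}{8\pi}$ by comparison with the known even $1$-form formula). However, three of your assertions are wrong as stated and need repair.

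First, $\widetilde{\Gamma}\cap Z$ is not isomorphic to $\mathbb{Z}$: the center $Z\cong U_1$ of $G$ is \emph{compact}, so $\widetilde{\Gamma}\cap Z$ is a finite group, and since $\widetilde{\Gamma}$ is torsion-free it is in fact trivial; the paper keeps the sum over $\widetilde{\Gamma}\cap Z$ only for generality and immediately notes it reduces to $z=1$ in the cases of interest. Second, and more substantively, the claim that complementary series ``do not embed in $L^2(\widetilde{\Gamma}\backslash G)$'' is false: for a cocompact lattice they occur exactly when the Laplacian on functions has small eigenvalues $0<\lambda<1$, a possibility the paper explicitly accommodates (the parameters $r_n\in i(0,1)$ in Theorem \ref{funformula}, and the Ramanujan-type remark after Proposition \ref{SWgap}). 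Omitting them leaves the spectral side incomplete; they must be included in the first sum, with $s_1-s_2$ off the unitary axis. Third, your normalizations are internally inconsistent: if $F=|D|^{1/2}\cdot Sf$ and $\mathrm{tr}\,\pi_{s_1,s_2,n,m}(f)=\widehat{Sf}(\chi^{-1})$, then the spectral sum produces $\widehat{Sf}$, not the $\widehat{F}$ appearing in the statement. In the paper's convention the factor $|D|^{1/2}$ is already built into the transform, i.e.\ $F=Sf=|D|^{1/2}\mathcal{O}(f)$, which is what makes $\mathrm{tr}\,\pi(f)=\widehat{F}(\chi^{-1})$, the $\det^k$ term carry $|D(t^{-1})|^{1/2}$, and the hyperbolic terms carry $|D(t_\gamma^{-1})|^{-1/2}F(t_\gamma)$ simultaneously; you should fix one convention and verify all three terms against it.
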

This is in fact a generalization of the trace formula from \cite{LL}, which can be recovered by using test functions on the diagonal torus $T$ invariant by the center $Z$ of $G.$   
The proof of this result is essentially identical to the one of the formula in \cite{LL}, so we will focus on pointing out similarities and differences between Proposition \ref{preliminaryreptheorytraceformula} and the trace formula from our first paper.
\begin{itemize}

\item
The ``identity contribution" to the trace formula has a contribution for each element of $\widetilde{\Gamma} \cap Z.$  The group $Z$ is compact so this sum is finite. In our case of interest, $\widetilde{\Gamma}$ is the lift of a lattice $\Gamma \subset \mathrm{PGL}_2(\C),$ and $\widetilde{\Gamma} \cap Z = \{ 1 \}$.

\item
Recall that the function $F(u,\alpha,\beta)$ is $W$-invariant iff
$$F(u,\alpha,\beta) = F(-u,\beta,\alpha).$$
In particular, we will use $F(u,\alpha,\beta) = K_{\pm}(\alpha,\beta) \cdot H_{\pm}(u),$ where $K_{\pm}$ are symmetric and antisymmetric respectively and $H_{\pm}$ are even and odd respectively, define $W$-invariant functions on $T.$

\item
The universal constant $-\frac{1}{8\pi}$ can be determined in several ways; for example, by specializing this formula in the case of the even trace formula for coexact $1$-forms and comparing it to the one from \cite{LL}.

\end{itemize}

\vspace{0.3cm}
\section{The odd signature and Dirac operators via representation theory} 
We need to specialize the general trace formula in Proposition \ref{preliminaryreptheorytraceformula} to our cases of interest, namely to isolate the eigenvalue spectrum of the odd signature and Dirac operators. In \cite{LL}, we showed how to interpret the Hodge Laplacian and its spectrum in terms of the representation theory of $\mathrm{PGL}_2$; in particular, we identified coexact $1$-eigenforms in terms of isotypic vectors in the irreducible representations, and the corresponding eigenvalue using the Casimir eigenvalue. We will do the same here, where we consider the maximal compact subgroup $K=U_2$. In particular we show the following two results.\footnote{Recall that our convention is that \begin{equation*}
\sigma_1 = \left( \begin{array}{cc} 0 & 1 \\ 1 & 0  \end{array} \right) \quad\sigma_2 = \left( \begin{array}{cc} 0 & -i \\ i & 0  \end{array} \right) \quad
\sigma_3 = \left( \begin{array}{cc} 1 & 0 \\ 0 & -1  \end{array} \right)
\end{equation*} forms a positively oriented basis of $\mathfrak{p}_0=i\mathfrak{su}(2)$.} 
\begin{prop} Eigenforms of $\ast d$ correspond to $\mathfrak{p}$-isotypic vectors where $\mathfrak{p}=\mathfrak{p}_0\otimes \mathbb{C}$. An irreducible unitary representation of $G$ contains a $\mathfrak{p}$-isotypic vector if and only if it is isomorphic to $\pi_{s_1,s_2,1,-1}$ with $s_1,s_2$ purely imaginary. Furthermore, each copy of $\pi_{s_1,s_2,1,-1}\subset L^2(\widetilde{\Gamma} \backslash G)$ contains exactly a one dimensional eigenspace of $\ast d$, spanned by an eigenform with eigenvalue $i(s_1-s_2)/2$.
\end{prop}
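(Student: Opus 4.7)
The plan is to adapt the $\mathrm{PGL}_2(\C)$ analysis of \cite[Appendix B]{LL} to the larger group $G$. First I would identify $L^2$ $1$-forms on $\widetilde{\Gamma}\backslash G/K$ with sections of the associated bundle $\widetilde{\Gamma}\backslash G \times_K \mathfrak{p}^\ast$; using an invariant inner product on $\mathfrak{p}_0$, these become $K$-equivariant maps $\widetilde{\Gamma}\backslash G \to \mathfrak{p}$. Under the spectral decomposition $L^2(\widetilde{\Gamma}\backslash G) = \bigoplus_\pi m_{\widetilde{\Gamma}}(\pi)\cdot\pi$, the space of $1$-forms becomes $\bigoplus_\pi m_{\widetilde{\Gamma}}(\pi)\cdot \mathrm{Hom}_K(\mathfrak{p},\pi)$, so eigenforms of $\ast d$ live inside the $\mathfrak{p}$-isotypic part of each irreducible constituent $\pi$.

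Next I would pin down $\mathfrak{p}$ as a $K=U_2$-representation. Since the center of $K$ acts trivially by conjugation, $\mathfrak{p}_0 = i\mathfrak{su}(2)$ is pulled back from the defining $3$-dimensional representation of $K/Z(K) \cong \mathrm{SO}(3)$; its complexification $\mathfrak{p}$ is the irreducible $K$-type with $T \cap K$-weights $(1,-1), (0,0), (-1,1)$. Because $B\cap K = T \cap K$ (upper triangular unitary matrices are diagonal), Frobenius reciprocity gives
\begin{equation*}
\dim\mathrm{Hom}_K(\mathfrak{p},\pi_{s_1,s_2,n_1,n_2}) = \dim\mathrm{Hom}_{T\cap K}(\mathfrak{p}|_{T\cap K}, \chi_{n_1,n_2}),
\end{equation*}
which equals $1$ if $(n_1,n_2)\in\{(1,-1),(-1,1),(0,0)\}$ and $0$ otherwise. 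The first two cases yield the single representation $\pi_{s_1,s_2,1,-1}$ (the two choices being identified via the Weyl element). The third case yields the spherical principal series $\pi_{s,0,0,0}$, whose $\mathfrak{p}$-isotypic vector is the image under $d$ of the $K$-spherical vector; the corresponding $1$-form is therefore \emph{exact} and lies in the kernel of $\ast d$. Thus the coexact eigenforms of $\ast d$ with nonzero eigenvalue come exclusively from $\pi_{s_1,s_2,1,-1}$ with $s_1,s_2$ purely imaginary; complementary series are unavailable (they require $n_1=n_2=0$) and the characters $\det^k$ contain no nontrivial $K$-type.

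Since $\mathrm{Hom}_K(\mathfrak{p},\pi_{s_1,s_2,1,-1})$ is one-dimensional by Schur's lemma, each copy of $\pi_{s_1,s_2,1,-1}$ in $L^2(\widetilde{\Gamma}\backslash G)$ yields a one-dimensional line of $1$-forms, verifying the multiplicity assertion. The final and central step is to show that $\ast d$ acts on this line by the scalar $i(s_1-s_2)/2$. Since $(\ast d)^2$ agrees with the Hodge Laplacian on coexact $1$-forms and the Casimir eigenvalue on $\pi_{s_1,s_2,1,-1}$ is (up to normalization) $-(s_1-s_2)^2/4$, the eigenvalue is pinned down up to sign as $\pm i(s_1-s_2)/2$. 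The sign is then determined by an explicit infinitesimal computation: write the $\mathfrak{p}$-isotypic vector in the induced model of $\pi_{s_1,s_2,1,-1}$, apply the basis $\sigma_1,\sigma_2,\sigma_3$ of $\mathfrak{p}_0$ (with the orientation convention fixed in Section \ref{traceformulas}), and read off $\ast d$ at the identity coset.

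The main obstacle I expect is the careful bookkeeping of signs and normalizations in this final calculation: the orientation convention on $\mathfrak{p}_0$, the $\delta^{1/2}$ twist in the definition of the induced representation, and the identification $\mathfrak{p}^\ast \cong \mathfrak{p}$ all enter and must be tracked consistently. A potentially cleaner route is to reduce, via the embedding $\SL_2(\C) \hookrightarrow G$, to the analogous computation for $\mathrm{PGL}_2(\C)$ carried out in \cite[Appendix B]{LL}, observing that the coexact $1$-form calculation is insensitive to the central $U_1$-direction in $G$ since $\pi_{s_1,s_2,1,-1}$ depends only on $s_1-s_2$.
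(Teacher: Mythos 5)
Your proposal is correct and follows essentially the same route as the paper: identify $1$-forms with $K$-equivariant maps into the associated bundle, use Frobenius reciprocity with respect to $M = B\cap K$ to see that only $(n_1,n_2)\in\{(1,-1),(-1,1),(0,0)\}$ admit a $\mathfrak{p}$-isotypic vector with the $(0,0)$ case contributing only closed forms, and then compute the $\ast d$ eigenvalue by an explicit infinitesimal calculation on the induced model using the Pauli basis of $\mathfrak{p}_0$. The only (harmless) difference is that you propose first pinning the eigenvalue down to $\pm i(s_1-s_2)/2$ via the Casimir before fixing the sign, whereas the paper carries out the full direct computation in one step.
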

Recall that a $\mathfrak{p}$-isotypic vector in a $G$-representation $\pi$ is a non-zero element of $\mathrm{Hom}_K(\mathfrak{p},\pi)$. Notice that as the Hodge Laplacian on coexact $1$-forms is the square of $\ast d$, this result implies (after changing the parametrization) those in \cite{LL}; in that case we computed the corresponding eigenvalue via Kuga's lemma. Notice that while this result can be proved directly using the group $\mathrm{PGL}_2$, the following one relies essentially on the larger group $G$.
\begin{prop}
Eigenspinors correspond to $S^{\vee}$-isotypic vectors where $S^{\vee}$ is the dual of the standard representation of $K=U_2$ on $\mathbb{C}^2$. An irreducible unitary representation of $G$ contains an $S^\vee$-isotypic vector if and only if it is isomorphic to $\pi_{s_1,s_2,-1,0}$ with $s_1,s_2$ purely imaginary. Furthermore, each copy of $\pi_{s_1,s_2,-1,0}\subset L^2(\widetilde{\Gamma} \backslash G)$ contains exactly a one dimensional eigenspace of the Dirac operator, spanned by an eigenspinor with eigenvalue $i(s_1-s_2)/2$.
\end{prop}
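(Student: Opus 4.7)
The plan is to follow the same representation-theoretic template used for the odd signature operator in the preceding proposition. Since the chosen spin$^c$ structure corresponds to a specific lift $\widetilde{\Gamma}\subset G$ of $\pi_1(Y)\subset \PSL_2(\C)$, the associated spinor bundle is realized as $G\times_K S^\vee$ where $S^\vee$ is the dual of the standard $K=U_2$ representation. Smooth spinor fields therefore correspond to $K$-equivariant maps $G\rightarrow S^\vee$ that are $\widetilde{\Gamma}$-invariant on the left, i.e.\ to elements of $\mathrm{Hom}_K(S^\vee, C^\infty(\widetilde{\Gamma}\backslash G))$. Decomposing the right regular representation on $L^2(\widetilde{\Gamma}\backslash G)$ into irreducibles shows that eigenspinors of the Dirac operator match up exactly with $S^\vee$-isotypic vectors in the irreducible constituents, which proves the first sentence of the proposition.

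Next I would identify which irreducible unitary representations of $G$ (as classified in \S\ref{unitaryrepsG}) admit such a $K$-type. The one-dimensional representations contain only the trivial $K$-type and can be immediately ruled out. For the principal series, by the Iwasawa decomposition $G=BK$ the restriction of $\pi_{s_1,s_2,n_1,n_2}$ to $K$ is independent of the $s_i$ and equals $\mathrm{Ind}_M^K(\chi_{s_1,s_2,n_1,n_2}|_M)$ where $M=T\cap K$ is the diagonal maximal torus of $U_2$. Frobenius reciprocity then gives
\[
\dim \mathrm{Hom}_K(S^\vee,\pi_{s_1,s_2,n_1,n_2})=\dim\mathrm{Hom}_M\!\left(S^\vee|_M,\chi_{s_1,s_2,n_1,n_2}|_M\right).
\]
The weights of $S^\vee|_M$ are $(-1,0)$ and $(0,-1)$, while $\chi_{s_1,s_2,n_1,n_2}|_M$ has weight $(n_1,n_2)$; combined with the identification $\pi_{s_1,s_2,n_1,n_2}\cong \pi_{s_2,s_1,n_2,n_1}$ from Section~\ref{unitaryrepsG}, this forces $(n_1,n_2)=(-1,0)$ up to this symmetry, and the isotypic component is one-dimensional. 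Complementary series can be dismissed by the same dimensional computation (they have $n_1=n_2=0$).

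To compute the eigenvalue of the Dirac operator on this one-dimensional $S^\vee$-isotypic line, I would work directly in the induced picture. Using Clifford multiplication $c:\mathfrak{p}\otimes S^\vee\to S^\vee$ (coming from the decomposition of $\mathfrak{p}\otimes S^\vee$ as a $K$-module) and the left-invariant vector fields along $\mathfrak{p}_0$, the Dirac operator on an isotypic vector becomes an algebraic endomorphism built out of the infinitesimal action of the Lie algebra of $B$ restricted to the $K$-type. Because induction by $\delta^{1/2}\chi_{s_1,s_2,n_1,n_2}$ twists the torus action by $\delta^{1/2}$, the parameter $s_1-s_2$ enters linearly, and an explicit calculation with the Pauli-matrix basis fixed in Section~\ref{traceformulas} yields the real eigenvalue $i(s_1-s_2)/2$. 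A useful cross-check is Parthasarathy's identity $D^2=-\Omega_G+|\rho|^2-\Omega_K$: evaluating this on the $S^\vee$-isotypic line in $\pi_{s_1,s_2,-1,0}$ gives $-(s_1-s_2)^2/4$, matching the square of the proposed eigenvalue.

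The main obstacle is the last step: pinning down the precise scalar $i(s_1-s_2)/2$ rather than $\pm i(s_1-s_2)/2$ or a rescaling thereof. The sign depends on the orientation convention via the Pauli matrices (fixed in Section~\ref{traceformulas}) and on the choice of the positive root implicit in the definition of the modular character $\delta$, and the normalization ultimately has to be checked on a single explicit spinor, e.g.\ the lowest $K$-type vector viewed as a function $G\to S^\vee$ built from the Iwasawa projection. Once that vector is written down, applying $\sum_i c(\sigma_i)\nabla_{\sigma_i}$ reduces to straightforward differentiation of the character $\delta^{1/2}\chi_{s_1,s_2,-1,0}$ along $\mathfrak{p}_0$ and produces the claimed eigenvalue; Parthasarathy's formula above then rules out any sign ambiguity that remains after squaring.
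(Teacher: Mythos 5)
Your overall route coincides with the paper's: identify spinor fields in the dual picture with $\mathrm{Hom}_K(S^\vee, C^\infty(\widetilde{\Gamma}\backslash G))$, use $G=BK$ and Frobenius reciprocity to get $\mathrm{Hom}_K(S^\vee,\pi_{s_1,s_2,n_1,n_2})\cong \mathrm{Hom}_M(S^\vee|_M,\chi_{n_1,n_2})$, read off the weights $(-1,0)$ and $(0,-1)$ of $S^\vee|_M$, and then evaluate the Dirac operator on the explicit induced-picture isotypic vector. (One convention slip: the spinor bundle is the bundle associated to $S$ itself; it is its space of \emph{sections} that is identified with $\mathrm{Hom}_K(S^\vee,\cdot)$, whereas the bundle $G\times_K S^\vee$ you name would have sections in $\mathrm{Hom}_K(S,\cdot)$.) The $K$-type analysis and the exclusion of the one-dimensional and complementary series representations are exactly as in the paper.

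The genuine gap is in the last step. Parthasarathy's identity computes $D^2$, and $D^2$ is blind to the sign of the eigenvalue: it confirms $|\lambda|=|s_1-s_2|/2$ but cannot distinguish $i(s_1-s_2)/2$ from $-i(s_1-s_2)/2$. That sign is precisely the new content of this proposition relative to the Casimir computation of \cite{LL} --- it is what feeds the odd trace formula and the eta invariants --- so no argument that passes through $D^2$ can settle it, and your closing sentence claiming it "rules out any sign ambiguity that remains after squaring" is self-contradictory. The only route is the first-order computation you defer, and its mechanism is more specific than "differentiating the character $\delta^{1/2}\chi$ along $\mathfrak{p}_0$": one decomposes each Pauli matrix as $\sigma_i=K_i+B_i$ with $K_i\in\mathfrak{k}$ and $B_i\in\mathfrak{b}$, notes that $B_1,B_2$ lie in $\mathfrak{n}$ so their contributions vanish by left $N$-invariance of the induced vector, and is left with three terms evaluated at the identity: the compact directions $K_1,K_2$ act through $\mathrm{std}^\vee$ on the $K$-variable and each contribute $-1$, while $B_3$ differentiates $\delta^{1/2}\chi_{s_1,s_2,-1,0}$ and contributes $s_1-s_2+2$. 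The $-2$ from the compact terms cancels the $\rho$-shift $+2$, and after the Clifford normalization (the factor $2i$ together with the fact that the $\sigma_i$ have hyperbolic norm $2$) one gets exactly $i(s_1-s_2)/2$. Without carrying this out --- in particular without seeing the cancellation against the $\rho$-shift --- neither the value nor the sign is established.
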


The present Appendix is dedicated to the proof of these results. The new main ingredient is to describe the Dirac and $\ast d$ operators in terms of representation theory, which we do in Subsections \ref{repprin}, \ref{automorphicdirac} and \ref{automorphicstard}. We then identify the relevant representations in Subsection \ref{frobrec}, and the corresponding eigenvalues in Subsection \ref{eigencalc}.

\vspace{0.3cm}
\subsection{Generalities on associated bundles and connections for the principal $K$-bundle $G \rightarrow G / K$}\label{repprin}

\subsubsection{The invariant connection on $G \rightarrow G/K$} \label{invariantconnection}

We follow the notation of Subsection \ref{subgroups}. Note that $G \xrightarrow{\pi} G / K$ (with $K$ acting on the right by multiplication) is a principal $K$-bundle.  The assignment 
$$H_g := (L_g)_{\ast} \mathfrak{p}_0 \subset T_g(G)$$
defines a subbundle $H \subset TG.$  Observe that 

\begin{enumerate}
\item \label{first}
The projection $H_g \rightarrow T_g(G) \xrightarrow{\pi_{\ast}} T_{gK}(G/K)$ is an isomorphism.

\item \label{second}
The subbundle $H$ is right $K$-invariant:
\begin{align*}
(R_k)_{\ast} H_g &= (R_k)_{\ast} (L_g)_{\ast} \mathfrak{p}_0 \\
&= (L_g)_{\ast} (R_k)_{\ast} \mathfrak{p}_0 \hspace{2.0cm} \text{because left and right multiplications commute} \\
&= \left[ (L_g)_{\ast} (L_k)_{\ast} \right] \left[ (L_k)_{\ast}^{-1} (R_k)_{\ast} \right] \mathfrak{p}_0 \\
&= (L_g L_k)_{\ast} (\text{conjugation by } k^{-1})_{\ast} \mathfrak{p}_0 \\
&= (L_{gk})_{\ast} \mathrm{Ad}(k^{-1}) \mathfrak{p}_0 \\
&= (L_{gk})_{\ast} \mathfrak{p}_0  \hspace{2.8cm} \text{because } \mathfrak{p}_0 \text{ is invariant under } \mathrm{Ad}(K)
\end{align*}
\item \label{third}
The subbundle $H$ is left $G$-invariant: $(L_h)_{\ast} H_g = (L_h)_{\ast} (L_g)_{\ast} \mathfrak{p}_0 = H_{hg}.$
\end{enumerate}
Items \eqref{first} and \eqref{second} imply that $H$ defines a connection on $G \rightarrow G/K.$  Item \eqref{third} implies that $H$ is left $G$-invariant.

\subsubsection{Associated bundles and covariant derivatives} \label{covariantderivative}
Let $\rho: K \rightarrow \GL(V)$ be a representation of $K.$  The associated vector bundle $\mathcal{F}_V := G \times_{\rho} V \rightarrow G/K$ has sections 
$$\Gamma(\mathcal{F}_V) = \{ s: G \rightarrow V: s(gk) = \rho(k)^{-1} s(g) \}.$$
The covariant derivative $\nabla^V$ on $\mathcal{F}_V$ associated to the connection $H \subset TG$ is given by 
\begin{align*}
\nabla^V_X: \Gamma(\mathcal{F}_V) &\rightarrow \Gamma(\mathcal{F}_V) \\
s &\mapsto ds(\widetilde{X}),
\end{align*}
where $\widetilde{X}$ denotes the horizontal lift of $X$ to $TG$ via the invariant connection $H$ from \S \ref{invariantconnection}.  Via the trivialization 
\begin{align*}
\mathfrak{p}_0 \times G &\rightarrow H \\
(X,g) &\mapsto (L_g)_{\ast} X
\end{align*}
from \S \ref{invariantconnection}, we may regard $\nabla^V s \in \Gamma \left( \mathcal{F}_V \otimes \Omega^1(G / K) \right),$ which is the bundle associated to the representation $V \otimes \mathfrak{p}_0^\vee \simeq \mathrm{Hom}(\mathfrak{p}_0,V)$ of $K,$ as the function
\begin{align} \label{covariantderivativeformula}
\nabla^V s: G &\rightarrow \mathrm{Hom}(\mathfrak{p}_0,V) \\
g &\mapsto \left(X \mapsto ds( (L_g)_{\ast} X) = X(s \circ L_g) \right). \nonumber
\end{align}
Let $r$ denote the representation 
\begin{align*}
r: K &\rightarrow \mathrm{End}\left( \mathrm{Hom}(\mathfrak{p}_0,V) \right) \\
k &\mapsto \left(T \mapsto \rho(k) \circ T \circ \mathrm{Ad}(k)^{-1} \right)
\end{align*}
Note that $\nabla^V s,$ as defined by formula \eqref{covariantderivativeformula}, satisfies the $K$-equivariance property
 \begin{equation*}
 \left( \nabla^V s \right)(gk) = r(k)^{-1} \cdot \left(\nabla^V s\right)(g).
 \end{equation*}

\subsubsection{Dual picture for sections of $\mathcal{F}_V$ and the covariant derivative $\nabla^V$} \label{dualpicture} 
For our purposes it will be convenient to rephrase everything in terms of a dual picture, which is more manifestly representation theoretic. There is a canonical isomorphism

\begin{align} \label{sectionsdualpicture}
\Phi^V: C^\infty(G,V) &\xrightarrow{\sim} \mathrm{Hom}(V^\vee, C^\infty(G)) \nonumber \\
s &\mapsto \left(\; \ell \mapsto (g \mapsto \ell(s(g))) \; \right)
\end{align}

The inverse mapping $(\Phi^V)^{-1}$ is most easily expressed using a basis $\{ v_i \}$ for $V$ and its corresponding dual basis $\{v_i^\vee\}$ for $V^\vee$:
\begin{align} \label{sectionsdualpictureinverse}
(\Phi^V)^{-1}:  \mathrm{Hom}(V^\vee, C^\infty(G))  &\xrightarrow{\sim} C^\infty(G,V) \nonumber \\
T &\mapsto \left(g \mapsto \sum_i T(v_i^\vee)(g) \cdot v_i \right)
\end{align}
The map $\Phi := \Phi^V$ satisfies the following equivariance properties:  
\begin{enumerate}
\item
$\Phi( L_g s) = L_g \Phi(s)$ and $\Phi( R_g s) = R_g \Phi(s).$  In $C^{\infty}(G,V)$,
\begin{equation*}
L_g s(x) = s(gx), R_g s(x) = s(xg)
\end{equation*}
and on $\mathrm{Hom}(V^\vee, C^\infty(G)),$ the corresponding actions are
$$V^\vee \longrightarrow C^{\infty}(G) \xrightarrow{\text{left or right translation by } g} C^\infty(G).$$

\medskip

\item
$\Phi$ carries the subspace $\Gamma(\mathcal{F}_V) = \{ s: G \rightarrow S: s(gk) = \rho(k)^{-1} s(g) \}$ to the subspace $\mathrm{Hom}_K(V^\vee,C^\infty(G)),$ where $K$ acts by right translation on $C^\infty(G)$ and by $\rho^\vee$ on $V^\vee.$

\medskip

\item
Upon combining \eqref{covariantderivativeformula} with the explicit formula for $\Phi^{-1}$ from \eqref{sectionsdualpictureinverse}, we see that $\Phi$ intertwines the covariant derivative $\nabla^V$ with operator
\begin{align*}
\Theta^V: \mathrm{Hom}(V^\vee, C^{\infty}(G)) &\rightarrow \mathrm{Hom}((V \otimes \mathfrak{p}_0^\vee)^\vee, C^{\infty}(G)) \\
T &\mapsto \left(\ell \otimes X \mapsto \left(  g \mapsto \sum_i \frac{d}{dt}|_{t = 0} \left[  T(v_i^\vee)(g e^{tX}) \right] \cdot \ell(v_i) \right)  \right), 
\end{align*}
where $\ell \otimes X \in V^\vee \otimes \mathfrak{p}_0 \cong (V \otimes \mathfrak{p}_0^\vee)^\vee,$ i.e. there is a commutative diagram
$$\begin{CD}
C^{\infty}(G,V) @>{\nabla^V}>>C^{\infty}(G,V \otimes \mathfrak{p}_0^\vee) \\
@V{\Phi}VV @VV{\Phi}V \\
\mathrm{Hom}(V^\vee, C^{\infty}(G)) @>{\Theta^V}>> \mathrm{Hom}(V^\vee\otimes \mathfrak{p}_0, C^{\infty}(G)).
\end{CD}$$  

The above restricts to a commutative diagram
$$\begin{CD}
\Gamma(\mathcal{F}_V) @>{\nabla^V}>> \Gamma(\mathcal{F}_{V\otimes \mathfrak{p}_0^\vee})  \\
@V{\Phi}VV @VV{\Phi}V \\
\mathrm{Hom}_K(V^\vee, C^{\infty}(G)) @>{\Theta^V}>> \mathrm{Hom}_K(V^\vee\otimes \mathfrak{p}_0, C^{\infty}(G)).
\end{CD}$$  

\medskip

\item
$K$-equivariant maps $\alpha: V \rightarrow W$ induce maps $\Gamma(\mathcal{F}_V) \rightarrow \Gamma(\mathcal{F}_W)$ (by post-composition with $\alpha$) and on $\mathrm{Hom}_K(V^\vee, C^\infty(G)) \rightarrow \mathrm{Hom}_K(W^\vee, C^\infty(G))$ (by pre-composition with $\alpha^\vee$).  These induced maps commute with the isomorphism $\Phi.$
\end{enumerate}

\subsubsection{Going automorphic} \label{goingautomorphic}
Everything in \S \ref{invariantconnection}, \S \ref{covariantderivative} and \S \ref{dualpicture} is left invariant.  So the connections, vector bundles, covariant derivatives and bundle maps defined thus far all descend to $\widetilde{\Gamma} \backslash G / K.$  We denote descended objects with an $\overline{\bullet}$, e.g. for the descended vector bundle $\overline{\mathcal{F}}_V$ over $\widetilde{\Gamma} \backslash G/K$ 
$$\Gamma(\overline{\mathcal{F}}_V) = \{  s: \widetilde{\Gamma} \backslash G \rightarrow S: s(\widetilde{\Gamma}gk) = \rho(k)^{-1} s(\widetilde{\Gamma}g) \}$$
which is identified with, via the isomorphism $\Phi^V,$ with $\mathrm{Hom}_K(V^\vee, C^\infty(\widetilde{\Gamma} \backslash G)).$

\medskip

The descended covariant derivative $\overline{\nabla}^V$ and its incanation $\overline{\Theta}^V$ in the dual picture admit the ``same formulas mod $\widetilde{\Gamma}$".  In particular, 

\begin{align*}
\overline{\Theta}^V: \mathrm{Hom}_K(V^\vee, C^{\infty}(\widetilde{\Gamma}\backslash G)) &\rightarrow \mathrm{Hom}_K((V \otimes \mathfrak{p}_0^\vee)^\vee, C^{\infty}( \widetilde{\Gamma} \backslash G)) \\
T &\mapsto \left(\ell \otimes X \mapsto \left(  g \mapsto \sum_i \frac{d}{dt}|_{t = 0} \left[  T(v_i^\vee)(g e^{tX}) \right] \cdot \ell(v_i) \right)  \right), 
\end{align*}
and there is a commutative diagram
$$\begin{CD}
\Gamma(\overline{\mathcal{F}}_V) @>{\overline{\nabla}^V}>> \Gamma(\overline{\mathcal{F}}_{V\otimes \mathfrak{p}_0^\vee})  \\
@V{\overline{\Phi}}VV @VV{\overline{\Phi}}V \\
\mathrm{Hom}_K(V^\vee, C^{\infty}(\widetilde{\Gamma} \backslash G)) @>{\overline{\Theta}^V}>> \mathrm{Hom}_K(V^\vee\otimes\mathfrak{p}_0, C^{\infty}(\widetilde{\Gamma} \backslash G)).
\end{CD}$$  
Given a $K$-equivariant map $\alpha: V \rightarrow W,$ we denote the associated bundle map (obtained by post-composition by $\alpha$ on $\Gamma(\overline{\mathcal{F}}_V)$ or by precomposition by $\alpha^\vee$ in the dual picture) by $\overline{\alpha}.$  We also have commutative diagrams
$$\begin{CD}
\Gamma(\overline{\mathcal{F}}_V) @>{\overline{\alpha}}>> \Gamma(\overline{\mathcal{F}}_{V\otimes \mathfrak{p}^\vee_0})  \\
@V{\overline{\Phi}}VV @VV{\overline{\Phi}}V \\
\mathrm{Hom}_K(V^\vee, C^{\infty}(\widetilde{\Gamma} \backslash G)) @>{\overline{\alpha}}>> \mathrm{Hom}_K(V^\vee\otimes \mathfrak{p}_0, C^{\infty}(\widetilde{\Gamma} \backslash G)).
\end{CD}$$

Because $\overline{\nabla}^V$ is built from the derivative of the right translation action of $G$ on $C^{\infty}(\widetilde{\Gamma} \backslash G),$ the operators $\overline{\Theta}^V$ are amenable to representation-by-representation analysis.  Likewise, the bundle maps $\overline{\alpha}$ are amenable to representation-by-representation analysis.  For irreducible (unitary) representations $\pi$ of $G,$ denote by $\pi^{\infty}\subset\pi$ the dense subspace of smooth elements. We define the following $\pi$-isotypic versions of the operators $\overline{\Theta}^V$ and $\overline{\alpha}$: 
\begin{align} \label{connectiondualpicture}
\overline{\Theta}^V_{\pi}: \mathrm{Hom}_K(V^\vee, \pi^\infty ) &\rightarrow \mathrm{Hom}_K((V \otimes \mathfrak{p}_0^\vee)^\vee, \pi^\infty ) \nonumber \\
T &\mapsto \left(\ell \otimes X \mapsto \sum_i \left(X \cdot T(v_i^\vee) \right)  \cdot \ell(v_i) \right), 
\end{align}
and 
\begin{align*}
\overline{\alpha}_{\pi}: \mathrm{Hom}_K(V^\vee, \pi^\infty ) &\rightarrow \mathrm{Hom}_k(W^\vee, \pi^\infty ) \\
T &\mapsto T \circ \alpha^\vee. 
\end{align*}
Given a subrepresentation $\pi \subset L^2(\widetilde{\Gamma} \backslash G),$ the above $\pi$-isotypic operators fit into commutative diagrams 
\begin{equation} \label{connectiondualpicturediagram}
\begin{CD}
\mathrm{Hom}_K(V^\vee, \pi^{\infty} ) @>{\overline{\Theta}_{\pi}^V}>> \mathrm{Hom}_K((V \otimes \mathfrak{p}_0^\vee)^\vee, \pi^\infty)  \\
@V{\iota}VV @VV{\iota}V \\
\mathrm{Hom}_K(V^\vee, C^{\infty}(\widetilde{\Gamma} \backslash G)) @>{\overline{\Theta}^V}>> \mathrm{Hom}_K((V \otimes \mathfrak{p}_0^\vee)^\vee, C^{\infty}(\widetilde{\Gamma} \backslash G))
\end{CD}
\end{equation}  
and
\begin{equation} \label{bundlemapdualpicturediagram}
\begin{CD}
\mathrm{Hom}_K(V^\vee, \pi^{\infty} ) @>{\overline{\alpha}_{\pi}}>> \mathrm{Hom}_K( W^\vee, \pi^\infty)  \\
@V{\iota}VV @VV{\iota}V \\
\mathrm{Hom}_K(V^\vee, C^{\infty}(\widetilde{\Gamma} \backslash G)) @>{\overline{\alpha}}>> \mathrm{Hom}_K( W^\vee, C^{\infty}(\widetilde{\Gamma} \backslash G)),
\end{CD}
\end{equation}  
where the vertical maps $\iota$ in both diagrams are induced by the inclusion $\pi \subset L^2(\widetilde{\Gamma}\backslash G).$

\vspace{0.3cm}
\subsection{Automorphic incarnation of the spinor bundle and Dirac operator} \label{automorphicdirac}

Denote as before by $S$ be the standard representation of $\mathrm{U}_2$ on $\mathbb{C}^2$.  

\begin{defn}
Let $\widetilde{\Gamma} \subset G$ be a lattice lifting the closed hyperbolic 3-manifold lattice $\Gamma \subset \mathrm{PGL}_2(\C).$  The spinor bundle over $\Gamma \backslash \mathbb{H}^3 = \widetilde{\Gamma} \backslash \mathbb{H}^3 = \widetilde{\Gamma} \backslash G/K$ associated to the lift $\widetilde{\Gamma} \subset G$ of $\Gamma$ is the vector bundle $\overline{\mathcal{F}}_S.$  It comes equipped with the covariant derivative $\overline{\nabla}^S.$
\end{defn}

\subsubsection{Clifford multiplication on $\overline{\mathcal{F}}_S$} \label{cliffordmultiplication}
The linear map  $C: S \otimes \mathfrak{p}_0 \rightarrow S$ given by {$2i$} times matrix multiplication is $\mathrm{U}_2$-equivariant: 
\begin{align*}
C( (\mathrm{std} \otimes \mathrm{Ad})(k) (s \otimes X)) &= C( \mathrm{std}(k)s \otimes \mathrm{Ad}(k) X ) \\
&= {2i \cdot}(k X k^{-1})(k s) \\
&= k \left( {2i \cdot }Xs \right) \\
&= \mathrm{std}(k) C(s \otimes X). 
\end{align*}
In what follows, we will make use of the (physicists') Pauli matrices
\begin{equation}\label{physpauli}
\sigma_1 = \left( \begin{array}{cc} 0 & 1 \\ 1 & 0  \end{array} \right) \quad
\sigma_2 = \left( \begin{array}{cc} 0 & -i \\ i & 0  \end{array} \right) \quad
\sigma_3 = \left( \begin{array}{cc} 1 & 0 \\ 0 & -1  \end{array} \right).
\end{equation}
\begin{remark}\label{paulinorm}
In the hyperbolic metric, the Pauli matrices $\sigma_1,\sigma_2,\sigma_3$ are orthogonal to each other, but have norm $2$.  To see why the latter holds, notice that$$e^{t \sigma_3} = \left(  \begin{array}{cc} e^t & 0 \\ 0 & e^{-t}  \end{array} \right).$$  In the upper half-space model, this maps $(0,0,1)$ to $(0,0,e^{2t}),$ and the geodesic segment connecting them has length $2t.$  On the other hand, it has length $|| \sigma_3 || \cdot t$, so $||\sigma_3|| = 2$.  So the multiplier $2$ in the above definition of $C$ guarantees that Clifford multiplication is isometric.
\end{remark} 

\begin{remark}
The above Clifford multiplication is compatible with the orientation $\sigma_1 \wedge \sigma_2 \wedge \sigma_3$ in the sense that 
$$C(\sigma_1) C(\sigma_2) C(\sigma_3) = 8$$
is positive.
\end{remark}

The hyperbolic metric on $\mathfrak{p}_0$ defines a $K$-equivariant isomorphism $\iota: \mathfrak{p}_0^\vee \rightarrow \mathfrak{p}_0.$  We define $C'$ to be the composition of $C$ with $\iota$:
$$C' = S \otimes \mathfrak{p}^\vee_0 \xrightarrow{ 1 \otimes \iota} S \otimes \mathfrak{p}_0 \xrightarrow{C} S.$$

We will refer to the associated map on sections
\begin{equation*}
\Gamma(\overline{\mathcal{F}}_{S \otimes \mathfrak{p}_0^\vee}) \xrightarrow{\overline{C}'} \Gamma( \overline{\mathcal{F}}_S)
\end{equation*}
as \emph{Clifford multiplication}.

\subsubsection{The Dirac operator on $\Gamma(\overline{\mathcal{F}}_S)$} 

\begin{defn}
The Dirac operator $\overline{D}$ on $\Gamma(\overline{\mathcal{F}}_S)$ is defined to be the composition
$$\Gamma(\overline{\mathcal{F}}_S) \xrightarrow{\overline{\nabla}^S} \Gamma( \overline{\mathcal{F}}_{S \otimes \mathfrak{p}_0^\vee}) \xrightarrow{C'} \Gamma( \overline{\mathcal{F}}_S).$$
\end{defn}
Let $X_1,X_2,X_3$ be an oriented orthonormal basis for $\mathfrak{p}_0.$  The canonical isomorphism $\mathrm{Hom}(\mathfrak{p}_0,S) \simeq \mathfrak{p}_0^\vee \otimes S$ can be expressed as
$$T \mapsto \sum_{i = 1}^3 \langle \bullet, X_i \rangle \otimes T(X_i),$$
where $X_i$ is any orthonormal basis for $\mathfrak{p}.$  By the discussion from \S \ref{covariantderivative} and \S \ref{cliffordmultiplication}, the Dirac operator is given by the formula 
\begin{align*}
\overline{D}: \Gamma(\overline{\mathcal{F}}_S) &\rightarrow \Gamma(\overline{\mathcal{F}}_S) \\
(g \mapsto s(g)) &\mapsto \left(g \mapsto \sum_{i = 1}^3 {2i \cdot} m_{X_i}  X_i(s \circ L_g) \right) \\
&= \left(g \mapsto \sum_{i = 1}^3 {2i \cdot} m_{X_i} \frac{d}{dt}|_{t = 0}s(g e^{tX_i})  \right). 
\end{align*}
where $m_{X_i}$ denotes matrix multiplication by $X_i\in\mathfrak{p}_0=i\mathfrak{su}(2)$.   
\par
In the dual picture, $\overline{D} = \overline{C}' \circ \overline{\Theta}^S.$  Unravelling the formula for $\overline{\Theta}^S$ from \eqref{connectiondualpicture}, we find that $\overline{D}$ equals
\begin{align*}
\overline{D}&: \mathrm{Hom}_K(S^\vee, C^\infty( \widetilde{\Gamma} \backslash G) ) \rightarrow \mathrm{Hom}_K(S^\vee, C^\infty(\widetilde{\Gamma} \backslash G)) \\
T &\mapsto \sum_{i = 1}^3 \left( S^\vee \xrightarrow{ {2i \cdot} m_{X_i}^\vee} S^\vee \xrightarrow{T} C^\infty(\widetilde{\Gamma} \backslash G) \xrightarrow{f \mapsto \left(g \mapsto \frac{d}{dt}|_{t = 0}f(ge^{tX_i})  \right) } C^{\infty}(\widetilde{\Gamma} \backslash G) \right).
\end{align*}

\subsubsection{Representation-by-representation analysis of the Dirac operator} \label{repbyrepdirac}
For every irreducible representation $\pi$ of $G$, we define the $\pi$-isotypic Dirac operators
$$\overline{D}_{\pi} := \overline{C}_{\pi}' \circ \overline{\Theta}_{\pi}^S.$$

Suppose $\pi \subset L^2(\widetilde{\Gamma} \backslash G)$ is a subrepresentation.  By the commutativity of the diagrams \eqref{connectiondualpicturediagram} and \eqref{bundlemapdualpicturediagram}, we obtain the commutative diagram
\begin{equation} \label{diracoperatordualpicturediagram}
\begin{CD}
\mathrm{Hom}_K(S^\vee, \pi^{\infty} ) @>{\overline{D}_{\pi}}>> \mathrm{Hom}_K( S^\vee, \pi^\infty)  \\
@V{\iota}VV @VV{\iota}V \\
\mathrm{Hom}_K(S^\vee, C^{\infty}(\widetilde{\Gamma} \backslash G)) @>{\overline{D}}>> \mathrm{Hom}_K( S^\vee, C^{\infty}(\widetilde{\Gamma} \backslash G)),
\end{CD}
\end{equation}  
where the vertical maps $\iota$ in both diagrams are induced by the inclusion $\pi \subset L^2(\widetilde{\Gamma}\backslash G).$

\medskip

We will show in Proposition \ref{spinorreps} that if $\pi$ is an irreducible (unitary) representation of $G$ and $\mathrm{Hom}_K(S^\vee, \pi^{\infty})$ is non-zero, then it is $1$-dimensional.  For all such representations $\pi,$ $\overline{D}_{\pi}$ thus necessarily acts on $\mathrm{Hom}_K(S^\vee, \pi^\infty)$ by some scalar $\lambda_{\mathrm{Dirac},\pi}$ (which will be later computed in Proposition \ref{diraceigenvaluecomputation}).  Thus, $\iota \left( \mathrm{Hom}_K(S^\vee, \pi^\infty)\right)$ corresponds to an eigenline for the Dirac operator $\overline{D}$ with eigenvalue $\lambda_{\mathrm{Dirac},\pi}.$ In particular, the natural decomposition
\begin{equation*}
L^2(\overline{\mathcal{F}}_S)=\bigoplus_{\pi\in\widehat{G}}m_{\tilde{\Gamma}}(\pi)\cdot \mathrm{Hom}_K(S^\vee, \pi^{\infty}),
\end{equation*}
which is the analogue of the Matsushima decomposition we used in \cite{LL}, corresponds to the eigenspace decomposition of the Dirac operator.

\vspace{0.3cm}
\subsection{Automorphic incarnation of $q$-form bundles and the $\ast d$ operator} \label{automorphicstard}
Reprise all notation from \S \ref{automorphicdirac}.  As in \cite{LL}, in this setup it is more convenient to consider the operators on complex valued forms; to this effect, we will consider the $K$-representation $\mathfrak{p}=\mathfrak{p}_0\otimes \mathbb{C}$. In particular, the vector bundle $\overline{\mathcal{F}}_{\wedge^q \mathfrak{p}^\vee}$ is naturally identified with the vector bundle of (complex-valued) $q$-forms over $\widetilde{\Gamma} \backslash \mathbb{H}^3$.


\subsubsection{Hodge star on $\overline{\mathcal{F}}_{\wedge^q \mathfrak{p}^\vee}$} \label{hodgestar}
Consider the Hodge star operator
$$\ast: \wedge^q \mathfrak{p}^\vee \rightarrow \wedge^{3-q} \mathfrak{p}^\vee.$$
The action of $\mathrm{U}_2$ on $\mathfrak{p}^\vee$ factors through $\mathrm{PU}_2,$ the group of orientation preserving isometries of $\mathbb{H}^3 = G/K$ stabilizing $eK.$  Thus, the actions of $K$ on $\wedge^q \mathfrak{p}^\vee$ and $\wedge^{3-q} \mathfrak{p}^\vee$ commute with $\ast.$  It follows immediately:

\begin{lemma}
The associated map on sections $\Gamma(\overline{\mathcal{F}}_{\wedge^q \mathfrak{p}^\vee} ) \xrightarrow{\overline{\ast}} \Gamma( \overline{\mathcal{F}}_{\wedge^{3-q} \mathfrak{p}^\vee})$ is identified with Hodge star for the hyperbolic metric on $\widetilde{\Gamma} \backslash \mathbb{H}^3.$
\end{lemma}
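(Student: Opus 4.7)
The goal is to show that the bundle map $\overline{\ast}$, built from the fiberwise Hodge star on $\wedge^q\mathfrak{p}^\vee$ via the associated bundle construction, coincides with the Hodge star defined by the hyperbolic metric and orientation on $\widetilde{\Gamma}\backslash\mathbb{H}^3$. Since the Hodge star on a Riemannian manifold is a pointwise operator, it suffices to check that the identification between fibers and (complexified) cotangent spaces is isometric and orientation-preserving.

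My plan is to proceed in three steps. First, I would pin down the canonical fiber identification. For any $g \in G$, the trivialization $\mathfrak{p}_0 \times G \xrightarrow{\sim} H$ from Subsection \ref{invariantconnection} together with $H_g \xrightarrow{\pi_\ast} T_{gK}(G/K)$ yields an isomorphism $\iota_g : \mathfrak{p}_0 \xrightarrow{\sim} T_{gK}(G/K)$, and hence $\iota_g : \wedge^q \mathfrak{p}^\vee \xrightarrow{\sim} \wedge^q T^\ast_{gK}(G/K) \otimes \mathbb{C}$. This is exactly the identification that makes $\overline{\mathcal{F}}_{\wedge^q \mathfrak{p}^\vee}$ the bundle of complex $q$-forms on $\widetilde{\Gamma}\backslash G/K$.

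Second, I would verify that $\iota_g$ is an oriented isometry for the hyperbolic metric at $gK$. Because the hyperbolic metric on $G/K$ is $G$-invariant, left translation by $g$ sends the metric at $eK$ (given by $\frac{1}{4}$ times the Killing form on $\mathfrak{p}_0$, normalized so that $\sigma_1,\sigma_2,\sigma_3$ have norm $2$ as in Remark \ref{paulinorm}) to the metric at $gK$. Hence $\iota_g$ intertwines the chosen inner product on $\mathfrak{p}_0$ with the hyperbolic inner product on $T_{gK}(G/K)$, up to the global rescaling that defines both in a compatible way. A parallel argument with wedge powers gives the induced isometry on $\wedge^q \mathfrak{p}_0$. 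The orientation convention is handled by noting that $\sigma_1 \wedge \sigma_2 \wedge \sigma_3$ is positively oriented at $eK$ by our declaration in Section \ref{traceformulas}, and left translation by $g \in G \subset \GL_2(\mathbb{C})$ preserves this orientation on $T_{gK}(G/K)$.

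Third, I would promote the fiberwise Hodge star to a bundle map and then descend to $\widetilde{\Gamma}\backslash G/K$. The key observation, already noted in the excerpt, is that the $K$-action on $\wedge^q \mathfrak{p}^\vee$ factors through $\mathrm{PU}_2$, which acts on $T_{eK}(G/K)$ by oriented isometries, so $\ast: \wedge^q \mathfrak{p}^\vee \rightarrow \wedge^{3-q}\mathfrak{p}^\vee$ is $K$-equivariant and induces a well-defined bundle map $\overline{\ast}$. Since at each fiber the map $\ast$ on $\wedge^q \mathfrak{p}^\vee$ corresponds under the oriented isometry $\iota_g$ to the usual Hodge star on $\wedge^q T^\ast_{gK}(G/K) \otimes \mathbb{C}$, the induced map on sections is exactly the hyperbolic Hodge star on $\widetilde{\Gamma}\backslash \mathbb{H}^3$. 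The only real subtlety is ensuring that the various normalizations (in particular the factor of $2$ in the norms of the Pauli matrices) are used consistently so that the isometry statement is genuine; this is purely a bookkeeping verification rather than a substantive obstacle.
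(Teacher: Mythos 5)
Your proposal is correct and follows essentially the same route as the paper: the paper's proof consists precisely of observing that $K$ acts on $\mathfrak{p}^\vee$ through $\mathrm{PU}_2$, the oriented isometries fixing $eK$, so that $\ast$ is $K$-equivariant and the induced bundle map is the Riemannian Hodge star via the left-invariant identification of fibers with (complexified) cotangent spaces. You merely spell out the steps the paper compresses into ``it follows immediately,'' including the normalization and orientation bookkeeping.
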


\subsubsection{Exterior derivative}
Let $\overline{\nabla} := \overline{\nabla}^{\wedge^q \mathfrak{p}^\vee}$ be the covariant derivative on $\overline{\mathcal{F}}_{\wedge^q \mathfrak{p}^\vee}$ induced by the invariant connection on $G \rightarrow G/K$ described in \S \ref{covariantderivative}.  Let $\overline{\Theta} :=  \overline{\Theta}^{\wedge^q \mathfrak{p}^\vee}$ be the corresponding operator in the dual picture, described in \S \ref{dualpicture}. Composing with the wedge product map, $\wedge: \mathfrak{p}^\vee \otimes \wedge^q \mathfrak{p}^\vee \rightarrow \wedge^{q+1} \mathfrak{p}^\vee,$ which is $K$-equivariant, recovers the exterior derivative:
$$d = \Gamma \left( \overline{\mathcal{F}}_{\wedge^q \mathfrak{p}^\vee} \right) \xrightarrow{\overline{\nabla}} \Gamma \left( \overline{\mathcal{F}}_{\mathfrak{p}^\vee \otimes \wedge^q \mathfrak{p}^\vee} \right) \xrightarrow{\overline{\wedge}}  \Gamma \left( \overline{\mathcal{F}}_{\wedge^{q+1} \mathfrak{p}^\vee} \right).$$

In the dual picture, applying formula \eqref{connectiondualpicture} for $V = \wedge^q \mathfrak{p}^\vee$ yields the following formula for $d = \overline{\wedge} \circ \overline{\Theta}^{\wedge^q \mathfrak{p}^\vee}$: 
\begin{align} \label{exteriorderivativedualpicture}
d: \mathrm{Hom}_K( \wedge^q \mathfrak{p}, C^{\infty}(\widetilde{\Gamma} \backslash G)) &\rightarrow \mathrm{Hom}_K( \wedge^{q+1} \mathfrak{p}, C^{\infty}(\widetilde{\Gamma} \backslash G)) \nonumber \\
\omega &\mapsto \left( X_0 \wedge \cdots \wedge X_q \mapsto \sum_i X_i \cdot \omega( X_0 \wedge \cdots \wedge \widehat{X_i} \wedge \cdots \wedge X_q) \right).
\end{align}

See \cite[Chapter 1, \S 1]{BW}. Implicitly in \eqref{exteriorderivativedualpicture}, we have used the canonical duality between $\wedge^q \mathfrak{p}^\vee$ and $\wedge^q \mathfrak{p}.$

\subsubsection{Representation-by-representation analysis of $\ast d$} \label{repbyrepstard}
For every irreducible representation $\pi$ of $G$, we define the $\pi$-isotypic $\ast d$ operators 
$$(\ast d)_{\pi} := \overline{\ast}_{\pi} \circ \ast \overline{\wedge}_{\pi} \circ \overline{\Theta}_{\pi}^{\wedge^1 \mathfrak{p}^\vee}.$$

Suppose $\pi \subset L^2(\widetilde{\Gamma} \backslash G)$ is a subrepresentation.  By the commutativity of the diagrams \eqref{connectiondualpicturediagram} and \eqref{bundlemapdualpicturediagram}, it follows that the diagram
\begin{equation} \label{diracoperatordualpicturediagram}
\begin{CD}
\mathrm{Hom}_K(\wedge^1 \mathfrak{p}, \pi^{\infty} ) @>{(\ast d)_{\pi}}>> \mathrm{Hom}_K( \wedge^1 \mathfrak{p} , \pi^\infty)  \\
@V{\iota}VV @VV{\iota}V \\
\mathrm{Hom}_K(\wedge^1 \mathfrak{p}, C^{\infty}(\widetilde{\Gamma} \backslash G)) @>{\ast d}>> \mathrm{Hom}_K( \wedge^1 \mathfrak{p}, C^{\infty}(\widetilde{\Gamma} \backslash G)),
\end{CD}
\end{equation}  

where the vertical maps $\iota$ in both diagrams are induced by the inclusion $\pi \subset L^2(\widetilde{\Gamma}\backslash G).$

\medskip

Similarly to the case of the Dirac operator, we will show in Proposition \ref{reptheory1formeigenvector} that if $\pi$ is an irreducible (unitary) representation of $G$ and $\mathrm{Hom}_K(\wedge^1 \mathfrak{p}, \pi^{\infty})$ is non-zero, then it is 1-dimensional.  For all such representations $\pi,$ $(\ast d)_{\pi}$ thus necessarily acts on $\mathrm{Hom}_K(\wedge^1 \mathfrak{p}, \pi^\infty)$ by some scalar $\lambda_{\ast d,\pi}$ (which will be computed in Proposition \ref{stardeigenvaluecomputation}).  Thus, $\iota \left( \mathrm{Hom}_K(\wedge^1 \mathfrak{p}, \pi^\infty)\right)$ is an eigenline for the odd signature operator with eigenvalue $\lambda_{\ast d,\pi}.$ In particular, the natural decomposition
\begin{equation*}
L^2(\Omega^1)=\bigoplus_{\pi\in\widehat{G}}m_{\tilde{\Gamma}}(\pi)\cdot \mathrm{Hom}_K(\wedge^1\mathfrak{p}, \pi^{\infty}),
\end{equation*}
which is the essentially the Matsushima decomposition we used in \cite{LL}, corresponds to the eigenspace decomposition of odd signature operator. Notice that of course closed forms are in the kernel of $\ast d$; as in \cite{LL}, we will identify exactly which representations correspond to coexact forms.

\vspace{0.3cm}
\subsection{Frobenius reciprocity and $K$-isotypic vectors for induced representations} \label{frobrec}

Let $M = K\cap B$ be the subgroup of diagonal unitary matrices.  Let $\chi_{n_1,n_2}$ denote the character of $M$ given by
\begin{align*}
\chi_{n_1,n_2}: M& \rightarrow \mathbb{C}^\times  \\
 \left( \begin{array}{cc} t_1 & 0 \\ 0 & t_2  \end{array}  \right) &\rightarrow t_1^{n_1} \cdot t_2^{n_2},
\end{align*}
which is obtained by restricting the family of characters $\chi_{s_1,s_2,n_1,n_2}$ of $B$.
\par
Suppose $\pi \subset L^2( \widetilde{\Gamma} \backslash G)$ is a subrepresentation isomorphic to $\pi_{s_1,s_2,n_1,n_2}.$
The following Lemma will allow us to characterize whether $\pi$ contributes to the spinor (resp. $\ast d$) eigenvalue spectrum.  And if $\pi$ contributes, it will allow us to write down explicit vectors in $\mathrm{Hom}_K(S^\vee, \pi^\infty)$ (resp. $\mathrm{Hom}_K(S^\vee, \pi^\infty)$) whose images in $\mathrm{Hom}_K(S^\vee, C^\infty(\widetilde{\Gamma} \backslash G) )$ (resp. in $\mathrm{Hom}_K(S^\vee, C^\infty(\widetilde{\Gamma} \backslash G) )$) are Dirac eigenspinors (resp. $\ast d$ eigen 1-forms), per the discussion from \S \ref{automorphicdirac} (resp. \S \ref{automorphicstard}).

\begin{lemma} \label{frobreclemma}
Let $(V,\rho)$ be a finite dimensional representation of $K.$  The the map 
\begin{align*}
\mathrm{Hom}_M(V^\vee, \chi_{n_1,n_2}) &\rightarrow \mathrm{Hom}_K( V^\vee, \pi_{s_1,s_2,n_1,n_2} ) \\
T &\mapsto \left(\ell \mapsto \chi_{s_1,s_2,n_1,n_2}(b) \delta(b)^{1/2} \cdot T(\rho^\vee(k) \ell)\right),
\end{align*}
where $\delta$ is the modular function, is an isomorphism.
\end{lemma}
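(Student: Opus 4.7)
The plan is to interpret this as the standard Frobenius reciprocity for the parabolically induced representation $\pi_{s_1,s_2,n_1,n_2} = \mathrm{Ind}_B^G(\delta^{1/2}\chi_{s_1,s_2,n_1,n_2})$, transported through the Iwasawa decomposition $G = BK$. Since $B \cap K = M$, restriction gives an isomorphism of $K$-representations $\pi_{s_1,s_2,n_1,n_2}|_K \cong \mathrm{Ind}_M^K(\chi_{s_1,s_2,n_1,n_2}|_M)$, and the classical Frobenius reciprocity for the compact pair $(K,M)$ then yields the claim. The task is to verify that the explicit map in the statement realizes this chain of isomorphisms.

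First I would record the key character identity $\chi_{s_1,s_2,n_1,n_2}|_M = \chi_{n_1,n_2}$: for $m = \mathrm{diag}(t_1,t_2) \in M$ both $|t_1|$ and $|t_2|$ equal $1$, so the hyperbolic factor $|t_1|^{s_1}|t_2|^{s_2}$ is trivial and only the angular part survives. Given $T \in \mathrm{Hom}_M(V^\vee, \chi_{n_1,n_2})$ and $\ell \in V^\vee$, I define $f_\ell(g) := \chi_{s_1,s_2,n_1,n_2}(b)\,\delta(b)^{1/2}\,T(\rho^\vee(k)\ell)$ for any factorization $g = bk$. The two tasks are to check well-definedness under the ambiguity $(b,k) \mapsto (bm, m^{-1}k)$ with $m \in M$, and to check the induced-representation transformation law $f_\ell(b'g) = \chi_{s_1,s_2,n_1,n_2}(b')\delta(b')^{1/2} f_\ell(g)$. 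The former uses $\delta|_M = 1$, the $M$-equivariance of $T$, and the character identity above; the latter is immediate from left-multiplication by $b'$ on $B$ in the factorization. Thus $\ell \mapsto f_\ell$ lands in the smooth model of $\pi_{s_1,s_2,n_1,n_2}$, and $K$-equivariance of the assignment $T \mapsto (\ell \mapsto f_\ell)$ follows since right multiplication by $k_0 \in K$ sends the factorization $g = bk$ to $gk_0 = b(kk_0)$.

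For bijectivity, injectivity is clear: evaluating at $g = e$ (so $b = k = e$) recovers $f_\ell(e) = T(\ell)$, forcing $T = 0$ if $f \equiv 0$. Surjectivity proceeds via the inverse map $\Phi \mapsto T_\Phi$, $T_\Phi(\ell) := \Phi(\ell)(e)$; its $M$-equivariance uses that $\Phi$ is $K$-equivariant and that $\Phi(\ell)$ transforms by $\chi_{s_1,s_2,n_1,n_2}$ on the left under $M \subset B$, while the composition $T \mapsto f \mapsto T_f$ is the identity by direct calculation. The only mildly subtle point is bookkeeping the restriction of $\chi_{s_1,s_2,n_1,n_2}$ to $M$ correctly, but this is harmless as noted. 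I expect no real obstacles: the result is essentially a routine unpacking of parabolic Frobenius reciprocity with explicit formulas, and the work is purely algebraic.
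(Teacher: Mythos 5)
Your proposal is correct and follows essentially the same route as the paper: both factor the map as classical Frobenius reciprocity for the compact pair $(K,M)$ composed with the isomorphism $\mathrm{Ind}_M^K\chi_{n_1,n_2}\cong \pi_{s_1,s_2,n_1,n_2}|_K$ coming from $G=BK$, $M=B\cap K$, and the identity $\left(\chi_{s_1,s_2,n_1,n_2}\,\delta^{1/2}\right)|_M=\chi_{n_1,n_2}$. Your verification of well-definedness under $(b,k)\mapsto(bm,m^{-1}k)$ and of the inverse map just makes explicit what the paper leaves implicit.
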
 

In particular, this reduces the problem of determining $V^\vee$-isotypic vectors to the (essentially trivial) problem of understanding of $V^\vee$ as a representation of $M$.

\begin{proof}
This is proved in \cite[Appendix $B$]{LL}, to which we refer for a more detail discussion (and unraveling of the formulas). The isomorphism is the composite of the Frobenius reciprocity isomorphism 
\begin{align*}
\mathrm{Hom}_M(V^\vee, \chi_{n_1,n_2}) &\rightarrow \mathrm{Hom}_K(V^\vee, \mathrm{Ind}_M^K \chi_{n_1,n_2} ) \\
T &\mapsto \left(  \ell \mapsto f_{\ell}(k):= T(\rho^\vee(k) \ell) \right)
\end{align*}
and the map 
\begin{align*}
\mathrm{Hom}_K(V^\vee, \mathrm{Ind}_M^K \chi_{n_1,n_2} ) &\rightarrow \mathrm{Hom}_K(V^\vee, \pi_{s_1,s_2,n_1.n_2}) \\
S &\mapsto \left( \ell \mapsto g_{\ell}(bk) :=  \chi_{s_1,s_2,n_1,n_2}(b) \delta(b)^{1/2} \cdot  [S(\ell)](k)  \right).
\end{align*}
The latter is an isomorphism because $G = BK, M = B \cap K,$ and $\chi_{s_1,s_2,n_1,n_2} \cdot \delta^{1/2}$ restricted to $M$ equals $\chi_{n_1,n_2}.$
\end{proof}

\vspace{0.3cm}
\subsubsection{Representations contributing to the spinor spectrum}
 We need to apply Lemma \ref{frobreclemma} to the representation $S^{\vee}$, where again $S=\mathbb{C}^2$ is the standard representation of $\mathrm{U}_2$.

\begin{prop} \label{spinorreps}
Let $\pi$ be an irreducible unitary representation of $G.$  Then $\mathrm{Hom}_K(S^\vee, \pi^\infty)$ is either 0 or 1 dimensional.  It is non-zero (and hence 1-dimensional) if and only if $\pi$ is isomorphic to $\pi_{s_1,s_2,-1,0}$ (or, equivalently $\pi_{s_2,s_1,0,-1}$),  in which case a basis vector for $\mathrm{Hom}_K(S^\vee, \pi^\infty)$ is given by
$$\ell \mapsto f_{\ell}(bk) := \chi_{s_1,s_2,-1,0}(b) \delta(b)^{1/2} \cdot  [\mathrm{std}^\vee(k) \ell](e_1).$$
\end{prop}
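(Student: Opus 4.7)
The plan is to apply the Frobenius reciprocity isomorphism of Lemma \ref{frobreclemma} with $V = S,$ which reduces computing $\mathrm{Hom}_K(S^\vee, \pi^\infty)$ to computing $\mathrm{Hom}_M(S^\vee, \chi_{n_1,n_2})$ for each family of irreducible unitary representations of $G$ listed in Subsection \ref{unitaryrepsG}. First I would decompose $S^\vee$ as an $M$-representation: writing $\{e_1^\vee, e_2^\vee\}$ for the dual basis of the standard basis of $\mathbb{C}^2,$ the element $\mathrm{diag}(t_1,t_2) \in M$ acts on $e_i^\vee$ by $t_i^{-1},$ so $S^\vee|_M \cong \chi_{-1,0} \oplus \chi_{0,-1}.$ In particular $\mathrm{Hom}_M(S^\vee, \chi_{n_1,n_2})$ is one-dimensional when $(n_1,n_2) \in \{(-1,0),(0,-1)\}$ and vanishes otherwise.

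Next I would run through the three classes of irreducible unitary representations. For principal series $\pi_{s_1,s_2,n_1,n_2},$ the $M$-isotypic calculation combined with Lemma \ref{frobreclemma} shows that $\mathrm{Hom}_K(S^\vee, \pi^\infty)$ has dimension at most one, with equality precisely when $(n_1,n_2)$ is one of the two pairs above; the equivalence $\pi_{s_1,s_2,-1,0} \cong \pi_{s_2,s_1,0,-1}$ recorded in Subsection \ref{unitaryrepsG} then identifies these two cases. For the one-dimensional representations $\det^k,$ the Hom space vanishes because $S^\vee$ is irreducible and two-dimensional as a $K$-representation, hence has no one-dimensional $K$-quotient. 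For complementary series, we have $(n_1,n_2) = (0,0),$ and the $M$-isotypic argument forces the Hom space to vanish.

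Finally, I would make the isomorphism of Lemma \ref{frobreclemma} explicit. Under the decomposition $S^\vee|_M \cong \chi_{-1,0} \oplus \chi_{0,-1},$ the unique (up to scalar) $M$-equivariant map $T: S^\vee \to \chi_{-1,0}$ is $T(\ell) = \ell(e_1),$ since this picks out the $\chi_{-1,0}$-isotypic component. Substituting $T$ into the formula of Lemma \ref{frobreclemma} yields
\begin{equation*}
f_{\ell}(bk) \;=\; \chi_{s_1,s_2,-1,0}(b)\,\delta(b)^{1/2} \cdot T(\mathrm{std}^\vee(k)\ell) \;=\; \chi_{s_1,s_2,-1,0}(b)\,\delta(b)^{1/2}\cdot [\mathrm{std}^\vee(k)\ell](e_1),
\end{equation*}
which is exactly the asserted basis vector.

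There is no real obstacle here: once the $M$-decomposition of $S^\vee$ is written down, everything follows mechanically from Lemma \ref{frobreclemma} and the classification of irreducible unitaries of $G.$ The only point deserving a line of care is verifying that $T$ is genuinely $M$-equivariant (trivial from the character computation) and that the formula for $f_\ell$ is well-defined on all of $G = BK,$ which is already built into Lemma \ref{frobreclemma} via the compatibility $\chi_{s_1,s_2,-1,0}\cdot\delta^{1/2}|_M = \chi_{-1,0}.$
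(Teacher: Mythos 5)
Your proposal is correct and follows essentially the same route as the paper: reduce to $\mathrm{Hom}_M(S^\vee,\chi_{n_1,n_2})$ via Lemma \ref{frobreclemma}, use $S^\vee|_M\cong\chi_{-1,0}\oplus\chi_{0,-1}$, and push the intertwiner $\ell\mapsto\ell(e_1)$ through the isomorphism to get the explicit basis vector. The only (harmless) difference is that you spell out the Schur-lemma reason why the $\det^k$ case and the complementary series drop out, which the paper dispatches with ``we easily check.''
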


\begin{proof}
We easily check that $\mathrm{Hom}_K(S^\vee, \pi^\infty) = 0$ if $\pi$ is isomorphic to some power of the determinant.  So we focus on the case $\pi \cong \pi_{s_1,s_2,n_1,n_2}.$

By Lemma \ref{frobreclemma}, $\mathrm{Hom}_K(S^\vee, \pi^\infty)$ is naturally isomorphic to $\mathrm{Hom}_M(S^\vee, \chi_{n_1,n_2}).$  Since $S^\vee |_M$ is isomorphic to $\chi_{-1,0} \oplus \chi_{0,-1},$ if follows that 
\begin{itemize}
\item
$\mathrm{Hom}_K(S^\vee, \pi_{s_1,s_2,n_1,n_2}^\infty)$ is non-zero iff $(n_1,n_2) = (-1,0)$ or $(0,-1).$  If non-zero, it is exactly 1-dimensional.

\item
The intertwining map 
$$\ell \mapsto \ell(e_1)$$
is a basis vector for the 1-dimensional space $\mathrm{Hom}_M(S^\vee, \chi_{-1,0}).$  Its image under the isomorphism from Lemma \ref{frobreclemma}, namely
$$\ell \mapsto f_{\ell}(bk) := \chi_{s_1,s_2,-1,0}(b) \delta(b)^{1/2} \cdot  [\mathrm{std}^\vee(k) \ell](e_1),$$

thus defines a basis vector for the 1-dimensional space $\mathrm{Hom}_K(S^\vee, \pi_{s_1,s_2,-1,0}).$
\end{itemize}
This proves the Lemma.
\end{proof}

\vspace{0.3cm}

\subsubsection{Representations contributing to the coclosed 1-form spectrum}
 We need to apply Lemma \ref{frobreclemma} to the representation $\wedge^1\mathfrak{p}=\mathfrak{p}$, where again $\mathfrak{p}=\mathfrak{p}_0\otimes \mathbb{C}$ and $\mathfrak{p}_0=i\mathfrak{su}(2)$ is acted on by $K$ via the adjoint representation. Denoting again the Pauli matrices in (\ref{physpauli}) by $\sigma_i$, we have that
\begin{equation}\label{pdual}
w_1 = \sigma_1 + i \sigma_2,\quad w_0 = \sigma_3,\quad w_{-1} = \sigma_1 - i \sigma_2,
\end{equation}
are the $M$-weight vectors for $\wedge^1 \mathfrak{p}$ of respective weights $(1, -1), (0,0)$ and $(-1,1)$. We denote the corresponding dual basis by $w_1^\vee, w_0^\vee, w_{-1}^\vee.$
\begin{prop} \label{reptheory1formeigenvector}
Let $\pi$ be an irreducible unitary representation of $G.$  Then $\mathrm{Hom}_K(\wedge^1\mathfrak{p}, \pi^\infty)$ is either 0 or 1 dimensional.  It is non-zero (and hence 1-dimensional) if and only if $\pi$ is isomorphic to $\pi_{s_1,s_2,1,-1}$ (or equivalently $\pi_{s_2,s_1,-1,1}$) or $\pi_{s_1,s_2,0,0}$; only the representations of the form $\pi_{s_1,s_2,1,-1}$ contribute to the coclosed 1-form spectrum, in which case a basis vector for $\mathrm{Hom}_K(\wedge^1\mathfrak{p}, \pi^\infty)$ is given by
$$v \mapsto f_{v}(bk) := \chi_{s_1,s_2,1,-1}(b) \delta(b)^{1/2} \cdot w_1^\vee \left(  \mathrm{Ad}(k) v \right).$$
\end{prop}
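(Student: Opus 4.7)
The plan is to proceed exactly as in the proof of Proposition \ref{spinorreps}, applying Lemma \ref{frobreclemma} with $V = (\wedge^1\mathfrak{p})^\vee$ so that the relevant Hom space becomes $\mathrm{Hom}_M(\wedge^1\mathfrak{p}, \chi_{n_1,n_2})$. For the one-dimensional representations $\det^k$ one checks directly that no non-trivial intertwiner exists, since no $M$-weight of $\wedge^1\mathfrak{p}$ is of the form $(k,k)$. For $\pi \cong \pi_{s_1,s_2,n_1,n_2}$, the decomposition $\wedge^1\mathfrak{p}|_M \cong \chi_{1,-1} \oplus \chi_{0,0} \oplus \chi_{-1,1}$ furnished by the explicit weight basis $w_{\pm 1}, w_0$ from (\ref{pdual}) shows that $\mathrm{Hom}_M(\wedge^1\mathfrak{p}, \chi_{n_1,n_2})$ is at most one-dimensional, and is non-zero precisely when $(n_1,n_2)$ lies in $\{(1,-1),(0,0),(-1,1)\}$. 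The first and third cases yield isomorphic representations via the symmetry $\pi_{s_1,s_2,n_1,n_2} \cong \pi_{s_2,s_1,n_2,n_1}$, so up to isomorphism the only surviving unitary representations are $\pi_{s_1,s_2,1,-1}$ and $\pi_{s_1,s_2,0,0}$, as asserted.

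For the explicit basis vector, I would start from the functional $T = w_1^\vee \in \mathrm{Hom}_M(\wedge^1\mathfrak{p}, \chi_{1,-1})$ and push it through the isomorphism of Lemma \ref{frobreclemma}. Since the $K$-action on $V^\vee = \wedge^1\mathfrak{p}$ is precisely the adjoint action, the substitution $\rho^\vee(k) v = \mathrm{Ad}(k)v$ transforms $T$ into the claimed $v \mapsto f_v(bk) = \chi_{s_1,s_2,1,-1}(b) \delta(b)^{1/2} w_1^\vee(\mathrm{Ad}(k)v)$.

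It remains to distinguish which of these representations contribute to the coclosed 1-form spectrum. The representation $\pi_{s_1,s_2,0,0}$ is spherical: applying Lemma \ref{frobreclemma} with $V = \mathbf{1}$ shows $\mathrm{Hom}_K(\mathbf{1}, \pi^\infty)$ is one-dimensional, corresponding in the automorphic picture to a non-trivial Laplace eigenfunction in the $\pi$-isotypic component. Regarding the exterior derivative in the dual picture of \S \ref{dualpicture} as a $K$-equivariant map $d: \mathrm{Hom}_K(\mathbf{1}, \pi^\infty) \to \mathrm{Hom}_K(\wedge^1\mathfrak{p}, \pi^\infty)$, both source and target are one-dimensional and $d$ is non-zero there (the eigenfunction is non-constant, since $\pi$ is an infinite-dimensional principal series rather than the trivial representation). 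Hence every 1-form in this isotypic component is exact and, in particular, annihilated by $\ast d$. Conversely, for $\pi = \pi_{s_1,s_2,1,-1}$ the same Frobenius reciprocity computation yields $\mathrm{Hom}_K(\mathbf{1}, \pi^\infty) = 0$, so this isotypic component contains no 0-forms; its 1-forms are therefore not exact, and since $Y$ is a rational homology sphere ($\mathcal{H}^1 = 0$) the Hodge decomposition $\Omega^1 = d\Omega^0 \oplus \delta\Omega^2$ forces them to be coclosed.

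The main obstacle, if one calls it that, is purely bookkeeping: one must be careful about dualities when invoking Lemma \ref{frobreclemma} so that the $K$-action on $\wedge^1\mathfrak{p}$ appearing in the final formula is indeed $\mathrm{Ad}$ rather than its contragredient, and one must verify that $w_1^\vee$ is genuinely an $M$-intertwiner with character $\chi_{1,-1}$, which boils down to the elementary matrix identity $\mathrm{diag}(t_1,t_2) w_1 \mathrm{diag}(t_1,t_2)^{-1} = (t_1/t_2) w_1$ for the adjoint action.
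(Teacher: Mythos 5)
Your argument follows the paper's own proof in its main structure: both reduce $\mathrm{Hom}_K(\wedge^1\mathfrak{p},\pi^\infty)$ to $\mathrm{Hom}_M(\wedge^1\mathfrak{p},\chi_{n_1,n_2})$ via Lemma \ref{frobreclemma}, read off the answer from the weight decomposition $\wedge^1\mathfrak{p}|_M\cong\chi_{1,-1}\oplus\chi_{0,0}\oplus\chi_{-1,1}$, and obtain the explicit basis vector by pushing $w_1^\vee$ through the Frobenius isomorphism. Where you genuinely diverge is the closed-versus-coclosed dichotomy: the paper simply cites \cite[Lemma B.11]{LL}, whereas you give a self-contained argument --- $d$ is an isomorphism between the two one-dimensional isotypic spaces $\mathrm{Hom}_K(\mathbf{1},\pi^\infty)$ and $\mathrm{Hom}_K(\wedge^1\mathfrak{p},\pi^\infty)$ when $\pi\cong\pi_{s_1,s_2,0,0}$ (so those $1$-forms are exact), while $\pi_{s_1,s_2,1,-1}$ has no spherical vector, so its $1$-forms cannot be exact and must be coexact by the Hodge decomposition with $b_1(Y)=0$. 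This is correct and has the advantage of not outsourcing the key step; the cost is that it uses the automorphic realization and the rational-homology-sphere hypothesis explicitly, whereas the cited lemma phrases the dichotomy purely representation-theoretically.

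One small slip: your exclusion of the characters $\det^k$ via the claim that ``no $M$-weight of $\wedge^1\mathfrak{p}$ is of the form $(k,k)$'' fails for $k=0$, since $w_0=\sigma_3$ has $M$-weight $(0,0)$. The conclusion is still right, but the correct reason is the one the paper leaves implicit: $\wedge^1\mathfrak{p}=\mathfrak{p}$ is an irreducible three-dimensional representation of $K=\mathrm{U}_2$ (the complexified adjoint representation), so it admits no non-zero $K$-equivariant map to any one-dimensional representation. The $M$-weight criterion is only a necessary condition extracted from Frobenius reciprocity for the induced representations $\pi_{s_1,s_2,n_1,n_2}$ and does not apply to $\det^k$.
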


\begin{proof}
We easily check that $\mathrm{Hom}_K(\wedge^1 \mathfrak{p}, \pi^\infty) = 0$ if $\pi$ is isomorphic to some power of the determinant.  So we focus on the case $\pi \cong \pi_{s_1,s_2,n_1,n_2}.$

By Lemma \ref{frobreclemma}, $\mathrm{Hom}_K(\wedge^1 \mathfrak{p}, \pi^\infty)$ is naturally isomorphic to $\mathrm{Hom}_M(\wedge^1 \mathfrak{p}, \chi_{n_1,n_2}).$  Since $\wedge^1\mathfrak{p} |_M$ is isomorphic to $\chi_{1,-1} \oplus \chi_{0,0} \oplus \chi_{-1,1},$ if follows that 
\begin{itemize}
\item
$\mathrm{Hom}_K(\wedge^1\mathfrak{p}, \pi_{s_1,s_2,n_1,n_2}^\infty)$ is non-zero iff $(n_1,n_2) = (1,-1), (0,0),$ or $(-1,1).$  If non-zero, it is exactly 1-dimensional.  Furthermore, as argued in \cite[Lemma B.11]{LL}, the representations $\pi_{s_1,s_2,1,-1}$ and $\pi_{s_1,s_2,-1,1},$ isomorphic respectively to $\pi_{s_1-s_2,\pm 1}$ in the notation from \cite[Appendix B]{LL}, only contribute to the coclosed 1-form spectrum while the representations $\pi_{s_1,s_2,0,0},$ isomorphic to $\pi_{s_1-s_2,0}$ in the notation from \cite[Appendix B]{LL}, only contribute to the closed 1-form spectrum.  

\medskip

\item
The intertwining map $v \mapsto w_1^\vee(v)$ is a basis vector for the 1-dimensional space $\mathrm{Hom}_M(\wedge^1 \mathfrak{p} , \chi_{1,-1}).$  Its image under the isomorphism from Lemma \ref{frobreclemma}, namely
$$\ell \mapsto f_v(bk) := \chi_{s_1,s_2,1,-1}(b) \delta(b)^{1/2} \cdot  w_1^\vee \left(  \mathrm{Ad}(k)v \right),$$
thus defines a basis vector for the 1-dimensional space $\mathrm{Hom}_K(\wedge^1\mathfrak{p}, \pi_{s_1,s_2,1,-1}).$
\end{itemize}
This proves the Lemma.
\end{proof}

\vspace{0.3cm}
\subsection{Representation-by-representation calculation of Dirac and $\ast d$ eigenvalues}\label{eigencalc}

We conclude the proofs on the main results of this Appendix by computing the eigenvalues $\lambda_{\ast d,\pi}$ and $\lambda_{\mathrm{Dirac},\pi}$ of the eigenforms/eigenspinors corresponding to isotypic vectors.

\begin{remark}
While in \cite[Appendix $B$]{LL} we performed the computations in the natural Killing metric, and then normalized the result to obtain the answer for the hyperbolic metric, we will now work directly with the hyperbolic metric.  
\end{remark}

\begin{prop}[Calculation of $\lambda_{\mathrm{Dirac}, \pi}$] \label{diraceigenvaluecomputation}
Let $\pi = \pi_{s_1,s_2,-1,0}.$  Then
$$\lambda_{\mathrm{Dirac}, \pi} =  {i \cdot \left( \frac{s_1 - s_2}{2} \right)}.$$
\end{prop}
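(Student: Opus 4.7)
The plan is to compute $\lambda_{\mathrm{Dirac},\pi}$ by applying the explicit formula for $\overline{D}_\pi$ from Section \ref{automorphicdirac} to the basis vector
\[
f(\ell)(bk)=\chi_{s_1,s_2,-1,0}(b)\delta(b)^{1/2}[\mathrm{std}^\vee(k)\ell](e_1)
\]
of the one-dimensional space $\mathrm{Hom}_K(S^\vee,\pi^\infty)$ produced by Proposition \ref{spinorreps}. Because that space is one-dimensional, $\overline{D}_\pi f=\lambda_{\mathrm{Dirac},\pi}\cdot f$, so it is enough to evaluate both sides at $g=e$ on a single $\ell\in S^\vee$ with $\ell(e_1)\neq 0$; this will pin down the scalar $\lambda_{\mathrm{Dirac},\pi}$.

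First I would unwind the formula, using the orthonormal basis $X_i=\sigma_i/2$ of $\mathfrak{p}_0$ dictated by Remark \ref{paulinorm}: the computation reduces to evaluating $\left.\tfrac{d}{dt}\right|_{t=0} f(\mu)(e^{tX})$ for $X$ ranging over $\{\sigma_1,\sigma_2,\sigma_3\}$ and $\mu$ of the form $\ell\circ m_{\sigma_i/2}$. These derivatives become tractable after passing to the complexified basis $E=(\sigma_1+i\sigma_2)/2,\ F=(\sigma_1-i\sigma_2)/2,\ H=\sigma_3$: since $E,H\in\mathfrak{b}_\mathbb{C}$, $e^{tE}$ and $e^{tH}$ are upper-triangular and the $B$-equivariance of $f$ immediately reduces the corresponding derivatives to derivatives of the character $\chi_{s_1,s_2,-1,0}\delta^{1/2}$ at the relevant one-parameter subgroup, yielding $E\cdot f(\mu)(e)=0$ and $H\cdot f(\mu)(e)=(s_1-s_2+2)\mu(e_1).$

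The one genuinely new ingredient, which I expect to be the main (albeit modest) obstacle, is the derivative along $F$, since $F\notin\mathfrak{b}_\mathbb{C}$. I would compute the Iwasawa decomposition of $e^{tF}=\bigl(\begin{smallmatrix}1&0\\t&1\end{smallmatrix}\bigr)$ by direct $2\times 2$ linear algebra; the unitary factor comes out to $(1+t^{2})^{-1/2}\bigl(\begin{smallmatrix}1&-t\\t&1\end{smallmatrix}\bigr)$ and the $B$-factor is the corresponding upper-triangular completion with positive real diagonal. Plugging this into the definition of $f$ and differentiating at $t=0$ will give $F\cdot f(\mu)(e)=-\mu(e_2)$, and consequently $\sigma_1\cdot f(\mu)(e)=-\mu(e_2)$ and $\sigma_2\cdot f(\mu)(e)=-i\mu(e_2).$

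It then remains to assemble the three contributions, carefully tracking the various factors of $2$ (from $X_i=\sigma_i/2$, from the constant $2i$ in Clifford multiplication, and from the action of $m_{X_i}^\vee$ on $\ell$ in the dual picture) together with the identities $\sigma_1 e_2=e_1,\ \sigma_2 e_2=-ie_1,\ \sigma_3 e_1=e_1$. The two $-\ell(e_1)$ contributions coming from $\sigma_1,\sigma_2$ should exactly kill the $+2\ell(e_1)$ hidden in the $H$-contribution, leaving $(s_1-s_2)\ell(e_1)$ and finally $(\overline{D}_\pi f)(\ell)(e)=i(s_1-s_2)/2\cdot\ell(e_1)$, which identifies $\lambda_{\mathrm{Dirac},\pi}=i(s_1-s_2)/2$ as claimed.
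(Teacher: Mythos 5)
Your proposal is correct and follows essentially the same route as the paper: plug the explicit Frobenius-reciprocity basis vector of $\mathrm{Hom}_K(S^\vee,\pi^\infty)$ into the dual-picture formula for $\overline{D}_\pi$, evaluate at the identity, and track the factors of $2$ from the norm of the Pauli matrices and from Clifford multiplication. The only (minor) divergence is in how the derivatives along $\sigma_1,\sigma_2$ are handled: the paper splits each as $K_i+B_i$ with $K_i\in\mathfrak{k}$ and $B_i\in\mathfrak{n}$ and uses $N$-invariance plus $K$-equivariance, whereas you pass to the $\mathfrak{sl}_2$-triple $E,F,H$ and compute the one nontrivial derivative along $F$ via an explicit Iwasawa decomposition of $e^{tF}$; both yield the same three contributions $-1,\ -1,\ s_1-s_2+2$ and hence the eigenvalue $i(s_1-s_2)/2$.
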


\begin{proof}
The space $\mathrm{Hom}_{T \cap K} \left( S^\vee, \chi_{-1,0} \right)$ has basis $\ell \mapsto \ell(e_1).$  Thus according to Lemma \ref{frobreclemma}, the unique (up to scaling) element $T$ of $\mathrm{Hom}_K \left( S^\vee, \pi_{s_1,s_2,-1,0} \right)$ equals
\begin{align*}
T(\ell) = f_{\ell}:G&\rightarrow\mathbb{C}\\
f_{\ell}(bk) &=  \delta^{1/2}(b) \chi_{s_1,s_2,0,-1}(b) \cdot \left[ \mathrm{std}^\vee(k) \ell \right](e_1).
\end{align*}
Suppose $\overline{D}_{\pi_{s_1,s_2,-1,0}} T = c T.$  Then, as $T(e_1^\vee)(1)=1$, we have
\begin{align*}
\lambda_{\mathrm{Dirac},\pi} &=c \\
&= c T(e_1^\vee)(1) \\
&= (\overline{D}_{\pi_{s_1,s_2,-1,0}} T)(e_1^\vee)(1),  
\end{align*}
so we will evaluate the latter next.  We calculate:
\begin{align*}
m_{\sigma_1}^\vee: e_1^\vee &\mapsto e_2^\vee, \\
e_2^\vee &\mapsto e_1^\vee \\
m_{\sigma_2}^\vee: e_1^\vee &\mapsto -i e_2^\vee, \\
e_2^\vee &\mapsto i e_1^\vee \\
m_{\sigma_3}^\vee: e_1^\vee &\mapsto e_1^\vee, \\
e_2^\vee &\mapsto -e_2^\vee 
\end{align*}
Therefore we have the explicit expression
\begin{align*}
 \overline{D}_{\pi_{s_1,s_2,-1,0}}T(e_1^\vee) &=\frac{2i}{2^2} \cdot \left( \sigma_1 \cdot T m_{\sigma_1}^\vee(e_1^\vee) 
+  \sigma_2 \cdot T m_{\sigma_2}^\vee(e_1^\vee) 
+ \sigma_3 \cdot T m_{\sigma_3}^\vee(e_1^\vee) \right)\\
&= \frac{i}{2}\left( \sigma_1 \cdot f_{e_2^\vee} 
+ \sigma_2 \cdot f_{-ie_2^\vee} 
+ \sigma_3 \cdot f_{e_1^\vee} \right),
\end{align*}
where the factor $2^2$ comes the fact that the Pauli matrices have norm $2$ in the hyperbolic metric, see Remark \ref{paulinorm}. To evaluate the above expression at 1, we express each Pauli matrix in the form $K_i + B_i$ where $K_i \in \mathfrak{k}$ and $B_i \in \mathfrak{b}$ (here $\mathfrak{k}$ and $\mathfrak{b}$ are the Lie algebras of $K$ and $B$ respectively):  
\begin{align*}
\sigma_1 &= \left( \begin{array}{cc} 0 & -1 \\ 1 & 0  \end{array} \right) + 2  \left( \begin{array}{cc} 0 & 1 \\ 0 & 0  \end{array} \right) =: K_1 + B_1 \\
\sigma_2 &= \left( \begin{array}{cc} 0 & i \\ i & 0  \end{array} \right) - 2  \left( \begin{array}{cc} 0 & i \\ 0 & 0  \end{array} \right)  =: K_2 + B_2 \\
\sigma_3 &= \left( \begin{array}{cc} 0 & 0 \\ 0 & 0  \end{array} \right) +  \left( \begin{array}{cc} 1 & 0 \\ 0 & -1  \end{array} \right) =: K_3 + B_3 
\end{align*}
Notice that $B_1,B_2\in \mathfrak{n}$, the Lie algebra of the group of unipotent matrices $N$. Because the functions $f_{\ell}$ are invariant under left translation by elements of $N,$
\begin{align*}
 \overline{D}_{\pi_{s_1,s_2,-1,0}}T(e_1^\vee) &= \frac{i}{2} \cdot \left( \sigma_1 \cdot f_{e_2^\vee} 
+ \sigma_2 \cdot f_{-ie_2^\vee} 
+ \sigma_3 \cdot f_{e_1^\vee} \right) \\
&=  \frac{i}{2} \cdot \left( K_1 \cdot f_{e_2^\vee} 
+ K_2 \cdot f_{-ie_2^\vee} 
+ B_3 \cdot f_{e_1^\vee} \right).
\end{align*}
We calculate each summand, evaluated at 1, individually:
\begin{align*}
(K_1 \cdot f_{e_2^\vee})(1) &= 1 \cdot \frac{d}{dt}|_{t = 0}[e_2^\vee]( e^{-tK_1} e_1) \\
&= [e_2^\vee]( -K_1 e_1) \\
&= -1, \\
(K_2 \cdot f_{-ie_2^\vee})(1) &= 1 \cdot \frac{d}{dt}|_{t = 0}[-ie_2^\vee]( e^{-tK_2} e_1) \\
&= [-ie_2^\vee]( -K_2 e_1) \\
&= [-ie_2^\vee](-ie_2) \\
&= -1, \\
(B_3 \cdot f_{e_1^\vee})(1) &= \frac{d}{dt}|_{t = 0} \delta^{1/2} \left( \begin{array}{cc} e^t & 0 \\ 0 & e^{-t}  \end{array} \right) \cdot \chi_{s_1,s_2,-1,0}  \left( \begin{array}{cc} e^t & 0 \\ 0 & e^{-t}  \end{array} \right)  \cdot [e_1^\vee]( e_1) \\
&= \frac{d}{dt}|_{t = 0} e^{2t}\cdot e^{t \cdot (s_1 - s_2)} \cdot 1 \\
&= s_1 - s_2 + 2.
\end{align*}
Summing these contributions, $\overline{D}_{\pi_{s_1,s_2,-1,0}}$ acts on $\mathrm{Hom}_K(S^\vee, \pi_{s_1,s_2,-1,0})$ by the scalar
\begin{align} \label{diraceigenvalue}
c &=  \overline{D}_{\pi_{s_1,s_2,-1,0}} T(e_1^\vee)(1) \nonumber \\
&= \frac{i}{2} \cdot\left((-1) + (-1) + (s_1 - s_2 + 2)  \right) \nonumber\\
&= {i \cdot \left( \frac{s_1 - s_2}{2} \right)}\end{align}
and the proof is completed.
\end{proof}

\vspace{0.3cm}
We conclude by computing $\lambda_{\ast d, \pi}$, obtaining the desired refinement of the result of \cite{LL}.
\begin{prop}[Calculation of $\lambda_{\ast d, \pi}$] \label{stardeigenvaluecomputation}
Let $\pi = \pi_{s_1,s_2,1,-1}.$  Then
$$\lambda_{\ast d, \pi} =  {i \cdot \left( \frac{s_1 - s_2}{2} \right)}.$$
\end{prop}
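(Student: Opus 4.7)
The plan is to mirror the Dirac computation of Proposition \ref{diraceigenvaluecomputation} line by line, replacing the Clifford map $C'$ by Hodge star composed with wedge. By Proposition \ref{reptheory1formeigenvector}, the space $\mathrm{Hom}_K(\wedge^1\mathfrak{p},\pi^\infty)$ is spanned by the intertwiner
\[
T(v) \;=\; f_v, \qquad f_v(bk)\;=\;\chi_{s_1,s_2,1,-1}(b)\,\delta^{1/2}(b)\, w_1^\vee\bigl(\mathrm{Ad}(k)v\bigr),
\]
and I will determine the scalar $\lambda_{\ast d,\pi}$ by evaluating $((\ast d)_\pi T)(w_1)(1)$, which normalizes out since $T(w_1)(1) = w_1^\vee(w_1) = 1$.

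First I will compute the 2-form $dT \in \mathrm{Hom}_K(\wedge^2\mathfrak{p},\pi^\infty)$ using the dual-picture formula \eqref{exteriorderivativedualpicture}, giving $dT(X\wedge Y) = X\!\cdot\! f_Y - Y\!\cdot\! f_X$. Next I will apply Hodge star: since $\overline{\ast}\circ\overline{\wedge}$ pre-composes with the transpose of $\ast:\wedge^1\mathfrak{p}^\vee\to\wedge^2\mathfrak{p}^\vee$, one has $((\ast d)_\pi T)(w_1) = dT\bigl(\,\ast^T w_1\,\bigr)$. Working in the orthonormal (scaled) basis $e_i := \sigma_i/2$ of $\mathfrak{p}_0$, in which $w_1 = 2(e_1+ie_2)$ and the orientation is $e_1\wedge e_2\wedge e_3$, I will work out explicitly that $\ast^T w_1$ is a scalar multiple of $w_0\wedge w_1$, so that the whole computation localizes to $dT(w_0\wedge w_1)(1)$ up to a constant I can track.

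The evaluation of $dT(w_0\wedge w_1)(1) = w_0\cdot f_{w_1}(1) - w_1\cdot f_{w_0}(1)$ is then the direct analogue of the calculation in \eqref{diraceigenvalue}. I will decompose each $\sigma_j$ into its $\mathfrak{k}$ and $\mathfrak{b}$ components as in the Dirac proof, use that $f_v$ is left $N$-invariant to kill the nilpotent contributions, and collect the surviving pieces. The $\mathfrak{k}$-derivatives act via $\mathrm{Ad}(k)$ on the argument $v$ of $w_1^\vee$ and produce explicit constants from $w_1^\vee([\sigma_i,\sigma_j])$; the $\sigma_3$ derivative acts on the $bk$-factor through $\chi_{s_1,s_2,1,-1}\delta^{1/2}$ and supplies the term linear in $s_1-s_2$. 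As in \eqref{diraceigenvalue}, I expect the $\mathfrak{k}$-contributions to cancel the additive constant ``$+2$'' coming from $\delta^{1/2}$, leaving the clean answer $i(s_1-s_2)/2$.

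The main obstacle is bookkeeping. Because the $w$-basis is neither orthonormal nor real, I must be careful with both the hyperbolic normalization (recall $\lVert\sigma_i\rVert = 2$, cf. Remark \ref{paulinorm}) and with orientation signs when identifying $\ast^T w_1$ inside $\wedge^2\mathfrak{p}$; a single sign or factor of $2$ error would flip the final answer to $-i(s_1-s_2)/2$. As an internal consistency check, on coexact $1$-forms one has $(\ast d)^2 = \Delta$ up to an overall sign, and the Hodge Laplacian eigenvalue on $\pi_{s_1,s_2,1,-1}$ computed in \cite[Appendix B]{LL} (via Kuga's lemma, $\lambda^* = -(s_1-s_2)^2/4$, which is positive for purely imaginary $s_1,s_2$) forces $\lambda_{\ast d,\pi} \in \{\pm\,i(s_1-s_2)/2\}$; so the computation really only needs to pin down the sign, which the explicit evaluation above does in a way compatible with the orientation fixed by $(\sigma_1,\sigma_2,\sigma_3)$.
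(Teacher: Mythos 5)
Your proposal is correct and follows essentially the same route as the paper's proof: the same intertwiner from Proposition \ref{reptheory1formeigenvector}, the same evaluation of $((\ast d)_\pi T)(w_1)(1)$, the same dual-picture formula for $d$, and the same $\sigma_i = K_i + B_i$ decomposition with left $N$-invariance killing the nilpotent parts, after which the $\mathfrak{k}$-terms cancel the $+2$ from $\delta^{1/2}$ exactly as you predict. The only (cosmetic) difference is that you apply $\ast^\vee$ directly to $w_1$ — indeed $\ast^\vee w_1 = \tfrac{i}{2}\, w_0 \wedge w_1$ in the normalization $e_i = \sigma_i/2$, which yields $\tfrac{i}{2}\bigl((2+s_1-s_2) - 2\bigr) = i(s_1-s_2)/2$ — whereas the paper applies $\ast$ to each $\sigma_i$ and recombines.
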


\begin{proof}
We will use the notation from (\ref{pdual}). According to Proposition \ref{reptheory1formeigenvector}, the unique (up to scaling) element $T$ of $\mathrm{Hom}_K \left( \wedge^1 \mathfrak{p}, \pi_{s_1,s_2,1,-1} \right)$ equals
\begin{align*}
T(v) = f_v: G&\rightarrow \mathbb{C}\\
 f_v(bk)&= \delta^{1/2}(b) \chi_{s_1,s_2,1,-1}(b) \cdot w_1^\vee \left( \mathrm{Ad}(k) v \right).
\end{align*}
Recall the $\pi$-isotypic version $(\ast d)_{\pi}$ of $\ast d$ from \S \ref{repbyrepstard}.  Because $\mathrm{Hom}_K( \wedge^1 \mathfrak{p}, \pi_{s_1,s_2,1,-1})$ is 1-dimensional, $(\ast d)_{\pi}$ acts on it by a scalar.  We next calculate the eigenvalue of $(\ast d)_{\pi}$ acting on $\mathrm{Hom}_K( \wedge^1 \mathfrak{p}, \pi)$ for $\pi = \pi_{s_1,s_2,1,-1}.$  To lighten notation, we refer to $d_{\pi}, \ast_{\pi}$ and $(\ast d)_{\pi}$ simply by $d, \ast,$ and $\ast d$ respectively. Suppose $(\ast d) T = cT.$  Then
\begin{align*}
\lambda_{\ast d, \pi} &= c \\
&= c T(w_1)(1) \\
&= ((\ast d) T)(w_1) (1),  
\end{align*}
so we will evaluate the latter next. Observe that: 
\begin{align*}
dT(\sigma_1 \wedge \sigma_2) &= \sigma_1 \cdot T(\sigma_2) - \sigma_2 \cdot T(\sigma_1) \\
dT(\sigma_2 \wedge \sigma_3) &= \sigma_2 \cdot T(\sigma_3) - \sigma_3 \cdot T(\sigma_2) \\
dT(\sigma_3 \wedge \sigma_1) &= \sigma_3 \cdot T(\sigma_1) - \sigma_1 \cdot T(\sigma_3),
\end{align*}
implying that 
\begin{align*}
2 \cdot \ast d T(\sigma_3) &= dT(\sigma_1 \wedge \sigma_2) = \sigma_1 \cdot T(\sigma_2) - \sigma_2 \cdot T(\sigma_1) \\
2 \cdot \ast d T(\sigma_1) &= dT(\sigma_2 \wedge \sigma_3) = \sigma_2 \cdot T(\sigma_3) - \sigma_3 \cdot T(\sigma_2) \\
2 \cdot \ast d T(\sigma_2) &= dT(\sigma_3 \wedge \sigma_1) = \sigma_3 \cdot T(\sigma_1) - \sigma_1 \cdot T(\sigma_3),
\end{align*}
where the factor of $2$ appears because the norm of the Pauli matrices in the hyperbolic metric is $2$, see Remark \ref{paulinorm}. It thus follows:
\begin{align*}
2 \cdot \ast dT(w_1) &= 2 \cdot \left(  \ast dT(\sigma_1) + i \ast dT(\sigma_2) \right) \\
&= \sigma_2 \cdot T(\sigma_3) - \sigma_3 \cdot T(\sigma_2) + i \left(  \sigma_3 \cdot T(\sigma_1) - \sigma_1 \cdot T(\sigma_3) \right).
\end{align*}

To evaluate the above expression at 1, recall the decomposition of the Pauli matrices $\sigma_i=K_i+B_i$ introduced in the proof of Proposition \ref{diraceigenvaluecomputation}. Again, the functions $f_v$ are invariant under left translation by elements of $N$, and we therefore compute
\begin{align*}
2 \cdot \ast d T(w_1) &=  \sigma_2 \cdot T(\sigma_3) - \sigma_3 \cdot T(\sigma_2) + i \left(  \sigma_3 \cdot T(\sigma_1) - \sigma_1 \cdot T(\sigma_3) \right) \\
&=   K_2 \cdot f_{\sigma_3} - B_3 \cdot f_{\sigma_2} + i \left( B_3 \cdot f_{\sigma_1} - K_1 \cdot f_{\sigma_3} \right).
\end{align*}
We calculate each summand, evaluated at 1, individually. The terms involving $B_3$ are
\begin{align*}
(B_3 \cdot f_{\sigma_2})(1) &= \frac{d}{dt}|_{t = 0} \delta^{1/2} \left( \begin{array}{cc} e^t & 0 \\ 0 & e^{-t}  \end{array} \right) \cdot \chi_{s_1,s_2,1,-1}  \left( \begin{array}{cc} e^t & 0 \\ 0 & e^{-t}  \end{array} \right)  \cdot w_1^\vee( \sigma_2) \\
&= \frac{d}{dt}|_{t = 0} e^{2t}\cdot e^{t \cdot (s_1 - s_2)} \cdot \frac{1}{2} \left(-i \right) \\
&= (2 + s_1 - s_2) \cdot \frac{1}{2} \left(-i \right),\\
(B_3 \cdot f_{\sigma_1})(1) &= \frac{d}{dt}|_{t = 0} \delta^{1/2} \left( \begin{array}{cc} e^t & 0 \\ 0 & e^{-t}  \end{array} \right) \cdot \chi_{s_1,s_2,1,-1}  \left( \begin{array}{cc} e^t & 0 \\ 0 & e^{-t}  \end{array} \right)  \cdot w_1^\vee( \sigma_1) \\
&= \frac{d}{dt}|_{t = 0} e^{2t}\cdot e^{t \cdot (s_1 - s_2)} \cdot \frac{1}{2} \\
&= (2 + s_1 - s_2) \cdot \frac{1}{2},
\end{align*}
while those involving the $K_i$ are
\begin{align*}
(K_2 \cdot f_{\sigma_3})(1) &= 1 \cdot \frac{d}{dt}\lvert_{t = 0} w_1^\vee( \mathrm{Ad}(e^{tK_2}) \sigma_3) \\
&= w_1^\vee(  [K_2, \sigma_3] ) \\
&= w_1^\vee( 2 \sigma_2 ) \\
&= -i,\\
(K_1 \cdot f_{\sigma_3})(1) &= 1 \cdot \frac{d}{dt}|_{t = 0}w_1^\vee( Ad(e^{tK_1}) \sigma_3) \\
&= w_1^\vee( [ K_1,\sigma_3] ) \\
&= w_1^\vee( 2\sigma_1 ) \\
&= 1. 
\end{align*}
Summing these contributions, $\ast d$ acts on $\mathrm{Hom}_K(\wedge^1 \mathfrak{p}, \pi_{s_1,s_2,1,-1})$ by the scalar
\begin{align*}
c &=  \ast d T(w_1) (1) \\
&= \frac{1}{2} \cdot \left( -i -  (2 + s_1 - s_2) \cdot \frac{1}{2} \left(-i \right) + i \left( (2 + s_1 - s_2) \cdot \frac{1}{2} - 1 \right) \right) \\
&= {i \cdot \left( \frac{s_1 - s_2}{2} \right)}\end{align*}
and the proof is concluded.
\end{proof}

\vspace{0.5cm}
\section{Proof of the explicit trace formulas}
We will now prove the results in Section \ref{traceformulas}, starting with the more involved case of spinors in Theorem \ref{spinorformula}.

\subsection{Isolating the spinor spectrum} \label{isolatingspinors}
We discuss suitable choices of test function to specialize the general trace formula for $G$ in Proposition \ref{preliminaryreptheorytraceformula}. According to Proposition \ref{spinorreps}, the irreducible unitary representations of $G$ contributing to the spinor spectrum are precisely those isomorphic to $\pi_{s_1,s_2,-1,0}$ or $\pi_{s_1,s_2,0,-1}$ for purely imaginary $s_1,s_2.$  To isolate the latter representations, we use the test functions of the form

$$F \left( \begin{array}{cc} e^{u/2} e^{i \alpha} & 0 \\ 0 & e^{-u/2} e^{i \beta} \end{array} \right) = \begin{cases} H_{+}(u) \left( e^{i \alpha} + e^{i \beta} \right) &H_{+} \text{ even} \\ H_{-}(u) \left( e^{i \alpha} - e^{i \beta} \right) &H_{-} \text{ odd} \end{cases}$$
We now discuss how each term in the trace formula simplifies when choosing test functions of the latter shapes.

\subsubsection{Regular geometric terms for spinors} \label{regulargeometricspinor}
For our normalization of Haar measure (\ref{Haar}), the covolume of the centralizer of $\gamma$ equals $\ell(\gamma_0).$ If its conjugacy class in $G$ is represented by
\begin{equation*}
\left( \begin{array}{cc} e^{\ell/2} e^{i \theta} e^{i \phi} & 0 \\ 0 & e^{-\ell/2} e^{-i \theta} e^{i \phi} \end{array} \right),
\end{equation*}
then its complex length equals $\ell + i 2\theta$ and its corresponding summand on the geometric side of the trace formula from Proposition \ref{preliminaryreptheorytraceformula} equals
\begin{align*}
= \ell_0 \cdot \frac{1}{|1 - e^{\ell + i 2\theta}| \cdot |1 - e^{-\ell - i 2\theta}| } \cdot H_{\pm}(\ell) \cdot \left(e^{i (\theta + \phi)} \pm e^{i (-\theta + \phi)} \right).
\end{align*}
Here we use that the Weyl discriminant is the same as in the trace formula from \cite{LL}:
$$|D(t_{\gamma})|^{1/2} = \frac{1}{|1 - e^{\C \ell(\overline{\gamma})}| \cdot |1 - e^{- \C \ell(\overline{\gamma}) }|},$$
where $\overline{\gamma}$ denotes the image of $\gamma$ in $\mathrm{PGL}_2(\C).$  In particular, it is independent of $\phi.$

\vspace{0.3cm}

\subsubsection{Contribution of central elements for spinors} \label{identityspinor}
Let $z \in Z \subset G$ be central, so
\begin{equation*}
z =  \left( \begin{array}{cc}  e^{i\phi} & 0 \\ 0 &  e^{i\phi} \end{array} \right)
\end{equation*}
for some $\phi\in\mathbb{R}/2\pi\mathbb{Z}$. Recalling that for the two different parametrizations of $T$ we have $u=2v$, the contribution of the conjugacy class of $z$ in the trace formula of Proposition \ref{preliminaryreptheorytraceformula} is then
\begin{align*}
&-\frac{1}{8\pi}\cdot \vol(Y)\cdot \left( \frac{\partial^2}{\partial v^2} + \frac{\partial^2}{\partial \theta^2} \right)|_{t = z} F\\
=&-\frac{1}{8\pi}\cdot \vol(Y)\cdot \left( \frac{\partial^2}{\partial v^2} + \frac{\partial^2}{\partial \theta^2} \right)|_{(v,\theta)=(0,0)}H_+(2v)(e^{i(\theta+\phi)}+e^{i(-\theta+\phi)})\\
=&\frac{1}{8\pi} \cdot \vol(Y) \cdot 2 e^{i\phi} \cdot \left(H_{+}(0) - 4 H_{+}''(0) \right)\\
=&\frac{1}{4\pi} \cdot \vol(Y) \cdot e^{i\phi} \cdot \left(H_{+}(0) - 4 H_{+}''(0) \right).
\end{align*}
In our case, we will be interested in lifts of torsion-free lattices $\Gamma\subset\mathrm{PSL}_2(\mathbb{C})$, so that we will only need to consider the case $z=1$, corresponding to $\phi=0$.

\subsubsection{Regular spectral terms for spinors} \label{regularspectralspinor}
Let  $s_1 = i r_1, s_2 = ir_2 \in i \mathbb{R}.$  The summand on the spectral side of the trace formula from Proposition \ref{preliminaryreptheorytraceformula} corresponding to $\pi_{s_1,s_2,n_1,n_2}$ then equals
\begin{align*}
\widehat{F}(\chi^{-1}_{s_1,s_2,n_1,n_2}) &= \int_T F(t) \chi_{s_1,s_2,n_1,n_2}(t) dt \\
&= \int\int\int H_{\pm}(u) \cdot \left(e^{i \alpha} \pm e^{i \beta} \right) \cdot e^{u/2 \cdot s_1} e^{-u/2 \cdot s_2} \cdot e^{i n_1 \alpha} \cdot e^{i n_2 \beta} du \frac{d \alpha}{2\pi} \frac{d\beta}{2\pi}. \\
\end{align*}
In the case of $H_{+} \cdot (e^{i \alpha} + e^{i \beta}),$ the latter equals 
$$\widehat{H_{+}} \left( \frac{r_2 - r_1}{2} \right) \cdot \begin{cases} + 1 & \text{ if } (n_1,n_2) = (-1,0) \text{ or } (0,-1) \\ 0 &\text{ otherwise}  \end{cases}.$$
while in the case of $H_{-} \cdot (e^{i \alpha} - e^{i \beta}),$ it equals
$$\widehat{H_{-}} \left( \frac{r_2 - r_1}{2} \right) \cdot \begin{cases} + 1 & \text{ if } (n_1,n_2) = (-1,0) \\ -1 &\text{ if } (n_1,n_2) = (0,-1) \\ 0 &\text{ otherwise.}  \end{cases}.$$

\subsubsection{Contribution of $\det^k$ to the spectral side}

For every $k\in\mathbb{Z}$, the contribution of $\det^k$ to the trace formula for the test function $H_{\pm}(u) \cdot (e^{i \alpha} \pm e^{i \beta})$ equals 
\begin{align*}
& \frac{1}{|W|} \int_T |D(t^{-1})|^{1/2} \cdot \det(t)^k \cdot F(t)  dt \\
&= \frac{1}{|W|} \int \int \int \left(  e^u + e^{-u} - \left( e^{i(\alpha - \beta)} + e^{i(\beta - \alpha)} \right) \right) \cdot e^{k \cdot i(\alpha + \beta)} \cdot H_{\pm}(u) \cdot (e^{i \alpha} \pm e^{i \beta}) \; \frac{d\alpha}{2\pi} \frac{d\beta}{2\pi} du \\
&= 0,
\end{align*}
since the integral of every non-trivial character over the torus $\mathrm{U}_1 \times \mathrm{U}_1$ equals 0. 

\vspace{0.3cm}
\subsection{Trace formula for spinors} \label{traceformulaspinors}
We are finally ready to prove the trace formulas that appear in Theorem \ref{spinorformula}.

\subsubsection{Even spinor trace formula} 
Let $H_{+}$ be a smooth, compactly supported, even test function on $\mathbb{R}.$  Let $\widetilde{\Gamma}$ be the lift to $G$ of the closed hyperbolic 3-manifold group $\Gamma \subset \mathrm{PGL}_2(\C)$, (so that $\tilde{\Gamma}\cap Z=\{1\}$).  For $\gamma \in \widetilde{\Gamma},$ suppose that $\gamma$ is conjugate to 
$$\left( \begin{array}{cc} z_1 & 0 \\ 0 & z_2 \end{array} \right) = \left( \begin{array}{cc} e^{u/2} e^{i\alpha}  & 0 \\ 0 & e^{-u} e^{i \beta} \end{array} \right) = \left( \begin{array}{cc} e^{u/2} e^{i\theta} e^{i \phi} & 0 \\ 0 & e^{-u/2} e^{-i \theta} e^{i \phi} \end{array} \right).$$
Let $z = z_1 / z_2.$  Let $\ell_0$ denote the length of the primitive closed geodesic some multiple of which corresponds to $\gamma.$  Unwinding all terms in the trace formula from Proposition \ref{preliminaryreptheorytraceformula} as in \S \ref{isolatingspinors} yields the following trace formula:  
\begin{align*}
& \sum m_{\widetilde{\Gamma}}(\pi_{s_1,s_2,-1,0}) \cdot \widehat{H_{+}} \left( \frac{r_2 - r_1}{2}  \right) \\
&= {\frac{1}{4\pi}} \cdot \vol(Y) \cdot \left( H_{+}(0) - 4 H_{+}''(0) \right) \\
&+ \sum_{[\gamma]} \ell_0 \cdot \frac{1}{|1 - z| \cdot |1 - z^{-1}|} \cdot \left( e^{i \alpha} + e^{i \beta} \right) \cdot H_{+}(u),
\end{align*}
where the LHS is summed over all isomorphism classes of irreducible unitary representations of $G$ isomorphic to some $\pi_{s_1,s_2,-1,0}.$
Since $H_{+}$ is even, the left side is real.  So taking real parts:

\begin{align} \label{traceformulaevenspinorsrepresentationtheory}
& \sum m_{\widetilde{\Gamma}}(\pi_{s_1,s_2,-1,0}) \cdot  \widehat{H_{+}} \left( \frac{r_2 - r_1}{2}  \right)= \nonumber \\
=& {\frac{1}{4\pi}} \cdot \vol(Y) \cdot \left( H_{+}(0) - 4 H_{+}''(0) \right) \nonumber + \sum_{[\gamma]} \ell_0 \cdot \frac{1}{|1 - z| \cdot |1 - z^{-1}|} \cdot \left( \cos \alpha + \cos \beta \right) \cdot H_{+}(u) \nonumber \\
=& {\frac{1}{4\pi}} \cdot \vol(Y) \cdot \left( H_{+}(0) - 4 H_{+}''(0) \right) + \sum_{[\gamma]} \ell_0 \cdot \frac{1}{|1 - z| \cdot |1 - z^{-1}|} \cdot \left( 2 \cos \theta \cos \phi \right) \cdot H_{+}(u). 
\end{align}


Finally, by Proposition \ref{diraceigenvaluecomputation} the Dirac eigenvalue on $\mathrm{Hom}_K( S^\vee, \pi_{s_1,s_2,0,-1} )$, with orientation $\sigma_1 \wedge \sigma_2 \wedge \sigma_3,$ equals $i \left( \frac{s_1 - s_2}{2} \right) =  \frac{r_2 - r_1}{2}.$  Thus, by Proposition \ref{spinorreps} we can reexpress the trace formula from \eqref{traceformulaevenspinorsrepresentationtheory} geometrically as
\begin{align*}
&\frac{1}{2} \cdot \sum_{\text{Dirac eigenvalues for } \widetilde{\Gamma}} m_{\widetilde{\Gamma}}(\lambda) \cdot \widehat{H_{+}} \left( \lambda \right)= \\
&= \frac{1}{2\pi} \cdot \vol(Y) \cdot \left( \frac{1}{4} \cdot H_{+}(0) - H_{+}''(0) \right)+ \sum_{[\gamma]} \ell_0 \cdot \frac{1}{|1 - z| \cdot |1 - z^{-1}|} \cdot \left( \cos \theta \cos \phi \right) \cdot H_{+}(u). \\
\end{align*}
This is the even trace formula in Theorem \ref{spinorformula}.

\subsubsection{Odd spinor trace formula: representation theoretic form} 
Let $H_{-}$ be a smooth, compactly supported, odd test function on $\mathbb{R}.$ Using the notation of the previous subsection, unwinding all terms in the trace formula from Proposition \ref{preliminaryreptheorytraceformula} as in \S \ref{isolatingspinors} yields the following trace formula:  
\begin{align*}
& \sum m_{\widetilde{\Gamma}}(\pi_{s_1,s_2,-1,0}) \cdot  \widehat{H_{-}} \left( \frac{r_2 - r_1}{2}  \right)  \\
&= -\sum_{[\gamma]} \ell_0 \cdot \frac{1}{|1 - z| \cdot |1 - z^{-1}|} \cdot \left( e^{i \alpha} - e^{i \beta} \right) \cdot H_{-}(u)
\end{align*}
where the LHS is summed over all isomorphism classes of irreducible unitary representations of $G$ isomorphic to some $\pi_{s_1,s_2,-1,0}.$  Since $H_{-}$ is odd, its Fourier transform is purely imaginary.  So looking imaginary parts:

\begin{align} \label{traceformulaoddspinorsrepresentationtheory}
& \sum m_{\widetilde{\Gamma}}(\pi_{s_1,s_2,-1,0}) \cdot  \widehat{H_{-}} \left( \frac{r_2 - r_1}{2}  \right)  \nonumber \\
&= i \sum_{[\gamma]} \ell_0 \cdot \frac{1}{|1 - z| \cdot |1 - z^{-1}|} \cdot \left( \sin \alpha - \sin \beta \right) \cdot H_{-}(u) \nonumber \\
&= i \sum_{[\gamma]} \ell_0 \cdot \frac{1}{|1 - z| \cdot |1 - z^{-1}|} \cdot \left( 2 \sin \theta \cos \phi \right) \cdot H_{-}(u).
\end{align}
By \eqref{diraceigenvaluecomputation}, the Dirac eigenvalue on $\mathrm{Hom}_K( S^\vee, \pi_{s_1,s_2,-1,0} )$, with orientation $\sigma_1 \wedge \sigma_2 \wedge \sigma_3,$ equals $i \left( \frac{s_1 - s_2}{2} \right) = \frac{r_2 - r_1}{2}.$  Thus by Proposition \ref{spinorreps}, we can reexpress the above trace formula from \eqref{traceformulaoddspinorsrepresentationtheory} geometrically:

\begin{align} \label{geoetricoddspinortraceformula}
& \frac{1}{2} \sum_{\text{Dirac eigenvalues for } \widetilde{\Gamma}} m_{\widetilde{\Gamma}}(\lambda) \cdot  \widehat{H_{-}} \left( \lambda  \right) \nonumber \\
&= i \sum_{[\gamma]} \ell_0 \cdot \frac{1}{|1 - z| \cdot |1 - z^{-1}|} \cdot \left( \sin \theta \cos \phi \right) \cdot H_{-}(u).
\end{align}
This is the odd trace formula in Theorem \ref{spinorformula}; let us point out again that in \eqref{geoetricoddspinortraceformula}, the Dirac operator is taken relative to the orientation $\sigma_1 \wedge \sigma_2 \wedge \sigma_3$ on $\mathbb{H}^3.$

\vspace{0.3cm}
\subsection{The trace formula for coexact 1-forms}

Specializing the trace formula to isolate coexact 1-forms is more straightforward than specializing the trace formula to isolate spinors as in \S \ref{traceformulaspinors}.  We content ourselves with highlighting the main differences between specializing to 1-forms versus specializing to spinors.
\begin{itemize}
\item
Irreducible unitary subrepresentations of $L^2(\widetilde{\Gamma} \backslash G)$ contributing to the coclosed 1-form spectrum are precisely those isomorphic to $\pi_{s_1,s_2,1,-1}$ and $\pi_{s_1,s_2,-1,1}$ for $s_1, s_2 \in i \mathbb{R}.$  We isolate those representations using the test functions $H_{\pm}(u) \cdot \left(  e^{i(\alpha - \beta)} \pm e^{i(\beta - \alpha)} \right),$ where $H_{+}$ is even and $H_{-}$ is odd.

\medskip

\item
For the latter test functions $H_{\pm}(u) \cdot \left(e^{i(\alpha - \beta)} \pm e^{i(\beta - \alpha)} \right),$ the summand corresponding to the representation $\pi_{s_1,s_2,1,-1}$ equals $\widehat{H_{\pm}} \left( \frac{r_2 - r_1}{2} \right).$  By Proposition \ref{stardeigenvaluecomputation}, $\frac{r_2 - r_1}{2}$ is the eigenvalue of $\ast d$ acting on the $\wedge^1 \mathfrak{p}$ isotypic vector of $\pi$ (for the orientation $\sigma_1 \wedge \sigma_2 \wedge \sigma_3$ on $\mathbb{H}^3$).  For a subrepresentation $\pi \subset L^2(\widetilde{\Gamma} \backslash G)$ isomorphic to $\pi_{s_1,s_2,1,-1},$ the latter equals the eigenvalue of $\ast d$ acting on the $\wedge^1 \mathfrak{p}$-isotypic vectors of $\pi_{s_1,s_2,1,-1}.$  

\medskip

\item
The contribution of $\det^k$ to the trace formula for 1-forms is non-trivial only if $k = 0.$  If $k = 0,$ the contribution equals 0 for test functions $H_{-}(u) \cdot \left( e^{i(\alpha - \beta)} - e^{i(\beta - \alpha)} \right)$ for odd $H_{-}$ and equals $-\widehat{H_{+}}(0)$ for the test function
\begin{equation*}
H_{+}(u) \cdot \left( e^{i(\alpha - \beta)} + e^{i(\beta - \alpha)} \right)=2H_+(2v)\cos(2\theta)
\end{equation*}
with $H_{+}$ even.

\medskip

\item
In the even case for the test function $2H_+(2v)\cos(2\theta)$ the identity contribution is $\frac{1}{2\pi}\cdot \vol(Y)\cdot 2(H_+(0)-H_+''(0))$.

\end{itemize}
From this, one readily obtains the formulas for coexact $1$-forms in Theorem \ref{coexformula}.

\vspace{0.5cm}

\section{Admissibility of Gaussian functions}\label{gaussiantest}

Gaussian functions were convenient to use at several points in our arguments.  The next result proves that functions of sufficiently fast decay, e.g. Gaussians, are admissible for use in the trace formula.
\begin{prop}
Let $f$ be an even or odd smooth function on $\mathbb{R}.$  Suppose that 
$$\sum_{n = 0}^\infty e^n \cdot \sup_{x \in [n,n+1]} |f(x)| < \infty.$$
Then both the geometric and spectral sides of the spinor trace formula for closed hyperbolic 3-manifolds converge absolutely for the test function $f$ and they are equal.  The same statement holds for the 1-form trace formula applied to even or odd test functions and the 0-form trace formula applied to even test functions $f.$
\end{prop}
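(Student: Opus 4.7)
My plan is to approximate $f$ by a sequence $f_k \in C^\infty_c(\mathbb{R})$ of the same parity, apply the trace formulas of Section \ref{traceformulas} to each $f_k$ (for which they are already established), and pass to the limit $k \to \infty$ on both sides. Concretely, I would take an even smooth cutoff $\chi_k$ equal to $1$ on $[-k,k]$ and supported in $[-k-1, k+1]$, with all derivatives bounded uniformly in $k$, and set $f_k := f \chi_k$; parity is preserved, and $f_k$ is compactly supported and smooth, so Theorems \ref{funformula}, \ref{coexformula} and \ref{spinorformula} apply.

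For the geometric side, the key estimate is $|1 - e^{\mathbb{C}\ell(\gamma)}||1 - e^{-\mathbb{C}\ell(\gamma)}| \geq (e^{\ell}-1)(1-e^{-\ell})$, which combined with the prime geodesic theorem $\sum_{\ell(\gamma) \in [n,n+1]} \ell(\gamma_0) = O(e^{2n})$ (cf. Section \ref{geodesicSW}) bounds the contribution from geodesics of length in $[n,n+1]$ by a constant multiple of $e^n \cdot \sup_{[n,n+1]} |f|$. By the hypothesis, this is summable, so the geometric side converges absolutely for $f$, and dominated convergence gives convergence of the geometric sides for $f_k$ to that for $f$. The identity contribution only involves $f(0)$ and $f''(0)$, which agree with those of $f_k$ for $k \geq 1$ (and in the spinor case the central elements contribute nothing since $\widetilde{\Gamma}\cap Z = \{1\}$).

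For the spectral side, absolute convergence of $\sum_n \widehat{f}(t_n)$ follows from the local Weyl laws of Section \ref{localweyl}, which give $\#\{|t_n| \in [N,N+1]\} = O(N^2)$, combined with polynomial decay of $\widehat{f}$. The latter is produced by the standard integration-by-parts identity $\widehat{f}(t) = (it)^{-m}\widehat{f^{(m)}}(t) = O(t^{-m})$, valid whenever $f^{(m)} \in L^1(\mathbb{R})$; for a Gaussian (and any function whose derivatives also satisfy the exponential decay hypothesis) this holds for every $m$. Pointwise $\widehat{f_k}(t) \to \widehat{f}(t)$ by dominated convergence applied to the Fourier integrals, and to pass the whole sum to the limit I would invoke dominated convergence against the majorant coming from the same integration-by-parts estimate, now applied to $f_k$.

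The main obstacle is establishing this majorant uniformly in $k$. Derivatives of $f_k = f \chi_k$ are sums of terms $f^{(j)} \chi_k^{(m-j)}$; when $m > j$, the factors $\chi_k^{(m-j)}$ are supported in narrow annuli near $|x|=k$ but $L^\infty$-bounded independently of $k$, so integration by parts on the Fourier side yields error contributions of size $\int_{|x| \geq k-1} |f^{(j)}(x)|\, dx$, which are uniformly controlled and tend to zero provided $f$ and its derivatives decay like a Gaussian (as in our cases of interest). Once this uniform bound is in hand, dominated convergence closes the argument and the trace formulas for $f_k$ pass to the corresponding identities for $f$.
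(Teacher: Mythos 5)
Your overall strategy coincides with the paper's: truncate $f$ to a compactly supported function of the same parity, apply the already-established trace formula, and pass to the limit with uniform control of the tails on both sides. The geometric side is handled identically (the $e^{2n}$ bound on $\sum_{\ell(\gamma)\in[n,n+1]}\ell(\gamma_0)$ against the Weyl discriminant, then the hypothesis on $f$), and your treatment of the identity/central terms is fine. The genuine difference is in how the uniform majorant on the spectral side is produced. You cut off with a plateau function $\chi_k$ and then integrate by parts on $\widehat{f_k}$, which forces you to bound $\widehat{f^{(j)}\chi_k^{(m-j)}}$ and hence to assume that the \emph{derivatives} of $f$ are integrable with controlled tails --- an assumption not contained in the stated hypothesis, which only constrains $f$ itself. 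You correctly identify this as the main obstacle and close it only "provided $f$ and its derivatives decay like a Gaussian." The paper instead multiplies by a dilated bump $b(x/R)$ with $b(0)=1$ and $\widehat{b}>0$, so that on the Fourier side the truncation acts by convolution with the approximate identity $g_R=\tfrac{1}{2\pi}R\,\widehat{b}(R\,\cdot)$; the uniform-in-$R$ bound $|\widehat{f}\ast g_R(t)|\le C_p\langle t\rangle^{-p}$ is then obtained by splitting the convolution integral at $|y|=|t|/2$, using only the decay of $\widehat{f}$ and of $\widehat{b}$ and never differentiating the cutoff. This is what the convolution formulation buys: no hypotheses on $f^{(j)}$ beyond what is needed to make $\widehat{f}$ decay in the first place. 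For the Gaussians and convolution powers of indicators actually used in the paper, both routes work, so your argument is adequate for the intended applications; but as a proof of the proposition for a general $f$ satisfying only the displayed hypothesis, your version proves a slightly weaker statement. (To be fair, the stated hypothesis alone does not even force $\widehat{f}$ to decay polynomially, so some implicit regularity of $\widehat{f}$ is being assumed in the paper's proof as well; the point is that the convolution estimate isolates exactly that assumption, whereas the integration-by-parts route demands strictly more.)
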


\begin{proof}
We focus on the spinor case (arguments in the other cases being identical). Convergence of the spectral side of the trace formula for $f$ follows because $\widehat{f}$ is Schwartz and the number of spectral parameters in $[\nu,\nu+1]$ is quadratic in $\nu$ (see Section \ref{localweyl}) 

Convergence of the geometric side of the trace formula follows by our hypothesis. Indeed, 
\begin{align} \label{geometricsideconverges}
& \sum_{\gamma}\left| \ell(\gamma_0) \cdot \frac{(\cos \text{ or } \sin) \theta \cos \phi}{|1 - e^{\C \ell(\gamma)}| \cdot |1 - e^{-\C \ell(\gamma)}|} \cdot f( \ell(\gamma)) \right| \nonumber \\
&\leq \sum_{n = 0}^\infty \left( \sum_{\ell(\gamma) \in [n,n+1]} \ell(\gamma_0) \right) \cdot \frac{C}{e^n} \cdot \sup_{x \in [n,n+1]} |f(x)| \nonumber \\
&\leq \sum_{n = 0}^\infty D \cdot e^{2n} \cdot \frac{C}{e^n} \cdot \sup_{x \in [n,n+1]} |f(x)| \nonumber \\
&=CD \cdot \sum_{n = 0}^\infty e^{n} \cdot \sup_{x \in [n,n+1]} |f(x)| \nonumber \\
&< \infty.
\end{align}
Above, we have used that $\sum_{\ell(\gamma) \in [L,L+1]} \ell(\gamma_0)$ has order of magnitude $e^{2L}$ (cf. the discussion in Section \ref{geodesicSW}) and that $|1 - e^{\C \ell(\gamma)}| \cdot |1 - e^{-\C \ell(\gamma)}|$ has order of magnitude $e^{\ell(\gamma)}$ for $\ell(\gamma)$ large.
\\
\par
The trace formula for $f$ will be a consequence of the limit of the trace formulas applied to the test functions $f(x) \cdot b(x / R),$ where $b$ is a smooth even, compactly supported bump function with $\widehat{b}$ everywhere positive and $b(0) = 1.$  As $R \rightarrow \infty,$ the geometric and spectral sides of the trace formula for $f(x) \cdot b(x/R)$ respectively converge pointwise to the geometric and spectral sides of the trace formula for $f$; this is immediate for the geometric side and it follows on the spectral side because $f(x) \cdot b(x/R)$ has Fourier transform $\left(\widehat{f} \ast g_R\right)(t),$ where
\begin{equation*}
g_R(t) := \frac{1}{2\pi} \cdot R \cdot \widehat{b}(Rt)
\end{equation*}
is an approximate identity (i.e. it is everywhere positive of total mass 1 and concentrated increasingly near the origin as $R \to \infty$). Here we use that for our definition of Fourier transform $\widehat{f\cdot g}=\frac{1}{2\pi}\widehat{f} {\ast} \widehat{g}$. It therefore suffices to prove that the tails of the geometric and spectral sides of the trace formula, applied to the test function $f(x) \cdot b(x/R),$ approach 0 uniformly as $R \rightarrow \infty.$
\par
Uniform smallness of the tail on the geometric side follows by our hypothesis. Indeed,
\begin{align*}
&\sum_{\ell(\gamma) \geq k} \left| \ell(\gamma_0) \cdot \frac{(\cos \text{ or } \sin) \theta \cos \phi}{|1 - e^{\C \ell(\gamma)}| \cdot |1 - e^{-\C \ell(\gamma)}|} \cdot f( \ell(\gamma)) \right|\\
&\leq CD \cdot \sum_{n = k}^\infty e^{n} \cdot \sup_{x \in [n,n+1]} |f(x) b(x/R)| \hspace{0.5cm} \text{exactly as in \eqref{geometricsideconverges}} \\
&\leq CD \cdot ||b||_{\infty} \cdot \sum_{n = k}^\infty e^{n} \cdot \sup_{x \in [n,n+1]} |f(x)|, 
\end{align*}
which converges uniformly to 0 as $k \to \infty,$ being the tail of one fixed convergent sum (independent of $R$).
\\
\par
Uniform smallness of the tail on the spectral side follows by the following estimate:
\begin{align*}
\widehat{f} \ast g_R(t) &= \int_{\mathbb{R}} \widehat{f}(t-y) g_R(y) dy \\
&= \int_{y \in [-1/2 |t|, + 1/2 |t|]} \widehat{f}(t-y) g_R(y) dy  +  \int_{y \notin [-1/2 |t|, + 1/2 |t|]} \widehat{f}(t-y) g_R(y) dy \\ 
&= O \left( \sup_{y \in [1/2 |t|, 3/2 |t|]} |\widehat{f}(y)| \right) + O \left( ||\widehat{f}||_{\infty} \cdot \int_{y \notin [-1/2 R |t|, + 1/2 R |t| ]} \widehat{b}(z) dz \right).
\end{align*}
Since $\widehat{b}$ and $\widehat{f}$ are Schwartz, for every $p > 0$ there is some constant $C_p$ for which the latter is bounded above by $C_p \langle t \rangle^{-p}$ uniformly in $R.$  In particular, since the number of spectral parameters in $[\nu,\nu+1]$ grows quadratically with $\nu,$ taking any $p > 3,$ the tail of the spectral side of the trace formula converges to 0 uniformly in $R.$  
\end{proof}

\vspace{0.5cm}

\end{document}